\theoremstyle{plain}
\numberwithin{equation}{section}
\newtheorem{theorem}{Theorem}[section]
\newtheorem{lemma}[theorem]{Lemma}
\newtheorem{definition-lemma}[theorem]{Definition-Lemma}
\newtheorem{proposition}[theorem]{Proposition}
\newtheorem{corollary}[theorem]{Corollary}
\newtheorem{definition}[theorem]{Definition}
\newtheorem{example}[theorem]{Example}
\newtheorem{remark}[theorem]{Remark}
\newcommand{\id}         {{\mathrm {Id}}}
\newcommand{\gra}        {{\mathrm {graph}}}
\newcommand{\pr}         {{\mathrm{pr}}}
\def\G{\mathcal G}
\def\H{\mathcal H}
\def\V{\mathcal V}
\def\R{\mathbb R}
\def\C{C^\infty}
\def\CC {\mathbb C}
\def\id{{\rm id}}
\def\pr{{\rm pr}}
\def\toto{\rightrightarrows}
\def\tensor{\otimes}
\def\u{\underline}
\def\<{\langle}
\def\>{\rangle}
\newcommand{\thistheoremname}{}
\newtheorem{genericthm}[theorem]{\thistheoremname}
\newtheorem*{genericthm*}{\thistheoremname}
\newenvironment{namedthm*}[1]
  {\renewcommand{\thistheoremname}{#1}%
   \begin{genericthm*}}
  {\end{genericthm*}}
\newcommand{\Arrow}     {\rightarrow}
\newcommand{\frakg}     {\mathfrak{g}}
\newcommand{\frakx}     {\mathfrak{X}}
\renewcommand{\G}      {\mathcal{G}}
\newcommand{\grd}      {\mathcal{G}}
\newcommand{\s}        {\mathsf{s}}
\renewcommand{\t}      {\mathsf{t}}
\renewcommand{\u}      {\mathsf{1}}
\newcommand{\m}        {\mathsf{m}}
\newcommand{\ttimes}   {_{\s}\!\times_{\t}}
\newcommand{\inv}        {\mathsf{i}}
\newcommand{\cots}     {\widetilde{\mathsf{s}}}
\newcommand{\cott}     {\widetilde{\mathsf{t}}}
\newcommand{\cotu}     {\widetilde{\mathsf{\u}}}
\newcommand{\M}        {\mathbb{M}}
\newcommand{\bG}       {\mathbb{G}}
\newcommand{\bE}       {\mathbb{E}}
\newcommand{\bA}       {\mathbb{A}}
\newcommand{\bt}       {\mathbbm{t}}
\newcommand{\bT}       {\mathbb{T}}
\newcommand{\E}        {E}
\newcommand{\Rev}      {\mathcal{R}}
\newcommand{\vl}       {{\mathrm{v}}}       
\newcommand{\Lie}        {\mathcal L}
\newcommand{\VB}        {\mathcal {VB}}
\newcommand{\dd}     {d}
\newcommand{\T}      {{\mathcal{T}}}
\renewcommand{\tensor}[2]  {{\wedge^{#1}\, T^*\G \otimes \wedge^{#2} T\G}}
\newcommand{\atensor}[2]   {{\wedge^{#1}\, T^*M \otimes \wedge^{#2} A}}
\newcommand{\U}           {U\vphantom{\xi}}
\newcommand{\X}           {X\vphantom{\xi}}
\newcommand{\calb}       {\mathcal{B}}
\newcommand{\N}          {\mathcal{N}}
\begin{document}

\title[]
{Lie theory of multiplicative tensors}

\author[]{Henrique Bursztyn}
\address{Instituto de Matem\'atica Pura e Aplicada,
Estrada Dona Castorina 110, Rio de Janeiro, 22460-320, Brasil }
\email{henrique@impa.br}

\author[]{Thiago Drummond}

\address{Departamento de Matem\'atica, Instituto de Matem\'atica,
Universidade Federal do Rio de Janeiro,
Caixa Postal 68530, Rio de Janeiro, RJ, 21941-909, Brasil.
}
\email{drummond@im.ufrj.br}

\date{}

\begin{abstract}
We study tensors on Lie groupoids suitably compatible with the groupoid structure, called {\em multiplicative}.
Our main result gives a complete description of these objects only in terms of infinitesimal data. Special cases include the
infinitesimal counterparts of multiplicative forms, multivector fields and holomorphic
structures, obtained through a unifying and conceptual method. We also give a full treatment of
multiplicative vector-valued forms, particularly Nijenhuis operators and related structures.

\end{abstract}

\maketitle


\vspace{-0.4cm}

\section{Introduction}

Lie groupoids permeate several areas of mathematics, including foliations, group actions and Poisson geometry.
In these various contexts, Lie groupoids often come equipped with additional geometric structures, suitably compatible with the
groupoid multiplication; such structures, referred to as {\em multiplicative}, are the main object of interest in this paper.


The study of multiplicative geometric structures on Lie groupoids has by now a long and rich history (see \cite{KosSurv}
for a recent survey). A basic example of multiplicative structure arises in the definition of complex Lie groups, regarded as  real Lie groups endowed with a ``compatible'' complex structure; here, compatibility means that the group multiplication map   is holomorphic. Another class of multiplicative structures on Lie groups arose in the early 80s with the emergence of Poisson-Lie groups, introduced by Drinfel'd \cite{Dr} (later extended to Poisson groupoids by Weinstein \cite{We88}). Around the same time, the first examples of multiplicative differential forms on Lie groupoids came to notice
with the advent of symplectic groupoids \cite{Kar,We87} (and in their connections with the theory of hamiltonian actions
\cite{MiWe} and equivariant cohomology; see \cite{BC,bcwz,xu}). Multiplicative structures now abound in the literature,
where one finds multi-vector fields \cite{ILX,LW,Mac-Xu,Mac-Xu1}, differential forms \cite{AC,bc,BCO,CSS}, contact and Jacobi
structures \cite{CZ,IgMa,KS-B}, holomorphic structures \cite{LMX2,LMX}, as well as distributions, foliations \cite{CSS,DJO,Haw,JO},  among others (e.g. \cite{ETV,lbsev,ortiz}).


Any Lie groupoid has an underlying Lie algebroid, which linearizes it at the units.
As in classical Lie theory, a central issue when considering multiplicative geometric structures is identifying
their infinitesimal versions, i.e., finding their description solely in terms of Lie-algebroid data.
This problem has been studied in numerous settings, through different approaches,
leading to various ``infinitesimal-global'' correspondence results. Examples include the correspondences between
symplectic groupoids and Poisson structures \cite{catfel,CF,Mac-Xu2}, Poisson groupoids and Lie bialgebroids \cite{LW,Mac-Xu,Mac-Xu2},
contact groupoids and Jacobi structures \cite{CS,CZ}, presymplectic groupoids and Dirac structures \cite{bcwz},
complex Lie groupoids and holomorphic Lie algebroids \cite{LMX}, to mention a few
(see also \cite{AC,CX,ILX,StX}). All these results rely on defining a ``Lie functor'', taking global
to infinitesimal objects, and on an ``integration'' step, which reconstructs multiplicative structures from infinitesimal data.


In spite of these various results in specific settings, the theory of multiplicative geometric structures
still lacks a complete treatment. The very notion of ``multiplicativity'' seems to be adapted to each case at hand, and
different techniques have been employed to handle seemingly analogous results. In this paper, we introduce the concept of
{\em multiplicative tensor} on Lie groupoids, which agrees with the existing notions of multiplicativity in
known situations, and devise a general method to obtain their complete infinitesimal description.
As a consequence, all aforementioned ``infinitesimal-global'' correspondences can be naturally derived from our
main result (Theorem~\ref{thm:main}), with a unified proof and conceptual approach, and new applications are obtained.
Although we focus on ordinary tensors, our method adapts to more general contexts (such as the study of 1-cocycles
on VB-groupoids), including tensors with values in representations (up to homotopy), see \cite{DE,Egea}.


\medskip

\noindent{\bf Main results.}
Let $\G \toto M$ be a Lie groupoid with source and target maps $\s$ and $\t$.
We denote its Lie algebroid by $A$, equipped with anchor map $\rho$ and bracket $[\cdot,\cdot]$. Consider a
$(q,p)$-tensor field\footnote{The assumption of skew symmetry lead to some simplifications of the results, but it is by no means essential.} $\tau \in \Gamma(\wedge^p T^*\G \otimes \wedge^q T\G)$.
Let
$$\mathbb{G}:= (\oplus^p T\G) \oplus (\oplus^q T^*\G),
$$
which carries a natural groupoid structure over $\mathbb{M}:=(\oplus^p TM)\oplus(\oplus^q A^*)$ induced from the tangent and cotangent groupoids; see Section~\ref{sec:prelim}. We say that the tensor $\tau$ is {\em multiplicative} if the map
$c_\tau: \mathbb{G} \to \mathbb{R}$,
$$
c_\tau(U_1,\ldots,U_p,\xi_1,\ldots,\xi_q) = \tau(U_1,\ldots,U_p,\xi_1,\ldots,\xi_q),
$$
is a groupoid morphism, where $\mathbb{R}$ is viewed as an abelian group; in other words,
$c_\tau$ is a differentiable {\em 1-cocycle} on $\mathbb{G}$.


To state our main result, consider the action of $\Gamma(A)$ on $\Gamma(\wedge^p T^*M \otimes \wedge^q A)$ by
$$
a\cdot (\beta\otimes\mathfrak{X})=\Lie_{\rho(a)}\beta \otimes \mathfrak{X} + \beta\otimes [a, \mathfrak{X}],
$$
where $[\cdot,\cdot]$ is the Schouten bracket on $\Gamma(\wedge^\bullet A)$.
The following theorem gives a full description of the infinitesimal counterparts of multiplicative tensors:

\begin{theorem}\label{thm:intro}
If $\G \toto M$ is source 1-connected, then there is a natural one-to-one correspondence
between multiplicative $(q,p)$-tensors $\tau$ on $\G$ and triples $(D,l,r)$, where
$l: A \Arrow \wedge^{p-1}T^*M\otimes \wedge^q A$ and $r: T^*M \Arrow \wedge^{p}T^*M\otimes \wedge^{q-1} A$
are vector bundle maps covering the identity, $D: \Gamma(A) \Arrow \Gamma(\wedge^p T^*M \otimes \wedge^q A)$ is an $\R$-linear map
satisfying the Leibniz-type condition
$$
D(fa) = fD(a) + df \wedge l(a) - a \wedge r(df), \;\;\;\; f\in C^\infty(M),\; a\in \Gamma(A),
$$
and the following equations hold: for $a,b \in \Gamma(A)$ and $\alpha,\beta \in \Omega^1(M)$,
\begin{align}
\tag{IM1}  D([a,b])& = a\cdot D(b) - b \cdot D(a),\\
\tag{IM2}  l([a,b])& = a \cdot l(b) - i_{\rho(b)} D(a),\\
\tag{IM3}  r(\Lie_{\rho(a)} \alpha) &= a\cdot r(\alpha) - i_{\rho^*(\alpha)} D(a),\\
\tag{IM4}  i_{\rho(a)}\,l(b) & = - i_{\rho(b)} \,l(a),\\
\tag{IM5}  i_{\rho^*\alpha}\, r(\beta) &= - i_{\rho^*\beta}\, r(\alpha),\\
\tag{IM6}  i_{\rho(a)}\,r(\alpha) & = i_{\rho^*\alpha}\,l(a).
\end{align}
\end{theorem}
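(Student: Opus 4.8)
The plan is to reduce the statement to a correspondence between multiplicative 1-cocycles $c_\tau$ on the iterated tangent/cotangent groupoid $\mathbb{G}$ and their infinitesimal counterparts, and then to unwind what the latter means concretely in terms of the tensorial data $(D,l,r)$. For the first reduction I would invoke the standard correspondence (from classical Lie groupoid cohomology, or from the description of multiplicative functions) between differentiable 1-cocycles $c\colon\mathbb{G}\to\mathbb{R}$ on a source 1-connected Lie groupoid $\mathbb{G}\toto\mathbb{M}$ and their ``infinitesimalizations'': such cocycles are in bijection with Lie algebroid 1-cocycles on $\mathrm{Lie}(\mathbb{G})$ valued in the trivial representation, equivalently with pairs consisting of a bundle map $\mathbb{M}\to\mathbb{R}$ together with the $\mathbb{R}$-linear derivation data on sections. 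Concretely, the cocycle $c$ differentiates to (i) its restriction to units, a function $\mathbb{M}\to\mathbb{R}$, and (ii) the derivative of $c$ along the Lie algebroid of $\mathbb{G}$, and source 1-connectedness guarantees that this assignment is a bijection, the inverse being integration. So the whole problem is: identify $\mathrm{Lie}(\mathbb{G})$ and $\mathbb{M}$ explicitly, and translate the two pieces of infinitesimal data plus the cocycle identity into the objects $(D,l,r)$ and the equations (IM1)--(IM6).

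The key computation, carried out in Section~\ref{sec:prelim}'s language, is the identification of the Lie algebroid of $\mathbb{G}=(\oplus^p T\G)\oplus(\oplus^q T^*\G)$ over $\mathbb{M}=(\oplus^p TM)\oplus(\oplus^q A^*)$. Using that the Lie algebroid of the tangent groupoid $T\G\toto TM$ is $TA\to TM$ (the tangent prolongation $T A$), and that of the cotangent groupoid $T^*\G\toto A^*$ is $T^*A\to A^*$ (via the canonical identification $T^*A\cong T^*A^*$), one gets that $\mathrm{Lie}(\mathbb{G})$ is a fibred product of copies of $TA$ and $T^*A$ over $\mathbb{M}$. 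A section of $A=\mathrm{Lie}(\G)$, a function $f\in C^\infty(M)$, and a 1-form $\alpha\in\Omega^1(M)$ each produce, by the standard prolongation formulas, distinguished sections of these tangent/cotangent prolongation algebroids (the complete and vertical lifts); evaluating the differentiated cocycle on these generating sections yields the three operators: $D$ comes from pairing against the complete lift of $a\in\Gamma(A)$, while $l$ and $r$ come from the vertical-lift directions in the $T\G$- and $T^*\G$-summands respectively, which is exactly why $l$ is a bundle map $A\to\wedge^{p-1}T^*M\otimes\wedge^q A$ and $r$ a bundle map $T^*M\to\wedge^p T^*M\otimes\wedge^{q-1}A$ (lowering $p$ resp.\ $q$ by one by contraction). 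The Leibniz rule $D(fa)=fD(a)+df\wedge l(a)-a\wedge r(df)$ is then precisely the statement that the differentiated cocycle is $C^\infty(M)$-linear ``up to'' the derivative of $f$, i.e.\ it records how the anchor of $\mathrm{Lie}(\mathbb{G})$ acts, with the two correction terms being the contributions of the $T\G$- and $T^*\G$-legs (the sign on the $r$-term coming from the duality pairing on the cotangent side).

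With the dictionary in place, equations (IM1)--(IM6) are obtained by writing out the single Lie-algebroid cocycle identity $\delta c=0$ on all pairs of generating sections of $\mathrm{Lie}(\mathbb{G})$: (IM1) is the cocycle identity on two complete lifts (hence the appearance of the Lie bracket $[a,b]$ and of the action $a\cdot(-)$, which is nothing but the Lie derivative of $\mathrm{Lie}(\G)$-sections acting on $\Gamma(\wedge^p T^*M\otimes\wedge^q A)$); (IM2) and (IM3) are the cocycle identity on a complete lift paired with a vertical lift from the $T\G$- and the $T^*\G$-summand respectively — here the contraction terms $i_{\rho(b)}D(a)$ and $i_{\rho^*(\alpha)}D(a)$ arise because the Lie bracket of a complete lift with a vertical lift in a prolongation algebroid is again a vertical lift determined by a Lie/anchor action; and (IM4), (IM5), (IM6) are the cocycle identity on two vertical lifts, in the two $T\G$-legs, the two $T^*\G$-legs, and one of each, respectively, expressing the (graded) symmetry/compatibility of $l$ and $r$ forced by the skew structure of $\mathbb{M}$. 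Conversely, given $(D,l,r)$ satisfying these, one assembles a Lie-algebroid 1-cocycle on $\mathrm{Lie}(\mathbb{G})$ and integrates it to a multiplicative function on $\mathbb{G}$, which is fibrewise linear in the right way to be $c_\tau$ for a unique $\tau\in\Gamma(\wedge^p T^*\G\otimes\wedge^q T\G)$; multiplicativity of $\tau$ is the statement that $c_\tau$ is a groupoid 1-cocycle, which holds by construction. The main obstacle I anticipate is bookkeeping: correctly identifying $\mathrm{Lie}(\mathbb{G})$ with the appropriate tangent/cotangent prolongation algebroid (including the nontrivial canonical isomorphism $T^*A\cong T^*A^*$ that makes the cotangent groupoid's algebroid transparent), choosing a set of generating sections whose complete/vertical lifts span $\mathrm{Lie}(\mathbb{G})$ fibrewise, and then tracking the signs and contractions so that the six identities come out in exactly the stated form — in particular verifying that $l$ and $r$, a priori defined on sections, are genuinely $C^\infty(M)$-linear (bundle maps), which should follow from evaluating the Leibniz rule and the cocycle identities on $fa$ and $f\alpha$.
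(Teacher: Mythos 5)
Your proposal follows essentially the same route as the paper: regard $\tau$ as the multiplicative function $c_\tau$ on $\bG$, identify $\mathrm{Lie}(\bG)$ with the Whitney sum of tangent and cotangent prolongations of $A$, read off $D$, $l$, $r$ by pairing the infinitesimal cocycle $Ac_\tau$ against the generating sections $(T^pa,\Rev_a^q)$, $\calb a_{(i)}$, $\calb\alpha_{(j)}$, derive (IM1)--(IM6) as the cocycle identity $d_{\bA}(Ac_\tau)=0$ evaluated on pairs of generators, and integrate back using source 1-connectedness. The one step you assert rather than argue is that the multiplicative function obtained by integrating the algebroid cocycle is again componentwise linear and skew-symmetric (so that it is $c_\tau$ for an honest tensor $\tau$); the paper devotes Appendix A to this, via the homogeneity characterization of componentwise linear functions and uniqueness of integration of groupoid morphisms.
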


We refer to the equations (IM1)--(IM6) as {\em cocycle equations}, or {\em IM equations}
(where IM stands for ``infinitesimally multiplicative'').  A more detailed formulation of the
previous result can be found in Theorem~\ref{thm:main} below.

The definition of the ``Lie functor'', taking multiplicative tensors $\tau$ to triples $(D,l,r)$,
relies on a useful characterization of multiplicativity proven in Theorem~\ref{thm:mult_char};
it says, in particular, that for any $\alpha\in \Omega^1(M)$, $a\in \Gamma(A)$ and $\overrightarrow{a}$ the
corresponding right-invariant vector field on $\G$, the tensors $i_{\overrightarrow{a}} \tau$, $i_{\t^*\alpha}\,\tau$,
and $\Lie_{\overrightarrow{a}}\tau$ lie in the image of the map
$$
\T: \Gamma(\wedge^\bullet T^*M\otimes \wedge^\bullet A)\to \Gamma(\wedge^\bullet T^*\G \otimes \wedge^\bullet T\G),
$$
defined on homogeneous elements by $\T(\alpha\otimes \mathfrak{X}) = \t^*\alpha\otimes \overrightarrow{\mathfrak{X}}$.
The maps $D$, $l$, and $r$ arise from the equations
\begin{equation*}
i_{\overrightarrow{a}} \tau = \T(l(a)),\;\;
i_{\t^*\alpha}\,\tau=  \T(r(\alpha)),\;\;
\Lie_{\overrightarrow{a}} \tau =  \T(D(a)).
\end{equation*}
The IM-equations, combined with the 1-connectedness of the source fibers of $\G$,
permit the reconstruction of $\tau$ out of $(D,l,r)$.
Theorem \ref{thm:intro}, when restricted to tensors of types $(0,p)$ or $(q,0)$, directly recovers
the infinitesimal descriptions of multiplicative differential forms and multivector fields proven in \cite{AC,BC,ILX},
but using other methods. For $(1,1)$-tensors, it encompasses the correspondence of complex Lie groupoids and
holomorphic Lie algebroids of \cite{LMX}. In these special cases, the operator $D$ takes different guises,
codifying  $k$-differentials \cite{ILX}, IM-forms \cite{BC} (or the ``Spencer operators'' of \cite{CSS}),
or flat partial connections defining holomorphic structures.

The content of Theorem~\ref{thm:intro} is discussed in Section~\ref{sec:multiplicative},
and its proof is presented in Section~\ref{sec:proofs}, heavily based on our viewpoint to multiplicative tensors
$\tau$ on $\G$ as multiplicative {\em functions} $c_\tau$ on the  ``big'' Lie groupoid $\mathbb{G}\rightrightarrows \mathbb{M}$.
Multiplicative functions are simple to describe infinitesimally: they correspond to Lie-algebroid 1-cocycles,
i.e., sections of the dual of the Lie algebroid which are closed under the Lie-algebroid differential.
So the proof follows from a detailed analysis of 1-cocycles of the Lie algebroid $\mathbb{A}$ of $\mathbb{G}$.
The key fact that $\mathbb{A}\to \mathbb{M}$ has a natural {\em VB-algebroid} structure over $A\to M$ allows us to
identify a special set of generators of the $C^\infty(\mathbb{M})$-module $\Gamma(\mathbb{A})$, parametrized by
$\Gamma(A)$ and $\Omega^1(M)$. We use these generators to describe 1-cocycles of $\mathbb{A}$ by means of triples $(D,l,r)$
as in Theorem~\ref{thm:intro}, and we resort to classical lifting operations to realize the cocycle
condition as the equations (IM1)--(IM6).

In Section~\ref{sec:vectorvforms}, we specialize our main result to the case of $(1,p)$-tensors, i.e.,
{\em multiplicative vector-valued forms}. As observed in \cite{bd}, the usual Fr\"olicher-Nijenhuis bracket
preserves the multiplicativity condition, so it makes  the space of multiplicative vector-valued forms into a graded Lie algebra.
 One of our key results is the identification of the corresponding graded Lie algebra at the infinitesimal level, in Prop.~\ref{prop:compFN}. In Section~\ref{1_1}, we focus on multiplicative vector-valued 1-forms, i.e., (1,1)-tensors. In this case, we obtain an
explicit infinitesimal description of their Nijenhuis torsions (Cor.~\ref{cor:nij}), which
provides a broader viewpoint to the results in \cite{LMX,StX} concerning
multiplicative (almost) complex structures and Poisson (quasi-)
Nijenhuis structures. We also treat multiplicative projections and product structures, interpreting their infinitesimal versions
in terms of matched pairs.

\smallskip

\noindent{\bf Acknowledgments}: We thank CNPq, Capes and Faperj for financial support.
We are grateful to several institutions for hosting us during various stages of this project,
including IST (Lisbon), Utrecht University, ESI (Vienna) and the Fields Institute.
Special thanks go to Y. Kosmann-Schwarzbach for her comments and interest. We also thank A. Cabrera, M. Crainic, M. del Hoyo, L. Egea, C. Ortiz, L. Vitagliano and M. Zambon for helpful advice; we are especially indebted to Noah Kieserman for his collaboration in the early stages of this project.

\tableofcontents

\section{Preliminaries} \label{sec:prelim}
This section reviews some preliminary material, including tangent and cotangent Lie groupoids; see e.g. \cite{Mac-book, Mac-Xu}. We also discuss a convenient viewpoint to classical tensor fields, regarded as real-valued functions on Whitney sums of vector bundles.


\subsection{Tangent Lie groupoids}

Let $\grd \toto M$ be a Lie groupoid. We use the following notation:
$\s ,\t : \grd \Arrow M$ are the source and target maps, $\u: M
\Arrow \grd$ is the unit map, $\inv: \G \Arrow \G$ is the inversion
map, and $\m: \grd\,_{\s}\!\times_{\t} \grd \Arrow \grd$ is the
multiplication map. We will often identify $M$ with its image under
the unit map.

The \textit{tangent groupoid} of $\grd$ is the Lie groupoid
$T\G\toto TM$ whose structural maps are all obtained by taking the
derivatives of the structural maps of $\grd$; e.g., its
multiplication map is $T\m: T\grd\,_{T\s}\!\times_{T\t}T\grd \Arrow
T\grd$, where we have identified $T(\grd\, _{\s}\!\times_{\t}\grd)
\cong T\grd\,_{T\s}\!\times_{T\t} T\grd$. We shall denote the
multiplication on $T\G$ by $\bullet$.


We denote the Lie algebroid of $\grd$ by $A \Arrow M$, or $A\grd$
if there is any risk of confusion. We identify $A$ with $\ker(T\s)|_M$,
so the Lie bracket on $\Gamma(A)$ is
induced by right-invariant vector fields on $\G$, and the anchor
$\rho: A\to TM$ is given by $T\t|_{A}$. For $a\in \Gamma(A)$,
we denote by $\overrightarrow{a} \in \mathfrak{X}(\G)$ the
corresponding right-invariant vector field, and by
$\overleftarrow{a}$ the left-invariant vector field induced by
$a-\rho(a)\in \Gamma(\ker(T\t)|_{M})$.

Note that each section $a\in \Gamma(A)$ defines a bisection $\calb
a: TM \Arrow T\G$ of $T\G \toto TM$,
\begin{equation}\label{trans_bis}
\mathcal{B}a(X) = T\u(X) + a(x),  
\end{equation}
for $X \in T_xM$, covering the map $TM\to TM$, $X \mapsto X + \rho(a)(x)$. This bisection splits
the exact sequence
\begin{equation}\label{s_seq}
0 \longrightarrow A \hookrightarrow \u^*T\G \stackrel{T\s}
\longrightarrow TM \longrightarrow 0.
\end{equation}
We refer to $\calb a$ as the \textit{translation bisection}
associated with $a$.

\subsection{Cotangent Lie groupoids}

The cotangent bundle of a Lie groupoid $\G \toto M$ also carries a
natural Lie groupoid structure, $T^*\G\toto A^*$, where $A^*$ is
the dual vector bundle to $A$. The source and target maps $\cots,
\widetilde{\t}: T^*\G \Arrow A^*$ are defined by the restriction of
covectors to the subspaces tangent to the $\s$- and $\t$-fibers,
respectively:
\begin{equation}\label{eq:cotst}
\<\cots(\xi_g), a \> =  \<\xi_g,  \overleftarrow{a}(g) \> \text{\;
and \;} \< \cott(\xi_g), b \> = \< \xi_g, \overrightarrow{b}(g) \>,
\end{equation}
for $\xi_g \in T^*_g \G, \, a \in A_{\s(g)}, b \in A_{\t(g)}$.
The unit map $ \cotu:A^* \Arrow T^*\G $ is the vector bundle
morphism covering $\u: M \Arrow \G$, determined by
\begin{equation*}
\< \cotu_{\varphi}, T1(X) + a\> = \<\varphi, a\>,
\end{equation*}
for  $(X, \varphi, a) \in TM \times_M A^* \times_M A$.

The multiplication on $T^*\G$ is defined as follows: for $\xi_1 \in
T_{g_1}^*\G$ and $\xi_2 \in T_{g_2}^*\G$ such that
$\cots(\xi_1)=\cott(\xi_2)$, their product $\xi_1 \bullet \xi_2 \in
T_{g_1g_2}^*\G$ is determined by
\begin{equation}\label{m:cotang}
\< \xi_1 \bullet \xi_2, U_1\bullet U_2 \> = \xi_1(U_1) + \xi_2(U_2),
\end{equation}
for composable $U_1 \in T_{g_1}\G, U_2 \in T_{g_2}\G$.

The source map $\cots$ fits into the following exact sequence of
vector bundles over $M$:
\begin{equation}\label{s_dual_seq}
0 \longrightarrow T^*M \stackrel{(T\t)^*}\hookrightarrow \u^*T^*\G  \stackrel{\cots} \longrightarrow A^* \longrightarrow 0.
\end{equation}
Given a differential 1-form $\alpha \in \Omega^1(M)$ on $M$, the map
$\calb \alpha: A^* \Arrow \u^*T^*\G$ given by
\begin{equation}\label{core_dual}
\< \calb \alpha(\varphi), T1(X) + a \> = \<\alpha(x), \rho(a) + X\> + \<\varphi, a\>,
\end{equation}
for $\varphi \in A_x^*, \, X \in T_xM, \, a\in A_x$, provides a
splitting of the sequence \eqref{s_dual_seq}. It is a bisection of
$T^*\G\toto A^*$ covering
$
\varphi \mapsto \varphi + \rho^*(\alpha(x)), \,\, \varphi \in A_x^*.
$
We call $\calb \alpha$ the \textit{translation bisection}
corresponding to $\alpha$.

\subsection{Whitney sums}

Tangent and cotangent groupoids satisfy the property that their
Whitney sums as vector bundles again carry natural Lie groupoid
structures, defined componentwise\footnote{This property holds, more
generally, for {\em $\VB$-groupoids}, of which tangent and cotangent
groupoids are special cases (see e.g. \cite{BCdH})}. In this paper, we will be interested
in Lie groupoids of the form $\bG^{(p,q)} \toto \mathbb{M}^{(p,q)}$,
where
\begin{equation}\label{big_group}
\bG^{(p,q)} = (\oplus^p T\G) \oplus (\oplus^q T^*\G) \,\,\, \text{
and } \,\,\,  \M^{(p,q)} = (\oplus^p TM) \oplus (\oplus^q A^*),
\end{equation}
for positive integers $p$ and $q$.  When there is no risk of
confusion, we omit the indices $(q,p)$ and write just $\bG \toto
\mathbb{M}$. We denote the source and target maps by $\mathbbm{s},
\mathbbm{t}: \bG \Arrow \mathbb{M}$ and the unit map by
$\mathbbm{1}: \mathbb{M} \Arrow \mathbb{G}$. We keep the notation
$\bullet$ for the multiplication.

We shall denote by $(\underline{\X}, \underline{\varphi})$ and $(\underline{\U},
\underline{\xi})$ the elements
$(X_1, \dots, X_p, \varphi_1, \dots, \varphi_q) \in \mathbb{M}$ and
$(U_1, \dots, U_p, \xi_1, \dots, \xi_q) \in \bG$, respectively.

\begin{remark}\label{rem:fibers}
An important observation is that each source-fiber of $\bG$ is an affine bundle
over a source-fiber of $\G$, see e.g. \cite[Rem.~3.1.1(a)]{BCdH}; it follows that
$\bG$ is source connected, or source 1-connected, if and only if so is $\G$.
\end{remark}

\subsection{Functions on the Whitney sum of vector bundles}\label{app:component}
A key viewpoint pursued in this work is that tensor fields should be regarded as functions on the Whitney
sum of the tangent and cotangent bundles. We now explain this point of view from a general perspective.
Let $\E_1, \dots, \E_p$ be vector bundles over $N$ and consider their Whitney sum $\pi: E_1 \oplus \dots \oplus E_p \to N$.

\begin{definition}\label{comp_linear}
A function $F: \E_1 \oplus \dots \oplus \E_p \Arrow \R$ is said to be {\em componentwise linear}
if $F_y: (\E_1)_y \times \dots \times (\E_p)_y \Arrow \R$ is multi-linear, for each $y \in N$.
\end{definition}

\begin{example}
For $p=1$, componentwise linear functions on a vector bundle $E\Arrow N$ are  fiberwise linear functions,
i.e., those of the form $\ell_{\mu}$, where
$$
\ell_{\mu}(e) := \<\mu(y), e\>, \,\, \forall \,e \in E_y,
$$
for $\mu \in \Gamma(\E^*)$.
\end{example}

Every tensor field $\tau \in \Gamma(\E_1^*\otimes\dots \otimes \E_p^*)$ defines a componentwise linear
function $c_{\tau}: \E_1\oplus\dots \oplus\E_p \Arrow \R$ by pulling back the linear function
$\ell_{\tau}: \E_1\otimes\dots \otimes \E_p \Arrow \R$ by the natural map
$\E_1\oplus \dots \oplus\E_p \Arrow \E_1\otimes\dots \otimes \E_p$. The letter ``$c$'' in
our notation stands for ``componentwise'', and it is used to distinguish $c_{\tau}$ from the linear functions on
$\E_1\oplus \dots\oplus\E_p$ defined by sections of its dual.
In case $\tau= \mu_1\otimes \dots \otimes \mu_p$, for $\mu_i \in \Gamma(\E_i^*)$, $i=1,\dots, p$,
$$
c_{\tau}= \ell_{\mu_1} \circ \pr^1 \dots \ell_{\mu_p} \circ \pr^p,
$$
where $\pr^j: E_1 \oplus \dots \oplus E_p \to E_j$ is the projection on the $j$-th component.

The next result is a direct consequence of the properties of tensor products.
\begin{lemma}\label{lemma:comp_tensor}
The map $\tau  \mapsto c_{\tau}$ defines a bijection between $\Gamma(\E_1^*\otimes\dots \otimes \E_p^*)$
and the space of componentwise linear functions on $\E_1\oplus \dots \oplus \E_p$ satisfying
$$
c_{f \tau} = (f \circ \pi) \, c_\tau, \,\, \, \forall \, f \in C^{\infty}(N),
$$
where $\pi: E_1 \oplus \dots \oplus E_p \to N$ is the natural projection.
\end{lemma}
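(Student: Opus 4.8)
The plan is to establish the bijection by exhibiting the inverse map and checking compatibility with the multiplication by smooth functions, reducing everything to the universal property of tensor products applied fiberwise and then patched smoothly. First I would observe that the assignment $\tau \mapsto c_\tau$ is well defined: the linear function $\ell_\tau$ on $\E_1 \otimes \cdots \otimes \E_p$ is smooth since $\tau$ is a smooth section of the dual bundle, and the diagonal-type bundle map $\E_1 \oplus \cdots \oplus \E_p \to \E_1 \otimes \cdots \otimes \E_p$ (sending $(e_1,\dots,e_p)$ to $e_1 \otimes \cdots \otimes e_p$ in each fiber) is smooth, so the pullback $c_\tau$ is a smooth componentwise-linear function. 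The identity $c_{f\tau} = (f\circ\pi)\,c_\tau$ is immediate from $\ell_{f\tau} = (f\circ\pi)\,\ell_\tau$ and the fact that the diagonal map covers $\id_N$.

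Next I would construct the inverse. Given a componentwise-linear $F$ satisfying $c_{f\tau}$-type homogeneity, for each $y \in N$ the restriction $F_y \colon (\E_1)_y \times \cdots \times (\E_p)_y \to \R$ is multilinear, so by the universal property of the (algebraic) tensor product of the fibers it factors uniquely through a linear functional on $(\E_1)_y \otimes \cdots \otimes (\E_p)_y = (\E_1 \otimes \cdots \otimes \E_p)_y$, i.e. it determines a unique element $\tau(y) \in (\E_1^* \otimes \cdots \otimes \E_p^*)_y$. This defines a section $\tau$ of $\E_1^* \otimes \cdots \otimes \E_p^*$ with $c_\tau = F$ by construction, and one checks that $\tau$ is the unique such section. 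It remains to verify smoothness of $\tau$: this is a local question, so over a trivializing open set $U \subseteq N$ choose local frames $\{e_\alpha^{(i)}\}$ of each $\E_i$; then the component functions of $\tau$ are $F$ evaluated on the (constant-in-the-frame) tuples of frame sections, hence smooth since $F$ is smooth. The homogeneity condition $c_{f\tau} = (f\circ\pi)c_\tau$ is then automatic for the $\tau$ so produced, because both sides agree after restriction to each fiber by multilinearity.

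Finally I would note that the two constructions are mutually inverse — starting from $\tau$, forming $c_\tau$, and recovering a section gives back $\tau$ by uniqueness of the fiberwise factorization; starting from $F$ and forming $c_\tau$ returns $F$ by construction — so the correspondence is a bijection onto the stated subspace, and it is moreover $C^\infty(N)$-linear in the evident sense. The only mildly delicate point is the smoothness of the section $\tau$ recovered from $F$; I expect this to be the main (though routine) obstacle, and it is handled by the local-frame computation above, using that evaluation of a smooth componentwise-linear function on smooth local sections is smooth. Everything else is a direct transcription of the universal property of tensor products.
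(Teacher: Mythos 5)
Your proposal is correct and follows exactly the route the paper has in mind: the paper gives no written proof, stating only that the lemma is ``a direct consequence of the properties of tensor products,'' and your argument --- fiberwise universal property of the tensor product for the inverse map, local frames for smoothness of the recovered section, and the evident $C^\infty(N)$-linearity --- is precisely the standard elaboration of that remark. No gaps.
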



When $\E_{1} = \dots = \E_{p}=\E$, we say that a function $F \in \C(\oplus_{i=1}^p \E)$ is \textit{skew-symmetric} if
$
F(e_{\sigma(1)}, \dots, e_{\sigma(p)}) = sgn(\sigma) F(e_1, \dots, e_p),
$
for every permutation $\sigma \in S(p)$. Under the correspondence of Lemma \ref{lemma:comp_tensor}, the componentwise
linear functions on $\oplus^p \E$ which are skew-symmetric correspond to $\Gamma(\wedge^{p} \E^*)$.
For $\tau = \mu_1 \wedge \dots \wedge \mu_p$, with $\mu_i \in \Gamma(E^*)$,
\begin{equation}\label{skew_comp}
c_{\tau} =  \sum_{\sigma \in S(p)} sgn(\sigma) (\ell_{\mu_{\sigma(1)}} \circ \pr^{1})\cdots(\ell_{\mu_{\sigma(p)}}. \circ \pr^{p}).
\end{equation}

There is a natural projection $\C(\oplus^p \E) \Arrow \C(\oplus^p \E)$ on the space of skew-symmetric functions, defined by
\begin{equation}\label{anti_projection}
F \mapsto \frac{1}{p!} \sum_{\sigma \in S(p)} sgn(\sigma) \, F\circ \sigma,
\end{equation}
where $\sigma: \oplus^p \E \Arrow \oplus^p \E$ is given by $\sigma(e_1, \dots, e_p) = (e_{\sigma(1)}, \dots, e_{\sigma(p)})$.

\begin{remark}
Functions which are skew-symmetric only on some components can be defined similarly by considering $E_1=\dots = E_{p'}=E$, $p' < p$.
Extending the previous observations to this case is straightforward.
\end{remark}

\section{Multiplicative tensors}\label{sec:multiplicative}

In this section, we introduce our main object of study, multiplicative tensors, and state our main theorem, which
gives their full infinitesimal description. As we follow the idea of regarding tensors as functions on Whitney sums
of the tangent and cotangent bundles, we start by discussing multiplicative functions in general.

\subsection{Multiplicative functions}\label{subsec:functions}

Let $\H \toto N$ be a Lie groupoid with Lie algebroid $A\H$.

\begin{definition}
A smooth function $f \in C^{\infty}(\H)$ is said to be {\em multiplicative} if
\begin{equation}\label{mult:func}
f(h_1 h_2) = f(h_1) + f(h_2), \,\, \forall \, (h_1,h_2) \in \H\ttimes \H.
\end{equation}
\end{definition}

In other words, a multiplicative function $f: \H \Arrow \mathbb{R}$
is a groupoid morphism from $\H \toto N$ to the abelian Lie group
$\R$. As such, it defines a Lie-algebroid morphism $Af:A\H \Arrow \R$ given by
$
\<Af,\, \chi\> = df(\chi),
$
for $\chi \in A_y \H$ and $y \in N$;
equivalently,
\begin{equation}\label{eq:cocycle}
\<Af, \chi(y)\> = \left(\Lie_{\overrightarrow{\chi}}f\right)(y),
\end{equation}
for $\chi \in \Gamma(A\H)$. When we view $Af$ as a section of the dual bundle $A^*\H$, the
condition for $Af$ to be a Lie-algebroid morphism is expressed by
the cocycle equation
$$
d_A (Af) = 0,
$$
where $d_A: \Gamma(\wedge^\bullet A^*\H)\to \Gamma(\wedge^{\bullet +
1}A^*\H)$ is the Lie-algebroid differential.

\begin{example}\label{ex:multf}
The function $f= \t^*\psi - \s^*\psi$ is multiplicative, for any
$\psi \in \C(N)$. The associated cocycle $Af$ is $d_A\psi \in
\Gamma(A^*\H)$, which is exact, i.e., a coboundary.
\end{example}

\begin{example}\label{linear_func}
Let $\pi:\E \Arrow N$ be a vector bundle, viewed as a Lie groupoid. A function
$f \in \C(\E)$ is multiplicative if and only if it is
fiberwise linear. Let $\mu \in \Gamma(\E^*)$ be such
that $f = \ell_{\mu}$. The cocycle $Af \in \Gamma(A^*\E)
= \Gamma(\E^*)$ is given by
$$
\<Af, e\> = 
df\left(\left.\frac{d}{d\epsilon}\right|_{\epsilon=0} \epsilon \,e
\right) = \left.\frac{d}{d\epsilon}\right|_{\epsilon=0} \< \mu,
\epsilon\,e\> = \< \mu, e\>, \,\,\,\;\; \forall \, e \in \E.
$$
So $Af= \mu$ (which agrees with $f$ itself, if seen as a function
$\E \Arrow \R$). The equation $d_A\mu=0$ is trivially satisfied
since $d_A=0$ in this case.
\end{example}

The next result gives a useful formula relating the cocycle $Af$ and
the function $f$.

\begin{lemma}
For a multiplicative function $f: \H \Arrow \R$, its corresponding cocycle $Af \in \Gamma(A^*\H)$ satisfies
\begin{equation}\label{func:eq}
\Lie_{\overrightarrow{\chi}} f = \t^*\<Af,\,\chi\>, \,\;\;\; \forall  \, \chi \in \Gamma(A\H).
\end{equation}
\end{lemma}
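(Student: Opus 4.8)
The plan is to verify the identity \eqref{func:eq} pointwise, reducing it to the definition \eqref{eq:cocycle} of the cocycle $Af$ together with the multiplicativity relation \eqref{mult:func}. Fix $\chi\in\Gamma(A\H)$ and a point $g\in\H$; I want to show $(\Lie_{\overrightarrow{\chi}}f)(g) = \<Af,\chi(\t(g))\>$. The left-hand side is computed by differentiating $f$ along the right-invariant vector field $\overrightarrow{\chi}$, whose flow is right translation by the flow of $\overrightarrow{\chi}$ near the units. Concretely, if $\phi_\epsilon$ denotes the flow of $\overrightarrow{\chi}$, then $\phi_\epsilon(g) = \phi_\epsilon(\u(\t(g)))\cdot g$ for small $\epsilon$, since $\overrightarrow{\chi}$ is right-invariant and $\t(g) = \t(\phi_\epsilon(g))$ is preserved. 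This is the key geometric input: right-invariant vector fields integrate to left multiplications by elements of the source fiber through the units.

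Granting this, I would compute
$$
(\Lie_{\overrightarrow{\chi}}f)(g) = \left.\frac{d}{d\epsilon}\right|_{\epsilon=0} f(\phi_\epsilon(g)) = \left.\frac{d}{d\epsilon}\right|_{\epsilon=0} f\bigl(\phi_\epsilon(\u(\t(g)))\cdot g\bigr).
$$
Now apply multiplicativity \eqref{mult:func}: $f\bigl(\phi_\epsilon(\u(\t(g)))\cdot g\bigr) = f\bigl(\phi_\epsilon(\u(\t(g)))\bigr) + f(g)$. The term $f(g)$ is constant in $\epsilon$, so it drops out upon differentiation, leaving
$$
(\Lie_{\overrightarrow{\chi}}f)(g) = \left.\frac{d}{d\epsilon}\right|_{\epsilon=0} f\bigl(\phi_\epsilon(\u(\t(g)))\bigr) = \bigl(\Lie_{\overrightarrow{\chi}}f\bigr)(\u(\t(g))) = \<Af,\chi(\t(g))\>,
$$
where the last equality is precisely \eqref{eq:cocycle} evaluated at the unit $\u(\t(g))$, i.e. at the point $\t(g)\in N$. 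Since $g$ was arbitrary, this gives $\Lie_{\overrightarrow{\chi}}f = \t^*\<Af,\chi\>$ as functions on $\H$.

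The main obstacle — really the only nontrivial point — is justifying the flow identity $\phi_\epsilon(g) = \phi_\epsilon(\u(\t(g)))\cdot g$, or more precisely handling the fact that the curve $\epsilon\mapsto \phi_\epsilon(\u(\t(g)))$ need not stay in a neighborhood where everything is a product of units times $g$ for a uniform time interval; but since we only differentiate at $\epsilon=0$ this is not an issue, and one can equally well argue infinitesimally: $\overrightarrow{\chi}(g) = \bigl(T\m\bigr)\bigl(\overrightarrow{\chi}(\u(\t(g))), 0_g\bigr)$ by the very definition of right-invariance, so $df_g(\overrightarrow{\chi}(g))$ can be split using $d(f\circ\m) = \pr_1^* df + \pr_2^* df$ on composable pairs (which is the differentiated form of \eqref{mult:func}). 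Feeding in the second slot the zero vector $0_g$ kills the $f(g)$ contribution and leaves $df_{\u(\t(g))}(\overrightarrow{\chi}(\u(\t(g)))) = \<Af,\chi\>(\t(g))$. Either route is short; I would present the infinitesimal version to avoid any discussion of flow domains.
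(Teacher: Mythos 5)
Your proposal is correct, and the infinitesimal version you say you would present is essentially the paper's own proof: the paper writes $\overrightarrow{\chi}(h)=\chi(\t(h))\bullet 0_h$ and applies the differentiated multiplicativity $df(U\bullet V)=df(U)+df(V)$, which is exactly your splitting via $T\m$ with $0_g$ in the second slot. The flow-based variant is also fine, but adds nothing over the infinitesimal argument.
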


\begin{proof}
First, by differentiating equation \eqref{mult:func}, one sees
that
$$
df(U \bullet V) = df(U) + df(V), \,\, \forall (U,V) \in \,T\H_{\,T\s\!}\times_{T\t} T\H.
$$
Also, $\overrightarrow{\chi}(h) = \chi(\t(h))\bullet 0_h$ (where
$0_h$ is the zero vector field on $\H$ at $h$). Hence
$$
(\Lie_{\overrightarrow{\chi}} f)(h) = df(\overrightarrow{\chi}(h)) =
df(\chi(\t(h))\bullet 0_h) = df(\chi(\t(h))) = \<Af, \,\chi\>(\t(h)),
$$
as we wanted.
\end{proof}

It turns out that when $\H$ is source connected, equation
\eqref{func:eq} essentially characterizes multiplicative functions:

\begin{proposition}\label{func:prop}
Let $\H \rightrightarrows N$ be a source-connected Lie groupoid. A
function $f \in C^{\infty}(\H)$ is multiplicative if and only if
$f|_N = 0$ and there exists $\mu\in\Gamma(A^*\H)$ such that
\begin{equation}\label{func:eq2}
\Lie_{\overrightarrow{\chi}} f = \t^* \<\mu, \chi\>, \,\;\;\; \forall \, \chi \in \Gamma(A\H).
\end{equation}
In this case, $Af = \mu$.
\end{proposition}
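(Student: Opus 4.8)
The plan is to prove both implications, with the ``if'' direction being the substantive one.

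\medskip

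\noindent\emph{The easy direction.} If $f$ is multiplicative, then $f|_N = 0$ follows immediately from setting $h_1 = h_2 = 1_y$ in \eqref{mult:func} (since $1_y 1_y = 1_y$ forces $f(1_y) = 0$). The existence of $\mu$ satisfying \eqref{func:eq2} is exactly the content of the preceding Lemma, with $\mu = Af$.

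\medskip

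\noindent\emph{The hard direction.} Suppose $f|_N = 0$ and \eqref{func:eq2} holds for some $\mu \in \Gamma(A^*\H)$. I want to show $f$ is multiplicative. The key idea is that multiplicativity is equivalent to $f$ vanishing on the submanifold of composable pairs under the ``difference'' map, and I will verify this by a connectedness argument along source fibers. Concretely, fix a composable pair $(h_1, h_2)$ and consider the function along the source fiber $\s^{-1}(\s(h_1))$ given by $g \mapsto \phi(g) := f(g h_2) - f(g) - f(h_2)$, well-defined for $g$ with $\s(g) = \t(h_2)$; note this is a function on $\s^{-1}(\t(h_2))$, which is connected by hypothesis, and $\phi(1_{\t(h_2)}) = f(h_2) - 0 - f(h_2) = 0$ using $f|_N = 0$. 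So it suffices to show $\phi$ is locally constant on this source fiber, i.e. that its differential vanishes in the source-fiber directions. The tangent space to the source fiber at $g$ is spanned by values of right-invariant vector fields $\overrightarrow{\chi}$, $\chi \in \Gamma(A\H)$, so I need $(\Lie_{\overrightarrow{\chi}} \phi)(g) = 0$ for all such $\chi$. Since right translation by $h_2$ sends $\s^{-1}(\t(h_2))$ to $\s^{-1}(\s(h_2))$ and intertwines right-invariant vector fields (right-invariant vector fields are, by definition, invariant under right translations $R_{h_2}$), one computes $\Lie_{\overrightarrow{\chi}}(g \mapsto f(g h_2)) = (\Lie_{\overrightarrow{\chi}} f)(g h_2)$, which by \eqref{func:eq2} equals $\t^*\langle \mu, \chi\rangle(g h_2) = \langle \mu, \chi\rangle(\t(g h_2)) = \langle \mu, \chi\rangle(\t(g))$; similarly $\Lie_{\overrightarrow{\chi}}(g \mapsto f(g)) = \langle\mu,\chi\rangle(\t(g))$, and $\Lie_{\overrightarrow{\chi}}(g \mapsto f(h_2)) = 0$. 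Hence $\Lie_{\overrightarrow{\chi}} \phi \equiv 0$ along the source fiber, so $\phi$ is locally constant, hence identically $0$ by connectedness and $\phi(1_{\t(h_2)}) = 0$. Evaluating at $g = h_1$ gives \eqref{mult:func}.

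\medskip

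\noindent\emph{Conclusion and the last assertion.} Once multiplicativity is established, the final claim $Af = \mu$ follows from comparing \eqref{func:eq} (which now applies) with \eqref{func:eq2}: both say $\Lie_{\overrightarrow{\chi}} f = \t^*\langle\, \cdot\, , \chi\rangle$ for all $\chi$, and since $\t$ is a surjective submersion, $\t^*$ is injective on sections, forcing $\langle Af, \chi\rangle = \langle \mu, \chi\rangle$ for all $\chi$, i.e. $Af = \mu$.

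\medskip

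The main obstacle is the bookkeeping in the hard direction: making sure that the right-translation $R_{h_2}$ genuinely intertwines the relevant vector fields and that $\phi$ is a smooth function on the \emph{whole} source fiber $\s^{-1}(\t(h_2))$ (not just locally), so that source-connectedness can be invoked to pass from ``locally constant'' to ``constant''. One should also be slightly careful that the source fibers are connected manifolds (as assumed) and that $h_1$ indeed lies in the fiber $\s^{-1}(\t(h_2))$ since $(h_1,h_2)$ composable means $\s(h_1) = \t(h_2)$ — so the evaluation step is legitimate.
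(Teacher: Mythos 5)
Your proof is correct, but it takes a genuinely different route from the paper's. The paper handles the ``if'' direction by first verifying that $\mu$ is a Lie-algebroid $1$-cocycle ($d_A\mu=0$), then invoking the integration of cocycles to a multiplicative function $f_\mu$ on the source $1$-connected integration $\widetilde{\H}$, and finally comparing $\sigma^*f$ with $f_\mu$ along source fibers for the covering morphism $\sigma:\widetilde{\H}\to\H$. You instead argue directly on $\H$: for a fixed $h_2$ you show that $\phi(g)=f(gh_2)-f(g)-f(h_2)$ has vanishing derivative along $\s^{-1}(\t(h_2))$, because $\overrightarrow{\chi}$ is tangent to source fibers, is intertwined by $R_{h_2}$, and $\Lie_{\overrightarrow{\chi}}f=\t^*\<\mu,\chi\>$ is constant along $\t$-fibers (the crucial point, since $\t(gh_2)=\t(g)$); source-connectedness and $f|_N=0$ then force $\phi\equiv 0$. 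This is more elementary and self-contained: it never needs the cocycle identity for $\mu$, nor the existence of the source $1$-connected integration, nor the integrability of Lie-algebroid cocycles to multiplicative functions. What the paper's route buys is the explicit verification that $d_A\mu=0$, which is conceptually central to the rest of the paper (multiplicative functions correspond to algebroid cocycles) and is reused later; in your approach that identity would still follow a posteriori from multiplicativity, but it is not exhibited along the way. Your treatment of the easy direction and of the final assertion $Af=\mu$ matches the paper's.
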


\begin{proof}
If $f$ is multiplicative, then $f|_N=0$ as a consequence of
\eqref{mult:func}. Also, setting $\mu=Af$,  \eqref{func:eq2} follows
from Lemma \ref{func:eq}.

In the other direction, fix $\chi_1, \chi_2 \in \Gamma(A\H)$. Then
$$
\t^*\<\mu, [\chi_1, \chi_2]\> = \Lie_{[\overrightarrow{\chi_1},
\overrightarrow{\chi_2}]} f = \Lie_{\overrightarrow{\chi_1}}
\Lie_{\overrightarrow{\chi_2}} f - \Lie_{\overrightarrow{\chi_2}}
\Lie_{\overrightarrow{\chi_1}} f = \t^*(\Lie_{\rho(\chi_1)}
\mu(\chi_2) - \Lie_{\rho(\chi_2)} \mu(\chi_1)),
$$
which implies that
$$
d_A\mu(\chi_1, \chi_2) = \Lie_{\rho(\chi_1)}\mu(\chi_2) -
\Lie_{\rho(\chi_2)}\mu(\chi_1) - \mu([\chi_1, \chi_2]) = 0.
$$
So $\mu$ is a Lie-algebroid cocycle. If $\widetilde{\H}$ is a source
1-connected integration of $A\H$, then there exists a multiplicative
function $f_{\mu} \in C^{\infty}(\widetilde{\H})$ such that
$Af_{\mu}=\mu$.  The Lie groupoids $\H$ and $\widetilde{\H}$
correspond to the same Lie algebroid, so they are related by a
groupoid map $\sigma: \widetilde{\H}\to \H$ which is a local
diffeomorphism (and restricts to the identity map on $N$). To check
that $f$ is multiplicative, it is enough to check that $\sigma^*f$ is
multiplicative. We will verify that $\sigma^*f = f_\mu$.

By \eqref{func:eq}, since $\sigma$ is a groupoid morphism (hence commutes
with structure maps and preserves invariant vector fields), one has
that
$$
\Lie_{\overrightarrow{\chi}} \sigma^*f = \sigma^* \Lie_{\overrightarrow{\chi}}
f = \sigma^* \t^*\<\mu,\chi\> = \t^*\<Af_{\mu}, \chi\> =
\Lie_{\overrightarrow{\chi}} f_{\mu},
$$
where we have kept the same notation for the structure maps on
$\widetilde{\H}$ and $\H$. Since $\widetilde{\H}$ has source-connected fibers, it follows that $\sigma^*f-f_{\mu}$ is constant along
the $\s$-fibers. Finally, since $(\sigma^*f- f_{\mu})|_N = 0$, one has
that $\sigma^*f-f_{\mu} =0$ everywhere.
\end{proof}

\subsection{Definition and examples}

Let $\G \rightrightarrows M$ be a Lie groupoid and consider a
$(q,p)$-tensor field $\tau \in \Gamma(\wedge^p \,T^*\G \otimes
\wedge^q T\G)$. Let $c_{\tau}: \bG^{(p,q)} \Arrow \R$ be the
corresponding componentwise linear function,
$$
c_\tau(U_1, \dots, U_p, \xi_1, \dots, \xi_q) = \tau(U_1, \dots, U_p,
\xi_1, \dots, \xi_q),
$$
as in Definition \ref{comp_linear}, where $\bG^{(p,q)} \toto
\M^{p,q)}$ is the Lie groupoid \eqref{big_group}. In the following,
we shall omit the $(p,q)$-indices.

\begin{definition}\label{def:mult}
A $(q,p)$-tensor field $\tau \in \Gamma(\tensor{p}{q})$
 on $\G \toto M$ is {\em multiplicative} if the  function $c_{\tau}: \bG \Arrow \R$ is multiplicative.
\end{definition}

Note that the very same definition of multiplicativity makes sense for elements of
$\Gamma((\otimes^p T^*\G) \otimes (\otimes^q T\G))$ (or for their symmetric versions). Along the paper,
we will make some comments on how to adapt our results to this more general case.

The next examples relate our definition with known
notions of multiplicativity for special types of tensors.

\begin{example}
A differential form $\omega \in \Omega^p(\G)$ is multiplicative if
it satisfies
$$
\m^*\omega = \pr_1^*\omega + \pr_2^*\omega,
$$
where $\pr_i: \grd\,_{\s}\!\times_{\t} \grd \Arrow \grd \Arrow \G$,
$i=1,2$, are the natural projections. One can directly check (see
e.g. \cite{bc}) that this is equivalent to $c_{\omega}$ being
multiplicative.
\end{example}

\begin{example}
A multivector field $\Pi \in \frakx^q(\G)$ is said to be
multiplicative \cite{ILX} if the graph of the multiplication map is
coisotropic with respect to $\Pi \oplus \Pi \oplus (-1)^{q+1} \Pi$:
for $\xi_i \in \mathrm{Ann}(\gra(\m)) \subset
T^*(\G\times \G \times \G)$, $i=1,\ldots,q$, we have
$$
(\Pi\oplus \Pi \oplus (-1)^{q+1} \Pi)(\xi_1, \dots, \xi_q)=0.
$$
It is shown in \cite{ILX} that this condition is equivalent to
$c_{\Pi}$ being multiplicative.
\end{example}

The next result shows that our definition of multiplicativity for
$(1,p)$-tensor fields $K \in \Gamma(\wedge^p T^*\G \otimes T\G)$ agrees with the one given in \cite{LMX}. Note that $K$ can be seen as a map $\overline{K}: \oplus^p T\G \to T\G$.

\begin{proposition}\label{prop:vect_valued}
A $(1,p)$-tensor field $K \in \Gamma(\wedge^p T^*\G \otimes T\G)$ is multiplicative if and only if there is a vector-bundle map
$\overline{r}: \oplus^p TM \to TM$ covering the identity such that
\begin{equation}\label{mult_diagram}
\xymatrix{
\oplus^p T\G \ar@<-3pt>[d] \ar@<3pt>[d] \ar[r]^{\hspace{10pt} \overline{K}} & T\G \ar@<-3pt>[d] \ar@<3pt>[d]\\
\oplus^p TM \ar[r]^{\hspace{10pt} \overline{r}} & TM\\
}
\end{equation}
is a groupoid morphism.
\end{proposition}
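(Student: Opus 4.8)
The plan is to unwind both conditions in terms of the componentwise-linear function $c_K$ and the multiplicativity notion developed in Section~\ref{subsec:functions}, and to show that the commutativity of diagram \eqref{mult_diagram} is exactly the structural data forced by $c_K$ being a $1$-cocycle. First I would recall that $K$ can be viewed as the map $\overline{K}: \oplus^p T\G \to T\G$, and that — by the definition of multiplicativity — $\tau = K$ is multiplicative iff $c_K: \bG^{(p,1)} \to \R$ is a groupoid morphism, where $\bG^{(p,1)} = (\oplus^p T\G)\oplus T^*\G$ over $\M^{(p,1)} = (\oplus^p TM)\oplus A^*$. The key reformulation is: $c_K(\underline U, \xi) = \xi(\overline{K}(\underline U))$, so $c_K$ being multiplicative says precisely that, whenever $(\underline U_1,\xi_1)$ and $(\underline U_2,\xi_2)$ are composable in $\bG^{(p,1)}$, one has $(\xi_1\bullet\xi_2)(\overline K(\underline U_1 \bullet \underline U_2)) = \xi_1(\overline K(\underline U_1)) + \xi_2(\overline K(\underline U_2))$.

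Next I would interpret this. Composability in $\oplus^p T\G$ means $\underline U_1$, $\underline U_2$ are componentwise composable over a composable pair $(g_1,g_2)$ in $\G$; composability of $\xi_i \in T^*_{g_i}\G$ in the cotangent groupoid means $\cots(\xi_1) = \cott(\xi_2)$. Comparing with the defining formula \eqref{m:cotang} for the cotangent multiplication, the condition $(\xi_1\bullet\xi_2)(W) = \xi_1(W_1) + \xi_2(W_2)$ for $W = W_1\bullet W_2$ shows that the right-hand side of the multiplicativity identity equals $(\xi_1\bullet\xi_2)\bigl(\overline K(\underline U_1)\bullet \overline K(\underline U_2)\bigr)$, \emph{provided} $\overline K(\underline U_1)$ and $\overline K(\underline U_2)$ are composable in $T\G$. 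Hence $c_K$ is multiplicative iff, for all composable $\underline U_1,\underline U_2$, the vectors $\overline K(\underline U_i)$ are composable in $T\G$ and $\overline K(\underline U_1 \bullet \underline U_2) = \overline K(\underline U_1)\bullet \overline K(\underline U_2)$ — in other words iff $\overline K$ is a groupoid morphism from $\oplus^p T\G \toto \oplus^p TM$ to $T\G \toto TM$. Here I should be slightly careful: a priori the cocycle identity for $c_K$ only pins down $\overline K(\underline U_1\bullet \underline U_2)$ after pairing with all composable covectors $\xi_1\bullet\xi_2$, so I would argue that as $\xi_1$ ranges over $T^*_{g_1}\G$ with $\cots(\xi_1)$ fixed, the products $\xi_1\bullet\xi_2$ (for suitable $\xi_2$) exhaust $T^*_{g_1g_2}\G$, which lets me conclude the vector identity in $T_{g_1g_2}\G$, and in particular that $\overline K(\underline U_1\bullet\underline U_2)$ lies in the range of $\bullet$, forcing the composability of $\overline K(\underline U_i)$. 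Once $\overline K$ is a groupoid morphism, it covers a map $\oplus^p TM \to TM$ on objects, and restricting to units shows this base map is itself a groupoid (i.e.\ vector-bundle) morphism over $M$ covering the identity of $M$; this is the sought $\overline r$, and diagram \eqref{mult_diagram} commutes by construction. Conversely, if such an $\overline r$ exists making \eqref{mult_diagram} a groupoid morphism, then running the computation backwards gives the cocycle identity for $c_K$, i.e.\ multiplicativity of $K$.

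The main obstacle I anticipate is the bookkeeping around the cotangent groupoid: verifying that pairing against all composable covector products genuinely detects the vector-valued identity $\overline K(\underline U_1\bullet\underline U_2) = \overline K(\underline U_1)\bullet\overline K(\underline U_2)$, and in particular that it forces $\overline K(\underline U_1)$ and $\overline K(\underline U_2)$ to be $T\s,T\t$-composable in the first place (rather than merely constraining some projection of the output). Concretely this amounts to checking a surjectivity/non-degeneracy statement: for fixed $g_1,g_2$ and fixed $\cots(\xi_1)=\cott(\xi_2)=:\varphi$, the assignment $(\xi_1,\xi_2)\mapsto \xi_1\bullet\xi_2$ hits all of $T^*_{g_1g_2}\G$, which follows from \eqref{m:cotang} together with the exact sequence \eqref{s_dual_seq} and a dimension count on the fibered product $T^*\G\,\ttimes\,T^*\G$. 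The remaining points — that a groupoid morphism between vector-bundle groupoids covers a vector-bundle morphism on objects, and that this base map covers $\id_M$ — are routine once the identification is set up, so I would state them briefly and refer to the componentwise structure of $\bG$ recalled in Section~\ref{sec:prelim}.
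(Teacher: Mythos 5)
Your overall reduction is the same as the paper's: both arguments rewrite multiplicativity of $c_K$ as the identity $(\xi_1\bullet\xi_2)\bigl(\overline{K}(\underline{U}_1\bullet\underline{U}_2)\bigr)=\xi_1(\overline{K}(\underline{U}_1))+\xi_2(\overline{K}(\underline{U}_2))$ and compare it with \eqref{m:cotang}. The difference lies in how you organize the converse, and that is exactly where your proposal has a gap --- one you partly flag but do not actually close. You propose to deduce that $\overline{K}(\underline{U}_1)$ and $\overline{K}(\underline{U}_2)$ are composable from the surjectivity of $(\xi_1,\xi_2)\mapsto\xi_1\bullet\xi_2$ onto $T^*_{g_1g_2}\G$. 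Surjectivity only tells you that the left-hand side of the cocycle identity determines the single vector $\overline{K}(\underline{U}_1\bullet\underline{U}_2)$; it constrains nothing about the pair $(\overline{K}(\underline{U}_1),\overline{K}(\underline{U}_2))$, and ``$\overline{K}(\underline{U}_1\bullet\underline{U}_2)$ lies in the range of $\bullet$'' is vacuous because $T\m$ is already surjective onto $T_{g_1g_2}\G$. What you actually need is the dual statement about the \emph{kernel} of the cotangent multiplication: the composable pairs $(\xi_1,\xi_2)$ with $\xi_1\bullet\xi_2=0$ are precisely the annihilator, inside $T^*_{g_1}\G\times T^*_{g_2}\G$, of the subspace of $T\s,T\t$-composable tangent pairs (this follows from \eqref{m:cotang}, the sequence \eqref{s_dual_seq} and a dimension count). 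On that kernel the cocycle identity forces $\xi_1(\overline{K}(\underline{U}_1))+\xi_2(\overline{K}(\underline{U}_2))=0$, and only then does a biannihilator argument give $T\s(\overline{K}(\underline{U}_1))=T\t(\overline{K}(\underline{U}_2))$. Without this lemma the key composability step is unjustified.

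The paper avoids the issue by a different ordering. It first uses Proposition \ref{func:prop} to get $c_K|_{\M}=0$, hence $\overline{K}(\oplus^p TM)\subseteq TM$, which defines $\overline{r}$; it then proves $T\s\circ\overline{K}=\overline{r}\circ T\s$ and $T\t\circ\overline{K}=\overline{r}\circ T\t$ directly, by evaluating the cocycle identity on the explicit factorizations $U_i=U_i\bullet T\s(U_i)$ and $(d\s)^*_g\alpha=0_g\bullet(d\s)^*_{\s(g)}\alpha$. Composability of $\overline{K}(\underline{U}_1)$ and $\overline{K}(\underline{U}_2)$ is then automatic, and the pairing argument (where surjectivity of the cotangent multiplication \emph{does} suffice) finishes the proof. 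Your route can be completed by inserting the annihilator lemma above, but as written the justification you offer for the crux is not sufficient.
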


\begin{proof}
It is straightforward to check that if \eqref{mult_diagram} is a groupoid morphism, then $K$ is multiplicative.
 Conversely, let us assume that $K$ is a multiplicative $(1,p)$-tensor field. From Proposition \ref{func:prop},
 one knows that $c_K |_{\M} =0$. This implies that, for
 $(X_1, \dots, X_p) \in TM \oplus \dots \oplus TM$, $\overline{K}(X_1,\dots, X_p) \in T\G|_M$ has
 zero component on $A$ under the decomposition $T\G|_M = TM \oplus A$. So, define
$
\overline{r}=\overline{K}|_{\oplus^p TM}.
$

It is straightforward to see that once
$$
T\t \circ \overline{K}  = \overline{r} \circ T\t,\;\;\;
T\s \circ \overline{K}  = \overline{r} \circ T\s,
$$
$\overline{K}$ will preserve the multiplication as a direct consequence of the multiplicativity of $K$. So, let $\alpha \in T_{\s(g)}^*M$.
\begin{align*}
\<T\s(\overline{K}(U_1, \dots, U_p)), \alpha \> & = c_K(U_1, \dots, U_p, (d\s)_g^*\,\alpha) \\
   & \hspace{-50pt} = c_K(U_1 \bullet T\s(U_1), \dots, U_p \bullet T\s(U_p), 0_g \bullet (d\s)_{\s(g)}^*\,\alpha)\\
   & \hspace{-50pt}=c_K(U_1, \dots, U_p, 0_g) + c_K(T\s(U_1), \dots, T\s(U_p), (d\s)_{\s(g)}^*\,\alpha))\\.
   & \hspace{-50pt} = \<\overline{r}(T\s(U_1), \dots, T\s(U_p)), \alpha\>.
\end{align*}
Here, we have used the equality
$
(d\s)_g^*\,\alpha = 0_g \bullet (d\s)_{\s(g)}^*\,\alpha,
$
which follows from \eqref{m:cotang}. The other equality follows similarly.
\end{proof}

We now consider a special class of multiplicative tensor fields on $\G$,
analogous to the multiplicative functions in Example~\ref{ex:multf}. Let $\mathcal{S}, \,\mathcal{T}: \Gamma(\wedge^p \,T^*M
\otimes \wedge^q A) \Arrow \Gamma(\wedge^p \,T^*\G \otimes
\wedge^q T\G)$ be the $\R$-linear maps defined on homogeneous
elements as
\begin{equation}
\label{T:def} \mathcal{T}(\alpha \otimes \frakx) = \t^*\alpha \otimes \overrightarrow{\frakx}, \;\;\;
 \mathcal{S}(\alpha \otimes \frakx)  =
\s^*\alpha \otimes \overleftarrow{\frakx}.
\end{equation}

\begin{proposition} \label{pull:invariance}
The maps $\mathcal{S}$ and $\T$ satisfy the following properties:
\begin{enumerate}
\item $\T(f \Phi) = (\t^*f) \,\T(\Phi)$, \; $\mathcal{S}(f\Phi) = (\s^*f) \, \mathcal{S}(\Phi)$,
\item $\mathbbm{t}^*c_{\Phi} = c_{\mathcal{T}(\Phi)}$,
\item $\mathbbm{s}^*c_{\Phi}  = c_{\mathcal{S}(\Phi)}$,
\end{enumerate}
for $\Phi \in \Gamma(\wedge^p \,T^*M \otimes \wedge^q A)$ and $f \in
C^{\infty}(M)$. In particular,
\begin{equation}\label{cohom_trivial}
(\mathcal{T}-\mathcal{S})(\alpha \otimes \frakx)=
\t^*\alpha \otimes \overrightarrow{\frakx}-\s^*\alpha \otimes \overleftarrow{\frakx}
\end{equation}
is a multiplicative $(q,p)$-tensor field on $\G$.
\end{proposition}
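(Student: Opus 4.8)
The plan is to reduce everything to homogeneous \emph{decomposable} elements $\Phi=(\alpha_1\wedge\cdots\wedge\alpha_p)\otimes(x_1\wedge\cdots\wedge x_q)$, with $\alpha_i\in\Omega^1(M)$ and $x_j\in\Gamma(A)$, since such elements generate $\Gamma(\wedge^p T^*M\otimes\wedge^q A)$ as a $C^\infty(M)$-module. Item (1) is then a one-line check: $\t^*(f\alpha_1)=(\t^*f)\,\t^*\alpha_1$ and $\overrightarrow{fx_1}=(\t^*f)\,\overrightarrow{x_1}$, and symmetrically $\s^*(f\alpha_1)=(\s^*f)\,\s^*\alpha_1$, $\overleftarrow{fx_1}=(\s^*f)\,\overleftarrow{x_1}$ (recalling that $\overleftarrow{a}$ is the left-invariant extension of $a-\rho(a)$ and that $fa-\rho(fa)=f(a-\rho(a))$); hence $\T(f\Phi)=(\t^*f)\,\T(\Phi)$ and $\mathcal S(f\Phi)=(\s^*f)\,\mathcal S(\Phi)$. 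The same identities show that the formulas \eqref{T:def} respect the $C^\infty(M)$-bilinear relation $f\alpha\otimes\frakx=\alpha\otimes f\frakx$ defining the tensor product, so $\T$ and $\mathcal S$ are in fact well defined on all of $\Gamma(\wedge^p T^*M\otimes\wedge^q A)$; I would record this observation first, before stating the proposition's three items.

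Next I would prove (2) by evaluating both sides of $\mathbbm t^*c_\Phi=c_{\T(\Phi)}$ at an arbitrary point $(\underline U,\underline\xi)=(U_1,\dots,U_p,\xi_1,\dots,\xi_q)\in\bG$ lying over $g\in\G$. Since $\T(\Phi)=(\t^*\alpha_1\wedge\cdots\wedge\t^*\alpha_p)\otimes(\overrightarrow{x_1}\wedge\cdots\wedge\overrightarrow{x_q})$, its associated componentwise-linear function factors as the product of $\det(\langle\t^*\alpha_i,U_j\rangle)=\det(\langle\alpha_i,T\t(U_j)\rangle)$ and $\det(\langle\xi_i,\overrightarrow{x_j}(g)\rangle)$. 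The crux is the second determinant: by the defining pairing \eqref{eq:cotst} of the cotangent groupoid, $\langle\xi_i,\overrightarrow{x_j}(g)\rangle=\langle\cott(\xi_i),x_j\rangle$, so it equals $\det(\langle\cott(\xi_i),x_j\rangle)=(x_1\wedge\cdots\wedge x_q)(\cott\xi_1,\dots,\cott\xi_q)$. Multiplying the two factors gives exactly $c_\Phi$ evaluated at $(T\t U_1,\dots,T\t U_p,\cott\xi_1,\dots,\cott\xi_q)=\mathbbm t(\underline U,\underline\xi)$, which is (2). Item (3) is the mirror computation, using instead the $\cots$-half of \eqref{eq:cotst}, $\langle\xi_i,\overleftarrow{x_j}(g)\rangle=\langle\cots(\xi_i),x_j\rangle$, together with the formula for $\mathbbm s$.

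Finally, \eqref{cohom_trivial} is a formal consequence: $\tau\mapsto c_\tau$ is linear (Lemma~\ref{lemma:comp_tensor} and its skew-symmetric version), so (2) and (3) give $c_{(\T-\mathcal S)(\Phi)}=c_{\T(\Phi)}-c_{\mathcal S(\Phi)}=\mathbbm t^*c_\Phi-\mathbbm s^*c_\Phi$; by Example~\ref{ex:multf}, applied to the groupoid $\bG\toto\M$ with $\psi=c_\Phi\in C^\infty(\M)$, this is a multiplicative function, hence $(\T-\mathcal S)(\alpha\otimes\frakx)=\t^*\alpha\otimes\overrightarrow{\frakx}-\s^*\alpha\otimes\overleftarrow{\frakx}$ is a multiplicative tensor by Definition~\ref{def:mult}. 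I do not expect a genuine obstacle here: the only delicate points are the well-definedness of $\T,\mathcal S$ handled in the first step, and the careful bookkeeping of signs in the wedge expansions and of the right-/left-invariance conventions so that they line up precisely with the pairings in \eqref{eq:cotst}.
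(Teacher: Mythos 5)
Your proposal is correct and follows essentially the same route as the paper: reduce to decomposable $\Phi$, observe that (2) and (3) hinge on the identities $\overrightarrow{\frakx}(\xi_1,\dots,\xi_q)=\frakx(\cott(\xi_1),\dots,\cott(\xi_q))$ and $\overleftarrow{\frakx}(\xi_1,\dots,\xi_q)=\frakx(\cots(\xi_1),\dots,\cots(\xi_q))$ coming from \eqref{eq:cotst}, and deduce multiplicativity of $(\T-\mathcal S)(\Phi)$ from Example~\ref{ex:multf} applied to $c_\Phi$ on $\bG\toto\M$. The only difference is that you spell out the determinant bookkeeping and the well-definedness of $\T,\mathcal S$, which the paper leaves as a direct verification.
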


\begin{proof}
The formulas in (1) can be verified directly.

For $\Phi$ of the form $\alpha
\otimes \frakx$ and $\xi_1, \dots,
\xi_q \in T_g^*\G$, formulas (2) and (3) boil down to
$$
\overrightarrow{\frakx}(\xi_1,\dots,
\xi_q) = \frakx(\cott(\xi_1), \dots, \cott(\xi_q)) \text{ and }
\overleftarrow{\frakx}(\xi_1,\dots,
\xi_q) = \frakx(\cots(\xi_1), \dots, \cots(\xi_q)),
$$
respectively, and these identities are direct consequences of the definitions of the source and target maps
of the cotangent groupoid (see \eqref{eq:cotst}). The case of arbitrary $\Phi$ follows by linearity.

The last assertion follows from the fact that $\mathbbm{t}^*c_{\Phi} - \mathbbm{s}^*c_{\Phi}$ is always a
multiplicative function (see Example~\ref{ex:multf}).
\end{proof}

\subsection{The main results: statements and first examples}

We now present a complete infinitesimal characterization of multiplicative tensor fields.
Our first main theorem is an analog of Proposition \ref{func:prop} for general tensor fields.

\begin{theorem}\label{thm:mult_char}
Let $\G \toto M$ be a source-connected Lie groupoid. A $(q,p)$-tensor field $\tau \in \Gamma(\wedge^p T^*\G \otimes \wedge^q T\G)$ is
multiplicative if and only if
\begin{equation}\label{eq:vanish}
\tau(\underline{\X}, \underline{\varphi}) = 0, \;\;\;\forall \,(\underline{\X}, \underline{\varphi}) \in \M,
\end{equation}
and there exist vector-bundle maps $l: A \Arrow \wedge^{p-1} T^*M \otimes \wedge^q A$ and
$r: T^*M \Arrow \wedge^p T^*M \otimes \wedge^{q-1}A$, covering the identity map on $M$,
and an $\R$-linear map $D: \Gamma(A)\Arrow \Gamma(\wedge^p T^*M \otimes \wedge^q A)$,
such that
\begin{align}
\label{D:leibniz} D(fa)& = fD(a) + df \wedge l(a) - a \wedge r(df),\\
\label{eq:conds}
i_{\overrightarrow{a}} \,\tau & =
\T(l(a)),\;\;\;
i_{\t^*\alpha}\,\tau=
\T(r(\alpha)),\;\;\;
\Lie_{\overrightarrow{a}}\, \tau =
\T(D(a)),
\end{align}
for $a \in \Gamma(A)$, $\alpha \in \Gamma(T^*M)$ and $f \in C^{\infty}(M)$.
\end{theorem}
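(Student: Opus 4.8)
The strategy is to regard $\tau$ as the componentwise-linear function $c_\tau$ on the Whitney-sum groupoid $\bG\toto\M$ of \eqref{big_group} and apply to it the characterization of multiplicative functions in Proposition~\ref{func:prop}, unwinding the resulting Lie-algebroid condition via the VB-algebroid structure of the Lie algebroid $\bA$ of $\bG$. Concretely, since $\G$ is source-connected, so is $\bG$ (Remark~\ref{rem:fibers}), so by Definition~\ref{def:mult} and Proposition~\ref{func:prop}, $\tau$ is multiplicative if and only if $c_\tau|_\M=0$, which is exactly \eqref{eq:vanish}, and there exists $\mu=Ac_\tau\in\Gamma(\bA^*)$ with $\Lie_{\overrightarrow\chi}c_\tau=\bt^*\langle\mu,\chi\rangle$ for all $\chi\in\Gamma(\bA)$. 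I would then test this identity on a generating set of the $C^\infty(\M)$-module $\Gamma(\bA)$. Since $\bG$ is a VB-groupoid over $\G$, the Lie algebroid $\bA\to\M$ is a VB-algebroid over $A\to M$ with core $(\oplus^p A)\oplus(\oplus^q T^*M)$, so $\Gamma(\bA)$ is generated by: (i) linear sections $\widehat a$, one for each $a\in\Gamma(A)$, whose right-invariant vector field on $\bG$ is the simultaneous complete/cotangent lift of $\overrightarrow a$ to the tangent and cotangent summands; (ii) core sections $\widehat a^{\,c}$ coming from $a\in\Gamma(A)$, supported on one $T\G$-summand; and (iii) core sections $\widehat\alpha$ coming from $\alpha\in\Omega^1(M)$, supported on one $T^*\G$-summand (by skew-symmetry of $\tau$ the chosen summand is immaterial, up to sign). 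Evaluating $\mu$ on these generators gives, via Lemma~\ref{lemma:comp_tensor}, an $\R$-linear map $D:\Gamma(A)\to\Gamma(\wedge^p T^*M\otimes\wedge^q A)$ and, since $l$ and $r$ come out $C^\infty(M)$-linear, vector-bundle maps $l,r$ over the identity, via $\langle\mu,\widehat a\rangle=c_{D(a)}$, $\langle\mu,\widehat a^{\,c}\rangle=\pm\,c_{l(a)}$ and $\langle\mu,\widehat\alpha\rangle=\pm\,c_{r(\alpha)}$.

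The technical heart is the classical lifting calculus, which evaluates the left-hand sides $\Lie_{\overrightarrow{\widehat a}}c_\tau$, $\Lie_{\overrightarrow{\widehat a^{\,c}}}c_\tau$ and $\Lie_{\overrightarrow{\widehat\alpha}}c_\tau$ to be the componentwise-linear functions associated with $\Lie_{\overrightarrow a}\tau$, $i_{\overrightarrow a}\tau$ and $i_{\t^*\alpha}\tau$, respectively. Feeding this into the cocycle relation, using Proposition~\ref{pull:invariance}(2) (so $\bt^*c_\Phi=c_{\T(\Phi)}$) and injectivity of $\Phi\mapsto c_\Phi$, one obtains precisely \eqref{eq:conds}. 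The Leibniz-type identity \eqref{D:leibniz} is then forced: from the Cartan-type formula
\[
\Lie_{(\t^*f)\overrightarrow a}\tau=(\t^*f)\,\Lie_{\overrightarrow a}\tau+\t^*df\wedge i_{\overrightarrow a}\tau-\overrightarrow a\wedge i_{\t^*df}\tau ,
\]
the equations \eqref{eq:conds}, the compatibilities $\t^*df\wedge\T(l(a))=\T(df\wedge l(a))$ and $\overrightarrow a\wedge\T(r(df))=\T(a\wedge r(df))$, and injectivity of $\T$, one reads off $D(fa)=fD(a)+df\wedge l(a)-a\wedge r(df)$.

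Conversely, given a triple $(D,l,r)$ satisfying \eqref{eq:vanish}, \eqref{D:leibniz} and \eqref{eq:conds}, I would define $\mu\in\Gamma(\bA^*)$ on the generators (i)--(iii) by $\langle\mu,\widehat a\rangle=c_{D(a)}$, $\langle\mu,\widehat a^{\,c}\rangle=\pm\,c_{l(a)}$, $\langle\mu,\widehat\alpha\rangle=\pm\,c_{r(\alpha)}$, and extend it $C^\infty(\M)$-linearly; here \eqref{D:leibniz} is exactly the compatibility needed, because $\widehat{fa}$ differs from $(\text{pull-back of }f)\,\widehat a$ by a core section. The lifting formulas together with $\overrightarrow{(\bt^*f)\chi}=(\bt^*f)\,\overrightarrow\chi$ then promote $\Lie_{\overrightarrow\chi}c_\tau=\bt^*\langle\mu,\chi\rangle$ from the generators to all $\chi\in\Gamma(\bA)$; combined with $c_\tau|_\M=0$ from \eqref{eq:vanish}, Proposition~\ref{func:prop} shows $c_\tau$, hence $\tau$, is multiplicative.

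The main obstacle is the dictionary between Lie-algebroid data and lifting operations: making the VB-algebroid $\bA$ explicit, exhibiting the generating sections (i)--(iii), computing their right-invariant vector fields on the Whitney sum of the tangent and cotangent groupoids, and proving the three lift identities $\Lie_{\overrightarrow{\widehat a}}c_\tau=c_{\Lie_{\overrightarrow a}\tau}$ and its contraction analogues, with careful tracking of signs and of which summand each core section occupies. Once these are in hand, extracting $(D,l,r)$, deriving the Leibniz rule and reconstructing $\tau$ are essentially formal.
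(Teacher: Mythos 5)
Your proposal is correct and follows essentially the same route as the paper: it reduces multiplicativity of $\tau$ to the cocycle characterization of the multiplicative function $c_\tau$ on $\bG\toto\M$ (Proposition~\ref{func:prop}), tests it on the same generators of $\Gamma(\bA)$ (the linear sections $(T^pa,\Rev_a^q)$ and the core sections $\calb a_{(i)}$, $\calb\alpha_{(j)}$), uses the lifting calculus of Proposition~\ref{Lie_tensor} to identify the resulting data with $(D,l,r)$, and in the converse builds $\mu$ on generators with the Leibniz rule guaranteeing well-definedness. This matches the paper's Lemmas~\ref{prop:lr_existence} and \ref{prop:D_existence} and the subsequent reconstruction argument in all essentials.
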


We refer to the triple $(D, l, r)$ as the \textit{infinitesimal components} of the multiplicative tensor $\tau$ and
to equation \eqref{D:leibniz} as the Leibniz condition for $D$.

\begin{remark}\label{rem:notation}
Regarding the notation in Theorem~\ref{thm:mult_char} above, in \eqref{D:leibniz} we view $\Gamma(\wedge^\bullet T^*M \otimes \wedge^\bullet A)$ as a left module for the exterior algebras $\Gamma(\wedge^\bullet A)$ and $\Gamma(\wedge^\bullet T^*M)$, and
both actions are denoted by $\wedge$: for $\mathfrak{Y}\in \Gamma(\wedge^\bullet A)$, $\eta\in \Gamma(\wedge^\bullet T^*M)$, and
$\omega\otimes \mathfrak{X}\in \Gamma(\wedge^\bullet T^*M \otimes \wedge^\bullet A)$,
$$
\mathfrak{Y}\wedge (\omega\otimes \mathfrak{X}) = \omega\otimes (\mathfrak{Y}\wedge \mathfrak{X}), \qquad\;
 \eta \wedge (\omega\otimes \mathfrak{X}) = (\eta\wedge \omega) \otimes \mathfrak{X}.
$$
In \eqref{eq:conds}, the contraction operators are defined as follows: for $U\in \Gamma(T\G)$, $\xi\in \Gamma(T^*\G)$ and
$\tau = \lambda\otimes W$,
$$
i_U\tau = (i_U\lambda)\otimes W,\qquad\; i_\xi\tau = U\otimes (i_\xi W).
$$
\end{remark}

We mention some particular cases of interest.

\begin{corollary}
On a source-connected Lie groupoid $\G \toto M$, a $p$-form $\omega \in \Omega^p(\G)$ is multiplicative if and only if
there is an $\mathbb{R}$-linear map $D: \Gamma(A) \Arrow \Omega^p(M)$ and a vector-bundle morphism $l: A \Arrow \wedge^{p-1} T^*M$ such that the following holds:
$$
1^*\omega  =0,\;\;\;
\Lie_{\overrightarrow{a}} \omega  = \t^* D(a),\;\;\;
i_{\overrightarrow{a}} \omega = \t^* l(a),
$$
for all $a\in \Gamma(A)$.
\end{corollary}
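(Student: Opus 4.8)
The plan is to obtain this as the case $q=0$ of Theorem~\ref{thm:mult_char}, tracking how the general statement degenerates. First I would observe that a $p$-form is precisely a $(0,p)$-tensor, so here $\tau=\omega\in\Gamma(\wedge^pT^*\G\otimes\wedge^0T\G)=\Omega^p(\G)$, the associated base is $\M=\oplus^pTM$ with no $\underline{\varphi}$-variables, and $\wedge^0T\G$ is the trivial line bundle. Under these identifications: condition \eqref{eq:vanish} becomes $\omega(T\u X_1,\dots,T\u X_p)=0$ for all $X_i\in T_xM$, i.e. $1^*\omega=0$; the map $\T$ restricts on $\Gamma(\wedge^pT^*M)=\Gamma(\wedge^pT^*M\otimes\wedge^0A)$ to $\alpha\mapsto\t^*\alpha$, as one reads off from the formula $\T(\alpha\otimes\frakx)=\t^*\alpha\otimes\overrightarrow{\frakx}$; and the codomain $\wedge^pT^*M\otimes\wedge^{q-1}A$ of $r$ is the zero bundle, so $r\equiv 0$ and the equation $i_{\t^*\alpha}\tau=\T(r(\alpha))$ reduces to the trivial identity $0=0$ (the left-hand side vanishes because $i_\xi$ is applied to the now-absent contravariant leg of $\tau$). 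Hence the only surviving data are $D\colon\Gamma(A)\to\Omega^p(M)$ and $l\colon A\to\wedge^{p-1}T^*M$, the two equations $i_{\overrightarrow a}\omega=\t^*l(a)$ and $\Lie_{\overrightarrow a}\omega=\t^*D(a)$, and the Leibniz condition \eqref{D:leibniz}, which now reads $D(fa)=fD(a)+df\wedge l(a)$.

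It then remains to justify that the Leibniz condition may be dropped from the statement. For the ``only if'' direction this is immediate: one takes the triple $(D,l,r)=(D,l,0)$ produced by Theorem~\ref{thm:mult_char} and simply forgets the Leibniz identity. For the ``if'' direction, suppose $D$ and $l$ satisfy $1^*\omega=0$, $i_{\overrightarrow a}\omega=\t^*l(a)$ and $\Lie_{\overrightarrow a}\omega=\t^*D(a)$. I would then recover the Leibniz condition automatically: using $\overrightarrow{fa}=(\t^*f)\,\overrightarrow a$ and the identity $\Lie_{gX}=g\,\Lie_X+dg\wedge i_X$, one computes
\[
\t^*D(fa)=\Lie_{(\t^*f)\overrightarrow a}\,\omega=(\t^*f)\,\Lie_{\overrightarrow a}\omega+\t^*(df)\wedge i_{\overrightarrow a}\omega=\t^*\!\big(fD(a)+df\wedge l(a)\big),
\]
and since $\t$ is a surjective submersion, $\t^*$ is injective on $\Omega^\bullet(M)$, whence $D(fa)=fD(a)+df\wedge l(a)$. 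Taking $r=0$ — which vacuously satisfies every requirement involving $r$ — Theorem~\ref{thm:mult_char} then applies (the source-connectedness hypothesis being exactly the one assumed) and shows that $\omega$ is multiplicative.

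This is essentially a bookkeeping argument, so I do not anticipate a genuine obstacle. The only points that require a little care are the degenerate identifications (so that $r$ disappears and $\T$ becomes $\t^*$), and the short observation that in the $q=0$ case the Leibniz condition is not an independent hypothesis but a consequence of the displayed equations together with the injectivity of $\t^*$ — which is precisely what lets the corollary omit it.
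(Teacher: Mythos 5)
Your proof is correct and follows the route the paper intends: the corollary is stated as an immediate specialization of Theorem~\ref{thm:mult_char} to $q=0$, under which $r$ takes values in the zero bundle, $\T$ reduces to $\t^*$, and \eqref{eq:vanish} becomes $1^*\omega=0$. Your observation that the Leibniz condition is not an independent hypothesis — it follows from $\overrightarrow{fa}=(\t^*f)\overrightarrow{a}$, the identity $\Lie_{gX}=g\Lie_X+dg\wedge i_X$, and the injectivity of $\t^*$ — is exactly the point needed to justify its omission from the corollary's statement.
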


The next result recovers \cite[Thm.~2.19]{ILX} (showing that some conditions there are redundant, cf. \cite[Lem.~2.3]{CSX}).
\begin{corollary}\label{cor:vect}
Let $\G \toto M$ be a source connected Lie groupoid. A $q$-vector field $\Pi \in \frakx^q(\G)$ is multiplicative if and only there is an $\mathbb{R}$-linear map $D: \Gamma(A) \Arrow \Gamma(\wedge^qA)$ and a vector bundle morphism $r: T^*M \Arrow \wedge^{q-1} A$ such that the following holds:
$$
\Pi(\varphi_1, \dots, \varphi_q)=0, \;\;\;\;
\Lie_{\overrightarrow{a}}\Pi = \overrightarrow{D(a)},\;\;\;\; i_{\t^*\alpha} \Pi  = \overrightarrow{r(\alpha)},
$$
for all $(\varphi_1, \dots, \varphi_q) \in A^*\times_M \dots \times_M A^*$, $a\in \Gamma(A)$ and $\alpha\in \Omega^1(M)$.
(In the terminology of \cite{ILX,Mac-Xu2}, the first condition above says that  $M$ is coisotropic with respect to $\Pi$, while
the other conditions say that $\Pi$ is an affine multivector field (see also \cite{DLSW,Kos1}).)
\end{corollary}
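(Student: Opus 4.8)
The plan is to read off Corollary~\ref{cor:vect} as the $p=0$ instance of Theorem~\ref{thm:mult_char}. A $(q,0)$-tensor is exactly a $q$-vector field $\Pi\in\frakx^q(\G)$, the target bundle of $D$ collapses to $\wedge^0 T^*M\otimes\wedge^q A=\wedge^q A$, and $r$ becomes a vector-bundle map $T^*M\to\wedge^{q-1}A$. The map $l$ would take values in $\wedge^{-1}T^*M\otimes\wedge^q A=0$; correspondingly, by Remark~\ref{rem:notation} the operator $i_{\overrightarrow{a}}$ acts on the $\wedge^0 T^*\G$-factor of $\tau$ and hence returns $0$, so the first identity in \eqref{eq:conds} is vacuous. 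Since $\T$ restricted to $\Gamma(\wedge^q A)$ is just $\frakx\mapsto\overrightarrow{\frakx}$, the other two identities in \eqref{eq:conds} become $i_{\t^*\alpha}\Pi=\overrightarrow{r(\alpha)}$ and $\Lie_{\overrightarrow{a}}\Pi=\overrightarrow{D(a)}$, while the vanishing condition \eqref{eq:vanish} becomes $\Pi(\varphi_1,\dots,\varphi_q)=0$ (using the inclusion $A^*\hookrightarrow T^*\G|_M$ as the conormal bundle of $M$, which is the coisotropy of $M$).

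The one thing not yet accounted for is the Leibniz condition \eqref{D:leibniz}, which is a hypothesis in Theorem~\ref{thm:mult_char} but is absent from the list in Corollary~\ref{cor:vect}; I would show it follows automatically from the other two conditions. Using $\overrightarrow{fa}=(\t^*f)\overrightarrow{a}$, the Schouten-bracket Leibniz rule in the form $\Lie_{hX}\Pi=h\,\Lie_X\Pi-X\wedge i_{dh}\Pi$, and $d(\t^*f)=\t^*(df)$, one computes
$$
\overrightarrow{D(fa)}=\Lie_{\overrightarrow{fa}}\Pi=(\t^*f)\,\Lie_{\overrightarrow{a}}\Pi-\overrightarrow{a}\wedge i_{\t^*(df)}\Pi=\overrightarrow{fD(a)}-\overrightarrow{a}\wedge\overrightarrow{r(df)}=\overrightarrow{fD(a)-a\wedge r(df)},
$$
where we also used $\overrightarrow{a}\wedge\overrightarrow{\frakx}=\overrightarrow{a\wedge\frakx}$. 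As $\Phi\mapsto\overrightarrow{\Phi}$ is injective on $\Gamma(\wedge^\bullet A)$, this yields $D(fa)=fD(a)-a\wedge r(df)$, i.e.\ \eqref{D:leibniz} with $l=0$.

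With this, the equivalence is immediate in both directions. If $\Pi$ is multiplicative, Theorem~\ref{thm:mult_char} (which applies since $\G$ is source-connected) supplies $D$ and $r$ with $l=0$, and hence the three displayed conditions. Conversely, given an $\R$-linear $D$ and a bundle map $r$ satisfying those three conditions, set $l=0$: the vanishing condition, the identities \eqref{eq:conds}, and the Leibniz condition just verified are exactly the hypotheses of Theorem~\ref{thm:mult_char}, so $\Pi$ is multiplicative. The parenthetical identifications (coisotropy of $M$, affine multivector field) are then the standard reformulations, as in \cite{ILX,Mac-Xu2}. I expect the only delicate point to be the bookkeeping in the middle paragraph --- pinning down the signs in the multivector Leibniz rule and matching the contraction conventions of Remark~\ref{rem:notation}, together with checking that the degree-shift degeneracies ($\wedge^{-1}=0$, $\T$ reducing to $\frakx\mapsto\overrightarrow{\frakx}$) are consistent with the way Theorem~\ref{thm:mult_char} is phrased.
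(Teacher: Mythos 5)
Your proposal is correct and is essentially the derivation the paper intends: Corollary~\ref{cor:vect} is read off from Theorem~\ref{thm:mult_char} at $p=0$, where $l$ lands in $\wedge^{-1}T^*M\otimes\wedge^qA=0$ and $\T$ reduces to $\frakx\mapsto\overrightarrow{\frakx}$. Your explicit check that the Leibniz condition \eqref{D:leibniz} follows automatically from $\Lie_{\overrightarrow{a}}\Pi=\overrightarrow{D(a)}$ and $i_{\t^*\alpha}\Pi=\overrightarrow{r(\alpha)}$ via the Schouten identity $[fX,\Pi]=f[X,\Pi]-X\wedge i_{df}\Pi$ is exactly the redundancy the paper alludes to when citing \cite{CSX}, and the signs in your computation are right.
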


Following the previous corollary, Theorem~\ref{thm:mult_char} indicates a notion of {\em affine} tensors on Lie
groupoids, in which we keep all properties of multiplicative tensors described in the theorem except for \eqref{eq:vanish}
(cf. \cite[Thm.~4.5]{We90}).

In the next example, we identify the infinitesimal components of a multiplicative tensor field of type
\eqref{cohom_trivial}. To do so, let us consider, for a Lie algebroid $(A,\rho,[\cdot,\cdot])$, the action of the Lie algebra
$\Gamma(A)$ on $\Gamma(\wedge^p T^*M\otimes \wedge^q A)$ given by
\begin{equation}\label{action}
a \cdot (\beta \otimes \frakx) = \Lie_{\rho(a)}\beta \otimes \frakx + \beta\otimes [a,\frakx],
\end{equation}
where $[\cdot, \cdot]$ is the Schouten bracket on $\Gamma(\wedge^{\bullet} A)$.

\begin{example}\label{trivial:ex}
For $\Phi \in \Gamma(\wedge^p T^*M\otimes \wedge^q A)$, consider the multiplicative tensor given by $\tau = (\mathcal{T}-\mathcal{S})(\Phi)$ (see Prop.~\ref{pull:invariance}). As particular cases, for $q=0$, $\tau = \t^*\Phi - \s^*\Phi$, while for $p=0$, $\tau= \overrightarrow{\Phi} -\overleftarrow{\Phi}$.
The infinitesimal components $(D, l, r)$ corresponding to $\tau$ are
$$
D(a)  = a \cdot \Phi,\;\;\;
l(a)  = i_{\rho(a)} \Phi,\;\;\;
r(\alpha)  = i_{\rho^*(\alpha)} \Phi.
$$
\end{example}


\begin{remark}\label{rem:non_skew1}
For $\tau \in \Gamma((\otimes^p T^*\G) \otimes (\otimes^q T\G))$, there is a result similar to Theorem \ref{thm:mult_char} characterizing  multiplicativity. In this case we have infinitesimal components $(D, l_1, \dots, l_p, r_1, \dots, r_q)$, where $D: \Gamma(A) \to \Gamma((\otimes^p T^*M) \otimes (\otimes^q A))$, $l_i: A \to (\otimes^{p-1} T^*M) \otimes (\otimes^q A)$, and $r_j: T^*M \to (\otimes^{p}T^*M) \otimes (\otimes^{q-1} A)$, defined by
\begin{equation}\label{non_skew_comp}
\Lie_{\overrightarrow{a}} \tau = \T(D(a)),\;\;\;\;\; \!\! \underbrace{\tau(\dots, \overrightarrow{a}, \dots)}_{\text{i-th $T\G$-entry}} \!\!\! \;= \T(l_i(a)),\;\;\;\;\; \!\! \underbrace{\tau(\dots, \t^*\alpha, \dots)}_{\text{ j-th $T^*\G$-entry}}\!\!\! \;= \T(r_j(\alpha)).
\end{equation}
The Leibniz equation for $D$ will change accordingly. For instance, for a multiplicative $\tau \in \Gamma(T\G \otimes T\G)$, the infinitesimal components $(D, r_1, r_2)$ satisfy
$$
D(fa) = fD(a) - a \otimes r_1(df) - r_2(df) \otimes a.
$$
\end{remark}


We now formulate our main result, which concerns the correspondence between multiplicative tensors $\tau$ and their
infinitesimal components $(D,l,r)$. The multiplicativity of $\tau$ is expressed by a set of equations satisfied by $(D,l,r)$,
described in the next definition.


\begin{definition}\label{def:IMtensor}
Let $(A,[\cdot,\cdot],\rho)$ be a Lie algebroid. An {\em IM $(q,p)$-tensor} on $A$  is a triple $(D,l,r)$,
where $l: A \Arrow \wedge^{p-1}T^*M\otimes \wedge^q A$ and $r: T^*M \Arrow \wedge^{p}T^*M\otimes \wedge^{q-1} A$
are vector-bundle maps covering the identity map on $M$,
$D: \Gamma(A) \Arrow \Gamma(\wedge^p T^*M \otimes \wedge^q A)$ is $\R$-linear and satisfies the Leibniz rule
$$
D(fa) = fD(a) + df \wedge l(a) - a \wedge r(df), \;\;\;\; \forall f\in C^\infty(M), \, a\in \Gamma(A),
$$
such that the following equations hold:
\begin{align}
\tag{IM1} \label{IM1} D([a,b])& = a\cdot D(b) - b \cdot D(a),\\
\tag{IM2} \label{IM2} l([a,b])& = a \cdot l(b) - i_{\rho(b)} D(a),\\
\tag{IM3} \label{IM3} r(\Lie_{\rho(a)} \alpha) &= a\cdot r(\alpha) - i_{\rho^*(\alpha)} D(a),\\
\tag{IM4} \label{IM4} i_{\rho(a)}\,l(b) & = - i_{\rho(b)} \,l(a),\\
\tag{IM5} \label{IM5} i_{\rho^*\alpha}\, r(\beta) &= - i_{\rho^*\beta}\, r(\alpha),\\
\tag{IM6} \label{IM6} i_{\rho(a)}\,r(\alpha) & = i_{\rho^*\alpha}\,l(a),
\end{align}
for $a,b \in \Gamma(A)$ and $\alpha,\beta \in \Omega^1(M)$.
\end{definition}

We refer to equations \eqref{IM1} -- \eqref{IM6} as the {\em IM-equations} of an IM $(q,p)$-tensor.

\begin{remark}[Redundancies]\label{rem:redundacy}
We observe that, in many cases, there are some redundancies in the IM-equations. Note first that, if $p > \dim(M)+1$ or $q > \mathrm{rank}(A)+1$, then any IM $(p,q)$-tensor is trivial. On the other hand,
when $p < \dim(M)+1$ and $q < \mathrm{rank}(A)+1$, we claim that $\{\eqref{IM1}, \eqref{IM2}, \eqref{IM6}\}$ and  $\{\eqref{IM1}, \eqref{IM3}, \eqref{IM6}\}$ are minimal sets of
independent IM-equations. Indeed, in this case, \eqref{IM1} implies that
$$
df \wedge (l([a,b]) - a\cdot l(b) + i_{\rho(b)}D(a)) = b \wedge (r(\Lie_{\rho(a)}(df)) - a\cdot r(df) + i_{df}D(a)),
$$
for all $f\in \C(M), a, b \in \Gamma(A)$. This follows from the Leibniz rule for the Lie bracket $[\cdot,\cdot]$,
the Leibniz formula for $D$, and Proposition \ref{Lie:leibniz} below. Therefore,
$$
\eqref{IM1} + \eqref{IM2} \Rightarrow \eqref{IM3} \text{ and } \eqref{IM1} + \eqref{IM3} \Rightarrow \eqref{IM2}.
$$
Similarly, one can prove that \eqref{IM2} (resp. \eqref{IM3})  implies that
\begin{align*}
df\wedge(i_{\rho(a)} l(b) + i_{\rho(b)} l(a)) & = a \wedge (i_{df}l(b)-i_{\rho(b)}r(df))\\
(\text{resp. } a \wedge (i_{df} r(\alpha) + i_{\rho^*\alpha} r(df)) & = df \wedge(i_{\rho(a)} r(\alpha) - i_{\rho^*\alpha}l(a))).
\end{align*}
As a result,
$
\eqref{IM2} + \eqref{IM6} \Rightarrow \eqref{IM4} \text{ and } \eqref{IM3} + \eqref{IM6} \Rightarrow \eqref{IM5}.
$
\end{remark}

Our main theorem can now be stated as follows:

\begin{theorem}\label{thm:main}
Let $\G \toto M$ be a source 1-connected Lie groupoid, and let $A$ be its Lie algebroid. There is a one-to-one correspondence
between multiplicative $(q,p)$-tensor fields $\tau \in \Gamma(\wedge^p T^*\G\otimes \wedge^q T\G)$ and IM $(q,p)$-tensors $(D,l,r)$
on $A$ satisfying
\begin{equation*}
\begin{array}{rl}
i_{\overrightarrow{a}} \,\tau = & \hspace{-5pt} \T(l(a))\\
i_{\t^*\alpha}\,\tau= & \hspace{-5pt} \T(r(\alpha))\\
\Lie_{\overrightarrow{a}}\, \tau =  & \hspace{-5pt} \T(D(a)),
\end{array}
\end{equation*}
where $\T$ is the map given by \eqref{T:def}.
\end{theorem}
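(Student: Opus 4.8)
The plan is to prove the correspondence $\tau \leftrightarrow (D,l,r)$ in three steps: (i) the infinitesimal components $(D,l,r)$ of a multiplicative tensor $\tau$ --- which exist by Theorem~\ref{thm:mult_char}, since source $1$-connectedness implies source-connectedness --- constitute an IM $(q,p)$-tensor, i.e., satisfy \eqref{IM1}--\eqref{IM6}; (ii) the assignment $\tau \mapsto (D,l,r)$ is injective; (iii) it is surjective onto IM $(q,p)$-tensors. Step (iii) is the integration step, where source $1$-connectedness is essential; it is carried out through the ``big'' Lie groupoid $\bG \toto \M$ via the viewpoint of multiplicative tensors as multiplicative functions.

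For step (i), I would first record the compatibility of the map $\T$ of \eqref{T:def} with the operations in \eqref{eq:conds}: for $\Phi \in \Gamma(\wedge^\bullet T^*M \otimes \wedge^\bullet A)$, $a \in \Gamma(A)$ and $\alpha \in \Omega^1(M)$,
\[
\Lie_{\overrightarrow{a}}\, \T(\Phi) = \T(a\cdot \Phi), \qquad i_{\overrightarrow{a}}\, \T(\Phi) = \T(i_{\rho(a)}\Phi), \qquad i_{\t^*\alpha}\, \T(\Phi) = \T(i_{\rho^*\alpha}\Phi),
\]
together with the injectivity of $\T$ (evaluate at units). These follow from $\overrightarrow{a}$ being $\t$-related to $\rho(a)$, from $\Lie_{\overrightarrow{a}}\overrightarrow{\frakx} = \overrightarrow{[a,\frakx]}$, and from $i_{\t^*\alpha}\overrightarrow{\frakx} = \overrightarrow{i_{\rho^*\alpha}\frakx}$. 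Granting these, each IM-equation drops out of \eqref{eq:conds} together with one standard identity: \eqref{IM1} from $\Lie_{\overrightarrow{[a,b]}}\tau = [\Lie_{\overrightarrow{a}}, \Lie_{\overrightarrow{b}}]\tau$; \eqref{IM2} and \eqref{IM3} from $[\Lie_{\overrightarrow{a}}, i_{\overrightarrow{b}}] = i_{\overrightarrow{[a,b]}}$ and $[\Lie_{\overrightarrow{a}}, i_{\t^*\alpha}] = i_{\t^*(\Lie_{\rho(a)}\alpha)}$; \eqref{IM4} and \eqref{IM5} from the skew-symmetry of $\tau$ in its $T\G$-entries and $T^*\G$-entries respectively; and \eqref{IM6} from the fact that $i_{\overrightarrow{a}}$ and $i_{\t^*\alpha}$ act on different tensor factors and hence commute. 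In each case one uses the identities above to move the operation inside $\T$ and then cancels $\T$ by injectivity. Step (ii) follows at once: $i_{\overrightarrow{a}}\tau = \T(l(a))$, $i_{\t^*\alpha}\tau = \T(r(\alpha))$, $\Lie_{\overrightarrow{a}}\tau = \T(D(a))$ and the injectivity of $\T$ show that $(D,l,r)$ is determined by $\tau$; and if $(D,l,r) = 0$ then $\tau$ vanishes on $M$ by \eqref{eq:vanish} and is invariant along every right-invariant vector field, so a concatenation-of-flows argument along the (connected) $\s$-fibers forces $\tau \equiv 0$.

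For step (iii), I would pass to $\bG \toto \M$, which is source $1$-connected by Remark~\ref{rem:fibers}. Since $\tau$ is multiplicative iff $c_\tau$ is, and since multiplicative functions on the source $1$-connected $\bG$ correspond bijectively --- via Proposition~\ref{func:prop} and the integration of Lie-algebroid cocycles --- to $1$-cocycles $\mu$ of the Lie algebroid $\mathbb{A} := A\bG$, it suffices to realize each IM $(q,p)$-tensor as such a cocycle that is, moreover, componentwise linear and skew-symmetric in the appropriate entries (and hence equal to $Ac_\tau$ for a tensor $\tau$). The key structural input is that $\mathbb{A} \to \M$ is a VB-algebroid over $A \to M$: this yields a generating set of the $C^\infty(\M)$-module $\Gamma(\mathbb{A})$ parametrized by $\Gamma(A)$ and $\Omega^1(M)$ (built from the translation bisections $\calb a$ and $\calb \alpha$ of the tangent and cotangent factors, together with associated core sections) and explicit brackets and anchor obtained from the classical tangent- and cotangent-lift operations. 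Given an IM $(q,p)$-tensor $(D,l,r)$, I would define $\mu \in \Gamma(\mathbb{A}^*)$ by prescribing its pairings with these generators in terms of $D$, $l$ and $r$; the Leibniz rule for $D$ is precisely what makes this prescription consistent with the $C^\infty(M)$-module relations among the generators, and the equation $d_{\mathbb{A}}\mu = 0$, evaluated on pairs of generators, becomes the system \eqref{IM1}--\eqref{IM6} (the remaining IM-equations holding automatically, cf.~Remark~\ref{rem:redundacy}). By source $1$-connectedness, $\mu$ integrates to a multiplicative function $f \in C^\infty(\bG)$; equivariance of $\mu$ under the fiberwise dilations and entry-permuting automorphisms of $\bG$ --- inherited from the skew, tensorial nature of $(D,l,r)$ --- forces $f$ to be componentwise linear and skew, hence $f = c_\tau$ for a unique multiplicative $(q,p)$-tensor $\tau$ by Lemma~\ref{lemma:comp_tensor}, and $\tau$ has infinitesimal components $(D,l,r)$ by construction.

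The main obstacle is step (iii), and within it the explicit description of the VB-algebroid $\mathbb{A} = A\bG$: choosing a workable generating set of $\Gamma(\mathbb{A})$, computing its brackets and anchor via the lifting operations, checking that the cocycle condition translates exactly into \eqref{IM1}--\eqref{IM6}, and verifying that the integrated function retains componentwise linearity and skew-symmetry. Steps (i) and (ii), by contrast, are essentially formal once the $\T$-compatibility identities are established.
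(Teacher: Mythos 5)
Your proposal is correct, and its overall architecture coincides with the paper's: the integration step (your (iii)) is exactly the paper's route --- describe the VB-algebroid $\bA\to\M$ by the generators $(T^pa,\Rev_a^q)$, $\calb a_{(i)}$, $\calb\alpha_{(j)}$, define $\mu\in\Gamma(\bA^*)$ by its pairings with these (well-defined thanks to the Leibniz rule), identify $d_{\bA}\mu=0$ on pairs of generators with (IM1)--(IM6), integrate the cocycle on the source $1$-connected $\bG$, and recover componentwise linearity and skew-symmetry of the integrated function from the corresponding equivariance at the algebroid level (this last point is the content of Prop.~\ref{prop:comp_group}; note that dilation-equivariance alone does not suffice --- one also needs vanishing on the partial zero sections, as in Prop.~\ref{comp:alt}). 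Where you genuinely diverge is in step (i): you derive (IM1)--(IM6) directly on $\G$ from the relations \eqref{eq:conds} via the compatibility of $\T$ with $\Lie_{\overrightarrow{a}}$, $i_{\overrightarrow{a}}$, $i_{\t^*\alpha}$ and standard commutator identities ($\Lie_{[X,Y]}=[\Lie_X,\Lie_Y]$, $[\Lie_X,i_Y]=i_{[X,Y]}$, skew-symmetry), whereas the paper obtains them only as the translation of the cocycle condition $d_{\bA}(Ac_\tau)=0$ through the lifting calculus of Prop.~\ref{Lie_tensor} and Prop.~\ref{prop:lie_alg_tensor}. Your derivation is a clean, self-contained proof of the ``Lie functor'' direction that avoids the lifting machinery; the trade-off is that it does not save work globally, since the equivalence of the cocycle condition with the IM-equations is still needed in full for surjectivity. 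Your step (ii) (uniqueness via flows of right-invariant vector fields along connected $\s$-fibers) is a mild repackaging of Prop.~\ref{func:prop} applied to $c_{\tau_1}-c_{\tau_2}$; just note that \eqref{eq:vanish} alone gives vanishing of $\tau$ only on arguments from $\M$, and you need the contraction conditions $i_{\overrightarrow{a}}\tau=0$, $i_{\t^*\alpha}\tau=0$ as well to conclude that $\tau$ vanishes as a full tensor at the units before propagating along the fibers. These are presentational points, not gaps.
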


\begin{remark}\label{rem:non_skew2}
The correspondence in Theorem \ref{thm:main} can be naturally extended to multiplicative tensors $\tau \in \Gamma((\otimes^p T^*\G) \otimes (\otimes^q T\G))$; in this case, following Rem.~\ref{rem:non_skew1},
we have more infinitesimal components  $(D, l_1, \dots, l_p,$  $r_1, \dots, r_q)$, obtained as in \eqref{non_skew_comp}, satisfying analogous IM-equations.
\end{remark}

We now show how Theorem~\ref{thm:main}, when restricted to $(0,p)$ and $(q,0)$ tensors, directly recovers the infinitesimal
descriptions of multiplicative differential forms and multivector fields, proven in \cite{AC,BC,ILX}.
It also recovers the correspondences in \cite{LMX,StX}, but we will leave this discussion to Section \ref{1_1},
where we present a more general treatment of multiplicative $(1,1)$-tensor fields.

\subsubsection*{Multiplicative differential forms}\label{diff_forms}
According to Theorem \ref{thm:main}, multiplicative $p$-forms correspond, infinitesimally, to IM $(0,p)$-tensors.
Explicitly, on a given Lie algebroid $A\to M$, IM $(0,p)$-tensors are pairs $(D,l)$, where
$D: \Gamma(A) \Arrow \Gamma(\wedge^p T^*M)$, $l: A \Arrow \wedge^{p-1} T^*M$, and these maps satisfy
$$
D(fa) = f D(a) + df \wedge l(a),
$$
for $a\in \Gamma(A)$ and $f\in C^\infty(M)$, and
\begin{align*}
D([a,b]) & = \Lie_{\rho(a)} D(b) - \Lie_{\rho(b)} D(a)\\
l([a,b]) & = \Lie_{\rho(a)} l(b) - i_{\rho(b)} D(a)\\
i_{\rho(a)}l(b) & = -i_{\rho(b)} l(a).
\end{align*}

Note that IM $(0,p)$-tensors agree with Spencer operators with values in the trivial representation, as considered in \cite{CSS}.

\begin{example}\label{poisson_(2,0)}
On a Poisson manifold $(M, \Pi)$, the cotangent bundle $T^*M$ has a Lie algebroid structure whose anchor is given by the
contraction of covectors with $\Pi$, $\Pi^\sharp: T^*M \to TM$, and the Lie bracket is given by
\begin{equation}\label{bracket:pi}
[\alpha,\beta]_\Pi = \Lie_{\Pi^\sharp(\alpha)} \beta - \Lie_{\Pi^\sharp(\beta)} \alpha - d(i_{\Pi^\sharp(\alpha)} \beta),
\,\, \alpha, \,\beta \in \Gamma(T^*M).
\end{equation}
There exists a canonical IM $(0,2)$-tensor on $T^*M$ given by $D=d$, the de Rham differential, and $l=\mathrm{id}_{T^*M}$.
\end{example}

We have the following alternative way to express IM $(0,p)$-tensors, see \cite{AC,bc,BCO}. 
We consider pairs $(\mu, \nu)$ with $\mu: A \Arrow \wedge^{p-1} T^*M$, $\nu: A \Arrow \wedge^p T^*M$ bundle maps (covering the identity)
satisfying
\begin{align*}
\nu([a,b]) & = \Lie_{\rho(a)} \nu(b) - i_{\rho(b)} d\nu(a),\\
\mu([a,b]) & = \Lie_{\rho(a)} \mu(b) - i_{\rho(b)} (d\mu(a) + \nu(a)),\\
i_{\rho(a)} \mu(b) & = -i_{\rho(b)} \mu(a),
\end{align*}
for all $a, b \in \Gamma(A)$. Such a pair $(\mu,\nu)$ is called an {\em IM $p$-form} in \cite{bc}.
The equivalence between IM $(0,p)$-tensors $(D,l)$ and IM $p$-forms $(\mu,\nu)$ is given by the following explicit relations:
\begin{align*}
D(a) & =  d\mu(a) + \nu(a), \\
l(a) & =  \mu(a).
\end{align*}

When $(D,l)$ are the infinitesimal components of a  multiplicative $(0,p)$ tensor field (i.e. a differential $p$-form)
$\omega \in \Omega^p(\G)$, we have that
$$
\Lie_{\overrightarrow{a}} \omega = \t^*(d\mu(a) + \nu(a)), \qquad  i_{\overrightarrow{a}} \omega = \t^* \mu(a)
$$
for all $a \in \Gamma(A)$. It follows that
$$
\t^*\nu(a) = \Lie_{\overrightarrow{a}} \omega - \t^*d\mu(a) =
i_{\overrightarrow{a}} d\omega + d(i_{\overrightarrow{a}} \omega - \t^*\mu(a)) = i_{\overrightarrow{a}} d\omega.
$$

In this way, we can see that Theorem~\ref{thm:main} immediately recovers the main result of \cite{bc},
relating multiplicative and IM differential forms:

\begin{corollary}\label{cor:forms}
For a source 1-connected groupoid $\G \toto M$, there is a one-to-one
correspondence between multiplicative $p$-forms $\omega \in \Omega^p(\G)$ and IM $p$-forms $(\mu, \nu)$, given by
$$
\t^*\mu(a) = i_{\overrightarrow{a}}\omega, \qquad \t^*\nu(a) = i_{\overrightarrow{a}} d\omega,
$$
for $a\in \Gamma(A)$.
\end{corollary}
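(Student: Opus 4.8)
The plan is to deduce this statement directly from Theorem~\ref{thm:main} in the special case $q=0$, combined with the reparametrization of IM $(0,p)$-tensors as IM $p$-forms recorded just above. First I would specialize Theorem~\ref{thm:main} to $(0,p)$-tensors, i.e.\ differential $p$-forms. Then the factors $\wedge^q A=\wedge^0 A$ are trivial, the bundle map $r$ is vacuous, and the operator $\T$ of~\eqref{T:def} restricts to $\t^*\colon\Omega^\bullet(M)\to\Omega^\bullet(\G)$. Thus an IM $(0,p)$-tensor on $A$ is precisely a pair $(D,l)$ with $D\colon\Gamma(A)\to\Omega^p(M)$ $\R$-linear, $l\colon A\to\wedge^{p-1}T^*M$ a bundle map covering the identity, satisfying $D(fa)=fD(a)+df\wedge l(a)$ and the three IM-equations displayed above for multiplicative forms; and Theorem~\ref{thm:main} provides a one-to-one correspondence $\omega\leftrightarrow(D,l)$ characterized by $i_{\overrightarrow{a}}\omega=\t^*l(a)$ and $\Lie_{\overrightarrow{a}}\omega=\t^*D(a)$.

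Next I would compose this with the change of variables $(D,l)\leftrightarrow(\mu,\nu)$, namely $\mu(a):=l(a)$, $\nu(a):=D(a)-d\mu(a)$, with inverse $D(a)=d\mu(a)+\nu(a)$, $l(a)=\mu(a)$. Two points must be checked. First, that $\nu$ is a genuine bundle map, i.e.\ $C^\infty(M)$-linear: this follows from the Leibniz rule for $D$ and the tensoriality of $\mu=l$, since $\nu(fa)=D(fa)-d(f\mu(a))=fD(a)+df\wedge l(a)-df\wedge\mu(a)-f\,d\mu(a)=f\nu(a)$. Second, that the IM-equations for $(D,l)$ become, after this substitution, exactly the three IM $p$-form equations for $(\mu,\nu)$ recalled above; this is a direct calculation using only $d\,\Lie_{\rho(a)}=\Lie_{\rho(a)}\,d$ and the Cartan identity $\Lie_{\rho(b)}=i_{\rho(b)}d+d\,i_{\rho(b)}$ (cf.~\cite{AC,bc,BCO}). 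Since the reparametrization is invertible by an explicit formula, it is a bijection between IM $(0,p)$-tensors and IM $p$-forms; composing it with the correspondence of the previous paragraph yields the claimed one-to-one correspondence between multiplicative $p$-forms and IM $p$-forms.

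Finally, the stated formulas are just read off: $\t^*\mu(a)=\t^*l(a)=i_{\overrightarrow{a}}\omega$ by Theorem~\ref{thm:main}, while, using $\t^*d=d\,\t^*$ and the Cartan formula,
\[
\t^*\nu(a)=\t^*D(a)-\t^*d\mu(a)=\Lie_{\overrightarrow{a}}\omega-d\,i_{\overrightarrow{a}}\omega=i_{\overrightarrow{a}}d\omega,
\]
which is precisely the computation displayed immediately before the statement; source $1$-connectedness is used only through Theorem~\ref{thm:main}. I do not expect a genuine obstacle: all the analytic content --- the reconstruction of a multiplicative tensor from its infinitesimal data --- is already absorbed into Theorem~\ref{thm:main}, and what is left is the bookkeeping of the second paragraph, i.e.\ checking that $(D,l)\leftrightarrow(\mu,\nu)$ carries the Leibniz rule and the IM-equations onto the defining relations of an IM $p$-form without introducing or dropping any relation.
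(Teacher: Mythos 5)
Your proposal is correct and follows essentially the same route as the paper: specialize Theorem~\ref{thm:main} to $(0,p)$-tensors, pass to $(\mu,\nu)$ via $\mu=l$, $\nu(a)=D(a)-d\mu(a)$, and read off $\t^*\nu(a)=\Lie_{\overrightarrow{a}}\omega-d\,i_{\overrightarrow{a}}\omega=i_{\overrightarrow{a}}d\omega$ by Cartan's formula, exactly as in the discussion preceding the corollary. The only difference is that you make explicit the tensoriality check for $\nu$ and flag the verification that the IM-equations transform into the IM $p$-form equations, details the paper leaves implicit with a citation.
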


For a Poisson manifold $(M, \Pi)$, if $(T^*M, [\cdot,\cdot]_\Pi)$ is the Lie algebroid of a source 1-connected groupoid
$\G \toto M$, then $\G$ has a canonical multiplicative 2-form integrating the canonical IM $(0,2)$-tensor of Example \ref{poisson_(2,0)}.
In this case, note that $\nu=0$, and this implies that $d\omega=0$.
One can also verify that $l=\mu$ being an isomorphism implies that $\omega$ is
non-degenerate. So $(\G, \omega)$ is the symplectic groupoid integrating $T^*M$.

\begin{remark}\label{rem:complexes}
Multiplicative differential forms on a Lie groupoid $\G$ form a subcomplex of the de Rham complex. On the other hand,
if $(\mu,\nu)$ is an IM $p$-form, one can directly verify that $(\nu,0)$ is an IM $(p+1)$-form. So $(\mu,\nu)\mapsto (\nu,0)$
defines a differential on the space of all IM-forms, in such a way that
the correspondence in Cor.~\ref{cor:forms} is an isomorphism of complexes.
\end{remark}

\subsubsection*{Multiplicative multivector fields}
Let us now consider the case of multiplicative $(q,0)$-tensor fields, i.e., multiplicative $q$-vector fields.
Following Theorem~\ref{thm:main}, their infinitesimal counterparts are IM $(q,0)$-tensors: on a given Lie algebroid
$A\to M$, these are pairs $(D,r)$, where $r: T^*M\to \wedge^{q-1} A$ is a vector-bundle map (covering the identity),
$D: \Gamma(A)\to \Gamma(\wedge^q A)$ is $\R$-linear and satisfies
\begin{equation}\label{eq:Leibniz_(0,2)}
D(f a) = fD(a) - a\wedge r(d f) = fD(a) + (-1)^{q}r(df) \wedge a,
\end{equation}
for $a\in \Gamma(A)$ and $f\in C^\infty(M)$, and the following compatibility conditions hold:
\begin{align*}
D([a,b]) &  = [a,D(b)] - [b, D(a)] = [D(a),b] + [a,D(b)]\\
r(\Lie_{\rho(a)}\alpha) & = [a,r(\alpha)] - i_{\rho^*\alpha} D(a) \\
i_{\rho^*\alpha} r(\beta) &= - i_{\rho^*\beta} r(\alpha),
\end{align*}
for $a, b \in \Gamma(A)$, $\alpha, \beta \in \Omega^1(M)$.

In order to make the connection with the work in \cite{ILX}, recall that a {\em $q$-differential} on a Lie algebroid $A$
is an $\R$-linear map $\delta: \Gamma(\wedge^{\bullet}A) \Arrow \Gamma(\wedge^{\bullet+q-1} A)$ satisfying
\begin{align}
\label{eq:q1}\delta(\frakx_1 \wedge \frakx_2) & = \delta(\frakx_1) \wedge \frakx_2 + (-1)^{k_1(q-1)} \frakx_1 \wedge \delta(\frakx_2)\\
\label{eq:q2}\delta([\frakx_1,\frakx_2]) & = [\delta(\frakx_1) , \frakx_2] + (-1)^{(k_1-1)(q-1)} [\frakx_1, \delta(\frakx_2)]
\end{align}
where $\frakx_i \in \Gamma(\wedge^{k_i}A)$, $i=1,2$, and $[\cdot, \cdot]$ is the Schouten bracket on $\Gamma(\wedge^\bullet A)$
(which makes it into a Gerstenhaber algebra).
In other words, $\delta$ is a derivation of degree $(q-1)$ of $\Gamma(\wedge^\bullet A)$ which is also a derivation of the Schouten
bracket. We denote the space of $q$-differentials by $\mathcal{A}_q$. The space
$\mathcal{A} = \oplus_{q \geq 0} \mathcal{A}_q$ is naturally a {\em Gerstenhaber algebra} with respect to the
bracket given by the commutator
$$
[\delta, \widetilde{\delta}] = \delta \circ \widetilde{\delta} - (-1)^{(q-1)(\widetilde{q}-1)}\, \widetilde{\delta}\circ \delta,
$$
where $\delta \in \mathcal{A}_q$ and $\widetilde{\delta} \in \mathcal{A}_{\widetilde{q}}$.

Note that a $q$-differential $\delta$ is determined by its restrictions
$$
\delta_0: \C(M) \Arrow \Gamma(\wedge^{q-1}A), \qquad \delta_1:  \Gamma(A)\Arrow \Gamma(\wedge^q A).
$$
For this reason, we may denote a $q$-differential by the pair $(\delta_0, \delta_1)$.

Before studying the relationship between $q$-differentials and IM $(q,0)$-tensors,
we list here some properties of the Schouten bracket that we need (see e.g. \cite{KM,Kosz}):
\begin{enumerate}
\item For $f \in C^{\infty}(N)$, $\frakx \in \Gamma(\wedge^q A)$,
$$
[\frakx, f] = (-1)^{q-1} i_{\rho^*df} \frakx.
$$
\item For $\frakx_i \in \Gamma(\wedge^{q_i} A)$, $i=1,2$,
$$
[\frakx_1, \frakx_2]=-(-1)^{(q_1-1)(q_2-1)}[\frakx_2, \frakx_1].
$$
\item For $\frakx_i \in \Gamma(\wedge^{q_i} A)$, $i=1,2,3$,
$$
[\frakx_1, \frakx_2\wedge \frakx_3] = [\frakx_1, \frakx_2]\wedge \frakx_3 + (-1)^{(q_1-1)q_2} \frakx_2 \wedge [\frakx_1, \frakx_3].
$$
\end{enumerate}
\begin{lemma}
There is a one-to-one correspondence between $q$-differentials $(\delta_0, \delta_1)$ and IM $(q,0)$-tensors $(D, r)$ via
\begin{equation}\label{qdif}
\delta_1= D, \qquad \delta_0 = (-1)^q  r\circ \dd,
\end{equation}
where $\dd$ is the de Rham differential.
\end{lemma}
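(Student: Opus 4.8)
The plan is to show that the two assignments $(\delta_0,\delta_1)\mapsto (D,r)$ and $(D,r)\mapsto(\delta_0,\delta_1)$ prescribed by \eqref{qdif} are well defined and mutually inverse. The starting point is a preliminary observation that makes the $\delta_0\leftrightarrow r$ part transparent: for any vector bundle $E\to M$, an $\R$-linear map $\delta_0\colon C^\infty(M)\to\Gamma(E)$ satisfying $\delta_0(fg)=f\delta_0(g)+g\delta_0(f)$ is the same datum as a vector-bundle map $\hat r\colon T^*M\to E$ covering the identity, via $\delta_0=\hat r\circ\dd$; indeed, the Leibniz rule forces $\delta_0(f)_x$ to depend only on $(\dd f)_x$ by the usual local-coordinates argument, which produces $\hat r$, and conversely $\hat r\circ\dd$ is visibly such a derivation. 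Applying this with $E=\wedge^{q-1}A$ and $\hat r=(-1)^q r$ identifies bundle maps $T^*M\to\wedge^{q-1}A$ with derivations $C^\infty(M)\to\Gamma(\wedge^{q-1}A)$; and by \eqref{eq:q1} with $k_1=0$ the component $\delta_0$ of a $q$-differential is exactly such a derivation, so the ``$r$-data'' on the two sides are already matched.

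For the forward direction, given a $q$-differential $(\delta_0,\delta_1)$ I would set $D=\delta_1$ and let $r$ be the bundle map corresponding to $\delta_0$ as above. Then $D$ is $\R$-linear, and its Leibniz rule \eqref{eq:Leibniz_(0,2)} is precisely \eqref{eq:q1} evaluated on $f\wedge a$, after rewriting $(-1)^q r(\dd f)\wedge a=-a\wedge r(\dd f)$ by graded commutativity of $\wedge$. For the IM-equations: \eqref{IM1} follows by applying \eqref{eq:q2} to $[a,b]$ and rewriting $[D(a),b]$ through property~(2) of the Schouten bracket. Since $r$ is a bundle map and exact $1$-forms span $T^*M$ pointwise, \eqref{IM3} and \eqref{IM5} need only be checked on $\alpha=\dd f$, $\beta=\dd g$; for these one applies \eqref{eq:q2} to the brackets $[a,f]$ and $[f,g]$ (which by property~(1) equal $i_{\rho^*\dd f}a$ and $0$), uses $\Lie_{\rho(a)}\dd f=\dd(i_{\rho^*\dd f}a)$, and converts every Schouten bracket with a function into a contraction $i_{\rho^*\dd f}$ via properties~(1)--(2). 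Bookkeeping the Koszul signs then yields exactly \eqref{IM3} and \eqref{IM5}.

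For the converse, given an IM $(q,0)$-tensor $(D,r)$ I would set $\delta_1=D$ and $\delta_0=(-1)^q r\circ\dd$. The Leibniz rule \eqref{eq:Leibniz_(0,2)} for $D$ is exactly the compatibility needed so that $f\mapsto\delta_0(f)$, $a\mapsto D(a)$ extends, by the graded Leibniz rule on decomposables $a_1\wedge\cdots\wedge a_k$, to a well-defined degree-$(q-1)$ derivation $\delta$ of $(\Gamma(\wedge^\bullet A),\wedge)$; well-definedness is a routine sign check using skew-commutativity, and this is \eqref{eq:q1}. It remains to verify \eqref{eq:q2}. Here I would invoke the standard fact that, since $\delta$ is a derivation for $\wedge$ and the Schouten bracket obeys property~(3), the defect $\delta([\mathfrak{X}_1,\mathfrak{X}_2])-[\delta\mathfrak{X}_1,\mathfrak{X}_2]-(-1)^{(k_1-1)(q-1)}[\mathfrak{X}_1,\delta\mathfrak{X}_2]$ is a graded derivation in each argument; hence \eqref{eq:q2} need only be checked when $\mathfrak{X}_1,\mathfrak{X}_2\in C^\infty(M)\cup\Gamma(A)$, and the three cases $(a,b)$, $(a,f)$, $(f,g)$ are precisely \eqref{IM1}, \eqref{IM3} with $\alpha=\dd f$, and \eqref{IM5} with $\alpha=\dd f$, $\beta=\dd g$ — i.e.\ the forward computations read backwards. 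The two constructions are then inverse to one another by construction on the $\delta_1=D$ slot and by the bijection of the preliminary observation on the $\delta_0$--$r$ slot.

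The main obstacle I anticipate is purely bookkeeping: keeping the Koszul signs straight when passing between $[\cdot,f]$, contractions $i_{\rho^*\dd f}$ and the operators $D$, $r$ in the forward derivation of \eqref{IM3} and \eqref{IM5}, and, in the converse, giving a clean justification that \eqref{eq:q2} reduces to generators (the ``defect is a biderivation'' argument). Everything else is routine once the preliminary identification of $\delta_0$ with a bundle map is in place.
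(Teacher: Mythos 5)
Your proposal is correct and follows essentially the same route as the paper: identify $\delta_0$ with a bundle map $r$ via its derivation property, then match the specializations of \eqref{eq:q1} and \eqref{eq:q2} on generators ($k_1,k_2\in\{0,1\}$) with the Leibniz rule and the equations \eqref{IM1}, \eqref{IM3}, \eqref{IM5}. The only difference is that you make explicit the converse extension of $(D,r)$ to a full derivation $\delta$ and the reduction of \eqref{eq:q2} to generators via the biderivation-defect argument, which the paper treats as implicit in the observation that a $q$-differential is determined by $(\delta_0,\delta_1)$.
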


\begin{proof}
Note that \eqref{eq:q1}, \eqref{eq:q2} give rise to five equations involving $\delta_0$ and $\delta_1$,
to be compared with the four equations characterizing IM $(q,0)$-tensors.

For $k_1=k_2=0$, \eqref{eq:q1} is equivalent to the existence of a vector-bundle map
$r_{0}: T^*M \Arrow \wedge^{q-1} A$ such that $r_{0}(d f)=\delta_0(f)$.
This guarantees that we can always assume that $\delta_0$ is of the form described in \eqref{qdif}, i.e.,
we set $r = (-1)^q r_0$ and $D=\delta_1$.

For $k_1=1$ and $k_2=0$, \eqref{eq:q1} becomes
$$
D(a f) = D(a)f + (-1)^{q-1} a\wedge (-1)^q r(df) = fD(a) - a\wedge r(df),
$$
which is just the Leibniz rule for $(D,r)$.

Next, when $k_1=k_2=0$  \eqref{eq:q2} reads
$$
0 = (-1)^q ([r(df),g] + (-1)^{q-1}[f,r(dg)]).
$$
Using that the Schouten bracket satisfies $[\frakx, f] = (-1)^{k-1} i_{\rho^*df} \frakx$,
for $f\in C^\infty(M)$ and $\frakx\in \Gamma(\wedge^k A)$, we see that this last equation boils down to
$i_{\rho^*dg} r(df) = - i_{\rho^*df} r(dg)$, which is equivalent to the condition
$$
i_{\rho^*\alpha} r(\beta) = - i_{\rho^*\beta} r(\alpha)
$$
for $\alpha,\beta \in \Omega^1(M)$.

It is immediate that, for $k_1=k_2=1$, \eqref{eq:q2} becomes
$$
D([a,b]) = [D(a),b] + [a,D(b)].
$$
Finally, when $k_1=1$, $k_2=0$, \eqref{eq:q2} amounts to
$$
(-1)^q r(\Lie_{\rho(a)} df) = (-1)^{q-1}i_{\rho^* df} D(a) + (-1)^q [a, r(df)],
$$
which is equivalent to the condition
$$
r(\Lie_{\rho(a)} \alpha) = -  i_{\rho^* \alpha} D(a) + [a, r(\alpha)].
$$
\end{proof}

For a multiplicative $q$-vector $\Pi$ on a Lie groupoid $\G\toto M$, its infinitesimal components are written
in terms of the corresponding $q$-differential as follows:
\begin{align*}
&\Lie_{\overrightarrow{a}}\Pi = \overrightarrow{D(a)} = \overrightarrow{\delta_1(a)},\\
& i_{\t^*df} \Pi = \overrightarrow{r(df)} = (-1)^q \overrightarrow{\delta_0(f)}.
\end{align*}
Since $[\overrightarrow{a},\Pi] = \Lie_{\overrightarrow{a}}\Pi$ and $[\Pi, \t^*f] = (-1)^{q-1}i_{\t^*df} \Pi$,
and using the fact that $\delta$ is a $q$-differential if and only if so is $-\delta$,
we see that Theorem~\ref{thm:main} recovers the following correspondence (which is the central result in \cite{ILX};
see also \cite{BC}):

\begin{corollary}
For a source 1-connected Lie groupoid $\G \toto M$, there is a one-to-one correspondence between
multiplicative $q$-vector fields $\Pi \in \frakx^q(\G)$ and $q$-differentials $(\delta_0, \delta_1)$, given by
$$
\overrightarrow{\delta_0(f)}  = [\Pi, \t^*f], \qquad
\overrightarrow{\delta_1(a)}  = [\Pi, \overrightarrow{a}].
$$
for $f\in C^\infty(M)$ and $a\in \Gamma(A)$.
\end{corollary}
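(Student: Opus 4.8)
The plan is to derive this corollary by chaining together Theorem~\ref{thm:main}, specialized to the case $p=0$, with the preceding lemma identifying IM $(q,0)$-tensors with $q$-differentials, and finally with the involution $\delta\mapsto-\delta$ of $\mathcal{A}_q$.

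First I would unwind Theorem~\ref{thm:main} when $p=0$. In this case the target bundle $\wedge^{p-1}T^*M\otimes\wedge^q A$ of $l$ is the zero bundle, so $l$ carries no information, the Leibniz rule degenerates to $D(fa)=fD(a)-a\wedge r(df)$, and among the equations \eqref{IM1}--\eqref{IM6} only \eqref{IM1}, \eqref{IM3}, \eqref{IM5} survive, the remaining three involving $l$ or a contraction of a $T^*M$-index that is not present. Thus an IM $(q,0)$-tensor is exactly a pair $(D,r)$ as in the list preceding Corollary~\ref{cor:vect}. Since $\T$ restricted to $\Gamma(\wedge^0 T^*M\otimes\wedge^q A)=\Gamma(\wedge^q A)$ is the map $\frakx\mapsto\overrightarrow{\frakx}$, Theorem~\ref{thm:main} produces a bijection $\Pi\leftrightarrow(D,r)$ between multiplicative $q$-vector fields on $\G$ and such pairs, characterized by
$$
\Lie_{\overrightarrow{a}}\Pi=\overrightarrow{D(a)},\qquad i_{\t^*\alpha}\Pi=\overrightarrow{r(\alpha)}\qquad(a\in\Gamma(A),\ \alpha\in\Omega^1(M)),
$$
the coisotropy condition $\Pi(\varphi_1,\dots,\varphi_q)=0$ being supplied by \eqref{eq:vanish} (compare Corollary~\ref{cor:vect}).

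Next I would invoke the lemma above, which converts $(D,r)$ into a $q$-differential via $\delta_1=D$, $\delta_0=(-1)^q r\circ\dd$ and is a bijection onto $\mathcal{A}_q$; composing, multiplicative $q$-vectors correspond bijectively to $q$-differentials. It then only remains to recast the two characterizing identities in Schouten-bracket form. Using $\Lie_{\overrightarrow{a}}\Pi=[\overrightarrow{a},\Pi]$ and, for the tangent algebroid (whose anchor is the identity), property~(1) of the Schouten bracket in the form $[\Pi,\t^*f]=(-1)^{q-1}i_{\t^*df}\Pi$, the first identity becomes $[\overrightarrow{a},\Pi]=\overrightarrow{\delta_1(a)}$, and evaluating the second on exact $1$-forms $\alpha=\dd f$ yields $[\Pi,\t^*f]=(-1)^{q-1}\overrightarrow{r(\dd f)}=-\overrightarrow{\delta_0(f)}$; testing on exact forms is enough because $i_{\t^*\alpha}\Pi$ and $\overrightarrow{r(\alpha)}$ are both $C^\infty(M)$-linear in $\alpha$ and the $\dd f$ span $T^*M$ pointwise, while $\delta_0$ is determined on functions. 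By graded antisymmetry of the Schouten bracket (property~(2), with degrees $q$ and $1$) one has $[\Pi,\overrightarrow{a}]=-[\overrightarrow{a},\Pi]$, so this bijection satisfies $\overrightarrow{\delta_1(a)}=-[\Pi,\overrightarrow{a}]$ and $\overrightarrow{\delta_0(f)}=-[\Pi,\t^*f]$. Post-composing with the automorphism $(\delta_0,\delta_1)\mapsto(-\delta_0,-\delta_1)$ of $\mathcal{A}_q$---legitimate since $-\delta$ is a $q$-differential whenever $\delta$ is---gives exactly the correspondence asserted in the statement.

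I do not expect any genuine obstacle here: all the content is carried by Theorem~\ref{thm:main} and the preceding lemma. The points that require care are the sign bookkeeping just described (together with the harmless passage $\delta\mapsto-\delta$), the observation that the $p=0$ specialization truly collapses the IM data to the pair $(D,r)$, and the reduction of the test for $r$ from arbitrary $1$-forms to exact ones.
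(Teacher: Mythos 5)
Your proposal is correct and follows essentially the same route as the paper: specialize Theorem~\ref{thm:main} to $(q,0)$-tensors, compose with the lemma identifying IM $(q,0)$-tensors with $q$-differentials, rewrite $\Lie_{\overrightarrow{a}}\Pi$ and $i_{\t^*df}\Pi$ as Schouten brackets, and absorb the resulting signs via the involution $\delta\mapsto-\delta$ of $\mathcal{A}_q$. The sign bookkeeping and the reduction to exact $1$-forms are handled correctly.
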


\begin{remark}\label{rem:gerst}
In \cite{ILX}, it is verified that the space of multiplicative multivector fields on a Lie groupoid $\G$ is closed under
the Schouten bracket, so it is a Gerstenhaber subalgebra of the space of all multivector fields on $\G$. The correspondence
in the previous corollary is proven to give rise to an isomorphism of Gerstenhaber algebras.
\end{remark}

The next example shows how the correspondence between Poisson groupoids and Lie bialgebroids \cite{Mac-Xu2} fits into
the framework of IM $(2,0)$-tensors.

\begin{example}\label{exam:poisson_groupoid}
For the case $q=2$, there is a one-to-one correspondence between pairs $(D,r)$ satisfying
\eqref{eq:Leibniz_(0,2)} and pre-Lie algebroid
\footnote{A pre-Lie algebroid structure on a vector bundle $E\to M$ consist of an anchor map $\rho_E: E \to TM$ together
with a skew-symmetric bilinear bracket $[\cdot, \cdot]$ on $\Gamma(E)$ such that the Leibniz equation
$[u, fv] = f[u,v] + (\Lie_{\rho_E(u)}f)\,v$ holds (see \cite{GU3}).} structures on $A^*$ given as follows:
$\rho_*: A^* \to TM$ is the dual map to $r: A\to T^*M$, and the bracket $[\cdot, \cdot]_*$ is determined by the Koszul formula:
\begin{equation}\label{pre_lie_br}
\<[\mu_1, \mu_2]_*, a \> = \Lie_{\rho_*(\mu_2)}\<\mu_1, a\> - \Lie_{\rho_*(\mu_1)}\<\mu_2, a\> - D(a)(\mu_1, \mu_2),
\end{equation}
for $a \in \Gamma(A)$ and $\mu_1, \mu_2 \in \Gamma(A^*)$. This is just another incarnation of the known correspondence between pre-Lie algebroids structures on $A^*$ and linear bivector fields on $A$  \cite{GU3}
(see also Corollary \ref{cor:1-1}).

Recall that a {\bf Poisson groupoid} is a Lie groupoid $\G \toto M$ endowed with a multiplicative 2-vector field
$\Pi \in \mathfrak{X}^2(\G)$ such that $[\Pi,\Pi]=0$.
The IM $(2,0)$-tensor $(D,r)$ on its Lie algebroid $A$ corresponding to $\Pi$ defines a pre-Lie algebroid structure on $A^*$.
It has an associated operator $\delta: \Gamma(\wedge^\bullet A) \to \Gamma(\wedge^{\bullet+1} A)$ which is
the 2-differential defined by \eqref{qdif}. Now, the fact that $[\Pi,\Pi]=0$ implies that $\delta^2=0$,
which says that the Jacobi identity holds for the pre-Lie bracket $[\cdot, \cdot]_*$;
so $A^*$ is a Lie algebroid. Finally, the IM-equation
$$
D([a,b]) = [D(a),b] + [a, D(b)],\,\,a, \, b \in \Gamma(A),
$$
gives the compatibility condition for $(A,A^*)$ to be a Lie bialgebroid.
This is the only relevant IM-equation because of the redundancies explained in Rem.~\ref{rem:redundacy}
(see \cite{Kos0}).
\end{example}

\section{Proof of the Theorems}\label{sec:proofs}

Before delving into the proofs of Theorems~\ref{thm:mult_char} and \ref{thm:main},
let us briefly sketch the general strategy to obtain the infinitesimal description of multiplicative tensors.

Given a multiplicative $(q,p)$-tensor field $\tau \in \Gamma(\wedge^p T^*\G \otimes \wedge^q T\G)$, our main object of analysis
is formula \eqref{func:eq} applied to $c_\tau$, the corresponding multiplicative function on
the groupoid $\bG \toto \M$ in \eqref{big_group}:
\begin{equation}\label{eq:main}
\Lie_{\overrightarrow{\chi}} \, c_\tau = \bt^*\<A c_{\tau}, \chi\>.
\end{equation}
Our goal is to have a concrete description of the infinitesimal cocycle $Ac_\tau \in \Gamma(\bA^*)$,
which codifies the infinitesimal information of $\tau$.

The first key observation is that it  is enough to check the identity \eqref{eq:main} when
$\chi$ varies within a special set of generators for the $C^\infty(\M)$-module of sections of
$\bA$. These generators will be parametrized by $\Gamma(A)$ and $\Omega^1(M)$, and their pairing with
$Ac_{\tau}$ will give rise to maps from the space of parameters into $C^{\infty}(\M)$; more precisely,
we will obtain three maps, $D$, $l$, and $r$, taking values in the subspace
$\Gamma(\wedge^p T^*M \otimes \wedge^q A) \subseteq C^{\infty}(\M)$
of componentwise linear functions. These maps, which completely determine $Ac_\tau$, will agree with the infinitesimal components
of $\tau$.

Considering the left-hand side of \eqref{eq:main}, we will see that the Lie derivatives of $c_\tau$
can be expressed in terms of contraction and Lie-derivative operations on the tensor field $\tau$ itself. In this way,
the equality \eqref{eq:main} is re-written as the relations involving $\tau$ and $(D,l.r)$ in Theorem~\ref{thm:main}.
The last step is expressing the cocycle condition $d_{\bA}(Ac_\tau)= 0$,
where $d_{\bA}$ is the Lie algebroid differential on $\Gamma(\bA^*)$, in terms of $(D,l,r)$.
This will lead to the  IM-equations.

We will need a few technical tools to carry out this strategy, including the study of lifting operations (Section~\ref{lift_chapter}) and an analysis of linear tensor fields (Section~\ref{linear}).


\subsection{Lifting operations}\label{lift_chapter}
As we now see, classical lifting operations (see e.g. \cite{YI})
of vector fields are essential ingredients in relating Lie derivatives of tensor fields $\tau$
with Lie derivatives of the corresponding componentwise linear functions $c_\tau$.

Let $\pi_E: E \to N$ be a vector bundle over a smooth manifold $N$. Given a section $u \in \Gamma(\E)$,
its \textit{vertical lift} is the vector field $u^{\vl}: \E \Arrow T\E$ on $\E$ defined by
\begin{equation}\label{vert_lift_def}
u^{\vl}(e) = \left.\frac{d}{d\epsilon}\right|_{\epsilon=0} (e + \epsilon \, u(y)), \,\,\,\,y \in N, \,e \in \E_y.
\end{equation}
For a section $\psi \in \Gamma(\E^*)$, we recall that the Lie derivative of its corresponding linear function
$\ell_{\psi} \in C^{\infty}(\E)$ along $u^{\vl}$ is given by
\begin{equation}\label{vert_lift}
\Lie_{u^{\vl}}\,\ell_{\psi} = \langle \psi, u \rangle \circ \pi_E.
\end{equation}

In this paper, we are mostly interested in the cases where
$\E= TN$ or $\E=A^*$, the dual of a Lie algebroid $(A, [\cdot, \cdot], \rho)$ over $N$. We denote the bundle projections
by $\pi: TN \Arrow N$ and $\pi_*: A^* \Arrow N$. In these cases, besides the vertical lifting, there are two other important
lifting constructions that we need to recall. For a vector field $X \in \frakx(N)$, consider its (local) flow
$\phi_{\epsilon}: N \Arrow N$. The \textit{tangent lift of} $X$ is the vector field $X^T$ on $TN$ with flow given by
$\epsilon \mapsto T\phi_{\epsilon}$. For $\alpha \in \Omega^1(N)$ and $f\in C^{\infty}(N)$,
the Lie derivatives of the functions $\ell_{\alpha}$ and $f \circ \pi$ in $C^{\infty}(TN)$ along $X^T$ are given by
\begin{equation}\label{tang_lift}
\Lie_{X^{T}} \,\ell_{\alpha} = \ell_{\Lie_{X}\alpha} \,\,\, \text{ and } \,\,\,\Lie_{X^T} (f\circ \pi) = (\Lie_X f) \circ \pi.
\end{equation}

The \textit{Hamiltonian lift of} a section $a \in \Gamma(A)$ of a Lie algebroid is the vector field $H_a$ on $A^*$ defined by
\begin{equation}\label{H:def}
H_a = \Pi_{lin}^{\sharp}(d\ell_a),
\end{equation}
where $\Pi_{lin} \in \Gamma(\wedge^2 TA^*)$ is the linear Poisson structure on $A^*$ (dual to the Lie algebroid structure on $A$)
and $\ell_a \in C^{\infty}(A^*)$ is the linear function corresponding to $a$. For $b \in \Gamma(A)$ and $f \in C^{\infty}(N)$,
\begin{equation}\label{ham_lift}
\Lie_{H_a} \, \ell_b = \ell_{[a,b]}\,\, \text{ and } \Lie_{H_a}(f \circ \pi_*) = (\Lie_{\rho(a)} f) \circ \pi_*.
\end{equation}


Note that \eqref{tang_lift} and \eqref{ham_lift} completely
characterize $X^T$ and $H_a$, respectively. When $A= TN$, the linear Poisson structure on $T^*N$
comes from the canonical symplectic form, and the Hamiltonian lift $H_X$ of a vector field $X \in \frakx(N)$
coincides with the \textit{cotangent lift} $X^{T^*} \in \frakx(T^*N)$, which is the vector field with flow
$\epsilon \mapsto (T\phi_{-\epsilon})^*$.

Our aim is to extend formulas \eqref{vert_lift}, \eqref{tang_lift} and \eqref{ham_lift} to elements of the space
$\Gamma(\wedge^p T^*N \otimes \wedge^q A)$. Let us first introduce some notation. For $Y \in \frakx(M)$,
we define vector fields on $\oplus^p TM$ as follows:
\begin{align}
\label{Yp} Y^{T,\,p}(X_1, \cdots, X_p) & = (Y^T(X_1), \dots, Y^T(X_p))\vspace{5pt}\\
\label{Yv} Y^{\vl,\,p}_{(i)}(X_1, \cdots, X_p) & = (0_{X_1}, \dots, 0_{X_{i-1}}, Y^{\vl}(X_i), 0_{X_{i+1}}, \dots, 0_{X_p}), \,\,\, i=1, \dots, p.
\end{align}
Similarly, for $a \in \Gamma(A)$ and $\mu \in \Gamma(A^*)$, we define vector fields on $\oplus^q A^*$  by
\begin{align}
\label{Hq} H_a^q(\varphi_1, \cdots, \varphi_q) &= (H_a(\varphi_1), \dots, H_a(\xi_q))\\
\label{mu_v} \mu^{\vl,\,q}_{(j)}(\varphi_1, \cdots, \varphi_q) & =
( 0_{\varphi_1}, \dots, 0_{\varphi_{j-1}}, \mu^{\vl}(\varphi_j),  0_{\varphi_{j+1}}, \dots, 0_{\varphi_{p}}), \,\,\, j=1, \dots, q.
\end{align}

Define
\begin{align*}
\gamma_{(i,0)}^{(p,q)}:& (\oplus^p TN) \oplus(\oplus^q A^*) \Arrow (\oplus^{p-1} TN) \oplus(\oplus^q A^*),\\
\gamma_{(0,j)}^{(p,q)}:& (\oplus^p TN) \oplus(\oplus^q A^*) \Arrow (\oplus^p TN) \oplus (\oplus^{q-1} A^*)
\end{align*}
to be the projections
\begin{align}
\label{proj:i}\gamma^{(p,q)}_{(i,0)}(\underline{\X}, \underline{\varphi}) & =
(X_1, \dots, X_{i-1}, X_{i+1}, \dots, X_p, \varphi_1, \dots, \varphi_q)\\
\label{proj:j}\gamma^{(p,q)}_{(0,j)}(\underline{\X}, \underline{\varphi}) & =
(X_1, \dots, X_p, \varphi_1, \dots, \varphi_{j-1}, \varphi_{j+1}, \dots, \varphi_q),
\end{align}
for $1 \leq i \leq p$, $1 \leq j \leq q$. When there is no risk of
confusion, we simplify the notation by omitting the superscripts $(q,p)$ on the projections.

\begin{proposition}\label{Lie_tensor}
Let $\tau \in \Gamma(\wedge^p T^*N \otimes \wedge^q A)$,
and consider the corresponding componentwise linear function
$c_{\tau}: (\oplus^p TN) \oplus (\oplus^q A^*) \Arrow \R$. For $a \in \frakx(A)$, $\mu \in \Gamma(A^*)$ and $Y \in \Gamma(TN)$,
one has that
\begin{align*}
\Lie_{(\rho(a)^{T, p}, H_a^q)} \,c_{\tau} & = c_{\,a\cdot \tau} \\
\Lie_{(Y^{\vl,\,p}_{(i)}, 0)} \,c_{\tau} & = (-1)^{i-1}c_{i_Y \tau}\circ \gamma_{(i,0)}\\
\Lie_{(0, \mu_{(j)}^{\vl,\, q})} \,c_{\tau} & = (-1)^{j-1} c_{i_{\mu} \tau} \circ \gamma_{(0,j)},
\end{align*}
where $\cdot$ is the action \eqref{action}.
\end{proposition}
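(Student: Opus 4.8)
The plan is to verify the three identities on decomposable tensors $\tau = \beta \otimes \mathfrak{X}$ with $\beta = \alpha_1 \wedge \cdots \wedge \alpha_p \in \Gamma(\wedge^p T^*N)$ and $\mathfrak{X} = b_1 \wedge \cdots \wedge b_q \in \Gamma(\wedge^q A)$; the general case follows by $\R$-linearity, and the correspondence $\tau \mapsto c_\tau$ of Lemma~\ref{lemma:comp_tensor} reduces everything to computing Lie derivatives of the explicit product of linear functions in \eqref{skew_comp}. For such a $\tau$, by Proposition~\ref{pull:invariance}-type bookkeeping (or directly from \eqref{skew_comp}), the function $c_\tau$ on $(\oplus^p TN)\oplus(\oplus^q A^*)$ is, up to the antisymmetrization, a sum of products $(\ell_{\alpha_{\sigma(1)}}\circ \pr^1)\cdots(\ell_{\alpha_{\sigma(p)}}\circ \pr^p)(\ell_{b_{\sigma'(1)}}\circ \pr^1)\cdots(\ell_{b_{\sigma'(q)}}\circ \pr^q)$, where the first block of projections lands in the $TN$-summands and the second in the $A^*$-summands. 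Since the relevant lifted vector fields act diagonally (for the tangent/Hamiltonian lifts) or in a single slot (for the vertical lifts), the Leibniz rule for $\Lie$ turns each computation into a sum over which factor is hit.

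For the first identity, each factor $\ell_{\alpha_k}\circ \pr^j$ is differentiated along $\rho(a)^{T,p}$ using \eqref{tang_lift}, giving $\ell_{\Lie_{\rho(a)}\alpha_k}\circ \pr^j$, and each factor $\ell_{b_k}\circ \pr^j$ is differentiated along $H_a^q$ using \eqref{ham_lift}, giving $\ell_{[a,b_k]}\circ \pr^j$. Summing over $k$ and reassembling via \eqref{skew_comp} yields exactly $c_{\Phi}$ where $\Phi = \sum_k (\alpha_1\wedge\cdots\wedge\Lie_{\rho(a)}\alpha_k\wedge\cdots)\otimes\mathfrak{X} + \beta\otimes\sum_k(b_1\wedge\cdots\wedge[a,b_k]\wedge\cdots) = \Lie_{\rho(a)}\beta\otimes\mathfrak{X} + \beta\otimes[a,\mathfrak{X}]$, which is $a\cdot\tau$ by the definition \eqref{action} and the derivation property of the Schouten bracket. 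For the second and third identities, the vertical lift $Y^{\vl,p}_{(i)}$ (resp. $\mu^{\vl,q}_{(j)}$) is supported in the $i$-th $TN$-slot (resp. $j$-th $A^*$-slot), so by \eqref{vert_lift} only the unique factor $\ell_{\alpha_k}\circ\pr^i$ (resp. $\ell_{b_k}\circ\pr^j$) survives each term of the Leibniz expansion, producing the constant-in-that-slot factor $\langle\alpha_k,Y\rangle\circ\pi$ (resp. $\langle\mu,b_k\rangle\circ\pi_*$). Collecting these reproduces the componentwise linear function of $i_Y\tau \in \Gamma(\wedge^{p-1}T^*N\otimes\wedge^q A)$ (resp. $i_\mu\tau$), pulled back along the projection $\gamma_{(i,0)}$ (resp. $\gamma_{(0,j)}$) that forgets the $i$-th (resp. $j$-th) slot; the sign $(-1)^{i-1}$ (resp. $(-1)^{j-1}$) is precisely the Koszul sign from moving $\alpha_k$ (resp. $b_k$) past the remaining factors, matching the convention $i_Y(\alpha_1\wedge\cdots\wedge\alpha_p) = \sum_i (-1)^{i-1}\langle\alpha_i,Y\rangle\,\alpha_1\wedge\cdots\widehat{\alpha_i}\cdots\wedge\alpha_p$.

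The main obstacle is purely bookkeeping: keeping the antisymmetrization in \eqref{skew_comp}, the permutation signs, and the Koszul signs from contraction consistent across all three formulas — in particular checking that the single surviving factor in the vertical-lift computations is correctly matched, after relabeling slots, with the $\gamma_{(i,0)}$ (resp. $\gamma_{(0,j)}$) pullback and not shifted by an off-by-one in the sign. One clean way to organize this, avoiding a morass of indices, is to first prove the $p=1$, $q=0$ and $p=0$, $q=1$ cases directly from \eqref{vert_lift}, \eqref{tang_lift}, \eqref{ham_lift}, then observe that all three lifted vector fields in the statement are derivations of the algebra of componentwise-polynomial functions that are themselves built by the same diagonal/single-slot rule on the generators $\ell_\alpha\circ\pr^j$, $\ell_b\circ\pr^j$; the formulas then propagate to arbitrary $(p,q)$ by the Leibniz rule, with the signs forced by compatibility with wedge and contraction. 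This reduces the proof to the two base cases plus a formal derivation argument.
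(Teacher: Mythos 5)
Your proposal is correct and follows essentially the same route as the paper: reduce to decomposable $\tau=\omega\otimes\mathfrak{X}$ with $\omega=\alpha_1\wedge\cdots\wedge\alpha_p$, write $c_\tau$ via \eqref{skew_comp} as a sum of products of fiberwise linear functions, apply the Leibniz rule together with \eqref{vert_lift}, \eqref{tang_lift}, \eqref{ham_lift}, and identify the $(-1)^{i-1}$ as the Koszul sign in $i_Y\omega(X_1,\dots,\widehat{X_i},\dots,X_p)=(-1)^{i-1}\omega(X_1,\dots,X_{i-1},Y,X_{i+1},\dots,X_p)$. The paper organizes the reduction by first splitting $c_\tau=(c_\omega\circ\pr_{TN})(c_{\mathfrak{X}}\circ\pr_{A^*})$ and treating the form and multivector factors separately, which is only a cosmetic difference from your bookkeeping.
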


\begin{proof}
Let us consider the case $\tau = \omega \otimes \frakx$, for
$\omega \in \Omega^p(N)$ and $\frakx \in \mathfrak{X}^q(A)$. First note that
$
c_\tau = (c_\omega \circ \pr_{TN}) (c_\frakx \circ \pr_{A^*}),
$
where $\pr_{TN}$ and $\pr_{A^*}$ are the projections of $(\oplus^p TN) \oplus (\oplus^q A^*)$ onto $\oplus^p TN$ and $\oplus^q A^*$,
respectively. Using the Leibniz rule, it suffices to prove that
\begin{align*}
\Lie_{Y^{T,\, p}_{(i)}} \,c_{\omega} & =  c_{\Lie_{Y}\omega},\,\,\,\ \Lie_{Y^{\vl,\,p}_{(i)}}\,c_{\omega} =
(-1)^{i-1} c_{i_Y \omega} \circ \gamma_{(i,0)}\\
\Lie_{H_a^q} \,c_{\frakx} & =  c_{[a,\frakx]}, \,\,\,\, \Lie_{\mu^{\vl, \,q}_{(j)}}\, c_{\frakx}=
(-1)^{j-1} c_{\,i_{\mu} \frakx} \circ \gamma_{(0,j)}.
\end{align*}
Let us simplify matters once more by assuming that $\omega= \alpha_1 \wedge \cdots \wedge \alpha_p$,
for $\alpha_1, \dots, \alpha_p \in \Gamma(T^*N)$. On the one hand, we have that
$$
\begin{array}{rl}
c_{i_Y\omega} \circ \gamma_{(i)} (X_1, \dots, X_p) = & i_Y\omega (X_1, \dots, X_{i-1}, X_{i+1}, \dots, X_p) \\
= & \omega (Y, X_1, \dots, X_{i-1}, X_{i+1}, \dots, X_p)\\
 = & (-1)^{i-1} \omega(X_1, \dots, X_{i-1}, Y, X_{i+1}, \dots, X_p).
\end{array}
$$
On the other hand, by \eqref{skew_comp} and \eqref{vert_lift},
$$
\Lie_{Y^{\vl, \,p}_{(i)}}\, c_{\omega} =
\sum_{\sigma \in S(p)} sgn(\sigma) \langle \alpha_{\sigma(i)}, Y \rangle (\ell_{\alpha_{\sigma(1)}} \circ \pr^1_{TN})
\cdots \widehat{(\ell_{\alpha_{\sigma(i)}}  \circ \pr_{TN}^{i})} \cdots (\ell_{\alpha_{\sigma(p)}}  \circ \pr_{TN}^{p}),
$$
where $\pr^j_{TN}: \oplus^p TN \Arrow TN$ is the projection on the $j$-component, for $1 \leq j \leq p$.
Hence,
\begin{align*}
\left(\Lie_{Y^{\vl, \,p}_{(i)}} \,c_{\omega}\right) (X_1, \dots, X_p) & = \omega(X_1, \dots, X_{i-1}, Y, X_{i+1}, \dots, X_p)\\
     & = (-1)^{i-1} c_{i_Y\omega} \circ \gamma_{(i)} (X_1, \dots, X_p).
\end{align*}
The other equations follow similarly using \eqref{tang_lift} and \eqref{ham_lift}. The case where $\tau$ is arbitrary follows
from linearity of the Lie derivative.
\end{proof}

\begin{remark} \label{rem:cotglift}
When $A=TN$, the action $\cdot$ of $X \in \mathfrak{X}(N)$ on
$\tau \in \Gamma(\wedge^p T^*N \otimes \wedge^q TN)$ is the Lie
derivative of $\tau$ along $X$,
$X \cdot \tau = \Lie_X \tau.$
In this case, Proposition \ref{Lie_tensor} says that
$$
\Lie_{(X^{T,p}, X^{T^*,q})} \, c_{\tau} = c_{\Lie_X \tau},
$$
where $X^T$ and $X^{T^*}$ are the tangent and cotangent lifts of $X$.
\end{remark}

For our next result, we keep the notation as in Remark~\ref{rem:notation}.


\begin{proposition}\label{Lie:leibniz}
Let $\tau \in \Gamma(\wedge^p T^*N \otimes \wedge^q A)$ and $a \in \Gamma(A)$. For $f \in \C(N)$,
$$
(f a) \cdot \tau = f (a\cdot \tau) + df \wedge i_{\rho(a)}\tau - a \wedge i_{df} \tau.
$$
\end{proposition}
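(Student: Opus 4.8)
The plan is to reduce to a homogeneous tensor and then expand both sides using the definition \eqref{action} of the action, Cartan calculus, and the properties of the Schouten bracket recalled above, bearing in mind the module conventions of Remark~\ref{rem:notation}.

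Since the action $a\cdot(-)$ of \eqref{action} is $\R$-linear, and so are $f(-)$, $df\wedge i_{\rho(a)}(-)$ and $a\wedge i_{df}(-)$, it suffices to prove the identity for a homogeneous element $\tau=\beta\otimes\frakx$ with $\beta\in\Omega^p(N)$ and $\frakx\in\Gamma(\wedge^q A)$. For such $\tau$, using $\rho(fa)=f\rho(a)$, the left-hand side reads
$$
(fa)\cdot(\beta\otimes\frakx)=\Lie_{f\rho(a)}\beta\otimes\frakx+\beta\otimes[fa,\frakx].
$$
I would treat the two summands separately. For the first, Cartan's formula $\Lie_X=d\,i_X+i_X\,d$ gives $\Lie_{f\rho(a)}\beta=f\,\Lie_{\rho(a)}\beta+df\wedge i_{\rho(a)}\beta$. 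For the second, I regard $fa=f\wedge a$ as an element of $\Gamma(\wedge^1 A)$: graded antisymmetry of the Schouten bracket turns $[fa,\frakx]$ into $-[\frakx,f\wedge a]$; the derivation property over $\wedge$ splits this into $[\frakx,f]\wedge a+f\,[\frakx,a]$; and then $[\frakx,f]=(-1)^{q-1}i_{\rho^*(df)}\frakx$, $[\frakx,a]=-[a,\frakx]$, together with the reordering $(i_{\rho^*(df)}\frakx)\wedge a=(-1)^{q-1}a\wedge i_{\rho^*(df)}\frakx$, yield
$$
[fa,\frakx]=f\,[a,\frakx]-a\wedge i_{\rho^*(df)}\frakx.
$$

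Substituting these two computations back, the left-hand side becomes
$$
f\big(\Lie_{\rho(a)}\beta\otimes\frakx+\beta\otimes[a,\frakx]\big)+(df\wedge i_{\rho(a)}\beta)\otimes\frakx-\beta\otimes(a\wedge i_{\rho^*(df)}\frakx).
$$
By the conventions of Remark~\ref{rem:notation} the three summands are exactly $f\,(a\cdot\tau)$, $\,df\wedge i_{\rho(a)}\tau$, and $-\,a\wedge i_{df}\tau$ (recall that on the $A$-factor the contraction $i_{df}$ means $i_{\rho^*(df)}$), so this equals the right-hand side; the general case follows by $\R$-linearity. The only step needing care is the sign bookkeeping in the Schouten-bracket computation — everything else is a routine unwinding of definitions.
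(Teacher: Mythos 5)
Your proof is correct and follows essentially the same route as the paper's: reduce by $\R$-linearity to a homogeneous $\tau=\beta\otimes\frakx$, use the Leibniz/Cartan identity $\Lie_{f\rho(a)}\beta=f\Lie_{\rho(a)}\beta+df\wedge i_{\rho(a)}\beta$ for the form factor, and the graded antisymmetry and derivation properties of the Schouten bracket (together with $[\frakx,f]=(-1)^{q-1}i_{\rho^*df}\frakx$) to get $[fa,\frakx]=f[a,\frakx]-a\wedge i_{\rho^*df}\frakx$. The sign bookkeeping you carry out matches the paper's computation exactly.
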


\begin{proof}
By linearity, it suffices to prove the result for $\tau = \omega \otimes \frakx$,
where $\omega \in \Gamma(\wedge^p T^*N)$ and $\mathfrak{X} \in \Gamma(\wedge^q A)$.
First,
$$
\Lie_{\rho(f a)} \,\omega  = f \, \Lie_{\rho(a)} \omega + df \wedge i_{\rho(a)}\omega
$$
and, by the properties of the Schouten bracket,
\begin{align*}
[f a, \mathfrak{X}] & =  -[\frakx, f a ] = - [\frakx, f] \wedge a - f [\frakx, a]\\
 & = - (-1)^{q-1} (i_{df} \frakx) \wedge a + f\, [a, \frakx]\\
 & = - a \wedge i_{df} \frakx + f\, [a, \frakx].
\end{align*}
Hence,
$(f a) \cdot (\omega \otimes \frakx)   =
f \,(a\cdot\tau) + df \wedge \left((i_{\rho(a)} \omega) \otimes \mathfrak{X}\right) -
a \wedge \left(\omega \otimes (i_{df}\mathfrak{X}) \right)$,
as we wanted.
\end{proof}


\subsection{The Lie algebroid of $\bG\toto \M$}

In this subsection, we discuss the Lie algebroid $\bA \to \M$ of the Lie groupoid $\bG\toto \M$ introduced in \eqref{big_group} and
describe a special set of generators for the $C^{\infty}(\M)$-module $\Gamma(\bA)$.

\subsubsection*{Prolongations of vector bundles}
Given a vector bundle $\pi_E:\E\Arrow M$, we may view it as a Lie
groupoid whose source and target maps are equal to $\pi_E$, the unit map
is the zero section $0: M \Arrow \E$, and the multiplication is
fiberwise addition. In this case, the tangent Lie groupoid is the
vector bundle $T\pi_E: T\E \Arrow TM$, called the \textit{tangent
prolongation} of $\E$. Similarly, the cotangent Lie groupoid is the
vector bundle $\widetilde{\pi}_E: T^*\E \Arrow \E^*$, called the
\textit{cotangent prolongation}. The projection $\widetilde{\pi}_E$ has
the following description: for $\xi \in T^*_e \E$, $\widetilde{\pi}_E(\xi)
\in \E^*_{\pi_E(e)}$ is the element defined by
$$
\langle \widetilde{\pi}_E(\xi), \dot{e} \rangle =
\langle \xi, \left.\frac{d}{d\epsilon}\right|_{\epsilon=0} (e+ \epsilon\,\dot{e}) \rangle, \;\;\;\;\; \forall\, \dot{e} \in \E_{\pi_E(e)}.
$$

The Lie algebroid $A\E$ is identified with $\E \Arrow M$ itself, with the zero
anchor and zero bracket; the right-invariant vector field
corresponding to $u \in \Gamma(\E)$ is the vertical lift $u^{\vl}
\in \frakx(\E)$, see \eqref{vert_lift_def}. The exact sequence
\eqref{s_seq} becomes
\begin{equation}\label{core_seq}
0 \longrightarrow \E \hookrightarrow 0^*T\E \stackrel{T\pi_E} \longrightarrow TM \longrightarrow 0.
\end{equation}
For $e \in \E_x$,
$$
\overline{e} = \left. \frac{d}{d\epsilon} \right|_{\epsilon=0}( \epsilon \, e) \,\in T_{0_x}\E
$$
defines the inclusion $\E \hookrightarrow 0^*T\E$ in the exact
sequence above. The translation bisection associated to $u \in
\Gamma(\E)$ is given by
\begin{equation}\label{trans_section}
\calb u(X) = T0(X) + \overline{u(x)}, \;\;\;\; X \in T_xM.
\end{equation}

In the cotangent prolongation, the exact sequence \eqref{s_dual_seq}
becomes the dual of \eqref{core_seq},
$$
0 \longrightarrow T^*M \stackrel{(T\pi_E)^*}\hookrightarrow 0^*T^*\E  \stackrel{\widetilde{\pi}_E} \longrightarrow \E^* \longrightarrow 0.
$$
For $\beta \in T_x^*M$, its image under the first map of
the exact sequence is
$$
\overline{\beta} = (T_{0_x}\pi_E)^*\beta  \in T_{0_x}^*\E.
$$
For a 1-form $\alpha \in \Omega^1(M)$, its translation bisection
$\calb\alpha: \E^* \Arrow T^*\E$ is given by
$$
\calb \alpha(\varphi) = \widetilde{0}_{\varphi} + \overline{\alpha(x)}, \,\, \varphi \in \E^*_x.
$$
Note that since $\t=\s=\pi_E$, $\calb u$ and $\calb \alpha$ are sections
of $T\pi_E: T\E \Arrow TM$ and $\widetilde{\pi}_E: T^*\E \Arrow \E^*$,
respectively. They are known as \textit{core sections} in the
general theory of double vector bundles (see e.g. \cite{Mac-book}).

\subsubsection*{Sections of prolongations}

For a section $u: M \Arrow \E$, its derivative $Tu: TM \Arrow T\E$
defines a section of the tangent prolongation. There is also an
induced section $\Rev_u: E^*\to T^*E$ of the cotangent prolongation,
as we now explain.

Recall the \textit{reversal isomorphism} $\Rev: T^*(\E^*) \Arrow
T^*\E$ (see e.g. \cite{Mac-book} for details): in local coordinates,
\begin{equation}\label{eq:reversal}
\Rev(\varphi, \beta, e) = (e, -\beta, \varphi),
\end{equation}
where we are locally writing $T^*(E^*) \cong E^* \oplus T^*M \oplus E$ and $T^*E \cong E \oplus T^*M \oplus E^*$.
Globally, $\Rev$ is both a vector bundle morphism from the cotangent bundle of $\E^*$, $T^*(\E^*) \Arrow
\E^*$,  to the cotangent prolongation of $\E$, $T^*\E \Arrow \E^*$,
and from the cotangent prolongation of $\E^*$, $T^*(\E^*) \Arrow
\E$, to the cotangent bundle of $\E$, $T^*\E \Arrow  \E$.
It fits into the following commutative diagram of vector bundle morphisms:
\begin{equation*}
\xy
(0,0)*+^{\E^*}="A"; (18,8)*+^{M}="M_1"; (30,0)*+^{\E^*}="\E^*";(48,8)*+^{M}="M_2";
(0,20)*+^{T^*(\E^*)}="T^*(\E^*)"; (18,28)*+^{\E}="\E_1"; (30,20)*+^{T^*\E}="T^*\E";(48,28)*+^{\E}="\E";
{\ar@{->}"A"; "M_1"};
{\ar@{->}_{\id_{\E^*}}"A"; "\E^*"};
{\ar@{->}^{\,\,\,\id_M}"M_1"; "M_2"};
{\ar@{->}"\E^*"; "M_2"};
{\ar@{->}"T^*(\E^*)"; "\E_1"};
{\ar@{->}^{\Rev}"T^*(\E^*)";"T^*\E"};
{\ar@{->}^{\id_\E}"\E_1"; "\E"};
{\ar@{->}"T^*\E"; "\E" };
{\ar@{->}"T^*(\E^*)"; "A"};
{\ar@{->}"\E_1";"M_1"};
{\ar@{->}"T^*\E"; "\E^*"};
{\ar@{->}"\E"; "M_2"};
\endxy
\end{equation*}

For $u \in \Gamma(\E)$ and $\mu \in \Gamma(\E^*)$, let $\ell_u \in
C^{\infty}(\E^*)$ and $\ell_{\mu}\in C^{\infty}(\E)$ be the
corresponding fiberwise linear functions. Then the composition
\begin{equation}\label{Ra_def}
\Rev_u := \Rev \circ d\ell_u: \E^* \Arrow T^*\E
\end{equation}
defines a section of the cotangent prolongation.
Note that the identities
\begin{equation}
\Rev_u(\mu(x)) = d\ell_{\mu}(u(x)) - (\pi_E^*\,d \langle \mu, u \rangle)(u(x)),
\end{equation}
and
\begin{equation}\label{R_hat}
\Rev(-\pi^*_{E^*}\alpha) = \calb \alpha,
\end{equation}
completely determine $\Rev$, where $\pi_{E^*}: E^* \to M$ is the projection of the dual bundle.

Let us fix positive integers $p,q$ and consider the Lie groupoid
\eqref{big_group}, with $\G=\E$. This is actually a vector bundle
$$
\bE^{(p,q)} \to \M^{(p,q)}.
$$
As before, we will simplify the notation by dropping the superindices.

For a section $u \in \Gamma(\E)$, we
denote by $(T^pu, \Rev_u^{\,q}): \mathbb{M} \Arrow \bE$ the section
given by
\begin{equation}\label{linear_pq}
(T^pu, \Rev_u^q)(\underline{\X},\underline{\varphi}) =
(Tu(X_1), \dots, Tu(X_p), \Rev_u(\varphi_1), \dots, \Rev_u(\varphi_q)),
\end{equation}
and by $\calb u_{(i)}: \M \Arrow \bE$ the section given by
\begin{equation}\label{a_i}
\calb u_{(i)}(\underline{\X},\underline{\varphi}) =
(T0(X_1), \dots, \calb u(X_i), \dots, T0(X_p), \widetilde{0}_{\varphi_1}, \dots, \widetilde{0}_{\varphi_q}),
\end{equation}
for $i=1, \dots, p$. Similarly, for $\alpha \in \Omega^1(M)$, denote by $\calb\alpha_{(j)}: \M \Arrow \bE$ the section defined by
\begin{equation}\label{alpha_j}
\calb\alpha_{(j)}(\underline{\X},\underline{\varphi}) =
(T0(X_1), \dots, T0(X_p), \widetilde{0}_{\varphi_1},  \dots, \calb\alpha(\varphi_j), \dots, \widetilde{0}_{\varphi_q}),
\end{equation}
for $j =1, \dots, q$. The following result is proven in
\cite{Mac-doubles}.

\begin{proposition}\label{prop:generators}
The $C^{\infty}(\M)$-module of section $\Gamma(\bE)$ is
generated by $(T^pu, \Rev_u^q)$, $\calb v_{(i)}$ and $\calb
\alpha_{(j)}$, for $u, v \in \Gamma(\E)$ and $\alpha \in
\Omega^1(M)$, $i=1, \dots, p$, $j=1, \dots, q$.
\end{proposition}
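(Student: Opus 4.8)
The plan is to bring into play the \emph{second} vector-bundle structure carried by $\bE$. Besides being a Lie groupoid over $\M$, the Whitney sum $\bE=(\oplus^p TE)\oplus(\oplus^q T^*E)$ carries the componentwise projection $\varpi\colon\bE\to E$ given by the tangent bundle projection $TE\to E$ on each of the first $p$ summands and the cotangent bundle projection $T^*E\to E$ on each of the last $q$; together with $\bs\colon\bE\to\M$ this exhibits $\bE$ as a double vector bundle with sides $E$ and $\M$ over $M$. In particular $\varpi$ is linear along the $\bs$-fibres, so
\[
\beta\colon\bE\longrightarrow\pi_\M^*E,\qquad z\longmapsto\bigl(\bs(z),\varpi(z)\bigr),
\]
is a surjective vector-bundle morphism over $\mathrm{id}_\M$, where $\pi_\M\colon\M\to M$ is the projection. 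It induces a surjection of $\C(\M)$-modules $\beta_*\colon\Gamma(\bE)\twoheadrightarrow\Gamma(\pi_\M^*E)$. Since the pullbacks $\pi_\M^*u$, $u\in\Gamma(E)$, generate $\Gamma(\pi_\M^*E)$ over $\C(\M)$, it suffices to prove: (i) $\beta_*(T^pu,\Rev_u^q)=\pi_\M^*u$ for all $u\in\Gamma(E)$; and (ii) $\ker\beta_*$ is generated over $\C(\M)$ by the sections $\calb v_{(i)}$ and $\calb\alpha_{(j)}$.

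For (i) I would observe that every component of $(T^pu,\Rev_u^q)(\underline{\X},\underline{\varphi})$ is based at the point $u(x)\in E$: the vector $Tu(X_i)$ is tangent to $E$ at $u(x)$, and, by the coordinate formula \eqref{eq:reversal} for the reversal isomorphism together with the definition \eqref{Ra_def}, $\Rev_u(\varphi_j)$ is a covector of $E$ at $u(x)$ as well. Hence $\varpi\circ(T^pu,\Rev_u^q)=u\circ\pi_\M$, which is exactly $\beta_*(T^pu,\Rev_u^q)=\pi_\M^*u$.

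For (ii) I would make $\ker\beta$ explicit. A point of $\bE$ lying over $(\underline{\X},\underline{\varphi})\in\M_x$ belongs to $\ker\beta$ precisely when all of its $TE$- and $T^*E$-components are based at $0_x\in E$. By the canonically split exact sequence \eqref{core_seq} (split by the derivative of the zero section) and its dual (split by the zero section $\widetilde{0}$), such a component is then uniquely of the form $T0(X_i)+\overline{v_i}$ with $v_i\in E_x$ on the $i$-th $TE$-slot, and of the form $\widetilde{0}_{\varphi_j}+\overline{\gamma_j}$ with $\gamma_j\in T^*_xM$ on the $j$-th $T^*E$-slot. This produces a canonical isomorphism of vector bundles over $\M$,
\[
\ker\beta\;\cong\;\pi_\M^*\bigl((\oplus^p E)\oplus(\oplus^q T^*M)\bigr),
\]
under which $\calb v_{(i)}$ corresponds to $\pi_\M^*v$ sitting in the $i$-th $E$-summand (and $0$ elsewhere) and $\calb\alpha_{(j)}$ to $\pi_\M^*\alpha$ sitting in the $j$-th $T^*M$-summand. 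Since pullbacks of $\Gamma(E)$ and of $\Omega^1(M)=\Gamma(T^*M)$ generate the corresponding pullback modules over $\C(\M)$, claim (ii) follows. Then, for arbitrary $\sigma\in\Gamma(\bE)$, writing $\beta_*\sigma=\sum_a f_a\,\pi_\M^*u_a$ with $f_a\in\C(\M)$, the section $\sigma-\sum_a f_a\,(T^pu_a,\Rev_{u_a}^q)$ lies in $\ker\beta_*$ and is therefore a $\C(\M)$-combination of the $\calb v_{(i)}$ and $\calb\alpha_{(j)}$, which proves the proposition.

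I expect the main obstacle to be the clean setup of this double-bundle picture: checking that $\varpi$ is fibrewise linear over $\bs$ (so that $\beta$ is a genuine vector-bundle morphism) and that $\ker\beta$ is pinned down exactly by the core sequences as above — this is the double-vector-bundle bookkeeping carried out in \cite{Mac-doubles}. A more hands-on alternative that avoids this machinery is to work over a chart of $M$ on which $E$ admits a local frame $\{\epsilon_a\}$: a direct computation (modelled on the $p=q=1$ case for $T\E\to TM$ and for $T^*\E\to\E^*$) shows that the finitely many sections $(T^p\epsilon_a,\Rev_{\epsilon_a}^q)$, $\calb(\epsilon_a)_{(i)}$ and $\calb(dx^l)_{(j)}$ are pointwise linearly independent and their number equals the rank of $\bE\to\M$, so they form a local frame; a partition of unity on $\M$ then upgrades this to the global generating statement.
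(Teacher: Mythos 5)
Your argument is correct, but note that the paper does not actually prove this proposition: it is stated with a pointer to \cite{Mac-doubles}, so there is no in-text proof to compare against. What you have written is essentially the standard double-vector-bundle argument that underlies Mackenzie's treatment, and it is sound: the map $\beta=(\bs,\varpi)$ is a surjective vector-bundle morphism over $\mathrm{id}_{\M}$ because the prolongation additions $+_{\scriptscriptstyle{\mathfrak{p}}}$ on $T\E\to TM$ and $T^*\E\to \E^*$ cover the fibrewise addition of $\E\to M$ (this is exactly the double-vector-bundle compatibility), the sections $(T^pu,\Rev_u^q)$ are $\beta$-lifts of $\pi_{\M}^*u$ since $Tu(X_i)$ and $\Rev_u(\varphi_j)$ are both based at $u(x)$, and the kernel of $\beta$ is the pullback of the core $(\oplus^p \E)\oplus(\oplus^q T^*M)$ via the split sequences \eqref{core_seq} and its dual, with the core sections $\calb v_{(i)}$, $\calb\alpha_{(j)}$ generating it. Two small points you should make explicit if you write this up: (a) to write $\beta_*\sigma=\sum_a f_a\,\pi_{\M}^*u_a$ globally you need a finite generating family for $\Gamma(\E)$ over $C^\infty(M)$ (or a partition of unity), which is standard; and (b) the identification $\ker\beta\cong\pi_{\M}^*\bigl((\oplus^p\E)\oplus(\oplus^q T^*M)\bigr)$ must be checked to be linear for the prolongation structure, not just a fibrewise bijection --- this follows from $T(+)(T0(X)+\overline{v},\,T0(X)+\overline{v'})=T0(X)+\overline{v+v'}$ and its cotangent analogue. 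Your rank-counting alternative with local frames also works (the generator count $\mathrm{rank}(\E)(1+p)+q\dim M$ matches the rank of $\bE\to\M$) and is closer in spirit to how the paper itself manipulates these generators in Remark~\ref{rem:Clinear} and the proof of Theorem~\ref{thm:mult_char}.
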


\begin{remark}\label{rem:Clinear}[$C^\infty(M)$-linearity]
One can check that
\begin{equation}\label{leibniz:tang}
T(fu) = (f\circ \pi_E) \cdot Tu\, +_{\scriptscriptstyle{\mathfrak{p}}}
\,\ell_{df} \cdot \calb u,
\end{equation}
where we used the notation $+_{\scriptscriptstyle{\mathfrak{p}}}$
and $\,\cdot\,$ for the sum and scalar multiplication on the fibers
of the tangent prolongation $T\E \Arrow TM$, respectively, and
$\ell_{df} \in C^{\infty}(TM)$ is the linear function corresponding
to $df \in \Omega^1(M)$.
Similarly,
\begin{equation}\label{leibniz:cotang}
\Rev_{fu} = (f\circ \pi_{E^*})\cdot \Rev_u
+_{\scriptscriptstyle{\mathfrak{p}}}
 \,\,\ell_{-u} \cdot \calb (df),
\end{equation}
where $+_{\scriptscriptstyle{\mathfrak{p}}}$ and $\,\cdot\,$ denote
the sum and scalar multiplication on the fibers of the cotangent
prolongation $T^*\E \Arrow \E^*$, respectively.
\end{remark}

\subsubsection*{Brackets and anchors.} \label{lie_alg}
We now recall the main features of the tangent
and cotangent Lie algebroids, see e.g. \cite{Mac-Xu} for details.

For a Lie algebroid $(A, [\cdot, \cdot], \rho)$, consider its tangent prolongation $TA \to TM$.
It has the structure of a Lie algebroid, where the Lie bracket $[\cdot,
\cdot]$ on the space of sections of $TA\to TM$ is determined by the conditions
$$
\left[Ta, Tb\right] = T\left[a,b\right], \;\;\; [Ta, \calb b] = \calb[a,b],\;\;\;
[\calb a, \calb b] = 0,
$$
for $a, \, b \in \Gamma(A)$, while the anchor $\rho_T: TA
\Arrow T(TM)$ is determined by
\begin{align*}
\rho_T(Ta) & = \rho(a)^{T}, \,\,\,
\rho_T(\calb a) = \rho(a)^{\vl}.
\end{align*}
where $(\cdot)^T$ and $(\cdot)^{\vl}$ are the tangent and vertical
lifts, respectively.

The cotangent Lie algebroid is the Lie algebroid structure on $T^*A \to A^*$ defined as follows: the anchor
$\rho_{T^*}: T^*(A) \Arrow T(A^*)$ is determined by
\begin{align*}
\rho_{T^*}(\Rev_a)& = H_a, \,\,\,
\rho_{T^*}(\calb \alpha) = (\rho^*\alpha)^{\vl},
\end{align*}
where $H_a \in \frakx(A^*)$ is the Hamiltonian lift  of $a\in
\Gamma(A)$, see \eqref{H:def}. The Lie bracket on the space of sections of
$T^*A\to A^*$ is determined by
$$
[\Rev_a, \Rev_b]=\Rev_{[a,b]}, \;\;\; [\Rev_a, \calb \alpha] = \calb
\left(\Lie_{\rho(a)}\alpha\right), \;\;\; [\calb \alpha,
\calb \beta] =0,
$$
where $a, b \in \Gamma(A)$ and $\alpha, \beta \in \Omega^1(M)$.

The Whitney sum
$$
\bA^{p,q} = (\oplus^p TA) \oplus (\oplus^q T^*A) \Arrow
(\oplus^p TM) \oplus (\oplus^q A^*)
$$
inherits a Lie algebroid structure which is determined componentwise by the
tangent and the cotangent Lie algebroids. We give a detailed description here for convenience.
The anchor map is determined by
\begin{align}
\nonumber \rho_{\bT}(T^pa, \Rev_a^q) & = (\rho(a)^{T,p}, H_a^q)\\
\label{big_anchor}\rho_\bT(\calb a_{(i)}) & = (\rho(a)^{\vl, \,p}_{(i)}, 0)\\
\nonumber \rho_\bT(\calb \alpha_{(j)}) & = (0, \rho^*\alpha^{\vl, \,q}_{(j)}),
\end{align}
for $\alpha \in \Omega^1(M)$.
The Lie bracket on the space of sections of $\bA\to M$ is determined by what it does on generators according to the following formulas:
\begin{align}
\nonumber [(T^pa, \Rev_a^q), (T^pb, \Rev_b^q)] & = (T^p[a,b], \Rev_{[a,b]}^q), \,\, [(T^pa, \Rev_a^q), \calb b_{(i)}] =
\calb [a,b]_{(i)}\\
\label{big_bracket}[(T^pa, \Rev_a^q), \calb \alpha_{(j)}] & =
\calb (\Lie_{\rho(a)}\alpha)_{(j)}, \,\, [\calb a_{(i)}, \calb b_{(i')}] = 0\\
\nonumber [\calb a_{(i)}, \calb \alpha_{(j)}] & = 0, \,\, [\calb \alpha_{(j)}, \calb \beta_{(j')}] = 0.
\end{align}

For a Lie groupoid $\G \toto M$, let $A \to M$ be its Lie algebroid. There are natural Lie-algebroid identifications $A(T\G) \cong TA$
as well as $A(T^*\G)\cong T^*A$ (see \cite{Mac-Xu}). Hence, the Lie algebroid $A(\bG^{(p,q)}) \Arrow \mathbb{M}^{(p,q)}$ of the
Lie groupoid $\bG^{(p,q)} \toto \M^{(p,q)}$ is naturally isomorphic to
\begin{equation}\label{big_alg}
\bA^{p,q} =(\oplus^p TA) \oplus (\oplus^q T^*A) \Arrow
(\oplus^p TM) \oplus (\oplus^q A^*).
\end{equation}
The right-invariant vector fields on $T\G$ corresponding to the sections of type $Ta$, $\calb a$
are given by
\begin{equation}\label{right:tang}
\overrightarrow{Ta} = \overrightarrow{a}^T \,\,\, \text{ and } \,\,\, \overrightarrow{\calb a} = \overrightarrow{a}^{\vl},
\end{equation}
see e.g. \cite{Mac-Xu} for a proof. Similarly, the right-invariant vector fields on
$T^*\G$ corresponding to the sections of type $\Rev_a$, $\calb \alpha$ are
\begin{equation}\label{right:cotang}
\overrightarrow{\Rev_a} =  \overrightarrow{a}^{T^*} \,\, \text{ and
} \,\, \overrightarrow{\calb\alpha} = (\t^*\alpha)^{\vl}.
\end{equation}
The proof of these last formulas can be found
in Appendix \ref{cot_appendix}.

It is now a straightforward consequence of \eqref{right:tang} and
\eqref{right:cotang} that the right-invariant vector fields
$\overrightarrow{(T^pa, \Rev_a^q)}$, $\overrightarrow{\calb
a_{(i)}}$ and $\overrightarrow{\calb \alpha_{(j)}} \in \frakx(\bG^{p,q})$
are given by
\begin{align}
\label{right:linear}
\overrightarrow{(T^pa, \Rev_a^q)} & = (\overrightarrow{a}^{T, \,p}, \overrightarrow{a}^{T^*, \,q})\\
\label{right:core}
\overrightarrow{\calb a_{(i)}} & = (\overrightarrow{a}^{\vl}_{(i)},0)\\
\label{right:core2}
\overrightarrow{\calb \alpha_{(j)}} & =  (0,(\t^*\alpha)^{\vl}_{(j)}),
\end{align}
for $i=1, \dots, p$, $j=1, \dots, q$.

\subsection{Proof of Theorem \ref{thm:mult_char}}

Let us begin with two important lemmas.
For the first one, we need to introduce some notation. Define
$\pi^{(p,q)}_{(i,0)}: \bG^{(p,q)} \Arrow \bG^{(p-1, q)}$,
$\pi^{(p,q)}_{(0,j)}: \bG^{(p,q)} \Arrow \bG^{(p, q-1)}$,
as the natural projections
\begin{align}
\label{projpi:i}\pi^{(p,q)}_{(i,0)}(\underline{\U}, \underline{\xi}) & = (U_1, \dots, U_{i-1}, U_{i+1}, \dots, U_p, \xi_1, \dots, \xi_q)\\
\label{projpi:j}\pi^{(p,q)}_{(0,j)}(\underline{\U}, \underline{\xi}) & = (U_1, \dots, U_p, \xi_1, \dots, \xi_{j-1}, \xi_{j+1}, \dots, \xi_q)
\end{align}
for $1 \leq i \leq p$, $1 \leq j \leq q$. When there is no risk of
confusion, we will omit the superscripts $(q,p)$ on the projections.
Observe that $ \pi_{(i,0)}$ and $ \pi_{(0,j)}$ are groupoid morphisms (covering
\eqref{proj:i} and \eqref{proj:j}, respectively).

\begin{lemma} \label{prop:lr_existence}
For any multiplicative tensor field $\tau \in \Gamma(\wedge^p T^*\G
\otimes \wedge^q T\G)$, there exist vector bundle maps $l : A \Arrow  \wedge^{p-1}
T^*M \otimes \wedge^q A$ and $r  : T^*M  \Arrow  \wedge^p T^*M
\otimes \wedge^{q-1} A$ covering the identity map such that
\begin{align}
\label{l:def} i_{\overrightarrow{a}} \,\tau & = \T(l(a)),\\
\label{r:def} i_{\t^*\alpha} \,\tau & = \T(r(\alpha)).
\end{align}
\end{lemma}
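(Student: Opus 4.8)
The strategy is to extract $l$ and $r$ from the Lie-algebroid cocycle $Ac_\tau\in\Gamma(\bA^*)$ attached to the multiplicative function $c_\tau$ on $\bG\toto\M$, by pairing $Ac_\tau$ against the core-type generators $\calb a_{(1)}$ and $\calb\alpha_{(1)}$ of the $C^{\infty}(\M)$-module $\Gamma(\bA)$ from Proposition~\ref{prop:generators}. Since $\tau$ is multiplicative, $c_\tau$ is a multiplicative function, so \eqref{eq:main} gives $\Lie_{\overrightarrow\chi}\,c_\tau=\bt^*\langle Ac_\tau,\chi\rangle$ for every $\chi\in\Gamma(\bA)$. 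Taking $\chi=\calb a_{(1)}$ with $a\in\Gamma(A)$ and using \eqref{right:core} to identify the right-invariant vector field $\overrightarrow{\calb a_{(1)}}=(\overrightarrow{a}^{\vl,\,p}_{(1)},0)$, Proposition~\ref{Lie_tensor} (the vertical-lift formula, applied to $\tau$ viewed as a tensor on the manifold $\G$ for the Lie algebroid $T\G$, so that $A^*=T^*\G$) yields $\Lie_{(\overrightarrow{a}^{\vl,\,p}_{(1)},0)}\,c_\tau=c_{i_{\overrightarrow a}\tau}\circ\pi_{(1,0)}$, with $\pi_{(1,0)}\colon\bG^{(p,q)}\to\bG^{(p-1,q)}$ forgetting the first $T\G$-entry. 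Comparing the two identities,
$$
c_{i_{\overrightarrow a}\tau}\circ\pi_{(1,0)}=\bt^*\langle Ac_\tau,\calb a_{(1)}\rangle .
$$

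The core step is then a descent argument. The left-hand side is independent of the forgotten entry $U_1$ of a point of $\bG^{(p,q)}$, whereas on the right-hand side $U_1$ enters only through $T\t(U_1)\in TM$; since $\t$ is a submersion and $\cott$ is fibrewise surjective onto $A^*$ (clear from \eqref{eq:cotst}), so that $\bt$ is fibrewise surjective, it follows that $\langle Ac_\tau,\calb a_{(1)}\rangle$ cannot depend on its first $TM$-component, i.e. it is the pullback along $\gamma^{\M}_{(1,0)}\colon\M^{(p,q)}\to\M^{(p-1,q)}$ of a smooth function $g$. As $\pi_{(1,0)}$ covers $\gamma^{\M}_{(1,0)}$ and is a surjective submersion, this forces $c_{i_{\overrightarrow a}\tau}=\bt^*g$; being a pullback of the componentwise linear function $c_{i_{\overrightarrow a}\tau}$ along the fibrewise surjective vector-bundle morphism $\bt$, the function $g$ is itself componentwise linear, so by Lemma~\ref{lemma:comp_tensor} $g=c_{l(a)}$ for a unique $l(a)\in\Gamma(\wedge^{p-1}T^*M\otimes\wedge^q A)$. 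Then Proposition~\ref{pull:invariance}(2) gives $c_{i_{\overrightarrow a}\tau}=\bt^*c_{l(a)}=c_{\T(l(a))}$, i.e.\ $i_{\overrightarrow a}\tau=\T(l(a))$. Repeating the argument verbatim with $\chi=\calb\alpha_{(1)}$ — using $\overrightarrow{\calb\alpha_{(1)}}=(0,(\t^*\alpha)^{\vl,\,q}_{(1)})$ from \eqref{right:core2}, the third lifting formula of Proposition~\ref{Lie_tensor}, and the surjectivity of $\cott$ to discard the first $A^*$-entry of $\M$ — produces $r(\alpha)\in\Gamma(\wedge^p T^*M\otimes\wedge^{q-1}A)$ with $i_{\t^*\alpha}\tau=\T(r(\alpha))$.

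It remains to see that $a\mapsto l(a)$ and $\alpha\mapsto r(\alpha)$ are $C^{\infty}(M)$-linear, hence given by vector-bundle maps covering the identity. This follows from $\overrightarrow{fa}=(\t^*f)\overrightarrow a$, $\t^*(f\alpha)=(\t^*f)\t^*\alpha$, the definition of the contractions in Remark~\ref{rem:notation}, and $\T(f\Phi)=(\t^*f)\T(\Phi)$ from Proposition~\ref{pull:invariance}(1): these give $\T(l(fa))=\T(fl(a))$ and $\T(r(f\alpha))=\T(fr(\alpha))$, whence the claim, since $\T$ is injective (because $\Phi\mapsto c_\Phi$ and $\bt^*$ are both injective). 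I expect the only genuinely delicate point to be the descent argument — making rigorous that the forgotten entry cannot reappear on the base side $\M$; the remainder is a routine unwinding of the lifting formulas of Section~\ref{lift_chapter} together with the componentwise-linear-function dictionary of Section~\ref{app:component}.
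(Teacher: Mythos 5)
Your argument is correct and follows essentially the same route as the paper: pair the cocycle $Ac_\tau$ with the core generators $\calb a_{(i)}$, $\calb \alpha_{(j)}$, apply the lifting formulas of Proposition~\ref{Lie_tensor}, identify the resulting componentwise linear function on $\M^{(p-1,q)}$ (resp.\ $\M^{(p,q-1)}$) with a tensor via Lemma~\ref{lemma:comp_tensor}, and deduce $C^\infty(M)$-linearity from Proposition~\ref{pull:invariance} together with the injectivity of $\T$. The only (cosmetic) difference is in the descent step: where you invoke the fibrewise surjectivity of $\bt$ to see that $\<Ac_\tau,\calb a_{(1)}\>$ is independent of the forgotten slot and componentwise linear, the paper simply restricts the identity $\bt^*\<Ac_\tau,\calb a_{(i)}\>=(-1)^{i-1}c_{i_{\overrightarrow{a}}\tau}\circ\pi_{(i,0)}$ to the units of $\bG$, where $\bt$ is the identity, and reads off $\<Ac_\tau,\calb a_{(i)}\>(\underline{\X},\underline{\varphi})=(-1)^{i-1}\tau(\overrightarrow{a},X_1,\dots,\widehat{X_i},\dots,X_p,\varphi_1,\dots,\varphi_q)$ explicitly.
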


\begin{proof}
By formulas \eqref{func:eq} and \eqref{right:core}, we have that
$$
\bt^*\<A c_{\tau}, \, \calb a_{(i)}\>  = \Lie_{\overrightarrow{\calb a_{(i)}}} \,c_{\tau} =
\Lie_{(\overrightarrow{a}^{\vl}_{(i)}, 0)} \, c_{\tau} = (-1)^{i-1} c_{\,i_{\overrightarrow{a}} \tau} \circ \pi_{(i,0)}
$$
for $a \in \Gamma(A)$, where the last equality is a consequence of Prop.~\ref{Lie_tensor}. Similarly, for
$\alpha\in \Omega^1(M)$, we check that
$$
\bt^*\<A c_{\tau}, \,\calb \alpha_{(j)}\>  = (-1)^{j-1}c_{i_{\t^*\alpha} \, \tau} \circ \pi_{(0,j)}.
$$

Note that, for $(\underline{\X}, \underline{\varphi}) \in \M$,
$$
\<A c_{\tau}, \calb a_{(i)}\>(\underline{\X}, \underline{\varphi}) =
(-1)^{i-1} \tau(\overrightarrow{a}, X_1, \dots, X_{i-1}, X_{i+1}, \dots,  X_p, \varphi_1, \dots, \varphi_q),
$$
which shows that $\<A c_{\tau},  \calb a_{(i)}\>$ is a
componentwise linear function of $\gamma_{(i,0)}(\underline{\X},
\underline{\varphi})$. Hence (see Lemma~\ref{lemma:comp_tensor}) there exists  $l(a)
\in \Gamma(\wedge^{p-1}T^*M\otimes \wedge^q A)$ such that
\begin{equation}\label{l:comp}
\<Ac_{\tau}, \calb a_{(i)}\> = (-1)^{i-1} c_{l(a)} \circ \gamma_{(i,0)}.
\end{equation}
Now, note that
\begin{align*}
c_{i_{\overrightarrow{a}}\tau} \circ \pi_{(i,0)} & = (-1)^{i-1}\bt^*\<Ac_{\tau}, \,\calb a_{(i)}\> =
\bt^* \left(c_{l(a)}\circ \gamma_{(i,0)}\right)\\
& = \left(\bt^*c_{l(a)} \right) \circ \pi_{(i,0)} = c_{\T(l(a))} \circ \pi_{(i,0)}.
\end{align*}
Formula \eqref{l:def} follows from the injectivity of the correspondence between tensors and
componentwise linear functions (Lemma~\ref{lemma:comp_tensor}).

To prove that $l$ is $\C(M)$-linear, we use Proposition \ref{pull:invariance} to see that
$$
\T(l(fa)) = i_{\overrightarrow{fa}} \tau = (\t^*f) \, i_{\overrightarrow{a}} \tau = (\t^*f) \,\T(l(a)) = \T(f l(a)),
$$
for $f \in \C(M)$, so that $\C(M)$-linearity follows from the
injectivity of $\T$.

Similarly, we can prove the existence of $r: T^*M  \Arrow  \wedge^p T^*M
\otimes \wedge^{q-1} A$ by checking that $\<A c_{\tau}, \,\calb \alpha_{(j)}\>$
is componentwise linear, so that \eqref{r:def} follows from
\begin{equation}\label{r:comp}
\<Ac_{\tau}, \calb \alpha_{(j)}\>  =(-1)^{j-1} c_{r(\alpha)}\circ \gamma_{(0,j)}.
\end{equation}
\end{proof}

\begin{lemma}\label{prop:D_existence}
For any multiplicative tensor $\tau \in \Gamma(\wedge^p T^*\G \otimes \wedge^q T\G)$,
there exists an $\R$-linear map $D: \Gamma(A) \Arrow \Gamma(\atensor{p}{q})$ satisfying the
Leibniz condition \eqref{D:leibniz} and such that
\begin{equation}\label{D:def}
\Lie_{\overrightarrow{a}} \tau  = \T(D(a)).
\end{equation}
\end{lemma}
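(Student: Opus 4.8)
The plan is to imitate the proof of Lemma~\ref{prop:lr_existence}, but now feeding the identity \eqref{func:eq} the generators of $\Gamma(\bA)$ of type $(T^pa,\Rev_a^q)$ rather than the core sections. Explicitly, for $a\in\Gamma(A)$ I would apply \eqref{func:eq} to the multiplicative function $c_\tau$ on $\bG\toto\M$ and to $\chi=(T^pa,\Rev_a^q)\in\Gamma(\bA)$. By \eqref{right:linear} the associated right-invariant vector field on $\bG$ is $(\overrightarrow{a}^{T,p},\overrightarrow{a}^{T^*,q})$, so Proposition~\ref{Lie_tensor} together with Remark~\ref{rem:cotglift} — applied to the base $\G$ with its tangent Lie algebroid and to $X=\overrightarrow{a}$ — gives $\Lie_{(\overrightarrow{a}^{T,p},\overrightarrow{a}^{T^*,q})}\,c_\tau = c_{\Lie_{\overrightarrow{a}}\tau}$. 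Hence \eqref{func:eq} becomes
$$
c_{\Lie_{\overrightarrow{a}}\tau} \;=\; \bt^*\,\langle Ac_\tau,\,(T^pa,\Rev_a^q)\rangle ,
$$
i.e.\ the componentwise linear function $c_{\Lie_{\overrightarrow{a}}\tau}$ on $\bG$ is the pullback along the target map $\bt\colon\bG\to\M$ of the function $g_a:=\langle Ac_\tau,(T^pa,\Rev_a^q)\rangle$ on $\M$.

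Next I would observe that $g_a$ is automatically well defined (this is exactly what the displayed identity says) and, moreover, componentwise linear and $C^\infty(M)$-compatible in the sense of Lemma~\ref{lemma:comp_tensor}. Indeed, $\bt$ is a fiberwise-surjective vector-bundle morphism over $\t\colon\G\to M$ — componentwise it is $T\t$ on the $T\G$-factors and $\cott$ on the $T^*\G$-factors — so multilinearity and the module condition descend from $c_{\Lie_{\overrightarrow{a}}\tau}$ to $g_a$ by choosing local lifts of elements of $\M$. Therefore, by Lemma~\ref{lemma:comp_tensor}, there is a unique $D(a)\in\Gamma(\wedge^pT^*M\otimes\wedge^qA)$ with $g_a = c_{D(a)}$. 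Feeding this back and using Proposition~\ref{pull:invariance}(2), the displayed identity reads $c_{\Lie_{\overrightarrow{a}}\tau} = \bt^*c_{D(a)} = c_{\T(D(a))}$, and the injectivity of $\tau\mapsto c_\tau$ yields \eqref{D:def}. The $\R$-linearity of $a\mapsto D(a)$ is then immediate: $a\mapsto\overrightarrow{a}$ and $\Lie$ are $\R$-linear, and $\T$ is injective.

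It remains to verify the Leibniz rule \eqref{D:leibniz}, which is the only real computation. Starting from $\overrightarrow{fa}=(\t^*f)\,\overrightarrow{a}$ and the general identity
$$
\Lie_{gX}\tau \;=\; g\,\Lie_X\tau + dg\wedge i_X\tau - X\wedge i_{dg}\tau
$$
— valid for any vector field $X$, function $g$, and $(q,p)$-tensor $\tau$, with the conventions of Remark~\ref{rem:notation}, and obtained by splitting $\tau$ into homogeneous pieces $\lambda\otimes W$ and combining the Cartan formula on $\lambda$ with the Schouten-bracket Leibniz identities on $W$ — one gets
$$
\Lie_{\overrightarrow{fa}}\tau \;=\; (\t^*f)\,\Lie_{\overrightarrow{a}}\tau + \t^*(df)\wedge i_{\overrightarrow{a}}\tau - \overrightarrow{a}\wedge i_{\t^*(df)}\tau .
$$
Now substitute $\Lie_{\overrightarrow{a}}\tau=\T(D(a))$, $i_{\overrightarrow{a}}\tau=\T(l(a))$ and $i_{\t^*(df)}\tau=\T(r(df))$ (the last two from Lemma~\ref{prop:lr_existence}), together with the elementary identities $(\t^*f)\T(\Phi)=\T(f\Phi)$, $\t^*\alpha\wedge\T(\Phi)=\T(\alpha\wedge\Phi)$ and $\overrightarrow{a}\wedge\T(\Phi)=\T(a\wedge\Phi)$ — the first is Proposition~\ref{pull:invariance}(1), the other two are checked on homogeneous $\Phi=\omega\otimes\frakx$ using $\overrightarrow{a}\wedge\overrightarrow{\frakx}=\overrightarrow{a\wedge\frakx}$. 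This rewrites the right-hand side as $\T\bigl(fD(a)+df\wedge l(a)-a\wedge r(df)\bigr)$, while the left-hand side equals $\T(D(fa))$ by \eqref{D:def}; cancelling $\T$ by injectivity gives \eqref{D:leibniz}. I expect the only delicate point to be the sign bookkeeping in the Schouten-bracket term $-X\wedge i_{dg}\tau$, which is precisely what produces the $-a\wedge r(df)$ summand; everything else is formal.
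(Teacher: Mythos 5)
Your proposal is correct and follows essentially the same route as the paper: pair $Ac_\tau$ with the generators $(T^pa,\Rev_a^q)$ via \eqref{func:eq} and \eqref{right:linear}, use Proposition~\ref{Lie_tensor} (with Remark~\ref{rem:cotglift}) to identify the left-hand side as $c_{\Lie_{\overrightarrow{a}}\tau}$, extract $D(a)$ from componentwise linearity via Lemma~\ref{lemma:comp_tensor}, and obtain the Leibniz rule from the identity $\Lie_{(\t^*f)\overrightarrow{a}}\tau=(\t^*f)\Lie_{\overrightarrow{a}}\tau+\t^*df\wedge i_{\overrightarrow{a}}\tau-\overrightarrow{a}\wedge i_{\t^*df}\tau$, which is exactly Proposition~\ref{Lie:leibniz} specialized to $A=T\G$, combined with Lemma~\ref{prop:lr_existence} and the injectivity of $\T$.
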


\begin{proof}
From \eqref{func:eq} and \eqref{right:linear}, we see that
$$
\bt^* \<A c_{\tau},\,(T^pa, \Rev_a^q)\> = \Lie_{\overrightarrow{(T^pa, \Rev_a^q)}} \,c_{\tau}
= \Lie_{(\overrightarrow{a}^{T, p}, \overrightarrow{a}^{T^*, q})} \, c_{\tau}
= c_{\Lie_{\overrightarrow{a}}\tau},
$$
where the last equality relies on Proposition \ref{Lie_tensor}.
In particular, for $(\underline{\X}, \underline{\varphi}) \in \M$,
$$
\<A c_{\tau},\,(T^pa, \Rev_a^q)\>(\underline{\X}, \underline{\varphi}) =
\Lie_{\overrightarrow{a}}\tau(\underline{\X}, \underline{\varphi}),
$$
which proves that $\<A c_{\tau},\,(T^pa, \Rev_a^q)\> \in C^{\infty}(\M)$ is a componentwise linear function.
By Lemma~\ref{lemma:comp_tensor}, there exists $D(a) \in \Gamma(\tensor{p}{q})$ such that
\begin{equation}\label{comp_D}
\<A c_{\tau}, (T^pa, \Rev_a^q)\> = c_{D(a)}.
\end{equation}
Now \eqref{D:def} follows from Proposition \ref{pull:invariance}
(and the injectivity part of Lemma~\ref{lemma:comp_tensor}):
$$
c_{\Lie_{\overrightarrow{a}}\tau} =\bt^*\<A c_{\tau},\,(T^pa, \Rev_a^q)\> = \bt^*c_{D(a)} = c_{\T(D(a))}.
$$
To prove the Leibniz condition \eqref{D:leibniz},  we use Proposition~\ref{Lie:leibniz} to see that
\begin{align*}
\T(D(fa)) & = \Lie_{\overrightarrow{fa}} \, \tau  = \Lie_{(\t^*f) \, \overrightarrow{a}} \,
\tau = (\t^*f) \,\Lie_{\overrightarrow{a}} \, \tau + \t^*df \wedge i_{\overrightarrow{a}} \,
\tau - \overrightarrow{a} \wedge i_{\t^*df}\,\tau\\
& = (\t^*f) \T(D(a)) + \t^*df \wedge \T(l(a)) - \overrightarrow{a} \wedge \T(r(df))\\
& = \T( f D(a) + df \wedge l(a) - a \wedge r(df)).
\end{align*}
The conclusion follows from the injectivity of $\T$.
\end{proof}

We are now in position to present the proof of Theorem \ref{thm:mult_char}.

\begin{proof}[Proof of Theorem \ref{thm:mult_char}]
If $\tau$ is multiplicative, then the existence of the triple $(D,l,r)$ is
guaranteed by Lemmas~\ref{prop:lr_existence} and
\ref{prop:D_existence}; condition \eqref{eq:vanish} follows from the fact that, since $c_\tau$ is a multiplicative function,
it must vanish along groupoid units (see Prop.~\ref{func:prop}).

Conversely, assume the existence of $(D,l,r)$. We claim that there is a unique $\mu \in \Gamma(A^*\bG)$ defined by
the following conditions:
\begin{align}
&\<\mu, \calb a_{(i)}\> = (-1)^{i-1} c_{l(a)} \circ \gamma_{(i,0)},\label{eq:mu1}\\
&\<\mu, \calb \alpha_{(j)}\> = (-1)^{j-1} c_{r(\alpha)} \circ \gamma_{(0,j)},\label{eq:mu2}\\
&\<\mu, (T^pa, \Rev_a^q)\> = c_{D(a)},\label{eq:mu3}
\end{align}
for $a \in \Gamma(A)$ and $\alpha \in \Omega^1(M)$. Uniqueness follows from Prop.~\ref{prop:generators}, so it remains to verify that $\mu$ is indeed well-defined. For local frames $(a^k)_{k=1,\ldots,\mathrm{rank}(A)}$ of $A$ and $(\alpha^s)_{s=1,\ldots,\dim(M)}$ of $T^*M$, we observe that the collection of local sections $(T^pa^k, \Rev^q_{a^k})$, $\calb a^k_{(i)}$, and $\calb \alpha^s_{(j)}$ form a local frame for $\bA$. We first define $\mu$ on this local frame using the formulas above (and extend it by linearity). In the following, we use Einstein notation. To show that $\mu$ is globally well defined, it suffices to verify that \eqref{eq:mu1}, \eqref{eq:mu2}, and \eqref{eq:mu3} hold for $a= f_k a^k$ and $\alpha= g_s \alpha^s$, where $f_k$, $g_s\in C^\infty(M)$. One can check that
$$
\calb a_{(i)} =  (f_k\circ \pr) \, \calb a_{(i)}^k, \;\;\;
\calb \alpha_{(j)} =  (g_s\circ \pr)\, \calb \alpha_{(j)}^s
$$
where $\pr: \M \to M$ is the bundle projection. Using \eqref{leibniz:tang} and \eqref{leibniz:cotang},
one also verifies that (see Remark~\ref{rem:Clinear} for notation)
$$
(T^pa,\Rev_a^q)= (f_k\circ \pr) (T^pa^k,\Rev_{a^k}^q)
+_{\scriptscriptstyle{\mathfrak{p}}} (\ell_{df_k}\circ \pr_{TM}^i) \calb a^k_{(i)} +_{\scriptscriptstyle{\mathfrak{p}}}
 (\ell_{-a_k}\circ \pr^j_{A^*})\calb \beta^k_{(j)},
$$
where $\beta^k= df_k$, and $\pr_{TM}^i: \M\to TM$, $\pr_{A^*}^j:\M\to A^*$ are given by
$\pr_{TM}^i(\underline{\X},\underline{\varphi})=X_i$, $\pr_{A^*}^j(\underline{\X},\underline{\varphi})=\varphi_j$.
Using Lemma~\ref{lemma:comp_tensor}, we see that
$$
\<\mu, \calb a_{(i)}\> = (f_k\circ \pr) \<\mu,  \calb a_{(i)}^k\> = (-1)^{i-1} (f_k\circ \pr) c_{l(a^k)}\circ \gamma_{(i,0)} = (-1)^{i-1} c_{l(a)}\circ \gamma_{(i,0)},
$$
which is \eqref{eq:mu1}. A similar argument verifies \eqref{eq:mu2}. For \eqref{eq:mu3}, note that
\begin{align*}
\<\mu, (T^pa, \Rev_a^q) \>  = & (f_k\circ \pr)c_{D(a^k)} +
(-1)^{i-1}(\ell_{df_k}\circ \pr_{TM}^i) c_{l(a^k)}\circ \gamma_{(i,0)} \\& +
(-1)^{j-1}(\ell_{-a^k}\circ \pr_{A^*}^j)c_{r(df_k)}\circ \gamma_{(0,j)}.
\end{align*}
By \eqref{skew_comp}, it follows that the right-hand side above agrees with
$$
c_{f_kD(a^k)} + c_{df_k \wedge l(a^k)}- c_{a^k\wedge r(df_k)} = c_{D(f_ka^k)},
$$
where the Leibniz condition for $D$ is used in the last equality. Hence \eqref{eq:mu3} holds and $\mu$ is well defined.

Now consider the componentwise linear function $c_{\tau} \in \C(\bG)$. It follows from the
third equality in \eqref{eq:conds}, Lemma~\ref{pull:invariance} and Theorem~\ref{Lie_tensor} that
\begin{align*}
\Lie_{\overrightarrow{(T^pa, \Rev_{a}^q)}} \, c_{\tau}  = c_{\Lie_{\overrightarrow{a}}\tau}
 = c_{\T(D(a))} = \bt^* c_{D(a)}  = \bt^* \<\mu, (T^pa, \Rev_{a}^q)\>.
\end{align*}
Similarly, we see that
$$
\Lie_{\overrightarrow{\calb b_{(i)}}} c_{\tau} = \bt^{*}\<\mu, \calb b_{(i)}\>,\;\;\mbox{ and }\;
\Lie_{\overrightarrow{\calb \alpha_{(j)}}} c_{\tau} = \bt^{*}\<\mu, \calb \alpha_{(j)}\>.
$$
By linearity, one has that $\Lie_{\overrightarrow{\chi}} c_{\tau} = \bt^*\<\mu, \chi\>$,
for every $\chi \in \Gamma(A\bG)$. As $\bG$ is source connected (because $\G$ is, see Remark~\ref{rem:fibers}),
the result follows from Proposition \ref{func:prop}.
\end{proof}

\subsection{Linear tensor fields} \label{linear}

For the proof of Theorem~\ref{thm:main}, we will need to specialize our study of multiplicative tensor fields
to Lie groupoids given by vector bundles $\pi_E: \E \Arrow M$. As we already saw, in this case the groupoid
$\mathbb{E} = (\oplus^p T\E) \oplus (\oplus^q T^*\E)$ in  \eqref{big_group} is a vector bundle over
$\M= (\oplus^p TM) \oplus (\oplus^q \E^*)$, with
Lie algebroid $A \bE$ given by $\bE \Arrow \M$ itself with zero anchor and zero bracket.
A tensor field $\tau \in \Gamma(\wedge^p T^*\E \otimes \wedge^q T\E)$
is multiplicative if and only if the associated function $c_\tau: \mathbb{E}\to \mathbb{R}$ is fiberwise linear
on $\mathbb{E} \Arrow \M$. For this reason, we refer to multiplicative tensor fields on $E\to M$ also as {\em linear}.
Our goal here is to show how one can reconstruct linear tensor fields on $\E\to M$ explicitly from their infinitesimal components.

For a linear tensor $\tau \in \Gamma(\wedge^p T^*\E \otimes \wedge^q T\E)$,
$$
Ac_{\tau} = c_{\tau},
$$
noticing that since $c_\tau$ is fiberwise linear on $\bE \to \M$, it can be seen as a section of $\bE^*\to \M$,
cf. Example \ref{linear_func}.
In particular, from \eqref{eq:mu1}, \eqref{eq:mu2} and \eqref{eq:mu3}, we see that the infinitesimal components
$(D,l,r)$ of a linear tensor $\tau$ satisfy
\begin{equation}\label{l_linear}
c_{l(u)} \circ \gamma_{(i,0)}  = (-1)^{i-1}\<c_{\tau}, \calb u_{(i)}\>,\;\;\;
c_{r(\alpha)} \circ \gamma_{(0,j)}  = (-1)^{j-1}\<c_{\tau}, \calb \alpha_{(j)}\>,
\end{equation}
\begin{equation}\label{D_linear}
c_{D(u)}  = \<c_{\tau}, (T^pu, \Rev_{u}^q)\>,
\end{equation}
for $u \in \Gamma(\E)$, $\alpha \in \Omega^1(M)$, and where $\gamma_{(i,0)}$, $\gamma_{(o,j)}$ are the projections
in \eqref{proj:i} and \eqref{proj:j}.

Before presenting the main result of this subsection, we need two lemmas.
We begin with a useful property of multiplicative tensors on Lie groupoids with $\s=\t$, so in particular linear ones.

\begin{lemma}\label{lemma:s=t}
Let $\G \toto M$ be a Lie groupoid such that $\s=\t$. If $\tau \in \Gamma(\wedge^p T^*\G \otimes \wedge^q T\G)$ is a multiplicative $(q,p)$-tensor field, then
\begin{align}
& i_{\overrightarrow{a}}\,i_{\overrightarrow{b}}\,\tau = 0,\\
& i_{\t^*\alpha} \,i_{\t^*\beta}\,\tau = 0,\\
& i_{\overrightarrow{a}}\,i_{\t^*\alpha}\,\tau = 0,
\end{align}
for $a, b \in \Gamma(A)$ and $\alpha, \beta \in \Gamma(T^*M)$.
\end{lemma}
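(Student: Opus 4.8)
The plan is to reduce all three identities to the single observation that, when $\s=\t$, the anchor of $A$ vanishes, and then to combine this with the fact that $i_{\overrightarrow{a}}\tau$ and $i_{\t^*\alpha}\tau$ lie in the image of the map $\T$ of \eqref{T:def}. First I would note that $\s=\t$ as maps $\G\to M$ forces $T\s=T\t$, so that, restricting to $A=\ker(T\s)|_M\subseteq T\G|_M$, one gets $\rho=T\t|_A=T\s|_A=0$. (Equivalently: every arrow is a loop, so all orbits are points and the anchor, whose image is tangent to the orbits, is zero.)

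Next I would record two elementary ``intertwining'' formulas for $\T$: for $\Phi\in\Gamma(\wedge^\bullet T^*M\otimes\wedge^\bullet A)$, $b\in\Gamma(A)$ and $\beta\in\Omega^1(M)$,
\[
i_{\overrightarrow{b}}\,\T(\Phi)=\T\big(i_{\rho(b)}\Phi\big),\qquad i_{\t^*\beta}\,\T(\Phi)=\T\big(i_{\rho^*\beta}\Phi\big),
\]
where on the right $i_{\rho(b)}$ contracts the $\wedge^\bullet T^*M$-factor and $i_{\rho^*\beta}$ the $\wedge^\bullet A$-factor. The first holds because $\overrightarrow{b}$ is $\t$-related to $\rho(b)$, so $i_{\overrightarrow{b}}\,\t^*\eta=\t^*(i_{\rho(b)}\eta)$ for any form $\eta$ on $M$; the second follows from the identity $\overrightarrow{\frakx}(\xi_1,\dots,\xi_q)=\frakx(\cott\xi_1,\dots,\cott\xi_q)$ established in the proof of Proposition~\ref{pull:invariance}, together with $\cott(\t^*\beta)=(\rho^*\beta)\circ\t$, which is immediate from \eqref{eq:cotst}.

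Finally, by Lemma~\ref{prop:lr_existence} (whose proof does not require source-connectedness) there are bundle maps $l$ and $r$ with $i_{\overrightarrow{a}}\tau=\T(l(a))$ and $i_{\t^*\alpha}\tau=\T(r(\alpha))$. Feeding these into the intertwining formulas and using $\rho=0$ yields, all at once,
\[
i_{\overrightarrow{b}}\,i_{\overrightarrow{a}}\,\tau=\T\big(i_{\rho(b)}l(a)\big)=0,\quad i_{\t^*\alpha}\,i_{\t^*\beta}\,\tau=\T\big(i_{\rho^*\alpha}r(\beta)\big)=0,\quad i_{\overrightarrow{a}}\,i_{\t^*\alpha}\,\tau=\T\big(i_{\rho(a)}r(\alpha)\big)=0,
\]
and the first asserted identity follows from the skew-symmetry of $\tau$ in its covector slots (or simply by swapping $a$ and $b$). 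The only point demanding any care is the second intertwining formula, which requires unwinding the source/target maps of the cotangent groupoid; the rest is formal, so I do not anticipate a genuine obstacle.
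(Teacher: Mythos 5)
Your proposal is correct and follows essentially the same route as the paper: the paper also observes that $\s=\t$ forces $i_{\overrightarrow{a}}\,\T(\Phi)=0$ and $i_{\t^*\alpha}\,\T(\Phi)=0$ for every $\Phi$ (which is exactly your intertwining formulas combined with $\rho=0$), and then concludes via the contraction identities $i_{\overrightarrow{a}}\tau=\T(l(a))$, $i_{\t^*\alpha}\tau=\T(r(\alpha))$ from Theorem~\ref{thm:mult_char}. Your remark that only the forward direction of that theorem (Lemma~\ref{prop:lr_existence}), which needs no connectedness hypothesis, is used here is a correct and worthwhile clarification.
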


\begin{proof}
As $\s=\t$, it follows from \eqref{T:def} that $i_{\overrightarrow{a}}\T(\Phi) = 0$ and $i_{\t^*\alpha} \T(\Phi) = 0$, for any $\Phi \in \Gamma(\wedge^{\bullet} T^*M \otimes \wedge^{\bullet} A)$. The result now follows from Theorem \ref{thm:mult_char}.
\end{proof}

The following lemma shows how the infinitesimal components of linear tensors can be obtained by means of
pointwise evaluation of the tensor on special vectors and covectors .

\begin{lemma}\label{lemma:linear_comp}
Let $\tau \in \Gamma(\wedge^p T^*\E \otimes \wedge^q T\E)$ be a linear $(q,p)$ tensor field.
For $(\underline{\X}, \underline{\varphi}) \in \M_x$, $u \in \Gamma(E)$ and $\alpha \in \Omega^1(M)$, define
\begin{align*}
\underline{U}_{(i)} & = (T0(X_1), \dots , T0(X_{i-1}), \overline{u(x)}, T0(X_{i+1}), \dots, T0(X_p)) \in \,\,\oplus^p T_{0_x}E\\
\underline{\xi}_{(j)} & =
(\widetilde{0}_{\varphi_1}, \dots, \widetilde{0}_{\varphi_{j-1}}, \overline{\alpha(m)},
\,\,\widetilde{0}_{\varphi_{j+1}}, \dots, \widetilde{0}_{\varphi_{q}}) \in \,\,\oplus^q T^*_{0_x}E.
\end{align*}
The infinitesimal components
$l:\E \Arrow \wedge^{p-1} T^*M\otimes \wedge^{q} \E$, $r:T^*M \Arrow \wedge^{p} T^*M\otimes \wedge^{q-1} \E$
and $D: \Gamma(\E) \Arrow \Gamma(\wedge^p T^*M\otimes \wedge^q \E)$ satisfy
\begin{itemize}
\item[(a)] $D(u)(\underline{\X},\underline{\varphi})  = \tau((Tu^p, \Rev^q)(\underline{\X}, \underline{\varphi}))$
\item[(b)] $l(u)(\gamma_{(i,0)}(\underline{\X},\underline{\varphi})) =
(-1)^{i-1}\tau(\underline{U}_{(i)}, \widetilde{0}_{\varphi_1}, \dots, \widetilde{0}_{\varphi_q})$ 
\item[(c)] $r(\alpha)(\gamma_{(0,j)}(\underline{\X},\underline{\varphi}))
= (-1)^{j-1}\tau(T0(X_1), \dots, T0(X_p), \underline{\xi}_{(j)}).$
\end{itemize}
\end{lemma}

\begin{proof}
From \eqref{D_linear}, it is clear that
\begin{align*}
D(u)(\underline{\X},\underline{\varphi}) & = \<c_{\tau}, (T^pu, \Rev_u^q)\>|_{(\underline{\X}, \underline{\varphi})} \\
& = \tau(Tu(X_1), \dots, Tu(X_p), \Rev_u(\varphi_1), \dots, \Rev_u(\varphi_q)),
\end{align*}
which proves (a).
Similarly, from the first equation in \eqref{l_linear},
\begin{align*}
l(u)(\gamma_{(i,0)}(\underline{\X},\underline{\varphi})) & = \<c_{\tau}, \calb u_{(i)}\>|_{(\underline{\X}, \underline{\varphi})}\\
 & = (-1)^{i-1}\tau(T0(X_1), \dots, \calb u(X_i), \dots, T0(X_p), \widetilde{0}_{\varphi_1}, \dots, \widetilde{0}_{\varphi_q}).
\end{align*}
But since $\calb u(X_i) = T0(X_i)+\overline{u(m)}$, we have that this last term equals
\begin{align*}
&\tau(T0(X_1), \dots, T0(X_p), \widetilde{0}_{\varphi_1}, \dots, \widetilde{0}_{\varphi_q})
+ \\
&\tau(T0(X_1), \dots, T0(X_{i-1}), \overline{u(m)}, T0(X_{i+1}), \dots, T0(X_p), \widetilde{0}_{\varphi_{1}},\dots, \widetilde{0}_{\varphi_{q}}).
\end{align*}
To conclude that (b) holds, note that
$\tau(T0(X_1), \dots, T0(X_p), \widetilde{0}_{\varphi_1}, \dots, \widetilde{0}_{\varphi_q}) = 0$, as $\tau$ is linear on $\bE \Arrow \M$.
The verification of (c) is similar.
\end{proof}

We can now present the main result regarding linear tensor fields.

\begin{proposition}\label{prop:linear1}
A tensor $\tau \in \Gamma(\wedge^p T^*\E \otimes \wedge^q T\E)$ is linear if and only if there exist
vector-bundle maps $l:\E \Arrow \wedge^{p-1} T^*M\otimes \wedge^{q} \E$ and $r:T^*M \Arrow \wedge^{p} T^*M\otimes \wedge^{q-1} \E$
covering the identity, and $D: \Gamma(\E) \Arrow \Gamma(\wedge^p T^*M\otimes \wedge^q \E)$ satisfying the Leibniz
condition \eqref{D:leibniz}, such that, for $U_1, \dots, U_p \in T_e \E$, $\xi_1, \dots, \xi_q \in T_e^*\E$, $e \in \E_x$,
\begin{align}\label{linear:eq}
\tau(\underline{\U}, \underline{\xi})  = &  D(u)(\underline{\X}, \underline{\varphi}) +
(-1)^{i-1}l(e_i)(\gamma_{(i,0)}(\underline{\X}, \underline{\varphi}))\\
\nonumber & + (-1)^{j-1}r(\beta_j)(\gamma_{(0,j)}(\underline{\X}, \underline{\varphi})),
\end{align}
where $u \in \Gamma(\E)$ is any section such that $u(x) = e$,
$X_i=T\pi_E(U_i)$, $\varphi_j= \widetilde{\pi_E}(\xi_j)$, and $e_i \in \E_x$, $\beta_j \in T_x^*M$ are defined by
$$
\<\psi, e_i\> = \<U_i -_{\scriptscriptstyle{\mathfrak{p}}} Tu(X_i), \widetilde{0}_{\psi}\>, \;\;\;\;
\<\beta_j, Y\> =\<\xi_j -_{\scriptscriptstyle{\mathfrak{p}}} \Rev_u(\varphi_j), T0(Y)\>,
$$
for $\psi \in \E^*_x$, $Y \in T_xM$, and $i=1, \dots, p$, $j=1, \dots, q$.
In this case, $(D,l,r)$ are the infinitesimal components of $\tau$.
\end{proposition}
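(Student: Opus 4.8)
The plan is to prove both implications by reducing, via the double vector bundle calculus for $T\E \to \E$ and $T^*\E \to \E$, to pointwise identities governed by Lemmas~\ref{lemma:s=t} and \ref{lemma:linear_comp}. The key point throughout is that $\tau$ being linear means $\tau$ is multiplicative on the Lie groupoid $\E \toto M$, which has $\s = \t = \pi_E$; hence Lemma~\ref{lemma:s=t} applies and forces every ``double contraction'' of $\tau$ to vanish.

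For the forward implication I would first invoke Lemmas~\ref{prop:lr_existence} and \ref{prop:D_existence} to produce the infinitesimal components $(D,l,r)$, with $D$ satisfying the Leibniz rule \eqref{D:leibniz}. To verify the formula \eqref{linear:eq}, fix $(\underline{\U}, \underline{\xi})$ over a point $e \in \E_x$ and pick $u \in \Gamma(\E)$ with $u(x) = e$. Using the double vector bundle structure I would decompose $U_i = Tu(X_i) +_{\scriptscriptstyle{\mathfrak{p}}} \overline{e_i}$ and $\xi_j = \Rev_u(\varphi_j) +_{\scriptscriptstyle{\mathfrak{p}}} \overline{\beta_j}$, where $\overline{e_i}$ and $\overline{\beta_j}$ are the core elements determined by the sections $e_i \in \E_x$ and $\beta_j \in T^*_xM$ specified in the statement; the crucial observation is that these core elements are values of right-invariant vector fields (vertical lifts) and of $\t^*$-pull-backs $\pi_E^*\beta_j$, respectively. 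Expanding $\tau(\underline{\U}, \underline{\xi})$ multilinearly over all $p+q$ slots yields a sum indexed by the positions receiving a core part; every summand with two or more core insertions vanishes by Lemma~\ref{lemma:s=t}. The summand with no core insertion equals $\tau\big((T^pu, \Rev_u^q)(\underline{\X}, \underline{\varphi})\big) = D(u)(\underline{\X}, \underline{\varphi})$ by Lemma~\ref{lemma:linear_comp}(a). For a single core insertion in the $i$-th tangent slot, I would replace the remaining $Tu(X_k)$ by $T0(X_k)$ and $\Rev_u(\varphi_j)$ by $\widetilde{0}_{\varphi_j}$ — the error terms again carry $\ge 2$ core insertions and hence vanish — and recognize the result as $(-1)^{i-1} l(e_i)(\gamma_{(i,0)}(\underline{\X}, \underline{\varphi}))$ via Lemma~\ref{lemma:linear_comp}(b); the cotangent slots are treated identically using Lemma~\ref{lemma:linear_comp}(c). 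Summing over $i$ and $j$ gives \eqref{linear:eq}.

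For the converse I would start by checking that the right-hand side of \eqref{linear:eq} is independent of the auxiliary section $u$. Replacing $u$ by $u+v$ with $v(x)=0$, formulas \eqref{leibniz:tang} and \eqref{leibniz:cotang} show that $e_i$ changes by a term bilinear in $dv|_x$ and $X_i$, and $\beta_j$ by a term bilinear in $dv|_x$ and $\varphi_j$, while $D(u)$ changes by $D(v)$; expanding $D(v)$ at a point of $\M_x$ through the Leibniz rule for $D$ and using \eqref{skew_comp}, the three contributions cancel. This is essentially the same local computation carried out in the verification of \eqref{eq:mu3} inside the proof of Theorem~\ref{thm:mult_char}. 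Granting well-definedness, one then verifies directly that the right-hand side of \eqref{linear:eq}, as a function of $(\underline{\U},\underline{\xi})$, is fiberwise linear over $\M$ — each of the three groups of terms is a componentwise linear function of $(\underline{\X},\underline{\varphi})$ pre-composed with a projection, and the dependence of $e_i$, $\beta_j$ on $U_i$, $\xi_j$ is itself linear — so $\tau$ is linear. That $(D,l,r)$ are then the infinitesimal components of $\tau$ follows by comparing \eqref{linear:eq} with \eqref{l_linear}--\eqref{D_linear}, i.e. by re-running the forward implication.

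The step I expect to be the main obstacle is the multilinear expansion in the forward implication: making rigorous, within the double vector bundle formalism for $T\E$ and $T^*\E$ over $\E$, that the expansion of $\tau(\underline{\U},\underline{\xi})$ relative to the splitting induced by $u$ collapses to exactly the $1+p+q$ terms in \eqref{linear:eq}, with all mixed core--core contributions annihilated by Lemma~\ref{lemma:s=t}. Once the relevant lifts are identified with invariant vector fields and $\t^*$-pull-backs this is mechanical, but the index and sign bookkeeping, together with the repeated passage between the two vector bundle structures on $T\E$, demand care; the well-definedness check in the converse is of the same flavor and recycles a computation already done for Theorem~\ref{thm:mult_char}.
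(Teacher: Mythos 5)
Your overall strategy coincides with the paper's: split each $U_i$, $\xi_j$ via the auxiliary section $u$ into a ``linear'' part and a core part, kill all terms with two or more core insertions using Lemma~\ref{lemma:s=t}, identify the surviving terms via Lemma~\ref{lemma:linear_comp}, and check independence of $u$ with local frames and the Leibniz rule \eqref{D:leibniz}. The converse direction and the well-definedness check are fine as you describe them (the paper likewise verifies well-definedness with local frames, and recovers $(D,l,r)$ by substituting special tuples into \eqref{linear:eq} rather than by re-running the forward implication, but these are equivalent).

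The one step that does not go through as written is the central expansion. You propose to ``expand $\tau(\underline{\U},\underline{\xi})$ multilinearly over all $p+q$ slots'' starting from $U_i = Tu(X_i)\so_{\,\scriptscriptstyle{\mathfrak{p}}}(\text{core part})$. But $\tau$ is a tensor on $\E$, hence multilinear with respect to the bundle structure $T\E\to\E$, whereas your decomposition is a sum with respect to the prolongation structure $T\E\to TM$; moreover the would-be cross terms (some slots filled with $Tu(X_k)\in T_e\E$, others with core elements based at $0_x$) cannot even be evaluated by $\tau$, since the arguments sit over different points of $\E$. (Strictly, the decomposition itself must read $U_i = Tu(X_i)\so_{\,\scriptscriptstyle{\mathfrak{p}}}(T0(X_i)+\overline{e_i})$, since $\overline{e_i}$ projects to $0_x$, not to $X_i$, under $T\pi_E$.) The same problem infects your subsequent ``replace the remaining $Tu(X_k)$ by $T0(X_k)$'' step: the error is not a core insertion, and the two vectors are again based at different points of $\E$. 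The paper's proof avoids all of this by invoking the hypothesis that $\tau$ is linear exactly once more: since $c_\tau$ is fiberwise linear on $\bE\to\M$, the single binary split $(\underline{\U},\underline{\xi}) = (T^pu(\underline{\X}),\Rev_u^q(\underline{\varphi}))\so_{\,\scriptscriptstyle{\mathfrak{p}}}(\underline{V},\underline{\zeta})$, with $V_i = T0(X_i)+\overline{e_i}$ and $\zeta_j = \widetilde{0}_{\varphi_j}+\overline{\beta_j}$, gives $\tau(\underline{\U},\underline{\xi}) = D(u)(\underline{\X},\underline{\varphi}) + \tau(\underline{V},\underline{\zeta})$; only then is ordinary multilinearity invoked, legitimately, inside the fibers over $0_x$, after which the zero-core term vanishes by fiberwise linearity and the multi-core terms vanish by Lemma~\ref{lemma:s=t}. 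You flag the passage between the two vector bundle structures as the delicate point, and this reordering is precisely the required fix; without it the expansion you describe is not licensed.
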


\begin{proof}
First note that the right-hand side of formula \eqref{linear:eq} is well defined,
in the sense that it does not depend on the extension $u$. Indeed, let
$(u^k)_{k=1, \dots, \mathrm{rank}(E)}$, $(\alpha^s)_{s=1, \dots, \dim(M)}$ be local frames of $E$ and $T^*M$ respectively. One can write (see notation in Remark~\ref{rem:Clinear})
$$
U_i = t_{k} \cdot T u^k(X_i) +_{\scriptscriptstyle{\mathfrak{p}}} \,\, h_{ik} \cdot \calb u^k(X_i), \,\,\,\,\,
\xi_j  = t_{k} \cdot \Rev_{u^k}(\varphi_j) +_{\scriptscriptstyle{\mathfrak{p}}} \,\,g_{js} \cdot \calb \alpha^s(\varphi_j),
$$
for $t_{k}, h_{ik}, g_{js} \in \R$, where $e = t_k u^k(x)$. For any section $u = f_k u^k$, $f^k \in C^{\infty}(M)$, $u(x)=e$ if and only if $f_k(x) = t_k$. Also,
$$
e_i = (h_{ik} - \Lie_{X_i} f_k) u^k, \,\,\,\,\,\,\, \beta_j = g_{js}\alpha^s + \langle \varphi_j, u^k \rangle df_k.
$$
So, by using the Leibniz condition \eqref{D:leibniz}, one
can rewrite \eqref{linear:eq} as
\begin{align*}
\tau(\underline{\U}, \underline{\xi})  = & t_k \,D(u^k)(\underline{\X}, \underline{\varphi}) +  \sum_{i=1}^ {p}(-1)^{i-1}h_{ik}\, l(u^k)(\gamma_{(i,0)}(\underline{\X}, \underline{\varphi}))\\
& +  \sum_{j=1}^q(-1)^{j-1}g_{js} \,r(\alpha^s)(\gamma_{(0,j)}(\underline{\X}, \underline{\varphi})).
\end{align*}
Let us assume that $\tau \in \Gamma(\wedge^p T^*\E \otimes \wedge^q T\E)$ is a linear tensor, and let $(D, l, r)$ be its
infinitesimal components. One may directly check that
$$
U_i = Tu(X_i) +_{\scriptscriptstyle{\mathfrak{p}}} \underbrace{(T0(X_i) + \overline{e_i})}_{V_i} \,\,\,
\text{ and }  \,\,\, \xi_j = \Rev_u(\varphi_j) +_
{\scriptscriptstyle{\mathfrak{p}}} \underbrace{(\widetilde{0}_{\varphi_j} + \overline{\beta_j})}_{\zeta_j}.
 $$
Since $\tau$ is linear, we have
$$
\tau(\underline{\U}, \underline{\xi})= \tau(T^{p}u(\underline{\X}), \Rev^q(\underline{\varphi})) +
\tau(\underline{V\vphantom{\zeta}}, \underline{\zeta}) = D(u)(\underline{\X}, \underline{\varphi}) +
\tau(\underline{V\vphantom{\zeta}}, \underline{\zeta}).
$$
Now, using the multilinearity of the tensor $\tau$, one can expand $\tau(\underline{V\vphantom{\zeta}}, \underline{\zeta})$
as a sum in which every term is $\tau$ evaluated on a string involving $T0(X_i), \overline{e_i}$ separatedly on the
$T\E$ part and $\widetilde{0}_{\varphi_j}, \overline{\beta_j}$ separatedly on the $T^*\E$ part.
\medskip

\paragraph{\bf Claim:} The only non-zero terms on the expansion of $\tau(\underline{V\vphantom{\zeta}}, \underline{\zeta})$
as a sum are the ones in which the $\overline{(\cdot)}$ terms appear exactly once
(counting both the $T\E$ and $T^*\E$ parts). Indeed, if they do not appear at all, one has
$
\tau(T0(X_1), \dots, T0(X_p), \widetilde{0}_{\varphi_1}, \dots, \widetilde{0}_{\varphi_q})=0,
$
because $\tau$ is linear on the fibers of $\bE \Arrow \M$. If they appear twice or more, note that
$\overline{e_i} = u_i^{\vl}(0_m)$ and $\overline{\beta_j} = (\pi_E^*\alpha_j)(0_m)$,
where $u_i \in \Gamma(\E)$ and $\alpha_j \in \Gamma(T^*M)$ satisfy $u_i(m)=e_i$ and
$\alpha_j(m) = \beta_j$. So, the claim  follows from Lemma \ref{lemma:s=t}.
Therefore,
\begin{align*}
\tau(\underline{\U}, \underline{\xi}) =  D(u)(\underline{\X}, \underline{\varphi})\\
& \hspace{-80pt}+ \sum_{i=0}^p \tau(T0(X_1), \dots, T0(X_{i-1}), \overline{e_i}, T0(X_{i+1}), \dots, T0(X_p), \widetilde{0}_{\varphi_{1}},\dots, \widetilde{0}_{\varphi_{q}})\\
& \hspace{-80pt}+ \sum_{j=0}^q \tau(T0(X_1), \dots, T0(X_{p}), \widetilde{0}_{\varphi_{1}},\dots,\widetilde{0}_{\varphi_{j-1}}, \overline{\beta_j},\widetilde{0}_{\varphi_{j+1}}, \dots \widetilde{0}_{\varphi_{q}}).
\end{align*}
Formula \eqref{linear:eq} now follows from Lemma \ref{lemma:linear_comp}.

Conversely, let us assume $\tau \in \Gamma(\wedge^p T^*\E \otimes \wedge^q T\E)$ is a $(q,p)$-tensor field for
which \eqref{linear:eq} holds. It is straightforward to check that \eqref{linear:eq} is linear on the fibers
of $\bE \Arrow \M$, so $\tau$ is linear. To prove that $(D, l, r)$ are exactly the infinitesimal components of $\tau$,
one proceeds as follows: first substitute $(\underline{\U},\underline{\xi})$ with $(T^pu(\underline{\X}), \Rev_u(\underline{\varphi}))$.
In this case, $e_i=0$, $\beta_j=0$ and formula \eqref{linear:eq} becomes
$
\tau(T^pu(\underline{\X}), \Rev_u(\underline{\varphi})) = D(u)(\underline{\X},\underline{\varphi}).
$
By substituting $(\underline{\U},\underline{\xi})$ with
$(\overline{e_1}, T0(X_2), \dots, T0(X_p), \widetilde{0}_{\varphi_{1}},\dots, \widetilde{0}_{\varphi_{q}})$,
one has that $X_1=0$, $e_2=\dots=e_p=0$, $\beta_j=0$ and, therefore, formula \eqref{linear:eq} becomes
$$
\tau(\overline{e_1}, T0(X_2), \dots, T0(X_p), \widetilde{0}_{\varphi_{1}},\dots, \widetilde{0}_{\varphi_{q}}) = l(e_1)(X_2, \dots, X_p, \varphi_1, \dots, \varphi_q).
$$
Finally, by substituting $(\underline{\U},\underline{\xi})$ with
$(T0(X_1), \dots, T0(X_p), \overline{\beta_1}, \widetilde{0}_{\varphi_{2}},\dots, \widetilde{0}_{\varphi_{q}})$,
formula \eqref{linear:eq} becomes
$$
\tau( T0(X_1), \dots, T0(X_p), \overline{\beta_1}, \widetilde{0}_{\varphi_{2}},\dots, \widetilde{0}_{\varphi_{q}}) = r(\beta_1)(X_1, \dots, X_p, \varphi_2, \dots, \varphi_q).
$$
The result now follows from Lemma \ref{lemma:linear_comp}.
\end{proof}

As an immediate consequence, we have

\begin{corollary}\label{cor:1-1}
There is a one-to-one correspondence defined by \eqref{linear:eq} between linear tensors
$\tau \in \Gamma(\wedge^p T^*\E \otimes \wedge^q T\E)$ and triples $(D,l,r)$,
where $l:\E \Arrow \wedge^{p-1} T^*M\otimes \wedge^{q} \E$ and $r:T^*M \Arrow \wedge^{p} T^*M\otimes \wedge^{q-1} \E$ are
vector bundle maps covering the identity and
$D: \Gamma(\E) \Arrow \Gamma(\wedge^p T^*M\otimes \wedge^q \E)$ satisfies the Leibniz condition \eqref{D:leibniz}.
\end{corollary}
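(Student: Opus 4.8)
The plan is to obtain the stated bijection by repackaging Proposition~\ref{prop:linear1}, which already contains both directions of the correspondence. First I would make explicit the two maps between the sets in question. To a linear tensor $\tau\in\Gamma(\wedge^p T^*\E\otimes\wedge^q T\E)$ I assign its infinitesimal components $(D,l,r)$; these exist and $D$ satisfies the Leibniz condition \eqref{D:leibniz} by Lemmas~\ref{prop:lr_existence} and \ref{prop:D_existence} applied to the groupoid $\E\toto M$ (a vector bundle regarded as a Lie groupoid is source-connected, since its source-fibers are vector spaces, so Theorem~\ref{thm:mult_char} applies). Conversely, given a triple $(D,l,r)$ with $l:\E \Arrow \wedge^{p-1}T^*M\otimes\wedge^{q}\E$ and $r:T^*M\Arrow\wedge^{p}T^*M\otimes\wedge^{q-1}\E$ vector-bundle maps covering the identity and $D:\Gamma(\E)\Arrow\Gamma(\wedge^p T^*M\otimes\wedge^q\E)$ an $\R$-linear map satisfying \eqref{D:leibniz}, I define a $(q,p)$-tensor field $\tau$ on $\E$ pointwise by formula \eqref{linear:eq}.

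Next I would verify that these two assignments are well defined and mutually inverse. The only nontrivial well-definedness point is that the right-hand side of \eqref{linear:eq} does not depend on the choice of local section $u$ extending $e$; this is precisely the computation in the opening paragraph of the proof of Proposition~\ref{prop:linear1}, where the Leibniz condition on $D$ enters in an essential way. That the tensor so produced is indeed linear and has $(D,l,r)$ as its infinitesimal components is the ``converse'' half of Proposition~\ref{prop:linear1}, while the fact that a linear $\tau$ is reconstructed from its own infinitesimal components by \eqref{linear:eq} is the ``direct'' half. Together, these two facts say exactly that $\tau\mapsto(D,l,r)$ and $(D,l,r)\mapsto\tau$ (the latter via \eqref{linear:eq}) are inverse bijections, which is the assertion.

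The hard part is not in the corollary itself: all the substance was already settled in Proposition~\ref{prop:linear1}, together with the auxiliary Lemmas~\ref{lemma:s=t} and \ref{lemma:linear_comp}. In writing the proof up, the only thing to be careful about is to phrase the two assignments cleanly and to invoke the correct halves of Proposition~\ref{prop:linear1} for the two composition identities; no further computation is required.
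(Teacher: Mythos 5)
Your proposal is correct and matches the paper's intent: the paper states Corollary~\ref{cor:1-1} as an immediate consequence of Proposition~\ref{prop:linear1}, and your write-up simply makes explicit the two assignments (forward via the infinitesimal components, backward via \eqref{linear:eq}) and the fact that the two halves of that proposition show they are mutually inverse. No gap; this is essentially the same argument, just spelled out.
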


\subsection{Proof of Theorem \ref{thm:main}}
We just saw in Cor.~\ref{cor:1-1} how linear tensors $\tau$ on a vector bundle are described in terms of
triples $(D,l,r)$. Let $(A, [\cdot,\cdot], \rho)$ be a Lie algebroid, and consider linear tensors $\tau$ on $A$ for which
the corresponding fiberwise linear functions $c_{\tau}: \bA \Arrow \R$ are Lie-algebroid cocycles. We now see how to express
this additional cocycle property in terms of $(D,l,r)$.

\begin{proposition}\label{prop:lie_alg_tensor}
There is a one-to-one correspondence defined by \eqref{linear:eq} between linear $(q,p)$-tensors
$\tau \in \Gamma(\wedge^p T^*A\otimes \wedge^q TA)$ for which $c_{\tau}: \bA \Arrow \R$ is a Lie-algebroid cocycle
and IM $(q,p)$-tensors $(D, l, r)$ on
$A$, as in Definition~\ref{def:IMtensor}.
\end{proposition}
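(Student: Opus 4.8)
The plan is to add the cocycle condition on top of Corollary~\ref{cor:1-1}. By that corollary, linear tensors $\tau \in \Gamma(\wedge^p T^*A \otimes \wedge^q TA)$ correspond bijectively, via \eqref{linear:eq}, to triples $(D,l,r)$ with $D$ subject to the Leibniz rule \eqref{D:leibniz}; under this correspondence $c_\tau$ is fiberwise linear on $\bA \to \M$, hence a section of $\bA^*$, and ``$c_\tau$ is a Lie-algebroid cocycle'' means exactly that $d_{\bA}c_\tau = 0$, where $d_{\bA}$ is the Lie-algebroid differential of $\bA$. Since $d_{\bA}c_\tau \in \Gamma(\wedge^2 \bA^*)$ is $C^\infty(\M)$-bilinear and skew, and since by Proposition~\ref{prop:generators} the sections $(T^pa, \Rev^q_a)$, $\calb b_{(i)}$, $\calb \alpha_{(j)}$ (for $a,b\in\Gamma(A)$, $\alpha\in\Omega^1(M)$) generate $\Gamma(\bA)$ over $C^\infty(\M)$, it is enough to evaluate $d_{\bA}c_\tau$ on the six types of pairs of such generators and to check that the resulting relations are precisely \eqref{IM1}--\eqref{IM6}.

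First I would record the data needed: by \eqref{D_linear} and \eqref{l_linear} one has $\langle c_\tau, (T^pa,\Rev^q_a)\rangle = c_{D(a)}$, $\langle c_\tau, \calb b_{(i)}\rangle = (-1)^{i-1}c_{l(b)}\circ\gamma_{(i,0)}$, and $\langle c_\tau, \calb\alpha_{(j)}\rangle = (-1)^{j-1}c_{r(\alpha)}\circ\gamma_{(0,j)}$; the anchor of $\bA$ is given by \eqref{big_anchor} and its bracket by \eqref{big_bracket}. Then I substitute these into the Koszul formula
\[
d_{\bA}c_\tau(\chi_1,\chi_2) = \Lie_{\rho_\bT(\chi_1)}\langle c_\tau,\chi_2\rangle - \Lie_{\rho_\bT(\chi_2)}\langle c_\tau,\chi_1\rangle - \langle c_\tau,[\chi_1,\chi_2]\rangle
\]
and compute, using Proposition~\ref{Lie_tensor} (which turns the Lie derivatives along $(\rho(a)^{T,p},H^q_a)$, along the vertical lifts $\rho(a)^{\vl,\,p}_{(i)}$, and along $\rho^*\alpha^{\vl,\,q}_{(j)}$ into the action $a\cdot(\cdot)$ of \eqref{action} and into the contractions $i_{\rho(a)}(\cdot)$, $i_{\rho^*\alpha}(\cdot)$, respectively). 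For $\chi_1 = (T^pa,\Rev^q_a)$, $\chi_2 = (T^pb,\Rev^q_b)$, the bracket term is $(T^p[a,b],\Rev^q_{[a,b]})$ and the formula collapses to $c_{a\cdot D(b)-b\cdot D(a)-D([a,b])}$, whose vanishing is \eqref{IM1}. For $\chi_1 = (T^pa,\Rev^q_a)$, $\chi_2 = \calb b_{(i)}$, the bracket term is $\calb[a,b]_{(i)}$; after dividing by $(-1)^{i-1}$ one gets $c_{a\cdot l(b)-i_{\rho(b)}D(a)-l([a,b])}$, i.e.\ \eqref{IM2}; the pair $((T^pa,\Rev^q_a),\calb\alpha_{(j)})$ gives \eqref{IM3} in the same way. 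For the three remaining pairs the bracket term vanishes by \eqref{big_bracket}, and one obtains \eqref{IM4} from $(\calb a_{(i)},\calb b_{(i')})$, \eqref{IM5} from $(\calb\alpha_{(j)},\calb\beta_{(j')})$, and \eqref{IM6} from $(\calb a_{(i)},\calb\alpha_{(j)})$; in each of these the outcome is independent of the slot indices, and it vanishes identically when the two core sections occupy the same slot, so no further conditions arise. Injectivity of $\Phi\mapsto c_\Phi$ (Lemma~\ref{lemma:comp_tensor}) then translates each of these six identities of componentwise-linear functions into the corresponding IM-equation, and running the argument backwards shows that if $(D,l,r)$ is an IM $(q,p)$-tensor then the tensor $\tau$ defined by \eqref{linear:eq} has $d_{\bA}c_\tau = 0$; this establishes the claimed bijection.

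I expect the only real difficulty to be bookkeeping: keeping track of the signs $(-1)^{i-1}$, $(-1)^{j-1}$ and of the way the projections $\gamma_{(i,0)}$, $\gamma_{(0,j)}$ compose with, and are compatible with, the various lifts, so that the output of the Koszul formula matches \eqref{IM1}--\eqref{IM6} on the nose rather than up to sign or reindexing. Once Proposition~\ref{Lie_tensor} is in place this is mechanical but requires care; the conceptual content is entirely captured by the dictionary ``anchor of $\bA$ $\leftrightarrow$ lifts, bracket of $\bA$ $\leftrightarrow$ Lie bracket of $A$ together with $\Lie_{\rho(a)}$, Koszul differential $d_{\bA}$ $\leftrightarrow$ the equations \eqref{IM1}--\eqref{IM6}''.
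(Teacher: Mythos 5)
Your proposal is correct and follows essentially the same route as the paper's own proof: the paper likewise reduces $d_{\bA}c_\tau=0$ to the identity \eqref{d_zero} evaluated on the six types of pairs of generators from Proposition~\ref{prop:generators}, and uses \eqref{big_anchor}, \eqref{big_bracket}, \eqref{l_linear}, \eqref{D_linear} together with Proposition~\ref{Lie_tensor} to match each case with one of \eqref{IM1}--\eqref{IM6}. The sign and projection bookkeeping you flag (e.g.\ that $\gamma^{(p-1,q)}_{(i,0)}\circ\gamma_{(k,0)}$ and $\gamma^{(p-1,q)}_{(k-1,0)}\circ\gamma_{(i,0)}$ coincide) is exactly where the paper spends its effort as well.
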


\begin{proof}
By definition, the cocycle condition $d_{\bA} \,c_{\tau} = 0$ is equivalent to the equation
\begin{equation}\label{d_zero}
\<c_{\tau},[U,V]\>= \Lie_{\rho_{\bT}(U)} \<c_{\tau}, V\> - \Lie_{\rho_{\bT}(V)} \<c_{\tau}, U\>
\end{equation}
all $U, V \in \Gamma(\bA)$.
In order to prove that this equality holds, it suffices to consider $U, V$ varying on the set of generators given by
Proposition~\ref{prop:generators}, and this will be shown to be equivalent to the set of IM-equations (IM1)--(IM6)
in Definition~\ref{def:IMtensor}.
Let us first fix $U= (T^pa, \Rev_a^{\, q})$, for $a \in \Gamma(A)$.  In the following, we shall use repeatedly
the anchor and Lie bracket equations \eqref{big_anchor}, \eqref{big_bracket} for the Lie algebroid $\bA \to \M$.

\smallskip

\paragraph{\bf Equation (IM1):} Take $V=(T^pb, \Rev_b^{\,q})$, for $b \in \Gamma(A)$.
It follows from \eqref{D_linear} that the cocycle equation \eqref{d_zero} is equivalent to
\begin{align*}
c_{D([a,b])} & =  \Lie_{(\rho(a)^{T,p}, H_a^q)} \, c_{D(b)} - \Lie_{(\rho(b)^{T,p}, H_b^q)} \, c_{D(a)} \vspace{5pt}\\
   & = c_{a\cdot D(b) - b\cdot D(a)},
\end{align*}
where the last equality follows from Proposition \ref{Lie_tensor}. So, in this case \eqref{d_zero} is equivalent to \eqref{IM1}.

\smallskip

\paragraph{\bf Equations (IM2) and (IM3):} Take $V= \calb b_{(i)}$, $1 \leq i \leq p$. From \eqref{l_linear}, it follows that
the cocycle equation \eqref{d_zero} for this pair $U,V$ can be rewritten as
\begin{align*}
(-1)^{i-1}c_{l([a,b])}\circ \gamma_{(i,0)} & = (-1)^{i-1}\Lie_{\rho_{\bT}(T^pa, \,\,\Rev_a^{\, q})}\, \left( c_{l(b)}\circ \gamma_{(i,0)}\right) - \Lie_{\rho_{\bT}(\calb b_{(i)})}\, c_{D(a)}\vspace{5pt}\\
& = (-1)^{i-1} \left( \Lie_{(\rho(a)^{T,p-1}, \,\,H_a^{\, q})}\,  c_{l(b)} \right) \circ \gamma_{(i,0)}
- \Lie_{(\rho(b)^{\vl, p}_{(i)},0)} \, c_{D(a)}\vspace{5pt} \\
& = (-1)^{i-1} \left( \left(c_{a\cdot l(b)}\right) \circ \gamma_{(i,0)} -  c_{\,i_{\rho(b)}D(a)} \circ \gamma_{(i,0)}\right)
\end{align*}
where $\gamma_{(i,0)}$ is the projection \eqref{proj:i} and the last equality follows from Proposition \ref{Lie_tensor}.
So, for the given choices of $U$ and $V$, \eqref{d_zero} is equivalent to \eqref{IM2}.

When $V=\calb\alpha_{(j)}$, $1 \leq j \leq q$, for $\alpha \in \Omega^1(M)$, one can prove analogously
that \eqref{d_zero} and \eqref{IM3} are equivalent.

\smallskip

\paragraph{\bf Equations (IM4), (IM5) and (IM6):} Let $U= \calb a_{(i)}$ and $V=\calb b_{(k)}$,
for $1 \leq i < k \leq p$. As $[\calb a_{(i)}, \calb b_{(k)}]=0$, it follows from \eqref{l_linear} that
the cocycle equation \eqref{d_zero} can be rewritten as
\begin{align*}
0 & = (-1)^{k-1} (\Lie_{(\rho(a)_{(i)}^{\vl,\, p-1}, \,0)} c_{l(b)}) \circ \gamma_{(k,0)} - (-1)^{i-1}(\Lie_{(\rho(b)_{(k-1)}^{\vl, \,p-1}, \,0)} c_{l(a)})\circ \gamma_{(i,0)}\\
 & = (-1)^{i+k-2} c_{i_{\rho(b)} l(a)} \circ \gamma^{(p-1,q)}_{(i,0)} \circ \gamma_{(k,0)} - (-1)^{i+k-3} c_{i_{\rho(a)} l(b)} \circ \gamma^{(p-1,q)}_{(k-1,0)} \circ \gamma_{(i,0)}\\
 & = (-1)^{i+k-2} \left(c_{i_{\rho(b)} l(a)} \circ \gamma^{(p-1,q)}_{(i,0)} \circ \gamma_{(k,0)} + c_{i_{\rho(a)} l(b)} \circ \gamma^{(p-1,q)}_{(k-1,0)} \circ \gamma_{(i,0)}\right),
\end{align*}
where in the second equality we have used Proposition \ref{Lie_tensor}.
One can now directly check that $\gamma^{(p-1,q)}_{(i,0)} \circ \gamma_{(k,0)}$ and $\gamma^{(p-1,q)}_{(k-1,0)} \circ \gamma_{(i,0)}$
are the same projection from $\M^{(p,q)}$ to $\M^{(p-2,q)}$, which forgets the $i$-th and the $k$-th components on $TM$.
Hence, for these choices of $U$ and $V$, \eqref{d_zero} is equivalent to \eqref{IM4}.

In a similar way, one checks that, for $U=\calb \alpha_{(j)}$, $V=\calb \beta_{(k)}$, $1 \leq j < k \leq q$,
one obtains the equivalence of  \eqref{d_zero} with \eqref{IM5}, and for
$U=\calb a_{(i)}$, $V=\calb \alpha_{(j)}$, $1 \leq i \leq p$, $1 \leq j \leq q$, one has
the equivalence of  \eqref{d_zero} with \eqref{IM6}.

These 6 cases cover all possibilities of $U,V$ varying in the set of generators, so the result follows.
\end{proof}

Let now $\G \toto M$ be a Lie groupoid with Lie algebroid $A \to M$. For a multiplicative $(q,p)$-tensor field
$\tau \in \Gamma(\wedge^p T^*\G \otimes \wedge^q T\G)$, consider the corresponding multiplicative function
$c_\tau \in C^\infty(\bG)$ on the groupoid \eqref{big_group}. Let $Ac_\tau \in \Gamma(\bA^*)\subseteq C^\infty(\bA)$
be its associated infinitesimal cocycle.

\begin{lemma}\label{main_lemma}
The following holds:
\begin{itemize}
\item[(a)] There is a linear tensor field $\tau_{A} \in \Gamma(\wedge^p T^*A \otimes \wedge^q TA)$ such
$A c_{\tau} = c_{\tau_{A}}$.
\item[(b)] The infinitesimal components $(D_{A}, l_{A}, r_{A})$ of $\tau_A$
satisfy
\begin{align*}
D_{A}(a)(\underline{\X}, \underline{\varphi}) & = \left(\Lie_{\overrightarrow{a}} \tau\right)(\underline{\X}, \underline{\varphi})\\
l_{A}(a)|_{\gamma_{(i,0)}(\underline{\X}, \underline{\varphi})} & = \left(i_{\overrightarrow{a}}
\tau\right)|_{\pi_{(i,0)} (\underline{\X}, \underline{\varphi})}\\
r_{A}(\alpha)|_{\gamma_{(0,j)}(\underline{\X}, \underline{\varphi})} & = \left(i_{\t^*\alpha} \tau\right)|_{\pi_{(0,j)} (\underline{\X}, \underline{\varphi})},
\end{align*}
where $(\underline{\X}, \underline{\varphi}) \in \M$,  $a \in \Gamma(A)$, $\alpha \in \Omega^1(M)$
and $\gamma_{(i,0)}$, $\gamma_{(0,j)}$, $\pi_{(i,0)}$ and $\pi_{(0,j)}$ are the forgetful
projections  \eqref{proj:i}, \eqref{proj:j}, \eqref{projpi:i} and \eqref{projpi:j}, respectively.
\item[(c)] The infinitesimal components $(D, l, r)$ of $\tau$ coincide with those of $\tau_{A}$.
\end{itemize}
\end{lemma}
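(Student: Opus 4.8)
The plan is to manufacture $\tau_A$ out of the triple $(D,l,r)$ of infinitesimal components of $\tau$, and then to identify its $c$-function with $Ac_\tau$ by testing both against a generating set of sections of $\bA$. The enabling observation is that, as a vector bundle over $\M$, the Lie algebroid $\bA=A\bG$ of \eqref{big_alg} is precisely the bundle $\bE$ of Section~\ref{linear} obtained by taking $\E=A$ there; hence it makes sense to speak of linear tensors on the vector bundle $A$ and of their infinitesimal components, and Corollary~\ref{cor:1-1} applies with $\E=A$.

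To prove (a) and (c): by Lemma~\ref{prop:D_existence} the triple $(D,l,r)$ attached to $\tau$ satisfies the Leibniz condition \eqref{D:leibniz}, so Corollary~\ref{cor:1-1} produces a unique linear tensor $\tau_A\in\Gamma(\wedge^p T^*A\otimes\wedge^q TA)$ whose infinitesimal components $(D_A,l_A,r_A)$ equal $(D,l,r)$ — which is (c). For (a), I would observe that both $Ac_\tau$ and $c_{\tau_A}$ are sections of $\bA^*\to\M$ (the second because the $c$-function of a linear tensor on the vector bundle $A$ is fiberwise linear on $\bE=\bA$, as recalled at the start of Section~\ref{linear}), so by Proposition~\ref{prop:generators} it suffices to check that they agree when paired with the generators $(T^pa,\Rev^q_a)$, $\calb a_{(i)}$ and $\calb\alpha_{(j)}$. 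The pairings of $Ac_\tau$ with these generators were computed in the proof of Theorem~\ref{thm:mult_char}, in \eqref{l:comp}, \eqref{r:comp} and \eqref{comp_D}, in terms of $l(a)$, $r(\alpha)$, $D(a)$; the pairings of $c_{\tau_A}$ with the same generators are furnished by \eqref{l_linear} and \eqref{D_linear} in terms of $l_A(a)$, $r_A(\alpha)$, $D_A(a)$. Since $(D_A,l_A,r_A)=(D,l,r)$, the two lists of formulas coincide verbatim — signs $(-1)^{i-1}$, $(-1)^{j-1}$ and projections $\gamma_{(i,0)}$, $\gamma_{(0,j)}$ included — whence $Ac_\tau=c_{\tau_A}$.

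To prove (b): I would couple these pairing identities with the pointwise evaluations already obtained inside the proofs of Lemmas~\ref{prop:lr_existence} and \ref{prop:D_existence}, namely $\<Ac_\tau,(T^pa,\Rev^q_a)\>(\underline{\X},\underline{\varphi})=(\Lie_{\overrightarrow{a}}\tau)(\underline{\X},\underline{\varphi})$, $\<Ac_\tau,\calb a_{(i)}\>(\underline{\X},\underline{\varphi})=(-1)^{i-1}(i_{\overrightarrow{a}}\tau)|_{\pi_{(i,0)}(\underline{\X},\underline{\varphi})}$, and the analogous identity for $\calb\alpha_{(j)}$ with $i_{\t^*\alpha}\tau$ and $\pi_{(0,j)}$. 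Equating these with the right-hand sides of \eqref{D_linear} and \eqref{l_linear} evaluated at $(\underline{\X},\underline{\varphi})$, i.e. with $c_{D_A(a)}(\underline{\X},\underline{\varphi})$, $(-1)^{i-1}c_{l_A(a)}(\gamma_{(i,0)}(\underline{\X},\underline{\varphi}))$ and $(-1)^{j-1}c_{r_A(\alpha)}(\gamma_{(0,j)}(\underline{\X},\underline{\varphi}))$, cancelling the common signs, and using that $\gamma_{(i,0)}$, $\gamma_{(0,j)}$ are what $\pi_{(i,0)}$, $\pi_{(0,j)}$ cover, yields the three formulas asserted in (b).

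Since Theorem~\ref{thm:mult_char}, Corollary~\ref{cor:1-1} and Proposition~\ref{prop:generators} already do the substantive work, I expect no serious obstacle: the argument is essentially bookkeeping. The one step deserving care is the identification of $\bA$ over $\M$ with the linear-tensor bundle $\bE$ for $\E=A$, which is what licenses the phrase ``infinitesimal components of $\tau_A$'' and the use of \eqref{l_linear}, \eqref{D_linear}, together with the matching of signs and forgetful projections — clean here only because the factors $(-1)^{i-1}$, $(-1)^{j-1}$ occur symmetrically on both sides.
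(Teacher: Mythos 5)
Your argument is correct, but it reaches the conclusion by a genuinely different route from the paper's, essentially by proving (c) first and (a) last. The paper establishes (a) abstractly: it invokes Proposition~\ref{prop:comp_group} from the appendix (the Lie functor preserves componentwise linearity and skew-symmetry, via the homogeneity characterization of Proposition~\ref{comp:alt}) to know \emph{a priori} that $Ac_\tau$ is the $c$-function of some skew-symmetric tensor $\tau_A$; it then computes the components of $\tau_A$ by pairing against the generators to get (b), and deduces (c) by comparison with Theorem~\ref{thm:mult_char}. You instead \emph{manufacture} the candidate $\tau_A$ from $(D,l,r)$ via Corollary~\ref{cor:1-1} — legitimate, since Lemmas~\ref{prop:lr_existence} and \ref{prop:D_existence} supply the $C^\infty(M)$-linearity of $l,r$ and the Leibniz rule for $D$ — so that (c) holds by construction together with the last part of Proposition~\ref{prop:linear1}, and then verify $Ac_\tau=c_{\tau_A}$ by testing both sections of $\bA^*$ against the $C^\infty(\M)$-module generators of Proposition~\ref{prop:generators}, matching \eqref{l:comp}, \eqref{r:comp}, \eqref{comp_D} with \eqref{l_linear}, \eqref{D_linear}. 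The sign and projection bookkeeping you flag does check out. What each approach buys: yours keeps the whole proof inside Section~\ref{sec:proofs} and avoids the appendix entirely for this lemma (though Proposition~\ref{prop:comp_group} is still needed later, in the integration step of Theorem~\ref{thm:main}, so nothing is saved globally); the paper's is shorter here because the existence of $\tau_A$ comes for free once the appendix result is in place, and it does not need to pre-verify that the reconstruction of Corollary~\ref{cor:1-1} reproduces the given components.
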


\begin{proof}
It follows from Proposition \ref{prop:comp_group} that $Ac_{\tau}: \bA \Arrow \R$ is a componentwise linear
function which is antisymmetric on the $TA$ components as well as on the $T^*A$ components.
Hence, there exists $\tau_{A} \in \Gamma(\wedge^p T^*A \otimes \wedge^q TA)$ such that $A c_{\tau} = c_{\tau_{A}}$.
This proves (a).

By \eqref{eq:cocycle}, \eqref{D_linear} and Proposition \ref{Lie_tensor},
\begin{align*}
D_{A}(a)(\underline{\X}, \underline{\varphi}) & = \<c_{\tau_{A}}, (T^pa(\underline{\X}), \Rev_a^q(\underline{\varphi}))\>
= \<A c_{\tau}, (T^pa(\underline{\X}), \Rev_a^q(\underline{\varphi}))\>\\
& = (\Lie_{\overrightarrow{(T^pa, \Rev_a^q)}} \, c_{\tau})(\underline{\X}, \underline{\varphi}) =
(\Lie_{(\overrightarrow{a}^{T,p}, H_a^q)} \, c_{\tau})(\underline{\X}, \underline{\varphi})\\
& = (\Lie_{\overrightarrow{a}}\tau)(\underline{\X}, \underline{\varphi}).
\end{align*}
Similarly,
\begin{align*}
(-1)^{i-1} l_{A}(a)(\gamma_{(i,0)}(\underline{\X}, \underline{\varphi})) & =
\<c_{\tau_{A}}, \calb a_{(i)}(\underline{\X}, \underline{\varphi})\> = \<A c_{\tau}, \calb a_{(i)}(\underline{\X}, \underline{\varphi})\>\\
& = (\Lie_{\overrightarrow{\calb a_{(i)}}} \, c_{\tau})(\underline{\X}, \underline{\varphi}) =
(\Lie_{(\overrightarrow{a}^{\vl,p}_{(i)}, 0)} \, c_{\tau})(\underline{\X}, \underline{\varphi})\\
& = (-1)^{i-1}(i_{\overrightarrow{a}}\tau)(\pi_{(i,0)}(\underline{\X}, \underline{\varphi})).
\end{align*}
The equation involving $r_{A}$ follows similarly, and we conclude that (b) holds.

If we now let $(D, l, r)$ be the infinitesimal components of $\tau$, the equalities
 $D = D_{A}$, $l = l_{A}$ and $r = r_{A}$ follow from Theorem \ref{thm:mult_char}.
\end{proof}

We can now finally proceed to the proof of our main result.

\begin{proof}[Proof of Theorem \ref{thm:main}]
By Proposition \ref{prop:lie_alg_tensor}, there exists a Lie algebroid $(q,p)$ tensor field $\tau_{A}$
having $(D,l,r)$ as its infinitesimal components. The fact that $c_{\tau_{A}}: A\bG \Arrow \R$ is a
Lie algebroid cocycle and $\bG$ is source 1-connected implies that there exists a unique multiplicative function
$F: \bG \Arrow \R$ satisfying $AF = c_{\tau_{A}}$. By Proposition \ref{prop:comp_group}, $F$ is componentwise
linear and anti-symmetric on both the $T\G$ and $T^*\G$ components. Therefore, $F= c_{\tau}$ for a
(unique) multiplicative tensor field $\tau \in \Gamma(\wedge^p T^*\G \otimes \wedge^q T\G)$.
The fact that the infinitesimal components of $\tau$ are $(D,l,r)$ follows from Lemma \ref{main_lemma}.
\end{proof}

\section{Multiplicative vector-valued forms}\label{sec:vectorvforms}

Given a manifold $N$, by a {\em vector-valued form} on $N$ we mean an element of
$\Omega^\bullet(N,TN)=\Gamma(\wedge^\bullet T^*N \otimes TN)$. The space of vector-valued forms
is a graded Lie algebra with respect to the Fr\"olicher-Nijenhuis
bracket \cite{FN}. On a Lie groupoid, the space of {\em multiplicative} vector-valued forms is closed under the
Fr\"olicher-Nijenhuis bracket \cite{bd}, so it is also a graded Lie algebra.
We now identify its infinitesimal counterpart, in the spirit of Remarks~\ref{rem:complexes} and \ref{rem:gerst}.
Before discussing Lie groupoids, we briefly recall vector-valued forms on manifolds.


\subsection{The graded Lie algebra of vector-valued forms}

Let $\Omega^\bullet(N)$ be the graded algebra
of differential forms on $N$. A {\it degree $k$ derivation} of
$\Omega^\bullet(N)$ is a linear map $\Delta: \Omega^\bullet(N)\to
\Omega^{\bullet + k}(N)$ such that $\Delta(\alpha \wedge\beta)=
\Delta(\alpha)\wedge \beta + (-1)^{kj}\alpha\wedge \Delta(\beta)$, for $\alpha
\in \Omega^{j}(N)$. Any vector-valued form $K \in \Gamma(\wedge^pT^*N \otimes TN)$
gives rise to a degree $(p-1)$ derivation of $\Omega^\bullet(N)$ by
\begin{align}
\label{dfn:contraction}& i_K\omega(X_1, \dots, X_{p+j-1}) =  \\
\nonumber & \frac{1}{p!(j-1)!}\sum_{\sigma \in
S_{p+j-1}} sgn(\sigma) \,\omega(K(X_{\sigma(1)},
\dots, X_{\sigma(p)}), X_{\sigma(p+1)}, \dots, X_{\sigma(p+j-1)}),
\end{align}
for $\omega \in \Omega^j(N)$, $X_1, \ldots, X_{p+j-1} \in TN$. It also gives rise to a degree $p$ derivation of $\Omega^\bullet(N)$
via
\begin{equation}\label{eq:lieK}
\Lie_{K} = [i_K, d] = i_Kd -(-1)^{p-1}di_K,
\end{equation}
where $d$ is the exterior differential on $N$.

We extend $i_K$ to a contraction operation $i_K: \Omega^{\bullet}(N, TN) \Arrow \Omega^{\bullet+p-1}(N,TN)$ by
\begin{equation}\label{ext:contraction}
i_K(\omega \otimes X) = (i_K\omega)\otimes X, \,\qquad \omega \in \Omega(N), \, X \in \frakx(N).
\end{equation}

Given $K\in \Omega^p(N,TN)$ and $L\in \Omega^{p'}(N,TN)$, their {\it
Fr\"olicher-Nijenhuis bracket} \cite{FN} (see also \cite[Ch.~2]{nat}) is the vector-valued form $[K,L]\in
\Omega^{p+p'}(N,TN)$ uniquely defined by the condition
\begin{equation}\label{eq:comm}
\Lie_{[K, L]} = [\Lie_K, \Lie_L] = \Lie_K \Lie_L - (-1)^{pp'} \Lie_L
\Lie_K.
\end{equation}
When $K$ and $L$ have degree zero (i.e., they are vector fields on
$N$), \eqref{eq:comm} agrees with the usual Lie bracket of vector
fields. More generally, for $X \in \frakx(N)$,
$$
[X, K] = \Lie_X K.
$$

The Fr\"olicher-Nijenhuis bracket makes $\Omega^\bullet(N,TN)$ into a graded Lie algebra.
It is a natural bracket in the following sense: for a smooth map $F: N_1\to N_2$ , let $K_i\in \Omega^p(N_i,TN_i)$,
$L_i\in \Omega^{p'}(N_i,TN_i)$, $i=1,2$, be such that $K_1$ is
$F$-related to $K_2$ and $L_1$ is $F$-related to $L_2$ \footnote{$K_1 \in \Omega^p(N_1,TN_1)$ is
{\it $F$-related} to $K_2 \in \Omega^p(N_2,TN_2)$ if
$$
K_2(TF(X_1),\ldots,TF(X_k))=TF(K_1(X_1,\ldots,X_k)),
$$
for all $X_1,\ldots,X_k \in T_xN$, and $x\in N$.}. Then
$[K_1,L_1]$ is $F$-related to $[K_2,L_2]$. There are other important properties of the Fr\"olicher-Nijenhuis
bracket which will be recalled in subsequent sections.


\subsection{Infinitesimal description}
Let $\G$ be a Lie groupoid. As seen in Proposition \ref{prop:vect_valued}, a  multiplicative vector-valued form
$K \in \Omega^p(\G, T\G)$ as defined by \cite{LMX} is exactly a multiplicative $(1,p)$-tensor field.
From Theorem~\ref{thm:main}, one obtains a bijective correspondence between multiplicative vector-valued $p$-forms
on a Lie groupoid $\G$ and IM $(p,1)$-tensors on its Lie algebroid $A$. For this reason, we will refer to IM $(p,1)$-tensors
also as {\em IM vector-valued $p$-forms}.

We denote by $\Omega^\bullet_{\scriptscriptstyle{mult}}(\G,T\G)$ the space of multiplicative vector-valued forms on $\G$.
In the following, we will also need the following result proven in \cite[Thm.~4.3]{bd}:

\begin{proposition}\label{prop:FNgrp}
On a Lie groupoid $\G$, $\Omega^\bullet_{\scriptscriptstyle{mult}}(\G,T\G)$ is closed under the
Fr\"olicher-Nijenhuis bracket.
\end{proposition}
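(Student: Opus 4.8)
The plan is to reduce multiplicativity of a vector-valued form to a relatedness statement under the structure maps of $\G$, and then invoke the naturality of the Fr\"olicher--Nijenhuis bracket. Write $\G\ttimes\G$ for the manifold of composable pairs, with the two projections $\pr_1,\pr_2\colon\G\ttimes\G\to\G$ and the multiplication $\m\colon\G\ttimes\G\to\G$; recall the identification $T(\G\ttimes\G)\cong T\G\,_{T\s}\!\times_{T\t}T\G$. The first step I would carry out is an auxiliary \emph{Lemma}: a vector-valued $p$-form $K\in\Omega^p(\G,T\G)$ is multiplicative if and only if there exists $\widetilde K\in\Omega^p(\G\ttimes\G,\,T(\G\ttimes\G))$ that is simultaneously $\pr_1$-related, $\pr_2$-related and $\m$-related to $K$.

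To prove the Lemma I would use the characterization of Proposition~\ref{prop:vect_valued}: $K$ is multiplicative iff $(\overline K,\overline r)$ is a Lie groupoid morphism $\oplus^pT\G\to T\G$ for some bundle map $\overline r\colon\oplus^pTM\to TM$ over the identity. For the ``only if'' direction, given such a morphism I would define $\overline{\widetilde K}(\underline{(U,V)}):=(\overline K\,\underline U,\,\overline K\,\underline V)$ on $\oplus^pT(\G\ttimes\G)$; the source/target intertwining $T\s\circ\overline K=\overline r\circ T\s$, $T\t\circ\overline K=\overline r\circ T\t$ guarantees the value lies in $T(\G\ttimes\G)$, so this defines $\widetilde K$, and one checks directly (smoothness, multilinearity and skew-symmetry being immediate) that $\widetilde K$ is $\pr_1$-, $\pr_2$- and $\m$-related to $K$ -- the $\m$-relatedness being exactly the statement that $\overline K$ preserves $\bullet$. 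For the ``if'' direction, $\pr_1$- and $\pr_2$-relatedness \emph{force} $\overline{\widetilde K}(\underline{(U,V)})=(\overline K\,\underline U,\overline K\,\underline V)$; since this takes values in $T(\G\ttimes\G)$ one gets $T\s\,\overline K\,\underline U=T\t\,\overline K\,\underline V$ whenever $T\s\,\underline U=T\t\,\underline V$, and from this (evaluating on lifts of vectors through the unit section) one extracts a well-defined $\overline r$ with $T\s\,\overline K=\overline r\circ T\s$ and $T\t\,\overline K=\overline r\circ T\t$; the $\m$-relatedness of $\widetilde K$ then gives that $\overline K$ preserves $\bullet$. Preservation of units and inversion is automatic (as for any map between Lie groupoids compatible with source, target and multiplication), so $(\overline K,\overline r)$ is a groupoid morphism and Proposition~\ref{prop:vect_valued} applies.

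With the Lemma in hand the proof of Proposition~\ref{prop:FNgrp} is immediate. Given $K\in\Omega^p_{\scriptscriptstyle{mult}}(\G,T\G)$ and $L\in\Omega^{p'}_{\scriptscriptstyle{mult}}(\G,T\G)$, pick $\widetilde K,\widetilde L$ on $\G\ttimes\G$ as in the Lemma. By the naturality of the Fr\"olicher--Nijenhuis bracket under $F$-related forms, applied successively to $F=\pr_1$, $F=\pr_2$ and $F=\m$, the bracket $[\widetilde K,\widetilde L]\in\Omega^{p+p'}(\G\ttimes\G,\,T(\G\ttimes\G))$ is $\pr_1$-, $\pr_2$- and $\m$-related to $[K,L]$. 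Hence $[K,L]$ meets the criterion of the Lemma and is multiplicative.

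I expect the main obstacle to be the careful bookkeeping in the ``if'' direction of the Lemma: recovering the base map $\overline r$ and the source/target intertwining purely from the value constraint $\overline{\widetilde K}\big(\oplus^pT(\G\ttimes\G)\big)\subseteq T(\G\ttimes\G)$ together with $\pr_1$/$\pr_2$-relatedness, and then verifying that this data plus preservation of $\bullet$ genuinely assembles into a groupoid morphism so that Proposition~\ref{prop:vect_valued} can be invoked. The remaining ingredients -- well-definedness, smoothness and skew-symmetry of $\widetilde K$, and the three applications of FN-naturality -- are routine.
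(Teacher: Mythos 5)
The paper does not prove Proposition~\ref{prop:FNgrp} itself; it is imported from \cite[Thm.~4.3]{bd}. Your argument is correct and, up to reformulation, is the one used there (the cited proof phrases multiplicativity as tangency of $K\oplus K\oplus K$ to the graph of multiplication inside $\G\times\G\times\G$, which is equivalent to your $\pr_1$-, $\pr_2$- and $\m$-relatedness over $\G\ttimes\G$): your auxiliary lemma is a faithful repackaging of Proposition~\ref{prop:vect_valued}, the extraction of $\overline r$ by evaluating on $T\u$-lifts works as you indicate, and the conclusion then follows from the naturality of the Fr\"olicher--Nijenhuis bracket under $F$-relatedness.
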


Hence $\Omega^\bullet_{\scriptscriptstyle{mult}}(\G,T\G) \subseteq \Omega^\bullet(\G,T\G)$ is a graded Lie subalgebra.
We now describe the graded Lie bracket on IM vector-valued forms corresponding to the Fr\"olicher-Nijenhuis bracket on
multiplicative vector-valued forms.

For a multiplicative vector-valued form $K \in \Omega^p(\G, T\G)$, consider its infinitesimal components
$D: \Gamma(A) \Arrow \Gamma(\wedge^p T^*M \otimes A)$, $l: A \Arrow \wedge^{p-1} T^*M \otimes A$ and
$r: T^*M \to \wedge^p T^*M$. Note that $r$ can be seen alternatively as an element $r \in \Omega^p(M, TM)$.
As such, Proposition \ref{prop:vect_valued} shows that $K$ is $\s,\t$-related to $r$.

Using the $\Omega^\bullet(M)$-module structure of $\Omega^\bullet(M, A) = \Gamma(\wedge^\bullet T^*M \otimes A)$,
we  extend $l$ to an operator $l: \wedge^{\bullet} T^*M\otimes A \Arrow \wedge^{\bullet+p-1} T^*M\otimes A$ by
$$
l(\alpha \otimes a) = \alpha \wedge l(a),
$$
and $D$ to an operator $D: \Omega^{j}(M, A) \Arrow \Omega^{p+j}(M, A)$ by
\begin{equation}\label{D:ext}
D(\alpha \otimes a) = \alpha \wedge D(a) + (-1)^j(d\alpha \wedge l(a) - (-1)^{j(p-1)}{\Lie_{r}\alpha}\otimes a),
\end{equation}
with $\Lie_r$ as in \eqref{eq:lieK}.
This extension for $D$ is well-defined as a consequence of the Leibniz rule \eqref{D:leibniz}. Moreover,
$$
D(\alpha \wedge \eta) = \alpha \wedge D(\eta) + (-1)^{i+j}(d\alpha  \wedge l(\eta) + (-1)^{(i+j)(p-1)} \Lie_{r}\alpha \wedge \eta),
$$
for $\alpha \in \Omega^{i}(M)$, and $\eta \in \Omega^j(M, A)$.

\begin{lemma}\label{lem:itK}
For $\eta \in \Omega^\bullet(M, A)$, we have
$i_{\T(\eta)} K = \T(l(\eta))$.
\end{lemma}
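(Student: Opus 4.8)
The plan is to reduce the identity to the defining relation $i_{\overrightarrow{a}}K=\T(l(a))$ of the infinitesimal component $l$ (the first equation of \eqref{eq:conds} in Theorem~\ref{thm:mult_char}, applied to the $(1,p)$-tensor $\tau=K$), exploiting that both sides depend on $\eta\in\Omega^\bullet(M,A)$ in a way compatible with the left $\Omega^\bullet(M)$-module structure. Since both $\eta\mapsto i_{\T(\eta)}K$ and $\eta\mapsto\T(l(\eta))$ are $\R$-linear, it suffices to treat decomposable elements $\eta=\alpha\otimes a$ with $\alpha\in\Omega^\bullet(M)$ and $a\in\Gamma(A)$, and then to extract the scalar factor $\alpha$.

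First I would record two elementary compatibility facts. Directly from $\T(\beta\otimes\mathfrak{X})=\t^*\beta\otimes\overrightarrow{\mathfrak{X}}$ one has $\T(\gamma\wedge\zeta)=\t^*\gamma\wedge\T(\zeta)$ for $\gamma\in\Omega^\bullet(M)$ and $\zeta\in\Gamma(\wedge^\bullet T^*M\otimes\wedge^\bullet A)$; combined with the definition of the extended operator, $l(\gamma\wedge\zeta)=\gamma\wedge l(\zeta)$, this gives $\T(l(\alpha\otimes a))=\T(\alpha\wedge l(a))=\t^*\alpha\wedge\T(l(a))$. On the other hand, for a decomposable vector-valued form $L=\gamma\otimes X$ the contraction operator acts on scalar forms by $i_L\omega=\gamma\wedge i_X\omega$ (a special case of \eqref{dfn:contraction}), hence on vector-valued forms by $i_L(\omega\otimes Y)=(\gamma\wedge i_X\omega)\otimes Y=\gamma\wedge i_X(\omega\otimes Y)$ via \eqref{ext:contraction}. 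Applying this with $L=\T(\alpha\otimes a)=\t^*\alpha\otimes\overrightarrow{a}$ yields $i_{\T(\alpha\otimes a)}K=\t^*\alpha\wedge i_{\overrightarrow{a}}K$.

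Putting these together, for $\eta=\alpha\otimes a$ I get $i_{\T(\eta)}K=\t^*\alpha\wedge i_{\overrightarrow{a}}K=\t^*\alpha\wedge\T(l(a))=\T(l(\eta))$, where the middle equality is exactly \eqref{eq:conds}; the general case then follows by $\R$-linearity (localizing to a frame if needed). Degrees are consistent: if $\eta\in\Omega^j(M,A)$ then both sides lie in $\Omega^{j+p-1}(\G,T\G)$. I do not expect a genuine obstacle here; the only point demanding a little care is the interaction of $i_{\T(\eta)}$ with the wedge factor produced by the $\Omega^\bullet(M)$-module structure, which is precisely what the two compatibility facts above take care of, together with checking that the extension of $l$ used in \eqref{D:ext} is compatible with left multiplication by forms on $M$.
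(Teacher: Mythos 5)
Your argument is correct and is essentially the proof in the paper: reduce to a decomposable $\eta=\alpha\otimes a$, use $i_{\t^*\alpha\otimes\overrightarrow{a}}K=\t^*\alpha\wedge i_{\overrightarrow{a}}K$ together with $i_{\overrightarrow{a}}K=\T(l(a))$, and then pull the factor $\t^*\alpha$ back through $\T$. The two compatibility facts you isolate are exactly what the paper uses implicitly, so there is nothing to add.
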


\begin{proof}
For homogeneous $\eta=\alpha \otimes a$, one has that $\T(\eta)=\t^*\alpha \otimes \overrightarrow{a}$. So, by definition of the contraction  \eqref{ext:contraction},
$$
i_{\T(\eta)} K = \t^*\alpha\wedge i_{\overrightarrow{a}} K  = \t^*\alpha \wedge \T(l(a))
  = \T(\alpha \wedge l(a))
  = \T(l(\alpha \otimes a)).
$$
%
\end{proof}

Let us consider the operation $[\cdot, K]: \Omega^{\bullet}(\G, T\G) \Arrow \Omega^{\bullet+k}(\G, T\G)$, where $[\cdot, \cdot]$ is the Fr\"olicher-Nijenhuis bracket.
The following result shows that $[\cdot, K]$ preserves the image of $\T$ inside $\Omega^\bullet(\G, T\G)$.

\begin{lemma}\label{J:inv}
For $\eta \in \Omega^{\bullet}(M, A)$, we have
$[\T(\eta), K] = \T(D(\eta))$.
\end{lemma}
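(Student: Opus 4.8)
The statement $[\T(\eta),K]=\T(D(\eta))$ should be proved by the same two reductions used throughout this section: first reduce to homogeneous $\eta=\alpha\otimes a$, then use the defining property \eqref{eq:comm} of the Fr\"olicher--Nijenhuis bracket, namely that two vector-valued forms agree iff their associated $\Lie$-derivations on $\Omega^\bullet(\G)$ agree. So the plan is to compute $\Lie_{[\T(\eta),K]}$ and $\Lie_{\T(D(\eta))}$ as derivations of $\Omega^\bullet(\G)$ and check they coincide. For the left-hand side, \eqref{eq:comm} gives $\Lie_{[\T(\eta),K]}=[\Lie_{\T(\eta)},\Lie_K]$, and since $K$ is $\t$-related to $r$ (Proposition~\ref{prop:vect_valued}) while $\T(\eta)$ is, by construction, $\t$-related to $\eta$ viewed suitably, one can try to push everything through $\t^*$.

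**Key steps.** First I would fix $\eta=\alpha\otimes a$ with $\alpha\in\Omega^i(M)$, $a\in\Gamma(A)$, so $\T(\eta)=\t^*\alpha\otimes\overrightarrow{a}$. Using the standard identity $\Lie_{\beta\otimes X}=\beta\wedge\Lie_X-(-1)^{i}d\beta\wedge i_X$ for a decomposable vector-valued form (with $\beta\in\Omega^i$, $X\in\mathfrak{X}$), I get
$$
\Lie_{\T(\eta)}=\t^*\alpha\wedge\Lie_{\overrightarrow{a}}-(-1)^{i}\,\t^*d\alpha\wedge i_{\overrightarrow{a}}.
$$
Next, plug this into $[\Lie_{\T(\eta)},\Lie_K]$ and expand the graded commutator, repeatedly using the Cartan-type relations among $i_{\overrightarrow a}$, $\Lie_{\overrightarrow a}$, $d$, and $\Lie_K=[i_K,d]$, together with the fact that $i_{\overrightarrow a}$ and $\Lie_{\overrightarrow a}$ commute with $\Lie_K$ up to terms governed by the infinitesimal components: concretely, from Theorem~\ref{thm:main}, $i_{\overrightarrow a}K=\T(l(a))$ and $\Lie_{\overrightarrow a}K=\T(D(a))$, and also $i_{\t^*\beta}K=\T(r(\beta))$. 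These three identities are what convert Lie derivatives/contractions of $K$ along right-invariant data into the operators $D$, $l$, $r$ pulled back by $\t$. Using $[\Lie_{\overrightarrow a},i_K]=i_{[\overrightarrow a,K]_{\mathrm{alg}}}=i_{\Lie_{\overrightarrow a}K}=i_{\T(D(a))}$ and $[i_{\overrightarrow a},i_K]=i_{i_{\overrightarrow a}K}-(\text{graded sign})\,i_{i_K\overrightarrow a}$ — where $i_K\overrightarrow a$ is handled via $i_{\t^*\beta}K=\T(r(\beta))$ after contracting with covectors — the whole expression collapses to $\t^*$ of an expression in $\alpha$, $d\alpha$, $D(a)$, $l(a)$, $\Lie_r\alpha$. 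Comparing with the right-hand side, $\Lie_{\T(D(\eta))}$ is computed from the same decomposable formula applied to $\T(D(\alpha\otimes a))$ and the definition \eqref{D:ext} of the extended $D$; after using Lemma~\ref{lem:itK} for the $i_{\T(\cdot)}$ pieces and Proposition~\ref{pull:invariance} to commute $\T$ past $\t^*$, the two sides match term by term, the $\Lie_r\alpha\otimes a$ summand in \eqref{D:ext} being exactly the term produced by the $K$-vs-$r$ relatedness through $\t$. Finally, the general case follows by $\R$-linearity and the $\Omega^\bullet(M)$-module compatibility already recorded for $D$ and $l$ (the extended Leibniz rule displayed just before the lemma).

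**Main obstacle.** The bookkeeping in expanding $[\Lie_{\T(\eta)},\Lie_K]$ is the delicate part: one must keep track of the graded signs $(-1)^{i}$, $(-1)^{ip}$, $(-1)^{p-1}$ coming from the degrees of $\alpha$, $K$, and from the derivation degrees of $i_K$ and $\Lie_K$, and one must correctly identify the two ``source'' terms (one from $\Lie_{\overrightarrow a}$ hitting $K$, giving $D(a)$, and one from $d\alpha$ and the contraction $i_{\overrightarrow a}$ interacting with $\Lie_K=[i_K,d]$, giving the $d\alpha\wedge l(a)$ and $\Lie_r\alpha\otimes a$ contributions). The cleanest way to organize this is to work entirely at the level of derivations of $\Omega^\bullet(\G)$ and to systematically replace every occurrence of $i_{\overrightarrow a}K$, $\Lie_{\overrightarrow a}K$, $i_{\t^*\beta}K$ by the corresponding $\T(\cdot)$ expression from Theorem~\ref{thm:main} before attempting any simplification; once that substitution is made, matching the two sides is a routine (if sign-heavy) verification, and I would relegate the full sign computation to the reader or to a displayed chain of equalities rather than spelling out every intermediate step.
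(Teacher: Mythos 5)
Your outline is correct and lands on the same essential ingredients as the paper's proof: reduction to decomposable $\eta=\alpha\otimes a$, the three identities $\Lie_{\overrightarrow{a}}K=\T(D(a))$, $i_{\overrightarrow{a}}K=\T(l(a))$, the $\t$-relatedness of $K$ and $r$ (which enters precisely as $\Lie_K\t^*\alpha=\t^*\Lie_r\alpha$), and the comparison with the definition \eqref{D:ext} of the extended $D$. The one genuine difference is where the expansion of $[\t^*\alpha\otimes\overrightarrow{a},K]$ comes from: the paper simply quotes the closed formula
$[\beta\otimes X,K]=\beta\wedge[X,K]+(-1)^j\bigl(d\beta\otimes i_XK-(-1)^{(j-1)p}\Lie_K\beta\otimes X\bigr)$
from \cite[Sec.~8.7]{nat} and then substitutes, whereas you propose to re-derive that formula from scratch via \eqref{eq:comm} by expanding the graded commutator $[\Lie_{\T(\eta)},\Lie_K]$ of derivations of $\Omega^\bullet(\G)$. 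Your route is legitimate (the assignment $L\mapsto\Lie_L$ is injective, so matching derivations suffices) and more self-contained, but it is exactly the sign-heavy computation you defer to the reader, and it is the entire content of the cited formula; executed in full it would add a page where the paper spends two lines. Two small corrections to your bookkeeping: the term $i_K\overrightarrow{a}$ you worry about vanishes identically, since by \eqref{ext:contraction} $i_K$ kills vector-valued $0$-forms, so the identity $i_{\t^*\beta}K=\T(r(\beta))$ is not actually needed anywhere in this lemma (it is $\Lie_K\t^*\alpha=\t^*\Lie_r\alpha$ that carries the $r$-dependence); and $\T(\eta)$ is not ``$\t$-related'' to anything --- the only relatedness used is that of $K$ with $r$, from Proposition~\ref{prop:vect_valued}.
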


\begin{proof}
If $\eta = \alpha \otimes a$, for $\alpha \in \Omega^j(M)$, then one has (see \cite[Sec.~8.7]{nat}):
\begin{align*}
[\t^*\alpha \otimes \overrightarrow{a}, K] & = \t^*\alpha \wedge [\overrightarrow{a}, K] + (-1)^j(\t^*d\alpha \otimes i_{\overrightarrow{a}}K - (-1)^{(j-1)p}\Lie_{K}(\t^*\alpha) \otimes \overrightarrow{a})\\
   &= \t^*\alpha \wedge \T(D(a)) + (-1)^j(\t^*d\alpha\otimes \T(l(a)) - (-1)^{j(p-1)}\t^*(\Lie_{r}\alpha)\otimes \overrightarrow{a}\\
   & = \T(\alpha \wedge D(a) +(-1)^j(d\alpha \otimes l(a) - (-1)^{j(p-1)}\Lie_{r}\alpha \otimes a))\\
   & = \T(D(\eta)).
\end{align*}
In the second equality, we have used the fact that $K$ and $r$ are $\t$-related.
\end{proof}

\begin{proposition}\label{prop:compFN}
Let $K_1 \in \Omega^{p_1}(\G, T\G)$, $K_2 \in \Omega^{p_2}(\G, T\G)$ be multiplicative vector-valued forms, with
infinitesimal components $(D_1,l_1,r_1)$ and $(D_2,l_2,r_2)$, respectively.
The infinitesimal components $(D,l,r)$ of their Fr\"olicher-Nijenhuis bracket $[K_1,K_2]$ are
\begin{align}
\label{D:frolicher}D & = [D_{2}, D_{1}] = D_{2}\circ D_{1} - (-1)^{p_1p_2} D_{1} \circ D_{2}\\
\label{l:frolicher}l & = [D_{2},l_{1}] - (-1)^{p_1p_2}[D_{1}, l_{2}],
\end{align}
where the brackets on the right-hand side are the (graded) commutators of endomorphisms of $\Omega^\bullet(M,A)$, and
\begin{equation}\label{r:frolicher}
r = [r_{1}, r_{2}],
\end{equation}
where the last bracket is the Fr\"olicher-Nijenhuis bracket of $r_{i} \in \Omega^{p_i}(M, TM)$, $i=1,2$.
\end{proposition}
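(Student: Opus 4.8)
The plan is to identify the infinitesimal components of $[K_1,K_2]$ by checking, for the candidate triple in \eqref{D:frolicher}--\eqref{r:frolicher}, the three defining relations of Theorem~\ref{thm:mult_char}; since those relations determine the infinitesimal components uniquely and $[K_1,K_2]$ is multiplicative by Proposition~\ref{prop:FNgrp}, this suffices. Throughout I use that for a vector field $\overrightarrow a$ the tensor Lie derivative $\Lie_{\overrightarrow a}$ of a vector-valued form coincides with the Fr\"olicher--Nijenhuis bracket $[\overrightarrow a,\cdot\,]$, so the relations for the factors read $i_{\overrightarrow a}K_i=\T(l_i(a))$, $i_{\t^*\alpha}K_i=\T(r_i(\alpha))$ and $[\overrightarrow a,K_i]=\T(D_i(a))$, with $D_i,l_i$ regarded as the extended operators on $\Omega^\bullet(M,A)$ from \eqref{D:ext}. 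The $r$-component is then immediate from naturality: by Proposition~\ref{prop:vect_valued} each $K_i$ is both $\s$-related and $\t$-related to $r_i\in\Omega^{p_i}(M,TM)$, so naturality of the Fr\"olicher--Nijenhuis bracket shows $[K_1,K_2]$ is $\s$- and $\t$-related to $[r_1,r_2]\in\Omega^{p_1+p_2}(M,TM)$, and since $T\t$ is fibrewise surjective this forces the $r$-component of $[K_1,K_2]$ to be $[r_1,r_2]$, which is \eqref{r:frolicher}.

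For the $D$-component I apply the graded Jacobi identity of the Fr\"olicher--Nijenhuis bracket to the triple $(\overrightarrow a,K_1,K_2)$; since $\overrightarrow a$ has degree $0$,
$$[\overrightarrow a,[K_1,K_2]] = [[\overrightarrow a,K_1],K_2] + [K_1,[\overrightarrow a,K_2]].$$
Substituting $[\overrightarrow a,K_i]=\T(D_i(a))$ and applying Lemma~\ref{J:inv} (which gives $[\T(\eta),K_i]=\T(D_i(\eta))$) together with graded antisymmetry, the right-hand side becomes $\T\big(D_2(D_1(a))-(-1)^{p_1p_2}D_1(D_2(a))\big)=\T([D_2,D_1](a))$, i.e. \eqref{D:frolicher}. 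Running the same computation with $a$ replaced by an arbitrary $\eta\in\Omega^\bullet(M,A)$ shows that the extension of this $D$-component is the graded commutator $[D_2,D_1]$, consistently with \eqref{D:ext}.

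For the $l$-component I use the Fr\"olicher--Nijenhuis identity (see \cite[Sec.~8.7]{nat}) expressing $i_X[K_1,K_2]$, for a vector field $X$, as a combination of the terms $[i_XK_1,K_2]$, $[K_1,i_XK_2]$, $i_{[X,K_1]}K_2$ and $i_{[X,K_2]}K_1$. Taking $X=\overrightarrow a$ and substituting $i_{\overrightarrow a}K_i=\T(l_i(a))$, $[\overrightarrow a,K_i]=\T(D_i(a))$, every summand on the right-hand side has one of the two forms $[\T(\xi),K_j]$ or $i_{\T(\xi)}K_j$; Lemma~\ref{J:inv} turns the former into $\T(D_j(\xi))$ and Lemma~\ref{lem:itK} turns the latter into $\T(l_j(\xi))$. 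Hence $i_{\overrightarrow a}[K_1,K_2]$ equals $\T$ applied to a linear combination of $D_2l_1(a)$, $l_2D_1(a)$, $D_1l_2(a)$ and $l_1D_2(a)$, which regroups as $\T\big(\big([D_2,l_1]-(-1)^{p_1p_2}[D_1,l_2]\big)(a)\big)$, the graded commutators being those of operators on $\Omega^\bullet(M,A)$; this is \eqref{l:frolicher}. By uniqueness of infinitesimal components (Theorem~\ref{thm:mult_char}) and injectivity of $\T$, the map $l$ so obtained is automatically a vector-bundle map, and $(D,l,r)$ are the infinitesimal components of $[K_1,K_2]$.

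The main obstacle is sign bookkeeping: one must use the precise form of the Fr\"olicher--Nijenhuis identity for $i_X$ of a bracket with $X$ of degree $0$, and carefully track the Koszul signs attached to the degrees $p_i$ and $p_i-1$ of the extended operators $D_i$ and $l_i$, in order to confirm that the combinations produced are exactly the graded commutators appearing in \eqref{D:frolicher} and \eqref{l:frolicher}. Everything else is a formal consequence of the defining relations of Theorem~\ref{thm:mult_char}, the naturality and graded Jacobi identity of the Fr\"olicher--Nijenhuis bracket, and Lemmas~\ref{lem:itK} and \ref{J:inv}.
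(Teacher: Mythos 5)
Your proposal is correct and follows essentially the same route as the paper's proof: naturality of the Fr\"olicher--Nijenhuis bracket for \eqref{r:frolicher}, the graded Jacobi identity applied to $(\overrightarrow{a},K_1,K_2)$ together with Lemma~\ref{J:inv} for \eqref{D:frolicher}, and the identity for $i_L[K_1,K_2]$ from \cite[Thm.~8.11]{nat} combined with Lemmas~\ref{lem:itK} and \ref{J:inv} for \eqref{l:frolicher}. The only difference is that you leave the sign bookkeeping in the $l$-computation as a stated task rather than carrying it out, whereas the paper performs it explicitly.
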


\begin{proof}
The equation \eqref{r:frolicher} follows from the naturality of the Fr\"olicher-Nijenhuis bracket. As $K_1$ is $\t$-related to $r_1$ and $K_2$ is $\t$-related to $r_2$, it follows that $[K_1,K_2]$ must be $\t$-related to $[r_1,r_2]$.
The identity \eqref{D:frolicher} for $D$ follows from the Jacobi equation for the Fr\"olicher-Nijenhuis bracket.  Indeed, using Lemma~\ref{J:inv}:
\begin{align*}
\T(D(a)) &= [\overrightarrow{a}, [K_1,K_2]] = [[\overrightarrow{a}, K_1],K_2]+[K_1, [\overrightarrow{a},K_2]]\\
                 & = [\T(D_1(a)), K_2] + [K_1, \T(D_2(a))]\\
                 & = \T(D_2(D_1(a))) - (-1)^{p_1p_2}[\T(D_2(a)), K_1]\\
                 & = \T(D_2(D_1(a)) - (-1)^{p_1 p_2} D_1(D_2(a))) = \T([D_2, D_1](a)).
\end{align*}

Now, recall the following general property of the Fr\"olicher-Nijenhuis bracket (see Theorem 8.11 in \cite{nat}):
for $K_i \in \Omega^{p_i}(N, TN)$ and $L \in \Omega^{p'+1}(N, TN)$, $i=1,2$, we have
\[
\begin{split}
i_L[K_1, K_2]  = & [i_LK_1, K_2] + (-1)^{p_1p'}[K_1, i_LK_2] \\
&  - \left((-1)^{p_1p'}\,i_{[K_1, L]},K_2 - (-1)^{(p_1+p')p_2}\,i_{[K_2,L]}K_1\right).
\end{split}
\]
Using this identity and the previous lemmas, we obtain
\begin{align*}
i_{\overrightarrow{a}}[K_1,K_2]  & = [i_{\overrightarrow{a}}K_1, K_2] + (-1)^{p_1}[K_1, i_{\overrightarrow{a}}K_2]\\
& \hspace{-40pt} - ((-1)^{p_1}i_{[K_1, \overrightarrow{a}]} \, K_2 - (-1)^{(p_1-1)p_2}i_{[K_2, \overrightarrow{a}]}\, K_1)\\
& = [\T(l_1(a)), K_2] + (-1)^{p_1}\left( -(-1)^{p_1(p_2-1)} [\T(l_2(a)), K_1] \right)\\
& \hspace{-40pt}  + ((-1)^{p_1}i_{\T(D_1(a))} \,K_2 -(-1)^{(p_1-1)p_2}i_{\T(D_2(a))} \, K_1)\\
& = \T\left(D_2(l_1(a)) - (-1)^{p_1p_2} D_1(l_2(a)) + (-1)^{p_1} l_2(D_1(a)\right) \\
& \hspace{-40pt} - (-1)^{(p_1-1)p_2} l_1(D_2(a)))\\
& =  \T\left([D_2, l_1](a) -(-1)^{p_1 p_2} (D_1(l_2(a)) - (-1)^{p_1(p_2-1)} l_2(D_1(a)))\right)\\
& = \T\left([D_2, l_1](a) - (-1)^{p_1 p_2}[D_1, l_2](a)\right),
\end{align*}
as we wanted to prove.
\end{proof}

\begin{corollary}
The space of IM vector-valued forms is a graded Lie algebra with the bracket defined by
\eqref{D:frolicher},\eqref{l:frolicher} and \eqref{r:frolicher}:
$$
[(D_1, l_1,r_1), (D_2,l_2,r_2)] = ([D_1,D_2], [l_1,l_2], [r_1,r_2]).
$$
The correspondence established by Theorem \ref{thm:main} between multiplicative vector-valued forms and
IM vector-valued forms is a graded Lie algebra isomorphism.
\end{corollary}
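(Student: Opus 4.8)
The plan is to assemble the corollary directly from the three pieces that have just been proven. First I would recall that Proposition~\ref{prop:FNgrp} establishes that $\Omega^\bullet_{\scriptscriptstyle{mult}}(\G,T\G)$ is closed under the Fr\"olicher-Nijenhuis bracket, hence a graded Lie subalgebra of $(\Omega^\bullet(\G,T\G),[\cdot,\cdot])$. Next, Theorem~\ref{thm:main} (as specialized to $(1,p)$-tensors in the discussion preceding this corollary) gives a bijection between multiplicative vector-valued $p$-forms on $\G$ and IM vector-valued $p$-forms $(D,l,r)$ on $A$; since this bijection is $\R$-linear and respects the grading by $p$, it transports the graded vector space structure. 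Finally, Proposition~\ref{prop:compFN} computes, for two multiplicative vector-valued forms $K_1,K_2$ with infinitesimal components $(D_i,l_i,r_i)$, the infinitesimal components of $[K_1,K_2]$ to be exactly $([D_2,D_1],\,[D_2,l_1]-(-1)^{p_1p_2}[D_1,l_2],\,[r_1,r_2])$. So the only real content is to package these statements.

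Concretely, I would argue as follows. Define on the space $\bigoplus_p\{\text{IM }(p,1)\text{-tensors on }A\}$ the bracket
$$
[(D_1,l_1,r_1),(D_2,l_2,r_2)] := \big([D_1,D_2],\,[l_1,l_2],\,[r_1,r_2]\big),
$$
where the right-hand side is shorthand for the three formulas \eqref{D:frolicher}, \eqref{l:frolicher}, \eqref{r:frolicher}. Let $\Psi$ denote the bijection of Theorem~\ref{thm:main} sending $\tau\mapsto(D,l,r)$. Proposition~\ref{prop:compFN} says precisely that $\Psi([K_1,K_2])=[\Psi(K_1),\Psi(K_2)]$; in particular the right-hand side of the displayed formula is again an IM vector-valued form (because $[K_1,K_2]$ is multiplicative by Proposition~\ref{prop:FNgrp} and its infinitesimal components are an IM tensor by Theorem~\ref{thm:main}), so the bracket is well-defined on IM vector-valued forms. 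Bilinearity and the correct degree are immediate from the formulas. Graded skew-symmetry and the graded Jacobi identity for this bracket then follow by transport of structure: since $\Psi$ is a bijection intertwining the two brackets and the Fr\"olicher-Nijenhuis bracket on $\Omega^\bullet_{\scriptscriptstyle{mult}}(\G,T\G)$ satisfies those axioms, so must the bracket on the infinitesimal side. Hence $\Psi$ is a graded Lie algebra isomorphism.

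One small subtlety worth flagging explicitly is that Proposition~\ref{prop:compFN} only tells us about the components of $[K_1,K_2]$ when $\G$ is (at least source-connected, and for the bijection source $1$-connected); to deduce that the bracket formula endows the \emph{abstract} set of IM vector-valued forms on an arbitrary Lie algebroid with a graded Lie structure one should note that any Lie algebroid $A$ admits a source $1$-connected integration $\G$ (at least locally, or globally under the integrability hypothesis implicit throughout this section), and the formulas \eqref{D:frolicher}--\eqref{r:frolicher} are purely algebraic in $(D_i,l_i,r_i)$, so the identities, once verified via $\Psi$ in the integrable case, hold as polynomial identities in general. I would include a sentence to this effect rather than re-deriving the Jacobi identity by hand. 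The expected main obstacle is essentially bookkeeping: making sure the sign conventions in \eqref{D:frolicher}, \eqref{l:frolicher}, \eqref{r:frolicher} are exactly those forced by graded skew-symmetry of the Fr\"olicher-Nijenhuis bracket, so that the shorthand $[(D_1,l_1,r_1),(D_2,l_2,r_2)]$ is internally consistent; but since all of this is inherited through the isomorphism $\Psi$, no independent computation is needed, and the proof is genuinely short.
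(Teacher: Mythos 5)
Your proposal is correct and is exactly the argument the paper intends: the corollary is stated without proof as an immediate consequence of Theorem~\ref{thm:main}, Proposition~\ref{prop:FNgrp} and Proposition~\ref{prop:compFN}, assembled by transport of structure just as you describe. Your reading of the displayed bracket as shorthand for \eqref{D:frolicher}--\eqref{r:frolicher} (rather than literal graded commutators of the $D_i$ and $l_i$) is the right way to resolve the notational sign issue, and your remark about non-integrable algebroids is a reasonable extra precaution that the paper itself does not address.
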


This last result should be regarded as parallel to Remarks~\ref{rem:complexes} and \ref{rem:gerst}.

\section{Multiplicative $(1,1)$-tensor fields}\label{1_1}

We will now focus on multiplicative vector-valued 1-forms, or $(1,1)$-tensor fields.

\subsection{Infinitesimal components}\label{subsec:1-1ic}

Let $K \in \Omega^1(\G, T\G)$ be a multiplicative $(1,1)$-tensor field, with infinitesimal components
$$
D: \Gamma(A) \Arrow \Gamma(T^*M \otimes A), \;\;\; l: A \Arrow A, \;\;\; r: TM \Arrow TM.
$$
Note that we have dualized the $r$ component.
For $X\in TM$, we will use the notation
$$
D_X: \Gamma(A)\to \Gamma(A),\;\;\;
D_X(a)= i_X D(a).
$$

The $IM$-equations satisfied by the triple $(D,l,r)$ take the form
\begin{align}
\tag{IM1*}\label{IM1_1} D_X([a,b]) &= [a, D_X(b)] - [b, D_X(a)] + D_{[\rho(b), X]} (a) - D_{[\rho(a), X]}(b)\\
\tag{IM2*}\label{IM2_1} l([a,b]) & = [a, l(b)] - D_{\rho(b)}(a)\\
\tag{IM3*} \label{IM3_1}r([\rho(a), X]) & = [\rho(a),r(X)] - \rho(D_{X}(a))\\
\tag{IM6*}\label{IM6_1} r \circ \rho & = \rho \circ l.
\end{align}

\begin{proposition}
Let $K: T\G \Arrow T\G$ be a multiplicative $(1,1)$-tensor field on the Lie groupoid $\G$,
with infinitesimal componentes $(D,l,r)$. Then $K^n$ is also multiplicative, and its
infinitesimal components $(D', l', r')$ satisfy
$$
l' = l^n,\qquad r' = r^n, \qquad D'(a) = \sum_{j=1}^n l^{j-1} \circ D(a) \circ r^{n-j}
$$
\end{proposition}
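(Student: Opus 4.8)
The plan is to exploit Proposition~\ref{prop:vect_valued}: multiplicativity of $K$ is the same as the bundle map $\overline K\colon T\G\to T\G$ being a groupoid morphism over a base map $\overline r\colon TM\to TM$, which after dualization is the component $r$. Since a composite of groupoid morphisms is again a groupoid morphism, $\overline{K^n}=\overline K{}^{\,n}$ is a groupoid morphism over $\overline r{}^{\,n}$, so $K^n$ is multiplicative and, by Proposition~\ref{prop:vect_valued}, $r'=r^n$. It then remains to identify $l'$ and $D'$ from the defining relations \eqref{eq:conds} for $K^n$, using injectivity of $\T$. First I would record three facts, all immediate in the $(1,1)$ case from \eqref{eq:conds}, \eqref{T:def} and Remark~\ref{rem:notation}: (i) $\overline K(\overrightarrow a)=\overrightarrow{l(a)}$, and more generally $\overline K(\overrightarrow{\phi}|_g)=\overrightarrow{l(\phi)}|_g$ for $\phi\in A_{\t(g)}$, since $l$ is a bundle map; (ii) $T\t\circ\overline K=\overline r\circ T\t$; (iii) for $\Phi\in\Gamma(T^*M\otimes A)$, the bundle map underlying $\T(\Phi)$ is $V\mapsto\overrightarrow{\Phi(T\t V)}$. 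From (i) one immediately gets $i_{\overrightarrow a}K^n=\overline K{}^{\,n}(\overrightarrow a)=\overrightarrow{l^n(a)}=\T(l^n(a))$, hence $l'=l^n$.

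The core of the argument, and the step I expect to be most delicate, is the composition lemma
$$
K^{a}\circ\T(\Phi)\circ K^{b}=\T\!\big(l^{a}\circ\Phi\circ r^{b}\big),\qquad a,b\ge 0,\ \ \Phi\in\Gamma(T^*M\otimes A),
$$
where on the right the bundle map $\Phi\colon TM\to A$ is precomposed with $r^{b}\colon TM\to TM$ and postcomposed with $l^{a}\colon A\to A$. To prove it I would evaluate the left-hand side on $V\in T_g\G$: applying $\overline K{}^{\,b}$ first and using (ii) $b$ times gives $\overline{\T(\Phi)}(\overline K{}^{\,b}V)=\overrightarrow{\Phi(r^{b}(T\t V))}|_g$ by (iii); then applying $\overline K{}^{\,a}$ and invoking the pointwise version of (i) $a$ times produces $\overrightarrow{l^{a}(\Phi(r^{b}(T\t V)))}|_g$, which is exactly $\overline{\T(l^{a}\circ\Phi\circ r^{b})}(V)$, again by (iii). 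The subtlety is that the two ``ingredients'' of $\T(\Phi)$ — reading off the $\t$-direction and extending right-invariantly — must be seen to interact correctly with powers of $\overline K$, one side via $\t$-relatedness to $r$, the other side via (i).

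Finally, for $D'$ I would use that $\Lie_X$ is a derivation of the composition product of $(1,1)$-tensors, namely $\Lie_X(K_1\circ K_2)=(\Lie_X K_1)\circ K_2+K_1\circ(\Lie_X K_2)$, which is a one-line check from $(\Lie_X K)(Y)=[X,\overline K(Y)]-\overline K([X,Y])$. Iterating this identity gives $\Lie_{\overrightarrow a}K^n=\sum_{j=1}^{n}K^{n-j}\circ(\Lie_{\overrightarrow a}K)\circ K^{j-1}$. Substituting $\Lie_{\overrightarrow a}K=\T(D(a))$ and applying the composition lemma with $(a,b)=(n-j,\,j-1)$ yields $\Lie_{\overrightarrow a}K^n=\T\big(\sum_{j=1}^{n}l^{n-j}\circ D(a)\circ r^{j-1}\big)$, which after reindexing $j\mapsto n+1-j$ equals $\T\big(\sum_{j=1}^{n}l^{j-1}\circ D(a)\circ r^{n-j}\big)$. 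Comparing with $\Lie_{\overrightarrow a}K^n=\T(D'(a))$ and using injectivity of $\T$ gives $D'(a)=\sum_{j=1}^{n}l^{j-1}\circ D(a)\circ r^{n-j}$, completing the proof. Everything outside the composition lemma is bookkeeping with signs and reindexing.
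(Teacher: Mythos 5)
Your proof is correct and follows essentially the same route as the paper: the identity $\Lie_{\overrightarrow a}(K_1\circ K_2)=(\Lie_{\overrightarrow a}K_1)\circ K_2+K_1\circ(\Lie_{\overrightarrow a}K_2)$ you iterate is exactly the recursion $[\overrightarrow a,K^n]=[\overrightarrow a,K^{n-1}]\circ K+K^{n-1}\circ[\overrightarrow a,K]$ on which the paper's induction is based. Your composition lemma $K^a\circ\T(\Phi)\circ K^b=\T(l^a\circ\Phi\circ r^b)$ just makes explicit the step the paper declares ``straightforward to check.''
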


\begin{proof}
The equations for $l'$ and $r'$ are straightforward to check. As for the equation for $D'$, the proof follows
from an induction on $n$ using the recursion formula
$$
[\overrightarrow{a}, K^n] = [\overrightarrow{a}, K^{n-1}] \circ K + K^{n-1}\circ [\overrightarrow{a}, K].
$$
\end{proof}

As a result, we obtain infinitesimal descriptions of multiplicative projections and almost complex/product structures.

\begin{corollary}\label{cor:proj}
Let $K$ be a multiplicative $(1,1)$-tensor field on a source-connected Lie groupoid $\G  \toto M$ with infinitesimal
componentes $(D, l, r)$. Then
\begin{itemize}
\item[(a)] $K$ satisfies $K^2=K$ if and only if
$$
l \circ D(a) + D(a)\circ r  =  D(a), \qquad l^2  = l,
\qquad r^2  = r.
$$
\item[(b)] $K$ satisfies $K^2=\pm \,\id_{T\G}$ if and only if
$$
l \circ D(a) + D(a)\circ r  =  0, \qquad
l^2  = \pm \,\id_{A}, \qquad
r^2  = \pm \,\id_{TM}.
$$
\end{itemize}
\end{corollary}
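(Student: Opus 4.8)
The plan is to combine the preceding Proposition, applied with $n=2$, with the fact that on a source-connected groupoid a multiplicative $(1,1)$-tensor field is completely determined by its infinitesimal components. This determinacy is already implicit in Theorem~\ref{thm:mult_char}: if two multiplicative tensors share the same triple $(D,l,r)$, then their difference is a multiplicative tensor whose triple is $(0,0,0)$, so the associated cocycle $Ac_\tau$ vanishes on the generators of $\Gamma(A\bG)$ and is therefore zero; since $\bG$ is source-connected (Remark~\ref{rem:fibers}) and multiplicative functions vanish along units, Proposition~\ref{func:prop} forces $c_\tau\equiv 0$, i.e. the two tensors coincide. Thus it suffices to compare infinitesimal components on both sides of the asserted identities.

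First I would record the infinitesimal components of the relevant model tensors. The identity $(1,1)$-tensor $\id_{T\G}$ is multiplicative, since the diagram in Proposition~\ref{prop:vect_valued} with $\overline r=\id_{TM}$ is a groupoid morphism; because $\Lie_X\id_{T\G}=0$ for every vector field $X$, while $i_{\overrightarrow a}\id_{T\G}=\overrightarrow a$ and the induced bundle map on $TM$ is $\id_{TM}$, its infinitesimal components (with the $r$ component dualized as in this section) are $D=0$, $l=\id_A$, $r=\id_{TM}$. Since multiplication by $-1$ on the fibers commutes with the fiberwise-linear groupoid multiplication $\bullet=T\m$ on $T\G$, the tensor $-\id_{T\G}$ is also multiplicative, with components $D=0$, $l=-\id_A$, $r=-\id_{TM}$; hence $\pm\id_{T\G}$ has infinitesimal components $D=0$, $l=\pm\id_A$, $r=\pm\id_{TM}$.

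Next, by the preceding Proposition with $n=2$, the multiplicative tensor $K^2$ has infinitesimal components
$$
l'=l^2,\qquad r'=r^2,\qquad D'(a)=D(a)\circ r+l\circ D(a).
$$
For part (a), $K^2=K$ holds precisely when $(D',l',r')$ agrees with $(D,l,r)$, i.e. when $l^2=l$, $r^2=r$ and $l\circ D(a)+D(a)\circ r=D(a)$. For part (b), $K^2=\pm\id_{T\G}$ holds precisely when $l'=\pm\id_A$, $r'=\pm\id_{TM}$ and $D'=0$, i.e. when $l^2=\pm\id_A$, $r^2=\pm\id_{TM}$ and $l\circ D(a)+D(a)\circ r=0$, using the components of $\pm\id_{T\G}$ computed above. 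In each case the forward implication is immediate, and the reverse implication is exactly the determinacy statement from the first paragraph.

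The only genuinely delicate ingredients are these two: the determinacy of $\tau$ from its infinitesimal components on a merely source-connected groupoid (rather than the source $1$-connected hypothesis of Theorem~\ref{thm:main}), and the careful identification of the infinitesimal components of $\pm\id_{T\G}$ — in particular verifying that $-\id_{T\G}$ is multiplicative and that the Lie derivative of the constant endomorphism field vanishes. Once these are in place, everything reduces to substitution into the formula for $D'$ supplied by the preceding Proposition.
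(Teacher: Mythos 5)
Your proposal is correct and follows essentially the same route as the paper: the paper's proof likewise notes that the equations force $K^2$ and $K$ (or $\pm\,\id_{T\G}$) to have the same infinitesimal components and then invokes connectedness of the source fibers to conclude equality. You simply spell out more explicitly the determinacy argument via Theorem~\ref{thm:mult_char} and Proposition~\ref{func:prop} and the computation of the components of $\pm\,\id_{T\G}$, which the paper leaves implicit.
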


\begin{proof}
 In (a), the equations for $(D, l, r)$ guarantee that the multiplicative $(1,1)$-tensors $K^2$ and $K$ have the same
 infinitesimal components. As $\G$ has connected $\s$-fibers, this implies that $K^2=K$. The same argument holds for
 almost product and almost complex structures.
\end{proof}

We will now consider an additional integrability condition in terms of the Nijenhuis torsion.

\subsection{Nijenhuis torsion}\label{subsec:nij}

Given a (1,1)-tensor field $K$ on a manifold $N$, its {\em Nijenhuis torsion} is the vector-valued
2-form $\N_K\in \Omega^2(N,TN)$ given by
$$
\N_K(X,Y)=[K(X),K(Y)]-K([KX,Y]+ [KY,X]) + K^2[X,Y],
$$
for $X,Y \in TN$. The Nijenhuis torsion has a well-known relation with the Fr\"olicher-Nijenhuis bracket via
\begin{equation}\label{eq:Nij}
\frac{1}{2}[K,K] = \N_K.
\end{equation}

For a multiplicative (1,1)-tensor field on a Lie groupoid $\G\toto M$, the following description of the infinitesimal components of
its Nijenhuis torsion is an immediate consequence of this last formula and Propositions ~\ref{prop:FNgrp} and ~\ref{prop:compFN}:

\begin{corollary}\label{cor:nij}
Let $K \in \Omega^1(\G, T\G)$ be multiplicative with infinitesimal components $(D,l,r)$. Then $\N_K \in \Omega^2(\G,T\G)$ is
multiplicative and its infinitesimal components $(D',l',r')$ are
$$
D'=D^2,\qquad l'=[D,l], \qquad r'= \N_{r}.
$$
\end{corollary}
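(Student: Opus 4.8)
The plan is to derive the corollary directly from Proposition~\ref{prop:compFN} applied to $K_1=K_2=K$, combined with the identity $\frac{1}{2}[K,K]=\N_K$ recorded in \eqref{eq:Nij}. Since $K\in\Omega^1(\G,T\G)$ has degree one, Proposition~\ref{prop:FNgrp} already guarantees that $[K,K]$, and hence $\N_K=\frac{1}{2}[K,K]$, is a multiplicative $(1,2)$-tensor field, so it carries well-defined infinitesimal components in the sense of Theorem~\ref{thm:mult_char}.

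The one preliminary observation I need is that the Lie functor $\tau\mapsto(D,l,r)$ of Theorem~\ref{thm:mult_char} is $\R$-linear: a scalar multiple (or sum) of multiplicative tensors is again multiplicative, since this property only depends on the corresponding componentwise linear functions, which transform linearly; and the triple $(D,l,r)$ is extracted from $i_{\overrightarrow{a}}\tau$, $i_{\t^*\alpha}\tau$ and $\Lie_{\overrightarrow{a}}\tau$ through the injective map $\T$, all of which depend linearly on $\tau$. Consequently, the infinitesimal components of $\N_K$ are exactly one half of those of $[K,K]$.

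It then remains to specialize Proposition~\ref{prop:compFN} with $p_1=p_2=1$. Formula~\eqref{D:frolicher} gives the $D$-component of $[K,K]$ as $[D,D]=D\circ D-(-1)^{1}D\circ D=2D^2$; formula~\eqref{l:frolicher} gives its $l$-component as $[D,l]-(-1)^{1}[D,l]=2[D,l]$; and formula~\eqref{r:frolicher} gives its $r$-component as the Fr\"olicher--Nijenhuis self-bracket $[r,r]$ of $r\in\Omega^1(M,TM)$. Halving these three expressions and invoking \eqref{eq:Nij} on the base $M$, which reads $\frac{1}{2}[r,r]=\N_r$, yields $D'=D^2$, $l'=[D,l]$ and $r'=\N_r$, as claimed. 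There is no real obstacle here: once Proposition~\ref{prop:compFN} is available, the corollary is a substitution together with the elementary linearity remark, and the only point requiring a bit of care is the factor of $\frac{1}{2}$ in the definition of the Nijenhuis torsion and its interaction with the $(-1)^{p_1p_2}$ signs appearing in Proposition~\ref{prop:compFN}.
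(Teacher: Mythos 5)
Your proposal is correct and follows exactly the paper's intended argument: the corollary is stated as an immediate consequence of the identity $\tfrac{1}{2}[K,K]=\N_K$ together with Propositions~\ref{prop:FNgrp} and \ref{prop:compFN}, which is precisely the substitution $K_1=K_2=K$, $p_1=p_2=1$ that you carry out. Your explicit verification of the factors of $2$ and of the $\R$-linearity of the correspondence $\tau\mapsto(D,l,r)$ just fills in details the paper leaves implicit.
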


It will be useful to have a more concrete expression for
$$
D^2: \Gamma(A) \Arrow \Gamma(\wedge^2 TM \otimes A).
$$
Recall \cite[Cor.~8.12]{nat} the following expression for the Fr\"olicher-Nijenhuis bracket of $K_1, K_2 \in \Omega^1(\G, T\G)$:
\begin{align*}
[K_1,K_2](U,V)  = & [K_1(U), K_2](V) - [K_1(V), K_2](U) - K_1([K_2(U), V] - [K_2(V), U])\\
&  + (K_2K_1+K_1K_2)([U,V]).
\end{align*}
Since
$
[\overrightarrow{a}, \frac{1}{2}[K,K]] = [[\overrightarrow{a},K],K],
$
by taking $K_1=[\overrightarrow{a},K]$ and $K_2=K$, and letting $U=X$, $V=Y$ be in $TM$,
one readily obtains that
\begin{equation}\label{D:square}
D^2_{(X,Y)}  =  D_Y \circ D_X - D_X \circ D_Y - D_{[r(X), Y]} + D_{[r(Y), X]} + l\circ D_{[X,Y]} + D_{r([X,Y])}
\end{equation}
where both sides are seen as maps $\Gamma(A)\to \Gamma(A)$.

Corollary~\ref{cor:nij} gives a complete infinitesimal description of general multiplicative
Nijenhuis operators on Lie groupoids. In the next subsections, we will illustrate how this
general result can be specialized to various cases of interest.

\subsection{Poisson quasi-Nijenhuis structures}\label{subsec:PN}

Poisson-Nijenhuis structures  \cite{KM,MM} play a central role in the theory of integrable systems.
Their recent connections with Lie groupoids arose in quantization schemes for Poisson manifolds, see e.g. \cite{bon}.
In this section we revisit the more general Poisson quasi-Nijenhuis structures \cite{StX}. We establish a link with the theory of
IM $(1,1)$-tensors, which leads to an extension of the integration of Poisson quasi-Nijenhuis
structures in  \cite[Thm.~6.2]{StX} (originally based on the theory of Lie bialgebroids \cite{Kos}) as a consequence of
Theorem \ref{thm:main}.

 Given a Poisson manifold $(M,\Pi)$, consider its cotangent bundle $T^*M$ with the Lie bracket $[\cdot,\cdot]_\Pi$
as in \eqref{bracket:pi}. We say that a (1,1) tensor $r: TM \to TM$ is \textit{compatible with $\Pi$} if
\begin{equation}\label{eq:comp1}
\Pi^\sharp \circ r^* = r \circ \Pi^\sharp
\end{equation}
(equivalently, $r \circ \Pi^\sharp: T^*M \to TM$ is skew-symmetric) and the following equation holds: for all
$\alpha,\beta \in \Omega^1(M)$,
\begin{equation}\label{eq:comp2}
C^r_{\Pi}(\alpha,\beta):=
[\alpha, \beta]_{\Pi_r} - ([r^*\alpha, \beta]_\Pi + [\alpha, r^*\beta]_\Pi -
r^*([\alpha, \beta]_\Pi))= 0,
\end{equation}
where $[\cdot,\cdot]_{\Pi_r}$ is the bracket \eqref{bracket:pi} for the bivector field $\Pi_r$ defined by
$r \circ \Pi^\sharp$.

\begin{remark}\label{rem:C}
The condition $C^r_{\Pi} = 0$ implies that $[\Pi, \Pi_r] = 0$ (here $[\cdot,\cdot]$ is the Schouten bracket), but the converse does not hold in general; it does if $\Pi$ is symplectic, see e.g. \cite{Vas}.
\end{remark}

The following definition extends \cite[Def.~3.3]{StX}:

\begin{definition}
A {\em Poisson quasi-Nijenhuis structure} is a pair $(\Pi, r)$, where $\Pi$ is a Poisson bivector field, $r: TM \to TM$ is a
$(1,1)$ tensor, such that $\Pi$ and $r$ are compatible
and the following condition holds:
\begin{align}
\label{eq:comp3}
\N_r^*([\alpha, \beta]_\Pi) & = \Lie_{\Pi^\sharp(\alpha)}\N_r^*(\beta) - i_{\Pi^\sharp(\beta)} d\N_r^*(\alpha)\\
\label{eq:comp4}
i_{\Pi^\sharp(\alpha)}\,\N_r^*(\beta) & = - i_{\Pi^\sharp(\beta)} \,\N_r^*(\alpha), \;\;\; \forall \,
\alpha,\beta \in \Omega^1(M),
\end{align}
where $\N_r^*: T^*M \to \wedge^2 T^*M$ is the adjoint of the Nijenhuis torsion of $r$, given by
$\N^*_r(\alpha)(X,Y) = \<\alpha, \N_r(X,Y)\>$.
We refer to a {\em symplectic quasi-Nijenhuis structure} when $\Pi$ is nondegenerate, i.e., a symplectic structure.
\end{definition}

Note that \eqref{eq:comp3} and \eqref{eq:comp4} say that $(\N_r^*,0)$ defines an IM $3$-form on $T^*M$.

\begin{remark}\label{rem:phitype} The Poisson quasi-Nijenhuis structures considered in \cite[Def.~3.3]{StX} are slightly more
restricted than in our definition: they are required to satisfy
the condition
$\N_r^*(\alpha) = - i_{\Pi^{\sharp}(\alpha)} \phi,$
or, equivalently,
\begin{equation}\label{eq:3form}
\N_r(X,Y) = \Pi^\sharp(\phi(X,Y,\cdot)),
\end{equation}
for a given closed 3-form $\phi \in \Omega^3(M)$. One may verify that this implies that $\N_r^*$
automatically satisfies \eqref{eq:comp3} and \eqref{eq:comp4}. This difference in the definitions will become more
transparent when we talk about integration, see Thm.~\ref{thm:PqN} and Cor.~\ref{cor:PqN} below.
\end{remark}

Following the previous remark, we shall refer to the structures satisfying \eqref{eq:3form}
as {\em Poisson quasi-Nijenhuis structures relative to $\phi$}; we will specify them by triples $(\Pi,r,\phi)$
to make the dependence on the closed 3-form $\phi$ explicit.

The following proposition gives an alternative way to express the compatibility between a (1,1)-tensor $r$ and a Poisson structure $\Pi$ in terms of IM tensors.
Let $D^r: \Gamma(T^*M) \to \Gamma(T^*M\otimes T^*M)$ be defined by
\begin{equation}\label{PN:D}
\<D^r_X(\alpha), Y\> = d\alpha(X,r(Y)) - (\Lie_r\alpha)(X,Y),
\end{equation}
where $\Lie_r$ is the operator \eqref{eq:lieK}.

\begin{proposition}\label{prop:pn_comp}
The $(1,1)$-tensor $r$ and the Poisson tensor $\Pi$ are compatible if and only if the triple
$(D^r,r^*,r)$
is an IM $(1,1)$-tensor on $T^*M$.
\end{proposition}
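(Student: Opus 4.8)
The plan is to spell out exactly what the IM $(1,1)$-equations \eqref{IM1_1}--\eqref{IM6_1} say for the triple $(D^r,r^*,r)$ on the Lie algebroid $(T^*M,[\cdot,\cdot]_\Pi,\Pi^\sharp)$, and to match them with the two compatibility conditions \eqref{eq:comp1} and \eqref{eq:comp2}. Here the relevant Lie algebroid is $A=T^*M$ with anchor $\rho=\Pi^\sharp$, so a section $a\in\Gamma(A)$ is a $1$-form $\alpha$, and a ``base $1$-form'' $\alpha\in\Omega^1(M)$ pairs with $A$ via $\rho^*=(\Pi^\sharp)^*=-\Pi^\sharp$ (using skew-symmetry of $\Pi$). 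The first thing I would check is the Leibniz rule for $D^r$: using \eqref{PN:D} and the Leibniz rules for $d$ and for $\Lie_r$ (recall $\Lie_r=i_r d - d\,i_r$ on forms, extended as a derivation), one verifies $D^r(f\alpha)=f\,D^r(\alpha)+df\wedge r^*(\alpha)-\alpha\wedge r(df)$ — wait, more precisely the Leibniz identity \eqref{D:leibniz} with $l=r^*$ and ``$r$'' $=r$ seen as a map $T^*M\to T^*M$ via transpose; this is a direct computation from the definition of $D^r$ and should be recorded first so that $(D^r,r^*,r)$ is a candidate IM $(1,1)$-tensor regardless of compatibility.

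Next I would address the redundancy structure. By Remark~\ref{rem:redundacy}, since here $p=q=1$, a minimal independent set is $\{\eqref{IM1_1},\eqref{IM2_1},\eqref{IM6_1}\}$ (equivalently with \eqref{IM3_1} in place of \eqref{IM2_1}); so it suffices to analyze those three. Equation \eqref{IM6_1}, which reads $r\circ\rho=\rho\circ l$, becomes $r\circ\Pi^\sharp=\Pi^\sharp\circ r^*$, i.e.\ precisely \eqref{eq:comp1}. Equation \eqref{IM2_1}, $l([a,b])=[a,l(b)]-D_{\rho(b)}(a)$, becomes, after unwinding the definition of $D^r$ and of the bracket $[\cdot,\cdot]_\Pi$, an identity equivalent to $C^r_\Pi=0$ in \eqref{eq:comp2} (modulo the already-established \eqref{eq:comp1}); the key point here is to recognize that $[\cdot,\cdot]_{\Pi_r}$ in \eqref{eq:comp2} is exactly the bracket on $T^*M$ obtained from the bivector $\Pi_r=r\circ\Pi^\sharp$, and that the combination $r^*([\alpha,\beta]_\Pi)-[r^*\alpha,\beta]_\Pi-[\alpha,r^*\beta]_\Pi$ is what $D^r$ repackages via $\Lie_r$ and $d$. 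Then I would show \eqref{IM1_1} follows automatically: by the redundancy argument of Remark~\ref{rem:redundacy}, $\eqref{IM2_1}+\eqref{IM6_1}\Rightarrow\eqref{IM4}$, and $\eqref{IM1_1}$ itself is implied once $\eqref{IM2_1}$ (or $\eqref{IM3_1}$) holds together with $\eqref{IM6_1}$ — so no separate verification of \eqref{IM1_1} is needed, or alternatively it can be deduced directly from the Leibniz rule for $[\cdot,\cdot]_\Pi$ plus Proposition~\ref{Lie:leibniz}, exactly as in the general redundancy discussion.

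The main obstacle, and where the real work lies, is the bookkeeping in the \eqref{IM2_1} $\Leftrightarrow$ \eqref{eq:comp2} step: one must carefully expand both $D^r_{\Pi^\sharp(\beta)}(\alpha)=i_{\Pi^\sharp(\beta)}D^r(\alpha)$ using \eqref{PN:D} and the Cartan-type formula for $\Lie_r$, expand $[\alpha,\beta]_{\Pi_r}$ using \eqref{bracket:pi} for $\Pi_r$, and reconcile the two; terms involving $d(i_{\Pi^\sharp(\alpha)}\beta)$, $\Lie_{\Pi^\sharp(\alpha)}$, and the $r$-twists must be matched sign-by-sign. A clean way to organize this is to contract everything against an arbitrary vector field $Z$ (or to evaluate on arbitrary $X$) and use that $d\alpha(\Pi^\sharp\beta,\cdot)$, $d\beta(\Pi^\sharp\alpha,\cdot)$ recombine into Lie derivatives, reducing the identity to the vanishing of $C^r_\Pi$. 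I would also note in passing the alternative route: $(D^r,r^*,r)$ being an IM $(1,1)$-tensor on $T^*M$ is, by Corollary~\ref{cor:1-1}, the same as $c_\tau$ for a certain linear $(1,1)$-tensor $\tau$ on $T^*M$ being a Lie-algebroid cocycle — but for a self-contained proof the direct verification of \eqref{IM2_1} and \eqref{IM6_1} against \eqref{eq:comp1}--\eqref{eq:comp2} is the most transparent, so that is the route I would take.
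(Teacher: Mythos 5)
Your overall strategy coincides with the paper's: verify the Leibniz rule for $D^r$, identify \eqref{IM6_2} with \eqref{eq:comp1}, identify \eqref{IM2_2} with \eqref{eq:comp2}, and then dispose of the remaining IM-equations. However, there is a genuine gap in how you dispose of \eqref{IM1_2}. You claim that it ``follows automatically'' because, by Remark~\ref{rem:redundacy}, it ``is implied once \eqref{IM2_1} (or \eqref{IM3_1}) holds together with \eqref{IM6_1}.'' The remark says the opposite: the implications recorded there are $\eqref{IM1}+\eqref{IM2}\Rightarrow\eqref{IM3}$, $\eqref{IM1}+\eqref{IM3}\Rightarrow\eqref{IM2}$, $\eqref{IM2}+\eqref{IM6}\Rightarrow\eqref{IM4}$ and $\eqref{IM3}+\eqref{IM6}\Rightarrow\eqref{IM5}$, and the minimal independent sets are $\{\eqref{IM1},\eqref{IM2},\eqref{IM6}\}$ and $\{\eqref{IM1},\eqref{IM3},\eqref{IM6}\}$ --- so \eqref{IM1} belongs to \emph{every} minimal set and is never a consequence of the others. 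Your fallback (``deduced directly from the Leibniz rule for $[\cdot,\cdot]_\Pi$ plus Proposition~\ref{Lie:leibniz}'') is precisely the mechanism by which \eqref{IM1} \emph{implies} \eqref{IM3} given \eqref{IM2}; it does not produce \eqref{IM1}.

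As a result, the direction ``$\Pi$ and $r$ compatible $\Rightarrow$ $(D^r,r^*,r)$ is an IM $(1,1)$-tensor'' is not established in your write-up: after checking \eqref{IM6_2} and \eqref{IM2_2} you still owe a proof of \eqref{IM1_2}. The paper closes this by a separate (long but straightforward) computation showing that, on exact forms $\alpha=df$, $\beta=dg$, equation \eqref{IM1_2} is equivalent to $d\bigl(C^r_{\Pi}(df,dg)\bigr)=0$, which holds once $C^r_\Pi=0$; only after that does Remark~\ref{rem:redundacy} yield \eqref{IM3_2} from $\eqref{IM1}+\eqref{IM2}$. The rest of your plan --- the Leibniz check for $D^r$, the identification of \eqref{IM6_2} with \eqref{eq:comp1}, and the rewriting of $[\,\cdot\,,\cdot\,]_{\Pi_r}$ that turns \eqref{IM2_2} into the vanishing of $C^r_\Pi$ --- matches the paper's argument and is fine.
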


\begin{proof}
The Leibniz equation for $D^r$ follows from the properties of $d$ and $\Lie_r$. So one only needs to check that the
IM-equations for $(D, r, r^*)$ are equivalent to $\Pi$ and $r$ being compatible. The IM equations in this case are:
\begin{align}
\label{IM1_2} D^r_X([\alpha,\beta]_\Pi) = &  [\alpha, D^r_X(\beta)]_\Pi - [\beta, D^r_X(\alpha)]_\Pi +
D^r_{[\Pi^\sharp(\beta), X]} (\alpha) - D^r_{[\Pi^\sharp(\alpha), X]}(\beta), \\
\label{IM2_2} r^*([\alpha,\beta]_\Pi) = & [\alpha, r^*(\beta)]_\Pi - D^r_{\Pi^\sharp(\beta)}(\alpha),\\
\label{IM3_2} r([\Pi^\sharp(\alpha), X])  = & [\Pi^\sharp(\alpha),r(X)] - \Pi^\sharp(D^r_{X}(\alpha)), \\
\label{IM6_2} r \circ \Pi^\sharp  = & \Pi^\sharp \circ r^*.
\end{align}
The compatibility equation \eqref{eq:comp1} is exactly \eqref{IM6_2}. Note that \eqref{IM2_2}
is equivalent, for $df, dg \in \Omega^1(M)$, to
\begin{align*}
r^*([df, dg]_\Pi) & = [df, r^*(dg)]_\Pi - i_{\Pi^\sharp(dg)} dr^*(df)\\
& \hspace{-50pt}= [df, r^*(dg)]_\Pi - \Lie_{\Pi^\sharp(dg)} r^*(df) + di_{\Pi^\sharp(dg)}r^*(df)\\
& \hspace{-50pt}= [df, r^*(dg)]_\Pi - \underbrace{(\Lie_{\Pi^\sharp(dg)} r^*(df) - \Lie_{\Pi^\sharp(r^*(df))} dg - di_{\Pi^\sharp(dg)}r^*(df))}_{[dg, r^*(df)]_{\Pi}}
- \Lie_{\Pi^\sharp(r^*(df))} dg.
\end{align*}
Using that $[dg, df]_{\Pi_r} = -\Lie_{\Pi^\sharp(r^*(df))} dg$, it follows that \eqref{IM2_2}
is equivalent to \eqref{eq:comp2}. So, $\Pi$ and $r$ are compatible if and only if $\eqref{IM2_2}$ and $\eqref{IM6_2}$ hold.
The remaining equations follow from these two. Indeed, a long but straightforward computation shows that
\eqref{IM1_2} is equivalent, for $\alpha=df$ and $\beta = dg$, to
$$
d (C^r_{\Pi}(df,dg)) = 0.
$$
Finally, the redundancies among the IM equations (see Remark~\ref{rem:redundacy}) guarantee that \eqref{IM3_2} holds.
\end{proof}

From now on, let us assume that the Lie algebroid $(T^*M,\Pi^\sharp,[\cdot,\cdot]_\Pi)$ integrates to a  source 1-connected symplectic groupoid
$(\G, \omega)$ (see Section~\ref{diff_forms}).
Let $\Pi_\omega$ be the Poisson structure defined by $\omega$, so that
\begin{equation}\label{pi_omega}
\Pi_\omega^\sharp: T^*\G \to T\G
\end{equation}
is the inverse map to $T\G\to T^*\G$, $U\mapsto i_U\omega$. It follows from Theorem \ref{thm:main} and Proposition \ref{prop:pn_comp} that any $(1,1)$ tensor $r: TM \to TM$ compatible with $\Pi$ corresponds to a multiplicative $(1,1)$ tensor $K: T\G \to T\G$ on $\G$ integrating $(D^r, r^*, r)$.

%

\begin{lemma}\label{prop:nij}
Let $r: TM \to TM$ be a $(1,1)$ tensor compatible with $\Pi$. One has that $(\N_r^*, 0)$ is an IM 3-form if and only if there exists a closed 3-form $\lambda \in \Omega^3(\G)$ such that
\begin{equation}\label{eq:nij_symp}
\N_K(U,V) = \Pi_\omega^\sharp(\lambda(U,V,\cdot)), \,\,\, \forall \,U, \, V \in \mathfrak{X}(\G),
\end{equation}
where $K: T\G \to T\G$ is the multiplicative $(1,1)$ tensor integrating $(D^r, r^*, r)$.
In this case, $\lambda$ is the multiplicative 3-form integrating $(-\N_r^*, 0)$.
\end{lemma}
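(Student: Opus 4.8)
The plan is to compare the two sides of \eqref{eq:nij_symp}, which are both multiplicative vector-valued $2$-forms on $\G$, at the infinitesimal level, using that $\G$ is source $1$-connected. By Corollary~\ref{cor:nij}, $\N_K$ is multiplicative with infinitesimal components $((D^r)^2, [D^r, r^*], \N_r)$; in particular its $r$-component, viewed as a bundle map $T^*M \to \wedge^2 T^*M$, is $\N_r^*$. On the other side, by Corollary~\ref{cor:forms} and Section~\ref{diff_forms}, the pair $(\N_r^*, 0)$ is an IM $3$-form exactly when \eqref{eq:comp3}--\eqref{eq:comp4} hold, and then so is $(-\N_r^*, 0)$, which integrates to a unique multiplicative $3$-form $\lambda$ on $\G$; since its $\nu$-component vanishes and $\t^*\nu(a) = i_{\overrightarrow a}\,d\lambda$, source-connectedness forces $d\lambda = 0$. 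So the task reduces to identifying $\N_K$ with the vector-valued $2$-form obtained from such a $\lambda$ by raising one index with $\Pi_\omega$.

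To do this I would introduce the map $\Psi\colon \Omega^3(\G) \to \Omega^2(\G, T\G)$, $\Psi(\eta)(U,V) = \Pi_\omega^\sharp(i_V i_U\eta)$. It is injective, and since $\Pi_\omega$ is a multiplicative bivector on the symplectic groupoid $(\G, \omega)$ and multiplicative tensors are stable under this kind of contraction, $\Psi$ restricts to a bijection between multiplicative $3$-forms and the multiplicative vector-valued $2$-forms in its image; in particular, whenever $\N_K = \Psi(\lambda)$ for a closed $3$-form $\lambda$, that $\lambda = \Psi^{-1}(\N_K)$ is automatically multiplicative.

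The technical heart is to express the IM $(1,2)$-components $(D'', l'', r'')$ of $\Psi(\eta)$ in terms of the IM data $(\mu, \nu)$ of a multiplicative $3$-form $\eta$. Here I would use that $\mu = \mathrm{id}$ for the symplectic groupoid, so $i_{\overrightarrow a}\omega = \t^* a$ and hence $\Pi_\omega^\sharp(\t^* a) = \overrightarrow a$ for $a \in \Gamma(A) = \Omega^1(M)$, together with $i_{\overrightarrow a}\eta = \t^*\mu(a)$ and $\Lie_{\overrightarrow a}\eta = \t^*(d\mu(a) + \nu(a))$. A short computation then gives
$$
i_{\t^* a}\,\Psi(\eta) = -\,\t^*\mu(a) = \T(-\mu(a)),
$$
so $r'' = -\mu$, and analogous manipulations of $i_{\overrightarrow a}\Psi(\eta)$ and $\Lie_{\overrightarrow a}\Psi(\eta)$ give $l''$ and $D''$ explicitly in terms of $(\mu, \nu)$ and $\Pi$.

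Finally, for the two implications: if a closed $3$-form $\lambda$ satisfies $\N_K = \Psi(\lambda)$, then $\lambda$ is multiplicative, its $\nu$-part vanishes by closedness, and matching $r$-components with $\N_K$ gives $-\mu_\lambda = \N_r^*$, so $(-\N_r^*, 0)$ — hence $(\N_r^*,0)$ — is an IM $3$-form. Conversely, given that $(\N_r^*, 0)$ is an IM $3$-form, take $\lambda$ to be the closed multiplicative $3$-form integrating $(-\N_r^*, 0)$; the formula above shows $\Psi(\lambda)$ is a multiplicative vector-valued $2$-form with $r$-component $\N_r^*$, matching $\N_K$, and a longer but routine bookkeeping — using the definition \eqref{PN:D} of $D^r$, the expression \eqref{D:square} for $D^2$, and the compatibility equations \eqref{IM1_2}--\eqref{IM6_2} of Proposition~\ref{prop:pn_comp} — identifies the $l$- and $D$-components of $\Psi(\lambda)$ with $[D^r, r^*]$ and $(D^r)^2$. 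By Theorem~\ref{thm:main} this yields $\N_K = \Psi(\lambda)$, which is \eqref{eq:nij_symp}, with $\lambda$ integrating $(-\N_r^*, 0)$ as claimed. I expect the main obstacle to be precisely this last matching of the $l$- and $D$-components (the $r$-component is immediate), together with verifying that $\Psi$ preserves multiplicativity — the latter being most transparent from the functions-on-big-groupoids viewpoint used throughout Section~\ref{sec:proofs}.
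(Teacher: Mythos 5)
Your proposal is correct and follows essentially the same route as the paper: both sides of \eqref{eq:nij_symp} are compared through their infinitesimal components via Theorem~\ref{thm:main}, with the components of $\N_K$ read off from Corollary~\ref{cor:nij} (and the explicit identities $[D^r,r^*]=-\N_r^*$, $\langle (D^r)^2_{(X,Y)}\alpha,Z\rangle=-d\N_r^*(\alpha)(X,Y,Z)+d\alpha(Z,\N_r(X,Y))$), and the components of $\Pi_\omega^\sharp(\lambda(\cdot,\cdot,\cdot))$ computed from the IM $3$-form $(\mu,0)$ of $\lambda$ using $\Pi_\omega^\sharp(\t^*a)=\overrightarrow{a}$. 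Your extra care about why $\lambda$ is automatically multiplicative in the ``if'' direction is a point the paper's proof leaves implicit, but the overall argument is the same.
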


\begin{proof}
Following Cor.~\ref{cor:nij}, the infinitesimal components $((D^r)^2, [D^r, r^*], \N_r)$ of the Nijenhuis torsion of $K$ can be explicitly calculated using \eqref{D:square} and \eqref{PN:D}. Indeed, using Cartan calculus and the Jacobi identity repeatedly, one may verify that, for $\alpha \in \Omega^1(M)$, $X, Y \in \mathfrak{X}(M)$,
\begin{align*}
[D^r,r^*](\alpha) & = - \N_r^*(\alpha),\\
\<(D^r)^2_{(X,Y)} (\alpha), Z\> & =  -d\N^*_r(\alpha)(X,Y,Z) + d\alpha(Z, \N_r(X,Y)).
\end{align*}
Similarly, for any multiplicative closed 3-form $\lambda \in \Omega^3(\G)$, consider the multiplicative $(1,2)$ tensor field $\tau \in \Omega^2(\G, T\G)$ defined by $\tau(U,V) = \Pi_\omega^\sharp(\lambda(U,V,\cdot))$. Its infinitesimal components $\widetilde{D}: \Gamma(T^*M) \to \Omega^2(M, T^*M)$, $\widetilde{l}: T^*M \to T^*M \otimes T^*M$ and $\widetilde{r}: T^*M \to \wedge^2 T^*M$ are:
$$
\widetilde{l}= \mu, \,\,\, \widetilde{r} = -\mu, \,\,\ \widetilde{D}_{(X,Y)}(\alpha) =  d\mu(\alpha)(X,Y,\cdot) + d\alpha(\underbrace{\mu_{(X,Y)}}_{\in TM}, \cdot),
$$
where  $\mu: T^*M \to \wedge^2 T^*M$ is such that $(\mu,0)$ is the IM 3-form corresponding to $\lambda$. The result now follows from the fact that \eqref{eq:nij_symp} holds if and only if $\widetilde{r}= \N_r$, $\widetilde{l}=-\N_r$, $\widetilde{D} = (D^r)^2$.
\end{proof}

Our aim now is to prove that $\Pi_\omega$ and $K$ define a symplectic quasi-Nijenhuis structure on $\G$; the
previous lemma shows that this structure will be of the more restricted type of Remark~\ref{rem:phitype}.
As $\Pi_\omega$ is symplectic, following Remark~\ref{rem:C},
it suffices to show that
\begin{align}
&\Pi_\omega^\sharp \circ K^*  = K \circ \Pi_\omega^\sharp, \label{eq:commute0}\\
&[\Pi_\omega,\Pi_K] = 0,\label{eq:commute}
\end{align}
where $\Pi_K$ is the bivector field defined by $K\circ \Pi_{\omega}^\sharp$.


\subsubsection*{Integration of Poisson quasi-Nijenhuis structures}

We start by analyzing conditions \eqref{eq:commute0} and \eqref{eq:commute} in general. Our setting will be:
\begin{itemize}
 \item a source 1-connected Poisson groupoid $(\H, \widetilde{\Pi}) \toto N$;
 \item a multiplicative $(1,1)$-tensor $K: T\G \to T\G$ with $(D,l,r)$ as infinitesimal components.
\end{itemize}
Let $(A^*\H, \rho_*, [\cdot,\cdot]_*)$ be the corresponding Lie algebroid structure on $A^*\H$ and denote by $\delta: \Gamma(\wedge^\bullet A\H) \to \Gamma(\wedge^{\bullet+1} A\H)$ the 2-differential associated with $\widetilde{\Pi}$. Define $\delta_K: \Gamma(A\H) \to \Gamma(A^*\H \otimes A^*\H)$ by the expression
\begin{equation}\label{delta_K}
\delta_K(a)(\mu_1,\mu_2) := \delta(a)(\mu_1, l^*(\mu_2)) - \<D_{\rho_*(\mu_1)}(a), \mu_2\>, \;\;\; a\in \Gamma(A\H),
\end{equation}

\begin{lemma}\label{lemma:pn1}
One has that
$$
K \circ \widetilde{\Pi}^\sharp = \widetilde{\Pi}^\sharp \circ K^* \Longleftrightarrow
\left\{
\begin{array}{l}
 r \circ \rho_* = \rho_*\circ l^*\\
 \delta_K(\mu_1, \mu_2) = - \delta_K(\mu_2, \mu_1).
\end{array}
\right.
$$
In this case, the IM $(0,2)$-tensor on $A\H$ corresponding to the multiplicative bivector field $\widetilde{\Pi}_K$ is
given by $l \circ (\rho_*)^*: T^*N \to A\H$ and $\delta_K : \Gamma(A\H) \to \Gamma(\wedge^2 A\H)$.
\end{lemma}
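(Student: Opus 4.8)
The plan is to reduce the statement about the multiplicative tensor $K\circ\widetilde\Pi^\sharp:T^*\H\to T^*\H$ to a statement about IM $(1,1)$-tensors, using Theorem~\ref{thm:main} and the fact that the Poisson structure $\widetilde\Pi$ is itself multiplicative (so its infinitesimal data is encoded by the $2$-differential $\delta$, equivalently the pre-Lie/Lie algebroid structure $(A^*\H,\rho_*,[\cdot,\cdot]_*)$ as in Example~\ref{exam:poisson_groupoid}). First I would observe that both $K\circ\widetilde\Pi^\sharp$ and $\widetilde\Pi^\sharp\circ K^*$ are multiplicative $(1,1)$-tensors on $\H$: the first is a composition of the multiplicative tensor $\widetilde\Pi^\sharp$ (viewed via $\Pi_\omega$-type identification, or rather as the anchor-style map built from $\widetilde\Pi$) with the multiplicative $K$, and the second similarly with the dual $K^*$ which is again multiplicative. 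Since $\H$ is source-connected (in fact source $1$-connected), two multiplicative $(1,1)$-tensors agree if and only if their infinitesimal components agree (Theorem~\ref{thm:mult_char}). So the equality $K\circ\widetilde\Pi^\sharp=\widetilde\Pi^\sharp\circ K^*$ is equivalent to an equality of triples $(D,l,r)$-type data.

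Next I would compute these infinitesimal components explicitly. For $\widetilde\Pi^\sharp$, regarded as a $(1,1)$-tensor whose infinitesimal triple I can read off, the relevant maps are governed by $\rho_*$ and $\delta$ (cf. the computation in Example~\ref{exam:poisson_groupoid} and the $q=2$ discussion): the ``$r$''-type component is essentially $\rho_*$ on $A^*\H$, the ``$l$''-type component its dual, and the ``$D$''-type operator is built from $\delta$. For the composition $K\circ\widetilde\Pi^\sharp$ one can use the composition formulas for infinitesimal components of products of multiplicative $(1,1)$-tensors — the same bookkeeping that produced the formula for $K^n$ in the proposition preceding Corollary~\ref{cor:proj}, and the composition identity $[\overrightarrow a,K_1\circ K_2]=[\overrightarrow a,K_1]\circ K_2+K_1\circ[\overrightarrow a,K_2]$ together with the analogous contraction identity. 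Matching the $l$-components gives $l\circ(\rho_*)^*$ on one side against $\rho_*$ precomposed appropriately on the other, which yields the bundle-map condition $r\circ\rho_*=\rho_*\circ l^*$ (this is just \eqref{IM6}-type compatibility, i.e. (IM6*), now between $K$ and $\widetilde\Pi$). Matching the $D$-components, after unwinding the definition \eqref{delta_K} of $\delta_K$ and using the Koszul formula \eqref{pre_lie_br} for $[\cdot,\cdot]_*$, produces precisely the skew-symmetry condition $\delta_K(\mu_1,\mu_2)=-\delta_K(\mu_2,\mu_1)$: the symmetric part of the bilinear expression $\delta_K$ is exactly the obstruction to $K\circ\widetilde\Pi^\sharp$ being skew, hence to it equalling $\widetilde\Pi^\sharp\circ K^*=-\bigl(K\circ\widetilde\Pi^\sharp\bigr)^*$-type identity.

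For the second assertion — identifying the IM $(0,2)$-tensor corresponding to the multiplicative bivector $\widetilde\Pi_K$ defined by $K\circ\widetilde\Pi^\sharp$ — I would use the correspondence between IM $(q,0)$-tensors and $q$-differentials (the lemma around \eqref{qdif}) in the case $q=2$. A multiplicative bivector field $\widetilde\Pi_K$ has infinitesimal components $(D',r')$ with $r':T^*N\to A\H$ and $D':\Gamma(A\H)\to\Gamma(\wedge^2 A\H)$; under the Koszul/Example~\ref{exam:poisson_groupoid} dictionary these are $r'=(\text{anchor-dual of }\widetilde\Pi_K^\sharp)$ and $D'$ the $\delta_1$-part of the associated $2$-differential. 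Since $\widetilde\Pi_K^\sharp=K\circ\widetilde\Pi^\sharp\circ(\text{something})$ — more precisely since the sharp map of $\widetilde\Pi_K$ is obtained by composing with $K$ — the anchor-type map is $r'=l\circ(\rho_*)^*:T^*N\to A\H$, and a direct comparison of the defining formula for $D'$ with \eqref{delta_K} identifies $D'=\delta_K$, now genuinely landing in $\Gamma(\wedge^2 A\H)$ thanks to the skew-symmetry established in the first part.

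The main obstacle I anticipate is purely computational: carefully keeping track of signs, of the several identifications in play (the reversal isomorphism $\Rev$, the $A(T^*\H)\cong T^*A$ identification, and the dualization of the $r$-component already flagged in Section~\ref{subsec:1-1ic}), and unwinding the definition \eqref{delta_K} of $\delta_K$ so that its symmetric part really matches the failure of $K\circ\widetilde\Pi^\sharp$ to be self-adjoint with respect to the pairing. I would organize this by first proving the bundle-map equivalence $r\circ\rho_*=\rho_*\circ l^*$ (which is the easy direction, pointwise and algebraic), and then treating the $D$-level identity using Cartan calculus and the Jacobi identity for $[\cdot,\cdot]_*$ exactly as in the proof of Proposition~\ref{prop:pn_comp} and Lemma~\ref{prop:nij}; the redundancies recorded in Remark~\ref{rem:redundacy} should let me avoid checking the $l$-level equation separately once (IM6*) and the $D$-level equation hold. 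Once both directions of the equivalence are in place, the final sentence of the lemma follows immediately by reading the IM $(0,2)$-tensor off the already-computed infinitesimal components of $\widetilde\Pi_K$.
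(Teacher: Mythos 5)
Your overall strategy is the same as the paper's: view the two sides as multiplicative tensors on the source $1$-connected groupoid $\H$, reduce their equality to the equality of infinitesimal components, and read off the two conditions from the contraction-type components (giving $r\circ\rho_*=\rho_*\circ l^*$) and the Lie-derivative component (giving skew-symmetry of $\delta_K$); the final identification of the IM $(0,2)$-tensor of $\widetilde{\Pi}_K$ then comes for free. One imprecision worth fixing: $K\circ\widetilde{\Pi}^\sharp$ and $\widetilde{\Pi}^\sharp\circ K^*$ are maps $T^*\H\to T\H$, i.e.\ (generally non-skew) elements of $\Gamma(T\H\otimes T\H)$, not $(1,1)$-tensors, so the machinery that applies is the non-skew extension of Remarks~\ref{rem:non_skew1} and~\ref{rem:non_skew2} with components $(D,r_1,r_2)$ rather than the $(D,l,r)$/$K^n$ calculus of Section~\ref{subsec:1-1ic}. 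The paper implements exactly your plan by introducing the two tensors $\tau_L(\xi_1,\xi_2)=\langle\xi_2,K(\widetilde{\Pi}^\sharp(\xi_1))\rangle$ and $\tau_R(\xi_1,\xi_2)=-\langle\xi_1,K(\widetilde{\Pi}^\sharp(\xi_2))\rangle$, checking $\tau_L=\tau_R$ on the generators $\t^*\alpha$ and via $\Lie_{\overrightarrow{a}}$, and restricting to $A^*\H\subset T^*\H$ using $\widetilde{\Pi}^\sharp|_{A^*\H}=-\rho_*$; with that correction your argument goes through as written.
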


\begin{proof}
Let $\tau_L, \tau_R \in \Gamma(T\H \otimes T\H)$ be (not necessarily skew-symmetric) tensor fields on $\H$ defined by
$$
\tau_L(\xi_1, \xi_2) =  \<\xi_2, \,K(\Pi^\sharp(\xi_1))\>,\,\,\, \tau_R(\xi_1, \xi_2)= - \<\xi_1, \,K(\Pi^\sharp(\xi_2))\>,
$$
for $\xi_1$, $\xi_2 \in T^*\H$. Note that $K \circ \widetilde{\Pi}^\sharp = \widetilde{\Pi}^\sharp \circ K^*$ if and only if $\tau_L=\tau_R$. Now $\tau_L, \tau_R$ are multiplicative and, therefore, $\tau_L=\tau_R$ if and only if
$$
\underbrace{\Lie_{\overrightarrow{a}}\tau_L = \Lie_{\overrightarrow{a}}\tau_R}_{(i)}, \;\;
\underbrace{\tau_L(\t^*\alpha, \cdot) = \tau_R(\t^*\alpha, \cdot)}_{(ii)}, \; \mbox{ and }\;
\underbrace{\tau_L(\cdot\,, \t^*\alpha)  = \tau_R(\cdot\,, \t^*\alpha)}_{(iii)}, \,\, \alpha \in \Gamma(T^*M),
$$
using that $\H$ is source 1-connected (see Remarks \ref{rem:non_skew1} and \ref{rem:non_skew2}).
Since
$$
\tau_L(\t^*\alpha, \xi) = \<\widetilde{\t}(\xi), \,l \circ \rho^*_*(\alpha)\>, \;\;\;
 \tau_R(\t^*\alpha, \xi) = \<\widetilde{\t}(\xi), \,\rho_*^*\circ r^*(\alpha)\>,\;\;\;
\tau_L(\xi_2, \xi_1) = - \tau_R(\xi_1, \xi_2),
$$
it follows that (ii) and (iii) hold if and only if $r \circ \rho_* = \rho_*\circ l^*$. Finally, from
\begin{align*}
(\Lie_{\overrightarrow{a}} \tau_L)(\xi_1, \xi_2) & = \<\xi_2, \,[\overrightarrow{a}, K](\Pi^\sharp(\xi_1))\> + \<\xi_2,\, K((\Lie_{\overrightarrow{a}}\Pi)^\sharp(\xi_1)),\\
(\Lie_{\overrightarrow{a}} \tau_R)(\xi_1, \xi_2) & = - \<\xi_1, \,[\overrightarrow{a}, K](\Pi^\sharp(\xi_2))\> - \<\xi_1,\, K((\Lie_{\overrightarrow{a}}\Pi)^\sharp(\xi_2)),
\end{align*}
one can substitute $\xi_i=\mu_i \in A^*\H \subset T^*\H$ and use that $\Pi^\sharp|_{A^*\H} = - \rho_*$ to show that (i) holds if and only if $\delta_K$ is skew-symmetric.
\end{proof}

Let us assume that $K \circ \widetilde{\Pi}^\sharp = \widetilde{\Pi}^\sharp \circ K^*$.
The IM $(0,2)$-tensor on $A\H$ associated to the multiplicative bivector field $\widetilde{\Pi}_K$ defines a pre-Lie algebroid
structure on $A^*\H$. It follows from \eqref{pre_lie_br} and \eqref{delta_K} that the pre-Lie bracket is given by
$$
[\mu_1, \mu_2]_K = [l^*(\mu_1), \mu_2]_* + [\mu_1, l^*(\mu_2)]_* - l^*([\mu_1, \mu_2]_*) + \Theta(\mu_1, \mu_2),
$$
where $\Theta \in \Gamma(\wedge^2 A\H \otimes A^*\H)$ is defined by
$$
\<\Theta(\mu_1, \mu_2), a \> = \<D_{\rho_*(\mu_1)}(a), \mu_2\> + \delta(a)(l^*(\mu_1), \mu_2) - \delta(l(a))(\mu_1, \mu_2).
$$

For any element $\Omega \in \Gamma(\wedge^k A\H \otimes A^*\H)$, one can define a contraction operator
$i_{\Omega}: \Gamma(\wedge^\bullet A\H) \to \Gamma(\wedge^{\bullet+k-1} A\H)$ exactly as in \eqref{dfn:contraction}.
It is a graded derivation of degree $k-1$ of the graded algebra $\Gamma(\wedge^\bullet A\H)$. Note that
$$
\delta_K = [i_{l^*}, \delta] + i_{\Theta},
$$
where $[\cdot,\cdot]$ is the commutator of derivations of $\Gamma(\wedge^\bullet A\H)$.

\begin{lemma}\label{lemma:pn2}
The bivector fields $\widetilde{\Pi}$, $\widetilde{\Pi}_K$ satisfy
$[\widetilde{\Pi}, \widetilde{\Pi}_K] = 0$ if and only $[\delta,i_\Theta]=0$.
\end{lemma}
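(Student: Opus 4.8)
The plan is to translate the equation $[\widetilde{\Pi},\widetilde{\Pi}_K]=0$ into an identity between multidifferentials on $A\H$, and then to reduce that identity to $[\delta,i_\Theta]=0$ using the formula $\delta_K=[i_{l^*},\delta]+i_\Theta$ established just above, together with the graded Jacobi identity for derivations of $\Gamma(\wedge^\bullet A\H)$.

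\emph{Step 1 (reduction to multidifferentials).} Under the standing assumption $K\circ\widetilde{\Pi}^\sharp=\widetilde{\Pi}^\sharp\circ K^*$, Lemma~\ref{lemma:pn1} shows that $\widetilde{\Pi}_K$ is a genuine multiplicative bivector field on $\H$ whose associated $2$-differential on $A\H$ is $\delta_K$ (extended to $\Gamma(\wedge^\bullet A\H)$). Since $\widetilde{\Pi}$ is also multiplicative, $[\widetilde{\Pi},\widetilde{\Pi}_K]$ is a multiplicative $3$-vector field. As $\H$ is source $1$-connected, a multiplicative multivector field vanishes if and only if its associated multidifferential does (bijectivity of the Lie functor); moreover the correspondence between multiplicative multivector fields and multidifferentials on $A\H$ is an isomorphism of Gerstenhaber algebras (Remark~\ref{rem:gerst}), so the $3$-differential associated with $[\widetilde{\Pi},\widetilde{\Pi}_K]$ is the graded commutator $[\delta,\delta_K]=\delta\circ\delta_K+\delta_K\circ\delta$. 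Hence $[\widetilde{\Pi},\widetilde{\Pi}_K]=0$ if and only if $[\delta,\delta_K]=0$.

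\emph{Step 2 (computation).} Substituting $\delta_K=[i_{l^*},\delta]+i_\Theta$ gives
$$
[\delta,\delta_K]=[\delta,[i_{l^*},\delta]]+[\delta,i_\Theta].
$$
Because $\widetilde{\Pi}$ is Poisson, $[\widetilde{\Pi},\widetilde{\Pi}]=0$, so $\delta^2=0$, i.e. $[\delta,\delta]=0$ in the Gerstenhaber algebra of derivations of $\Gamma(\wedge^\bullet A\H)$. Since $i_{l^*}$ is a derivation of degree $0$ and $\delta$ is odd, the graded Jacobi identity yields $[\delta,[\delta,i_{l^*}]]=\tfrac12[[\delta,\delta],i_{l^*}]=0$, and graded antisymmetry gives $[i_{l^*},\delta]=-[\delta,i_{l^*}]$; therefore $[\delta,[i_{l^*},\delta]]=0$. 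Consequently $[\delta,\delta_K]=[\delta,i_\Theta]$, and combining with Step~1 we obtain $[\widetilde{\Pi},\widetilde{\Pi}_K]=0\iff[\delta,i_\Theta]=0$, as claimed.

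The only delicate point is the bookkeeping of degrees and signs: one must check that $i_{l^*}$ and $i_\Theta$ are derivations of $\Gamma(\wedge^\bullet A\H)$ of degrees $0$ and $1$ respectively (so that $\delta_K$ has degree $1$ and $[\delta,\delta_K]$ has degree $2$, matching the $3$-differential of a $3$-vector field), and that the Gerstenhaber-algebra identification (Schouten bracket $\leftrightarrow$ commutator of multidifferentials) is applied with the normalization used in the paper, under which $\delta^2=0$ corresponds to $[\widetilde{\Pi},\widetilde{\Pi}]=0$. Once these conventions are fixed, Step~2 is the short Jacobi-identity computation above, and no further obstacle is expected.
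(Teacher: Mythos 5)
Your proof is correct and follows essentially the same route as the paper: the paper's own proof is the two-line observation that $[\widetilde{\Pi},\widetilde{\Pi}_K]=0\Leftrightarrow[\delta,\delta_K]=0$ (cited from Sti\'enon--Xu) together with $[\delta,[i_{l^*},\delta]]=0$, which is exactly your Step~2 computation. The only difference is that you derive the first equivalence internally from the Gerstenhaber-algebra isomorphism of Remark~\ref{rem:gerst} and source $1$-connectedness rather than citing \cite{StX}, which is a legitimate (and arguably more self-contained) way to obtain the same statement.
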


\begin{proof}
It is shown in \cite{StX} that $[\widetilde{\Pi}, \widetilde{\Pi}_K] = 0 \Leftrightarrow [\delta, \delta_K]=0$.
The result then follows from $[\delta, [i_{l^*}, \delta] ]=0$.
\end{proof}

We can now conclude the description of the integration of Poisson quasi-Nijenhuis structures using Thm.~\ref{thm:main}:

\begin{theorem}\label{thm:PqN}
Let $(\G, \omega)$ be the source 1-connected symplectic groupoid integrating a Poisson manifold $(M,\Pi)$.
There is a one-to-one correspondencee between Poisson quasi-Nijenhuis structures $(\Pi,r)$ on $M$
and symplectic quasi-Nijenhuis structures $(\omega, K,\lambda)$ relative to $\lambda$,
where $K: T\G \to T\G$ is the multiplicative $(1,1)$ tensor integrating $(D^r, r^*, r)$ and $\lambda \in \Omega^3(\G)$
is the multiplicative closed 3-form integrating $(-\N_r^*, 0)$.
\end{theorem}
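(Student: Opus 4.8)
The plan is to assemble Theorem~\ref{thm:PqN} from the pieces already established, organizing the equivalence as a chain of ``if and only if'' steps that each invoke an earlier result. The starting point is Proposition~\ref{prop:pn_comp}: a $(1,1)$-tensor $r$ is compatible with $\Pi$ precisely when $(D^r,r^*,r)$ is an IM $(1,1)$-tensor on $T^*M$. Since $(M,\Pi)$ integrates to the source $1$-connected symplectic groupoid $(\G,\omega)$ with Lie algebroid $(T^*M,\Pi^\sharp,[\cdot,\cdot]_\Pi)$, Theorem~\ref{thm:main} then gives a unique multiplicative $(1,1)$-tensor $K:T\G\to T\G$ with infinitesimal components $(D^r,r^*,r)$. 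Thus the correspondence $r\leftrightarrow K$ is already in place at the level of ``compatible $(1,1)$-tensor'' $\leftrightarrow$ ``multiplicative $(1,1)$-tensor'', and what remains is to match the extra condition \eqref{eq:comp3}--\eqref{eq:comp4} (that $(\N_r^*,0)$ be an IM $3$-form) with the condition that $(\omega,K,\lambda)$ be a symplectic quasi-Nijenhuis structure relative to $\lambda$.

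First I would invoke Lemma~\ref{prop:nij}: given that $r$ is compatible with $\Pi$, the pair $(\N_r^*,0)$ is an IM $3$-form if and only if there is a closed $3$-form $\lambda\in\Omega^3(\G)$ with $\N_K(U,V)=\Pi_\omega^\sharp(\lambda(U,V,\cdot))$, and in that case $\lambda$ is the multiplicative $3$-form integrating $(-\N_r^*,0)$ (which exists and is unique by Corollary~\ref{cor:forms}, since $\G$ is source $1$-connected). This already identifies $\lambda$ and shows that condition \eqref{eq:3form} on $\G$ holds iff \eqref{eq:comp3}--\eqref{eq:comp4} hold on $M$. The remaining task is to verify that $(\omega,K,\lambda)$ is indeed a symplectic quasi-Nijenhuis structure, i.e. (following Remark~\ref{rem:C}, valid because $\Pi_\omega$ is nondegenerate) that the two compatibility conditions \eqref{eq:commute0} $\Pi_\omega^\sharp\circ K^* = K\circ\Pi_\omega^\sharp$ and \eqref{eq:commute} $[\Pi_\omega,\Pi_K]=0$ are automatic once $r$ is compatible with $\Pi$. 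Here I would apply the general analysis carried out in Lemmas~\ref{lemma:pn1} and~\ref{lemma:pn2} with $(\H,\widetilde\Pi)=(\G,\Pi_\omega)$, whose associated Lie algebroid structure on $A^*\H\cong TM$ is exactly $(TM,\Pi^\sharp,[\cdot,\cdot]_\Pi)$ — the original Poisson algebroid — and whose $2$-differential $\delta$ is the de Rham differential $d$ (Example~\ref{poisson_(2,0)}).

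With that identification, Lemma~\ref{lemma:pn1} says \eqref{eq:commute0} holds iff $r\circ\Pi^\sharp=\Pi^\sharp\circ r^*$ (which is \eqref{eq:comp1}, part of compatibility) \emph{and} the tensor $\delta_K$ of \eqref{delta_K} is skew-symmetric; unwinding \eqref{delta_K} with $\delta=d$, $\rho_*=\Pi^\sharp$, $l=r^*$, $D=D^r$ and using the definition \eqref{PN:D} of $D^r$, skew-symmetry of $\delta_K$ should reduce to a Cartan-calculus identity that is a formal consequence of \eqref{eq:comp1}; I expect this to be the same computation already implicit in the proof of Lemma~\ref{prop:nij}. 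Then Lemma~\ref{lemma:pn2} says \eqref{eq:commute} holds iff $[\delta,i_\Theta]=0$, and the calculation in the proof of Lemma~\ref{lemma:pn1}/\ref{prop:nij} identifies $i_\Theta$ in terms of $\N_r$ (up to sign), so $[\delta,i_\Theta]=0$ becomes exactly the statement that $d\N_r^*=0$ in the appropriate sense, equivalently the IM $3$-form equations \eqref{eq:comp3}--\eqref{eq:comp4} again — consistent with Remark~\ref{rem:C}. Finally I would note that the correspondence is bijective: starting from a symplectic quasi-Nijenhuis structure $(\omega,K,\lambda)$ relative to $\lambda$ on $\G$, multiplicativity of $K$ and $\lambda$ yields, via Theorem~\ref{thm:main} and Corollary~\ref{cor:forms}, infinitesimal data $(D^r,r^*,r)$ and $(-\N_r^*,0)$ on $T^*M$, and Proposition~\ref{prop:pn_comp} together with Lemma~\ref{prop:nij} recovers a Poisson quasi-Nijenhuis structure $(\Pi,r)$ on $M$; the two constructions are mutually inverse because the Lie functor of Theorem~\ref{thm:main} is a bijection. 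The main obstacle I anticipate is the bookkeeping in the Cartan-calculus reductions — verifying precisely that skew-symmetry of $\delta_K$ and the vanishing $[\delta,i_\Theta]=0$ translate into \eqref{eq:comp1} and \eqref{eq:comp3}--\eqref{eq:comp4} with the correct signs — but since Remark~\ref{rem:C} guarantees the desired equivalences hold in the symplectic case, and Lemmas~\ref{lemma:pn1}--\ref{lemma:pn2} already isolate the relevant conditions, no genuinely new difficulty arises beyond careful unwinding of definitions.
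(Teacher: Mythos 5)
Your overall route is the paper's: Proposition~\ref{prop:pn_comp} and Theorem~\ref{thm:main} handle the $(1,1)$ part, Lemma~\ref{prop:nij} handles the torsion/$3$-form part, and Lemmas~\ref{lemma:pn1}--\ref{lemma:pn2} together with Remark~\ref{rem:C} reduce the compatibility of $K$ with $\Pi_\omega$ to infinitesimal conditions. However, two of the reductions you anticipate are not what actually happens, and one of them, taken at face value, asserts a false intermediate equivalence. First, the dual Lie algebroid of the symplectic groupoid is $A^*=TM$ with anchor $\rho_*=\id_{TM}$ and the Lie bracket of vector fields (not $\rho_*=\Pi^\sharp$, which does not even have the right source); consequently the first condition in Lemma~\ref{lemma:pn1} reads $l=r^*$ --- a constraint on the components of $K$, automatic for the integration of $(D^r,r^*,r)$ --- and not \eqref{eq:comp1}, which instead enters through (IM6) for $(D^r,r^*,r)$ via Proposition~\ref{prop:pn_comp}. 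Second, and more seriously, once $l=r^*$ and $D=D^r$ one computes from \eqref{delta_K} and \eqref{PN:D} that $\delta_K=\Lie_r$ and $\Theta\equiv 0$; hence the skew-symmetry of $\delta_K$ and the condition $[\delta,i_\Theta]=0$ of Lemma~\ref{lemma:pn2} are automatic and do \emph{not} encode \eqref{eq:comp3}--\eqref{eq:comp4}. Those two equations enter the correspondence only through Lemma~\ref{prop:nij}, i.e.\ through the condition $\N_K=\Pi_\omega^\sharp(\lambda(\cdot,\cdot,\cdot))$. If your proposed identification of $[\delta,i_\Theta]=0$ with the IM $3$-form equations were correct, mere compatibility of $K$ with $\Pi_\omega$ would already force $(\Pi,r)$ to be quasi-Nijenhuis, which is not the case.

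Relatedly, the step you dispatch with ``the Lie functor is a bijection'' is where the paper does the real work in the reverse direction: starting from an arbitrary multiplicative $K$ compatible with $\Pi_\omega$, the \emph{only if} halves of Lemmas~\ref{lemma:pn1}--\ref{lemma:pn2} give $l=r^*$, $\delta_K$ skew-symmetric and $\Theta=0$, whence
$$
\<D_X(\alpha),Y\>=d\alpha(X,r(Y))-\delta_K(\alpha)(X,Y)=d\alpha(X,r(Y))-(\Lie_r\alpha)(X,Y)=\<D^r_X(\alpha),Y\>.
$$
This is what shows that a symplectic quasi-Nijenhuis $K$ necessarily has infinitesimal components of the special form $(D^r,r^*,r)$, so that the correspondence is surjective. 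With these corrections your outline becomes the paper's proof; without them, the forward direction still happens to close (the conditions you think must be checked are in fact automatic), but the stated equivalences are wrong.
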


\begin{proof}
We know that ${\Pi}_\omega$, the Poisson structure defined by $\omega$, makes $\G$ into a Poisson groupoid.
The dual Lie algebroid in this case is $A^* = TM$, with anchor $\rho_* = \mathrm{id_{TM}}$ and bracket $[\cdot, \cdot]_*$
given by the Lie bracket of vector fields. Note that $\delta=d$ is the de Rham differential.


Let $K: T\G \to T\G$ be a multiplicative $(1,1)$ tensor on $\G$ and $\lambda$ a multiplicative closed 3-form. Consider the IM (1,1)-tensor $(D,l,r)$ and the IM 3-form $(\mu,0)$ associated to $K$ and $\lambda$, respectively. From Lemmas \ref{lemma:pn1} and \ref{lemma:pn2}, one knows that $K$ and $\Pi_\omega$ are compatible if and only if $l=r^*$, $\delta_K$ in \eqref{delta_K} is skew-symmetric and, by writing $\delta_K = [i_r,d] + i_\Theta$, the condition $[d, i_\Theta]=0$ holds (notice that this last condition says that $\Theta=0$). Hence, using that $[d,i_r] = \Lie_r$,
\begin{align*}
\<D_X(\alpha), Y\>  = d\alpha(X, r(Y)) - \delta_K(\alpha)(X,Y)
 & = d\alpha(X, r(Y)) - \Lie_r(\alpha)(X,Y)\\
 & = \<D^r_X(\alpha), Y\>.
\end{align*}
Therefore $K$ and $\Pi_\omega$ are compatible if and only if $\Pi$ and $r$ are compatible and, moreover, $K$ is
the (1,1) tensor integrating $(D^r, r^*, r)$. Finally, from Lemma \ref{prop:nij}, the equation $\N_K(U,V) = \Pi^\sharp_\omega(\lambda(U,V,\cdot))$ holds if and if $(\mu,0)= (-\N_r^*,0)$ is the IM 3-form associated to $\lambda$.
%
%
%
\end{proof}

By restricting the previous correspondence to Poisson quasi-Nijenhuis structures on $M$ relative to closed 3-forms,
we recover \cite[Thm.~6.2]{StX} with a different viewpoint:

\begin{corollary}\label{cor:PqN}
Let $(\G, \omega)$ be a source 1-connected symplectic groupoid integrating the Poisson manifold $(M, \Pi)$.
There is one-to-one correspondence between Poisson quasi-Nijenhuis structures $(\Pi,r,\phi)$ on $M$ relative to a
a closed 3-form $\phi$ and symplectic quasi-Nijenhuis structures
$(\omega, K,\lambda)$ on $\G$ relative to $\lambda= \t^*\phi-\s^*\phi$ such that $K$ is multiplicative.
\end{corollary}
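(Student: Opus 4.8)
The plan is to deduce Corollary~\ref{cor:PqN} from Theorem~\ref{thm:PqN} by singling out, among all multiplicative closed $3$-forms $\lambda$ on $\G$, exactly those of the special form $\t^*\phi - \s^*\phi$. Recall that Theorem~\ref{thm:PqN} sets up a bijection between Poisson quasi-Nijenhuis structures $(\Pi,r)$ on $M$ and symplectic quasi-Nijenhuis structures $(\omega,K,\lambda)$ on $\G$, in which $K$ is the multiplicative $(1,1)$-tensor integrating $(D^r,r^*,r)$ and $\lambda$ is the multiplicative closed $3$-form integrating the IM $3$-form $(-\N_r^*,0)$ (cf. Lemma~\ref{prop:nij}). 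By Remark~\ref{rem:phitype}, a triple $(\Pi,r,\phi)$ with $\phi$ a closed $3$-form is a Poisson quasi-Nijenhuis structure relative to $\phi$ precisely when $\N_r^*(\alpha) = -i_{\Pi^\sharp(\alpha)}\phi$ for all $\alpha$, and in this case $(\Pi,r)$ is in particular a Poisson quasi-Nijenhuis structure in the sense used in Theorem~\ref{thm:PqN}. So the content of the corollary is the equivalence, under the bijection of Theorem~\ref{thm:PqN}, between this condition on $(\Pi,r,\phi)$ and the condition $\lambda = \t^*\phi - \s^*\phi$.

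The second step is to compute the IM $3$-form attached to $\t^*\phi-\s^*\phi$. Since the symplectic groupoid $\G$ integrates the Lie algebroid $(T^*M,\Pi^\sharp,[\cdot,\cdot]_\Pi)$, I apply Example~\ref{trivial:ex} with $q=0$ and $\Phi=\phi$: the multiplicative $3$-form $\t^*\phi-\s^*\phi=(\mathcal{T}-\mathcal{S})(\phi)$ has infinitesimal components $D(a)=\Lie_{\Pi^\sharp(a)}\phi$ and $l(a)=i_{\Pi^\sharp(a)}\phi$. Translating into an IM $3$-form $(\mu,\nu)$ via $\mu=l$, $\nu=D-d\mu$ and using Cartan's formula together with $d\phi=0$, I obtain $\mu(\alpha)=i_{\Pi^\sharp(\alpha)}\phi$ and $\nu=0$; in particular $\t^*\phi-\s^*\phi$ is closed, as it must be (see Remark~\ref{rem:complexes} and the discussion after Corollary~\ref{cor:forms}). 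Hence $\t^*\phi-\s^*\phi$ integrates the IM $3$-form $(i_{\Pi^\sharp(\cdot)}\phi,\,0)$.

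The third step is to conclude using source $1$-connectedness. By Corollary~\ref{cor:forms}, on the source $1$-connected groupoid $\G$ the assignment sending a multiplicative $3$-form to its IM $3$-form is a bijection. Therefore $\lambda=\t^*\phi-\s^*\phi$ if and only if the IM $3$-form of $\lambda$, namely $(-\N_r^*,0)$, equals $(i_{\Pi^\sharp(\cdot)}\phi,0)$, i.e.\ if and only if $\N_r^*(\alpha)=-i_{\Pi^\sharp(\alpha)}\phi$ for all $\alpha\in\Omega^1(M)$ --- which is exactly the condition of Remark~\ref{rem:phitype} that $(\Pi,r,\phi)$ be a Poisson quasi-Nijenhuis structure relative to $\phi$. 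Restricting the bijection of Theorem~\ref{thm:PqN} along this equivalence yields the asserted one-to-one correspondence $(\Pi,r,\phi)\leftrightarrow(\omega,K,\lambda)$ with $\lambda=\t^*\phi-\s^*\phi$, both directions being covered by the ``if and only if''.

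I expect the only delicate points to be bookkeeping rather than genuine obstacles: one must apply the dictionary between the infinitesimal components $(D,l)$ of a multiplicative form and the IM-form $(\mu,\nu)$ consistently, so that the sign in $-\N_r^*$ matches the one produced by $\t^*\phi-\s^*\phi$; and one should note that the closedness of $\phi$ is precisely what makes $\nu$ vanish, so that $\lambda=\t^*\phi-\s^*\phi$ is an admissible closed $3$-form for a symplectic quasi-Nijenhuis structure. (The residual ambiguity in $\phi$ when $\Pi^\sharp$ fails to be surjective is immaterial here, since the correspondence is stated for the triples $(\Pi,r,\phi)$ with $\phi$ part of the data.) Once the computation of the second step is carried out carefully, the rest is a formal consequence of Theorem~\ref{thm:PqN} and Corollary~\ref{cor:forms}.
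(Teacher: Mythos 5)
Your proposal is correct and follows essentially the same route as the paper: the paper's proof is a one-line appeal to the fact that $\t^*\phi-\s^*\phi$ integrates the IM $3$-form $(\alpha\mapsto i_{\Pi^\sharp(\alpha)}\phi,\,0)$ (via Example~\ref{trivial:ex}), combined with Theorem~\ref{thm:PqN}; you simply spell out the computation of that IM $3$-form and the sign-matching with $(-\N_r^*,0)$, both of which check out.
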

\begin{proof}
It follows from the fact that $\t^*\phi - \s^*\phi$ is the multiplicative 3-form integrating  the
IM 3-form $(\alpha \mapsto i_{\Pi^{\sharp}(\alpha)}\phi, 0)$ (see Example \ref{trivial:ex}).
\end{proof}

Our methods also work, more generally, to describe the infinitesimal counterparts of multiplicative
Poisson-Nijenhuis structures on Lie groupoids, not necessarily symplectic; here
one obtains compatibilities between the IM (1,1)-tensor corresponding to Nijenhuis structure
and the Lie bialgebroid associated with the Poisson groupoid. We will discuss this case elsewhere.


\subsection{Multiplicative (almost) complex structures}

Our general results on multiplicative (1,1) tensors in Sections~\ref{subsec:1-1ic} and \ref{subsec:nij} can be readily applied to
the study of complex structures on Lie groupoids, giving another viewpoint to results in \cite{LMX} concerning their infinitesimal
versions.

Recall that, for a (real) vector bundle $E\to M$, a holomorphic structure is specified by a triple $(J_E,J_M,\nabla)$, where $J_E: E\to E$ is an endomorphism satisfying $J_E^2=-\id$ (which makes $E$ into a complex vector bundle),
$J_M$ is a complex structure on $M$, and $\nabla$ is a flat $T^{(0,1)}$-connection
on $\E$, in such a way that holomorphic sections $u: M \Arrow \E$ are characterized by
$\nabla u =0$; see \cite{Rawnsley}. We shall call $\nabla$ the \textit{Dolbeault connection} on $E$.
More generally, we will be interested in holomorphic structures on  Lie groupoids and Lie algebroids.

A {\em holomorphic structure} on a Lie groupoid $\G\toto M$ is a multiplicative complex structure; i.e.,
a multiplicative $J\in \Omega^1(\G,T\G)$ such that $J^2=-\id$ and $\N_J=0$. One refers to $(\G,J)$
as a {\em complex} Lie groupoid. A {\em holomorphic structure} on a
Lie algebroid $A \Arrow M$ is a holomorphic structure $(J_A,J_M,\nabla)$ on its underlying vector bundle
such that the following compatibility conditions are satisfied:
\begin{enumerate}
\item[(H1)] $[\cdot, \cdot]$ restricts to a Lie bracket $[\cdot, \cdot]_{\mathrm{hol}}$ on the holomorphic sections;
\item[(H2)] $[\cdot, \cdot]_{\mathrm{hol}}$ is $\CC$-linear.
\end{enumerate}
As we will now see, the following correspondence, proven in \cite[Thm.~3.17]{LMX}, is a consequence of Theorem~\ref{thm:main},
along with Corollaries~\ref{cor:proj} and \ref{cor:nij}.

\begin{corollary}\label{cor:complex}
Let $\G\toto M$ be source 1-connected.
Then holomorphic structures on $\G$ are in natural one-to-one  correspondence with holomorphic structures on
its Lie algebroid $A\to M$.
\end{corollary}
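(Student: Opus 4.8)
The plan is to realize Corollary~\ref{cor:complex} as a direct translation of Theorem~\ref{thm:main}, specialized to $(1,1)$-tensors, together with the characterizations of the extra conditions ($J^2=-\id$ and $\mathcal{N}_J=0$) provided by Corollaries~\ref{cor:proj}(b) and \ref{cor:nij}. First I would recall that, by Proposition~\ref{prop:vect_valued} and the discussion opening Section~\ref{1_1}, a multiplicative $J\in\Omega^1(\G,T\G)$ is the same thing as a multiplicative $(1,1)$-tensor field, so Theorem~\ref{thm:main} gives a bijection between multiplicative $(1,1)$-tensors $J$ on the source $1$-connected groupoid $\G$ and IM $(1,1)$-tensors $(D,l,r)$ on $A$, with $D:\Gamma(A)\to\Gamma(T^*M\otimes A)$, $l:A\to A$, and $r:TM\to TM$ (after dualizing the $r$-component as in Section~\ref{subsec:1-1ic}). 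Under this correspondence one sets $J_A:=l$ and $J_M:=r$; the remaining data $D$ should be repackaged as a Dolbeault connection.

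The key steps, in order, are as follows. (1) Observe that $J^2=-\id_{T\G}$ holds if and only if, by Corollary~\ref{cor:proj}(b), $l^2=-\id_A$, $r^2=-\id_{TM}$, and $l\circ D(a)+D(a)\circ r=0$ for all $a\in\Gamma(A)$; the first two say exactly that $J_A=l$ and $J_M=r$ are almost complex structures on $A$ and $M$. (2) Observe that $\mathcal{N}_J=0$ holds if and only if, by Corollary~\ref{cor:nij}, $\mathcal{N}_r=0$ (so $J_M$ is integrable, i.e.\ a genuine complex structure on $M$), $[D,l]=0$, and $D^2=0$ (using formula \eqref{D:square}). (3) Interpret the data $(l,r,D)$ satisfying the conditions from (1) as a holomorphic structure on the vector bundle $A\to M$: decompose $T_\CC M=T^{(1,0)}M\oplus T^{(0,1)}M$ and $A_\CC=A^{(1,0)}\oplus A^{(0,1)}$ using $J_M$ and $J_A$; the relation $l\circ D(a)+D(a)\circ r=0$ together with the Leibniz rule \eqref{D:leibniz} (which in this $(1,1)$ case reads $D(fa)=fD(a)+df\otimes l(a)-a\otimes r(df)$, i.e.\ $D_X(fa)=fD_X(a)+(Xf)l(a)-(r(X)f)a$) shows that $D$ restricts to a $T^{(0,1)}M$-connection $\nabla$ on $A$; the conditions $[D,l]=0$ and $D^2=0$ become, respectively, the compatibility of $\nabla$ with $J_A$ and the flatness $\nabla^{0,1}\circ\nabla^{0,1}=0$, which by Rawnsley's criterion is exactly the statement that $\nabla$ is a Dolbeault connection making $A$ a holomorphic vector bundle. (4) Finally, show that the three IM-equations \eqref{IM1_1}, \eqref{IM2_1}, \eqref{IM3_1} (note \eqref{IM6_1} is just $r\circ\rho=\rho\circ l$, i.e.\ $\rho:A\to TM$ is complex-linear) are equivalent, given the above, to the two holomorphy conditions (H1) and (H2) on the Lie bracket: (H2), $\CC$-linearity of $[\cdot,\cdot]_{\mathrm{hol}}$, is encoded by \eqref{IM2_1} combined with $l^2=-\id$, while (H1), that $[\cdot,\cdot]$ preserves holomorphic sections, follows from \eqref{IM1_1} and \eqref{IM3_1} together with the flatness of $\nabla$ (holomorphic sections being the $\nabla$-flat ones). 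Conversely, any holomorphic Lie algebroid structure $(J_A,J_M,\nabla)$ yields $(l,r,D):=(J_A,J_M,\nabla)$ (extending $\nabla$ off $T^{(0,1)}M$ via the Leibniz rule and the constraint $l D(a)+D(a)r=0$) which is readily checked to be an IM $(1,1)$-tensor satisfying the conditions of (1) and (2), hence integrates to a unique holomorphic structure on $\G$ by Theorem~\ref{thm:main}.

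The main obstacle I anticipate is step~(4): carefully matching the abstract IM-equations \eqref{IM1_1}--\eqref{IM3_1} with the holomorphy axioms (H1)--(H2) phrased in terms of holomorphic sections and a $\CC$-linear bracket. The subtlety is that $D$ on all of $\Gamma(A)$ carries strictly more information than the Dolbeault connection $\nabla$ (which is only a $T^{(0,1)}M$-connection), so one has to check that the extra ``$(1,0)$-part'' of $D$ is not free data but is forced by $l\circ D+D\circ r=0$ together with the Leibniz rule, and then verify that the IM-equations restricted to the $T^{(0,1)}$-directions reproduce precisely the flatness and bracket-compatibility of $\nabla$ in the sense of \cite{LMX,Rawnsley}. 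Everything else is essentially a dictionary: \emph{first} quote Theorem~\ref{thm:main} for $(1,1)$-tensors, \emph{then} impose Corollaries~\ref{cor:proj}(b) and \ref{cor:nij}, \emph{then} invoke Rawnsley's description of holomorphic vector bundles via Dolbeault connections, and \emph{finally} unwind the IM-equations. The source $1$-connectedness hypothesis enters only through Theorem~\ref{thm:main} (and Corollary~\ref{cor:proj}, Corollary~\ref{cor:nij}), guaranteeing that the integrated object is unique.
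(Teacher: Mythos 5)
Your proposal follows the paper's proof essentially verbatim: Theorem~\ref{thm:main} together with Corollaries~\ref{cor:proj}(b) and \ref{cor:nij} reduces the statement to identifying IM $(1,1)$-tensors satisfying $l^2=-\id$, $r^2=-\id$, $l\circ D(a)+D(a)\circ r=0$, $D^2=0$, $[D,l]=0$, $\N_r=0$ with holomorphic structures on $A$, and your Dolbeault connection is exactly the paper's $\nabla_{X+\im r(X)}a=-l(D_Xa)$. The one place you understate the work is the converse of your step (4): deducing the IM-equations from (H1)--(H2) is not ``readily checked'' but requires the paper's argument that the error tensors $E_l$, $E_r$, $E_D$ are $C^\infty(M)$-multilinear (via the Leibniz rules) and vanish on holomorphic sections, which generate $\Gamma(A)$ as a $C^\infty(M)$-module --- though you do correctly flag this step as the main obstacle.
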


The remainder of this section proves this result.
By Corollaries~\ref{cor:proj} and \ref{cor:nij}, we immediately see that the correspondence in Theorem~\ref{thm:main}
restricts to a bijective correspondence between holomorphic structures
$J$ on $\G\toto M$ and IM $(1,1)$-tensors $(D,l,r)$ on $A$ satisfying
\begin{equation}\label{eq:j2=-1}
l \circ D(a) + D(a)\circ r  =  0, \qquad
l^2  = -\id_{A}, \qquad
r^2  = -\id_{TM}.
\end{equation}
and
\begin{equation}\label{eq:nij=0}
D^2=0,\qquad [D,l]=0, \qquad \N_{r}=0.
\end{equation}

So we must verify that an IM $(1,1)$-tensor $(D,l,r)$, for which  \eqref{eq:j2=-1} and \eqref{eq:nij=0} hold,
is equivalent to a holomorphic structure on the Lie algebroid $A$. We start by checking that
\eqref{eq:j2=-1} and \eqref{eq:nij=0} exactly say that the triple $(D,l,r)$ defines a holomorphic structure on the vector bundle
underlying $A$.

From \eqref{eq:j2=-1}, it is clear that $r$ is an almost complex structure on $M$ and $l$ is a
complex structure on the fibres of $A$ (so we regard $A$ as a complex vector bundle).
Defining $\nabla: \Gamma(T^{01})\times \Gamma(A) \to \Gamma(A)$ by
\begin{equation}\label{eq:nabla}
\nabla_{X+ i r(X)}(a) := -l (D_X(a)),
\end{equation}
we also see that the first equation in \eqref{eq:j2=-1} says that
$$
\nabla_{i (X+ i r(X))} a = i (\nabla_{X+ ir(X)} a).
$$
Moreover, this last property along with the Leibniz rule for $D$ imply that
$$
\nabla_{X+ir(X)} f a = f \nabla_{X + i r(X)} a + (\Lie_{X + ir(X)}f) a,
$$
for real functions $f\in C^\infty(M)$.

Let us now consider \eqref{eq:nij=0}. The third condition says that $r$ is a complex structure on $M$, while the
second says that $\nabla$ is complex linear in $A$. It follows that $T^{01}$ is a (complex) Lie algebroid and $\nabla$ is a
(complex) $T^{01}$-connection.
Finally, using \eqref{D:square}, one can also check that the first equation in \eqref{eq:nij=0} amounts to $\nabla$
being flat. In conclusion, conditions \eqref{eq:j2=-1} and \eqref{eq:nij=0} say that $(D,l,r)$ endow $A$ with the structure of a
holomorphic vector bundle: $J_A=l$, $J_M=r$ and $\nabla$ given by \eqref{eq:nabla}. The result in
Corollary~\ref{cor:complex} now follows from

\begin{lemma}
The IM-equations for $(D,l,r)$ are equivalent to conditions (H1) and (H2) above.
\end{lemma}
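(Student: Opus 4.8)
The plan is to unwind the conditions (H1) and (H2) using the characterization of holomorphic sections by $\nabla u = 0$ and to match them term-by-term against the IM-equations, exploiting the definition \eqref{eq:nabla} of $\nabla$. First I would recall the splitting $A^{\CC} = A^{1,0}\oplus A^{0,1}$ induced by $l$ (and similarly $TM^\CC = T^{1,0}\oplus T^{0,1}$ induced by $r$), and observe that a local section $u\in\Gamma(A)$ is holomorphic precisely when $\nabla_{\bar X} u = 0$ for all $\bar X\in\Gamma(T^{0,1})$, i.e., when $l(D_X(u)) = 0$ for all real $X$, which by $l^2=-\id$ is equivalent to $D_X(u)=0$ for all $X$, i.e., $D(u)=0$. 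Thus holomorphic sections are exactly those in the kernel of $D$. This is the key translation: (H1) becomes ``$[a,b]\in\Ker D$ whenever $a,b\in\Ker D$'' and (H2) becomes ``$[ia,b]\in\Ker D$ and $[ia,b] = i[a,b]$ whenever $a,b\in\Ker D$'', where $ia := l(a)$.

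Next I would show that the IM-equation \eqref{IM1_1} (IM1*), $D_X([a,b]) = [a,D_X(b)] - [b,D_X(a)] + D_{[\rho(b),X]}(a) - D_{[\rho(a),X]}(b)$, immediately gives (H1): if $D(a)=D(b)=0$ then every term on the right vanishes, so $D([a,b])=0$. For the $\CC$-linearity in (H2), I would use \eqref{IM2_1} (IM2*), $l([a,b]) = [a,l(b)] - D_{\rho(b)}(a)$: if $a,b\in\Ker D$ this reads $l([a,b]) = [a,l(b)] = [a,ib]$, and by skew-symmetry of the Lie bracket together with the analogous identity $l([b,a]) = [b,l(a)] - D_{\rho(a)}(b) = [b,ia]$, one gets $[ia,b] = -[b,ia] = -l([b,a]) = l([a,b]) = [a,ib]$, and $l([a,b]) = [a,ib]$ combined with $[ia,b]=l([a,b])$ yields $[ia,b] = i[a,b]$, which is exactly $\CC$-bilinearity of $[\cdot,\cdot]_{\mathrm{hol}}$. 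One should also check closure $[ia,b]\in\Ker D$, which follows since $ia = l(a)\in\Ker D$ whenever $a\in\Ker D$ (because $D\circ l = -l\circ D$ by $[D,l]=0$ together with $l^2=-\id$, forcing $D(l(a)) = 0$ when $D(a)=0$), and then (H1) applies.

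Conversely, assuming (H1) and (H2) hold, I would argue that the IM-equations are forced. The subtle point is that $\Ker D$ is in general only a sheaf of $\CC$-submodules, not a frame, so I must reconstruct the full equations from their restriction to $\Ker D$ using the Leibniz rule \eqref{D:leibniz} for $D$ and the fact that local holomorphic frames exist (by the Newlander--Nirenberg theorem, which applies once $\N_r = 0$ makes $r$ integrable, giving a complex structure on $M$, and then the flat $\bar\partial$-connection $\nabla$ admits local flat i.e. holomorphic frames). Concretely: write an arbitrary section as $a = \sum f_k u^k$ with $u^k$ a local holomorphic frame and $f_k\in C^\infty(M,\CC)$; then $D(a)$, $l(a)$, $r$ evaluated on such combinations are determined by the Leibniz rules and the vanishing $D(u^k)=0$, and one checks that (H1), (H2) propagate to the identities (IM1*)--(IM3*), (IM6*) for all sections. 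Equation (IM6*), $r\circ\rho = \rho\circ l$, is equivalent to saying the anchor $\rho\colon A\to TM$ is $\CC$-linear, which is part of the data of a holomorphic Lie algebroid morphism and follows from $[\cdot,\cdot]_{\mathrm{hol}}$ being $\CC$-linear together with the Leibniz identity; alternatively it can be read off from \eqref{IM3_1} (IM3*) restricted to holomorphic sections.

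The main obstacle I anticipate is the converse direction — passing from the restricted statements on $\Ker D$ back to the unrestricted IM-equations — because $\Ker D$ is not a $C^\infty(M,\R)$-module and one cannot simply evaluate pointwise. The clean way around this is to invoke Newlander--Nirenberg at the outset to produce local holomorphic frames (so $\Ker D$ is locally free over the holomorphic functions), reducing every IM-equation to a $\CC$-bilinear/Leibniz identity that need only be verified on frame elements, where it becomes the literal content of (H1) and (H2). The redundancies noted in Remark~\ref{rem:redundacy} further reduce the work: it suffices to verify \eqref{IM1_1}, \eqref{IM2_1} (or \eqref{IM3_1}), and \eqref{IM6_1}, the remaining ones being automatic. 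Combined with Corollaries~\ref{cor:proj} and \ref{cor:nij} and Theorem~\ref{thm:main}, this completes the proof of Corollary~\ref{cor:complex}.
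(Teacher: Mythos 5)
Your proposal follows essentially the paper's route: the forward direction (holomorphic sections are exactly $\Ker D$ because $\nabla_{X+ir(X)}a=-l(D_Xa)$ with $l$ invertible; \eqref{IM1_1} gives closure of $\Ker D$ under the bracket; \eqref{IM2_1} on holomorphic sections gives $\CC$-linearity) is the same argument, and the converse rests on the same fact that holomorphic sections generate $\Gamma(A)$ over $C^\infty(M)$ (equivalently, local holomorphic frames exist, via \cite{Rawnsley}). The difference is in how the converse is made precise. The paper does not expand in a frame; it introduces the defects $E_l$, $E_r$, $E_D$ of \eqref{IM2_1}, \eqref{IM3_1}, \eqref{IM1_1}, checks that $E_l$ and $E_r$ are $\C(M)$-bilinear, and records the identity \eqref{cal_D}, which is the one point your plan should absorb: the defect of \eqref{IM1_1} is \emph{not} tensorial on its own, so ``verify it on holomorphic frame elements and propagate by Leibniz'' does not work until the defects of \eqref{IM2_1} and \eqref{IM3_1} have already been shown to vanish; by \eqref{cal_D}, $E_D$ becomes $\C(M)$-linear only after $E_l\equiv 0$ and $E_r\equiv 0$. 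This forces an order on the argument: first $E_l\equiv 0$ from $\CC$-linearity of $[\cdot,\cdot]_{\mathrm{hol}}$, then \eqref{IM6_1} via the redundancies of Remark~\ref{rem:redundacy}, then the observation that $\rho$ sends holomorphic sections to holomorphic vector fields (needed to run the generation argument for $E_r$ with $r=J_M$ on $TM$), and only then $E_D\equiv 0$. Two small further points: the paper works throughout with real coefficients $f\in\C(M)$, for which the Leibniz rule \eqref{D:leibniz} is stated, whereas your expansion with $C^\infty(M,\CC)$ coefficients would require first extending that rule using $l$; and your assertion $D\circ l=-l\circ D$ has the wrong sign ($[D,l]=0$ gives $D(l(a))=l(D(a))$), though the conclusion you draw from it, that $l$ preserves $\Ker D$, is still correct. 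None of these affect the viability of your approach; they are the refinements needed to make the propagation step rigorous.
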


\begin{proof}
We saw that equations \eqref{eq:j2=-1} and \eqref{eq:nij=0} say that
 $(D,l,r)$ defines a holomorphic structure on the vector bundle $A$; moreover,
a section $a\in \Gamma(A)$ is holomorphic if and only if $Da=0$.

\noindent ($\Rightarrow$):
Note that \eqref{IM1_1} implies that $[\cdot, \cdot]$ restricts to a Lie bracket
$[\cdot, \cdot]_{\mathrm hol}$ on the holomorphic sections of $A$, and \eqref{IM2_1} implies that
$[\cdot, \cdot]_{\mathrm hol}$ is $\CC$-linear.

\noindent
($\Leftarrow$:) Assume that $(A, [\cdot, \cdot], \rho)$ is a holomorphic Lie algebroid, and consider
\begin{align*}
E_l(a,b) & := l([a,b]) - [a, l(b)] + D_{\rho(b)}(a)\\
E_r(a,X) & := r([\rho(a), X]) - [\rho(a), r(X)] + \rho(D_{X}(a))\\
E_D(a,b,X) & := D_X([a,b]) - [a, D_X(b)] + [b, D_X(a)] - D_{[\rho(b), X]}(a) + D_{[\rho(a), X]}(b).
\end{align*}
One can check that $E_l$ and $E_r$ are $\C(M)$-linear on both components and $E_D$ is anti-symmetric on the $\Gamma(A)$-components. Moreover, for $f\in C^\infty(M)$,
\begin{equation}\label{cal_D}
E_D(a,fb, X) = f E_D(a,b,X) + (\Lie_{X}f) E_l(a,b) - (\Lie_{E_r(a,X)} f) b.
\end{equation}

Note that $[\cdot, \cdot]_{\mathrm hol}$ being $\CC$-linear implies that $E_l(a,b)=0$ for $a,b \in \Gamma(A)$ holomorphic.
As $\Gamma(A)$ is generated as a $\C(M)$-module by the holomorphic sections, it follows that $E_l \equiv 0$. The redundancies
discussed in Remark~\ref{rem:redundacy} imply that $\rho \circ l = r \circ \rho$, so $\rho$ is a complex vector-bundle morphism.
Furthermore, note that $\rho: A \Arrow TM$ sends holomorphic sections to holomorphic sections. Indeed, if $h \in \C(M, \CC)$ is a
 holomorphic function and $u_1, u_2 \in \Gamma(A)$ are arbitrary holomorphic sections,
$$
(\Lie_{\rho(u_2)} h) u_1 = h[u_1, u_2] - [u_1, hu_2] = h [u_1, u_2]_{\mathrm hol} - [u_1, hu_2]_{\mathrm hol}
$$
is a holomorphic section, which implies that $\rho(u_2)$ is a holomorphic section of $TM$. Using the $\C(M)$-linearity of $E_r$,
one can argue as above to prove that $E_r(a,X)=0$ for all $a \in \Gamma(A)$ and $X \in \Gamma(TM)$ (use that $r= J_M$, the
almost complex structure of $M$, and that $TM$ with the Lie bracket of vector fields and the identity as anchor is a holomorphic
Lie algebroid). Finally, from \eqref{cal_D} it follows that $E_D$ is tensorial, and the fact that $[\cdot, \cdot]$ restricts to
$[\cdot, \cdot]_{\mathrm hol}$ on holomorphic sections implies, as before, that $E_D \equiv 0$.
\end{proof}


As this section and Section~\ref{subsec:PN} illustrate, Theorem~\ref{thm:main}
provides tools that can be directly applied to treat multiplicative geometric structures on holomorphic
Lie groupoids, including holomorphic symplectic groupoids \cite{LMX2} or more general holomorphic Poisson groupoids,
as well as multiplicative generalized complex structures \cite{JSX}, offering complementary information
about the latter in terms of infinitesimal components. We will further discuss these cases in a separate work.

\subsection{Multiplicative projections}
For a Lie groupoid $\G\rightrightarrows M$ with Lie algebroid $A$,
we consider a multiplicative $(1,1)$ tensor field $K \in \Omega^1(\G, T\G)$ satisfying $K^2 = K$,
referred to as a {\em multiplicative projection}.
In this section we apply our previous results to describe multiplicative projections infinitesimally,
making connections with the theory of matched pairs \cite{KS-M,Lu2,mokri}.

We start by observing that projections  can be used to treat other types of multiplicative $(1,1)$ tensors.

\begin{example}\label{ex:product}
Suppose that $Q:T\G\to T\G$ satisfies $Q^2=\id$ and is multiplicative; i.e., $Q$ is a multiplicative
{\em (almost) product structure} on $\G$. Then $K:= (Q+\id)/2$ is a multiplicative projection\footnote{By working with complexifications, one can also cast (almost) complex structures as projections.}.

\end{example}

We know that a multiplicative projection $K$ admits an infinitesimal description by its infinitesimal components $(D,l,r)$.
We start by discussing alternative ways to express the operator $D$, that will be
convenient when we consider the Nijenhuis torsion of $K$.

Since $r^2 = r: TM\to TM$ and $l^2=l:A\to A$ (by Cor.~\ref{cor:proj}), the bundles $TM$ and $A$ decompose as
$$
A = A^0\oplus A^1, \,\, \, TM = T^0 \oplus T^1,
$$
where $A^0, T^0$ are the kernels of the maps $l,r$, and $A^1, T^1$ are their images, respectively.

\begin{lemma}
One has that
\begin{equation}\label{proj:anchor}
\rho(A^0) \subset T^0 \text{ and } \rho(A^1) \subset T^1.
\end{equation}
Also, for $a \in \Gamma(A)$,
\begin{align*}
D_X(a) \in A_1&=\mathrm{im}(l),\;\;\; \mbox{ if } \; X \in \Gamma(T^0),\\
D_X(a) \in A_0&={\ker}(l),\;\;\; \mbox{ if } \; X \in \Gamma(T^1).
\end{align*}
\end{lemma}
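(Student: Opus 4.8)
The plan is to verify all three claims by a direct computation using the IM-equations \eqref{IM6_1} and \eqref{IM3_1} together with the projection conditions $l^2=l$, $r^2=r$ from Corollary~\ref{cor:proj}(a), and the relation $l\circ D(a) + D(a)\circ r = D(a)$. First I would establish \eqref{proj:anchor}: since \eqref{IM6_1} gives $r\circ\rho = \rho\circ l$, applying this to $a\in\Gamma(A^0)=\Gamma(\ker l)$ yields $r(\rho(a)) = \rho(l(a)) = 0$, so $\rho(A^0)\subseteq\ker r = T^0$; applying it to $a\in\Gamma(A^1)=\Gamma(\mathrm{im}\,l)$, write $a = l(b)$, so $\rho(a)=\rho(l(b)) = r(\rho(b))\in\mathrm{im}\,r = T^1$. (One should note $A^0, A^1$ and $T^0,T^1$ are genuine subbundles since $l,r$ are idempotent bundle maps of constant rank, at least on connected components; I would make this explicit or work locally.)

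For the statements about $D_X(a)$, the key is the pointwise identity $l\circ D(a) + D(a)\circ r = D(a)$, where $D(a)\in\Gamma(T^*M\otimes A)$ is viewed as a map $TM\to A$ via $X\mapsto D_X(a)$. Contracting with $X$, this reads $l(D_X(a)) + D_{r(X)}(a) = D_X(a)$ for all $X\in TM$. If $X\in\Gamma(T^0)=\Gamma(\ker r)$, then $r(X)=0$, so the identity becomes $l(D_X(a)) = D_X(a)$, i.e. $D_X(a)\in\mathrm{im}\,l = A^1$. If instead $X\in\Gamma(T^1)=\Gamma(\mathrm{im}\,r)$, write $X = r(Y)$; then by the same contracted identity applied to $Y$, we get $D_X(a) = D_{r(Y)}(a) = D_Y(a) - l(D_Y(a))$, and applying $l$ and using $l^2=l$ gives $l(D_X(a)) = l(D_Y(a)) - l(D_Y(a)) = 0$, so $D_X(a)\in\ker l = A^0$.

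I do not anticipate a genuine obstacle here: the whole lemma is a formal consequence of idempotency of $l$ and $r$ and the two IM-relations \eqref{IM6_1} and the ``$l\circ D + D\circ r = D$'' condition, with no integration or connectedness input needed beyond what Corollary~\ref{cor:proj} already supplies. The only mild subtlety to be careful about is the subbundle claim — that $\ker l$, $\mathrm{im}\,l$, $\ker r$, $\mathrm{im}\,r$ have locally constant rank — which follows because an idempotent endomorphism has rank equal to its trace, a continuous integer-valued function; alternatively one restricts attention to a connected base or simply works with the pointwise splittings. I would present the argument in the order above: subbundle splitting, then \eqref{proj:anchor}, then the two cases for $D_X(a)$, each as a two-line computation from the contracted Leibniz-type identity.
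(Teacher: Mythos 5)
Your proposal is correct and follows exactly the paper's route: the anchor inclusions come from (IM6*) ($r\circ\rho=\rho\circ l$) and the statements about $D_X(a)$ come from the identity $l\circ D(a)+D(a)\circ r=D(a)$ of Corollary~\ref{cor:proj}(a) together with idempotency of $l$ and $r$. The paper's proof merely cites these two facts; you have supplied the (correct) two-line computations that they imply.
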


\begin{proof}
The conditions in \eqref{proj:anchor} follow from \eqref{IM6_1} (Section~\ref{subsec:1-1ic}), whereas the
statements about $D$ follow from Cor.~\ref{cor:proj},
part (a).
\end{proof}

So, upon restriction, $D$ gives rise to two operators:
$$
D^{+}: \Gamma(A) \Arrow \Gamma(T^{0\,*} \otimes A^1),\;\;\; D^-: \Gamma(A) \Arrow \Gamma(T^{1\,*}\otimes A^0).
$$
Let us consider the operators
\begin{align}
\label{def:plus}\Lambda^+ = D^+|_{\Gamma(A^0)}, & \;\;\,\nabla^+= D^+|_{\Gamma(A^1)}\\
\label{def:minus}\Lambda^- = D^-|_{\Gamma(A^1)}, & \;\;\,\nabla^- = - D^-|_{\Gamma(A^0)}.
\end{align}

\begin{proposition}\label{prop:connections}
$\Lambda^+, \Lambda^-$ are tensorial, whereas $\nabla^+, \nabla^-$ satisfy
\begin{align*}
&\nabla^+(fa) = f \nabla^+(a) + df|_{T^0} \otimes a,\\
&\nabla^-(fb) = f \nabla^-(b) + df|_{T^1} \otimes b,
\end{align*}
for $f \in \C(M)$, $a \in \Gamma(A^0), \, b \in \Gamma(A^1)$.
\end{proposition}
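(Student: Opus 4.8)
The plan is to reduce the whole statement to a single pointwise form of the Leibniz rule \eqref{D:leibniz}. In the $(1,1)$ situation $l: A\to A$ and $r: TM\to TM$ (recall that the $r$--component has been dualized), so contracting \eqref{D:leibniz} with a tangent vector $X$ and using $\langle r(df),X\rangle=(r(X))f$ one obtains, for $f\in\C(M)$, $a\in\Gamma(A)$ and $X\in TM$,
$$
D_X(fa)\;=\;f\,D_X(a)\,+\,(Xf)\,l(a)\,-\,(r(X)f)\,a .
$$
This is the only computation really needed; everything else is bookkeeping against the splittings $TM=T^0\oplus T^1$ and $A=A^0\oplus A^1$ (where $r$ is $0$ on $T^0$ and the identity on $T^1$, and $l$ is $0$ on $A^0$ and the identity on $A^1$, since $r^2=r$, $l^2=l$), together with the preceding Lemma, which tells us that $D_X(a)\in\Gamma(A^1)$ for $X\in\Gamma(T^0)$ and $D_X(a)\in\Gamma(A^0)$ for $X\in\Gamma(T^1)$; this last fact is what makes $D^{\pm}$, hence $\Lambda^{\pm}$ and $\nabla^{\pm}$ in \eqref{def:plus}--\eqref{def:minus}, well defined with the indicated domains and codomains.

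I would then run through the four cases dictated by whether the section sits in $A^0$ or $A^1$ and the vector in $T^0$ or $T^1$. For $\Lambda^+=D^+|_{\Gamma(A^0)}$, evaluate the displayed identity on a section with $l(\cdot)=0$ and $X\in\Gamma(T^0)$ (so $r(X)=0$): both correction terms vanish and $D_X(fa)=f\,D_X(a)$, i.e.\ $\Lambda^+$ is $\C(M)$--linear. For $\Lambda^-=D^-|_{\Gamma(A^1)}$, take a section with $l(\cdot)=\cdot$ and $X\in\Gamma(T^1)$ (so $r(X)=X$): now the two correction terms $(Xf)\,l(a)$ and $(r(X)f)\,a$ are equal and cancel, giving again tensoriality. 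For $\nabla^+=D^+|_{\Gamma(A^1)}$, take a section with $l(\cdot)=\cdot$ and $X\in\Gamma(T^0)$: only the last term drops, so $D_X(fa)=f\,D_X(a)+(Xf)\,a$, which reads $\nabla^+(fa)=f\,\nabla^+(a)+df|_{T^0}\otimes a$. For $\nabla^-=-D^-|_{\Gamma(A^0)}$, take a section with $l(\cdot)=0$ and $X\in\Gamma(T^1)$: only the middle term drops, so $D_X(fa)=f\,D_X(a)-(Xf)\,a$, and the sign in the definition of $\nabla^-$ turns this into $\nabla^-(fa)=f\,\nabla^-(a)+df|_{T^1}\otimes a$.

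I do not anticipate a genuine obstacle here: the argument is essentially the one displayed identity plus a $2\times 2$ case analysis. The two points that require a little care are (i) translating the term $-a\wedge r(df)$ of \eqref{D:leibniz} into $-(r(X)f)\,a$ after the dualization of the $r$--component, so that the role of $r$ versus its transpose (and the signs) comes out right; and (ii) invoking Cor.~\ref{cor:proj}(a) (exactly as in the preceding Lemma) to be sure that $D_X(a)$ lands in the correct summand, which is what guarantees both that the restrictions $\Lambda^{\pm}$, $\nabla^{\pm}$ make sense and that the right-hand sides above indeed take values in $A^1$, resp.\ $A^0$.
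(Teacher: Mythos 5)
Your proposal is correct and is exactly the paper's argument: the paper's proof consists of the single sentence that the claim ``follows immediately from the Leibniz rule for $D$'', and your displayed identity $D_X(fa)=fD_X(a)+(Xf)\,l(a)-(r(X)f)\,a$ together with the four-case check is precisely the computation being left implicit there. Your handling of the dualized $r$-component and of the sign in $\nabla^-=-D^-|_{\Gamma(A^0)}$ is right, and you correctly take $\nabla^+$ on sections of $A^1$ and $\nabla^-$ on sections of $A^0$, consistent with \eqref{def:plus}--\eqref{def:minus}.
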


\begin{proof}
This follows immediately from the Leibniz rule for $D$ \eqref{D:leibniz}.
\end{proof}

\subsubsection*{Vanishing of the Nijenhuis torsion.}

Let $\N_K \in \Omega^2(\G, T\G)$ be the Nijenhuis torsion of $K$.
We say that $K$ is a \textit{flat projection} if $\N_K=0$.
The next result gives an equivalent description of the Nijenhuis vanishing condition.

\begin{proposition}\label{prop:Nij_vanish} Let  $K \in \Omega^1(\G, T\G)$ be a multiplicative projection on a source-connected
Lie groupoid $\G \toto M$. Then
$\N_K = 0$ if and only if
\begin{itemize}
\item $\Lambda^+=0$, $\Lambda^- = 0$;
\item $T^0$ and $T^1$ are involutive distributions;
\item $\nabla^+$ is a flat $T^0$-connection, and  $\nabla^-$ is a flat $T^1$-connection.
\end{itemize}
\end{proposition}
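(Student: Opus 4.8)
The strategy is to use Corollary~\ref{cor:nij}, which says that $\N_K\in\Omega^2(\G,T\G)$ is multiplicative with infinitesimal components $(D^2,[D,l],\N_r)$, together with the fact that $\G$ is source-connected, so that $\N_K=0$ if and only if all three of its infinitesimal components vanish. Thus I must unravel what the vanishing of $D^2$, $[D,l]$ and $\N_r$ means in terms of the decompositions $A=A^0\oplus A^1$, $TM=T^0\oplus T^1$ and the operators $\Lambda^\pm,\nabla^\pm$ of \eqref{def:plus}--\eqref{def:minus}.

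First I would record the block decomposition of $D$: by the previous lemma, $D_X(a)\in A^1$ when $X\in\Gamma(T^0)$ and $D_X(a)\in A^0$ when $X\in\Gamma(T^1)$, so $D=D^+\oplus D^-$ according to the splitting of $T^*M$, and $D^\pm$ further split on the $A^0\oplus A^1$ source. The condition $\N_r=0$ is, by definition, exactly that $r$ is a Nijenhuis projection; since $r^2=r$, a standard computation (or direct use of $\N_r(X,Y)=[rX,rY]-r([rX,Y]+[rY,X])+r[X,Y]$, evaluated on sections of $T^0$ and $T^1$ separately) shows this is equivalent to both $T^0=\ker r$ and $T^1=\mathrm{im}\,r$ being involutive. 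Next, the condition $[D,l]=0$ means $l\circ D(a)=D(a)\circ r$ as elements of $\Gamma(T^*M\otimes A)$ for all $a$; feeding in $X\in\Gamma(T^0)$ gives $l(D_X(a))=0$ (since $r(X)=0$ and $D_X(a)\in A^1$, so $l(D_X(a))=D_X(a)$ forces $D_X(a)=0$ precisely when... ) — here I need to be careful: evaluating $[D,l]=0$ against $X\in T^0$ gives $D_X(a)=l(D_X(a))=0$ only on the $\Lambda^+$ piece, and against $X\in T^1$ kills the $\Lambda^-$ piece. So $[D,l]=0$ is equivalent to $\Lambda^+=0$ and $\Lambda^-=0$ (together with the already-known compatibility $l\circ D=D\circ r$ on the connection pieces being automatic). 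Finally, using the explicit formula \eqref{D:square} for $D^2_{(X,Y)}$, I would show that once $\N_r=0$, $\Lambda^+=\Lambda^-=0$, the identity $D^2=0$ reduces — upon restricting $X,Y$ to sections of $T^0$ and of $T^1$, and $a$ to sections of $A^0$ and $A^1$ — to the flatness of $\nabla^+$ (as a $T^0$-connection on $A^0$) and of $\nabla^-$ (as a $T^1$-connection on $A^1$): the terms $D_{[r(X),Y]}$, $D_{r([X,Y])}$, $l\circ D_{[X,Y]}$ in \eqref{D:square} become, on each block, exactly the connection terms $\nabla^\pm_{[X,Y]}$ and the curvature expression $\nabla^\pm_Y\nabla^\pm_X-\nabla^\pm_X\nabla^\pm_Y-\nabla^\pm_{[X,Y]}$.

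Conversely, given the three bulleted conditions, I would run the same computations backwards: involutivity of $T^0,T^1$ plus $r^2=r$ forces $\N_r=0$; $\Lambda^\pm=0$ forces $[D,l]=0$ (the off-diagonal compatibility $l\circ D=D\circ r$ being a consequence of \eqref{IM6_1} and Cor.~\ref{cor:proj}(a)); and flatness of $\nabla^\pm$ together with $\Lambda^\pm=0$ and $\N_r=0$ kills every term in \eqref{D:square}, giving $D^2=0$. Then Cor.~\ref{cor:nij} and source-connectedness give $\N_K=0$.

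\textbf{Main obstacle.} The routine but delicate point is the block bookkeeping in \eqref{D:square}: one must carefully track, for each choice of $X,Y$ in $T^0$ or $T^1$ and $a$ in $A^0$ or $A^1$, which of the six terms $D_Y\circ D_X$, $D_X\circ D_Y$, $D_{[r(X),Y]}$, $D_{[r(Y),X]}$, $l\circ D_{[X,Y]}$, $D_{r([X,Y])}$ land in $A^0$ versus $A^1$ and which vanish under $\Lambda^\pm=0$; the mixed cases ($X\in T^0$, $Y\in T^1$) must be shown to be vacuous or automatically satisfied, using \eqref{proj:anchor} and involutivity to control $[X,Y]$ and $[r(X),Y]$. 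Getting the signs and the identification of the curvature term right is where care is needed, but no conceptual difficulty arises beyond what Cor.~\ref{cor:nij} and formula \eqref{D:square} already provide.
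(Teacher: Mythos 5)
Your plan follows the paper's proof exactly: reduce via Cor.~\ref{cor:nij} and source-connectedness to the vanishing of the three infinitesimal components $(D^2,[D,l],\N_r)$, and identify each vanishing with the corresponding bullet by the same block analysis of \eqref{D:square}. One slip to fix: $[D,l]=0$ is not the identity $l\circ D(a)=D(a)\circ r$ (that is the idempotency condition of Cor.~\ref{cor:proj}(a), which already holds); rather $[D,l]_X(a)=D_X(l(a))-l(D_X(a))$, which equals $\Lambda^+_X(l(a)-a)$ for $X\in\Gamma(T^0)$ and $\Lambda^-_X(l(a))$ for $X\in\Gamma(T^1)$ --- so your final conclusion that $[D,l]=0$ is equivalent to $\Lambda^+=\Lambda^-=0$ is nevertheless correct.
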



\begin{proof}
As $\G$ is source connected, the Nijenhuis torsion $\N_K$ vanishes if and only if its infinitesimal components
$(D^2, [D,l], N_r)$ are zero (see Corollary \ref{cor:nij}).
A direct verification shows that
\begin{equation*}
[D,l]_X(a) = D_X(l(a)) - l(D_X(a)) =
\begin{cases}
\Lambda_X^+(l(a)-a), & \text{ if } X \in T^0,\\
\Lambda_X^-(l(a)), & \text{ if } X \in T^1.
\end{cases}
\end{equation*}
Hence, $[D,l] =0$ if and only if both $\Lambda^+$ and $\Lambda^-$ vanish.
Similarly, $N_r = 0$ is equivalent to $T^0$ and $T^1$
being involutive\footnote{In general, for a projection $K \in \Omega^1(M, TM)$ a projection, the Nijenhuis torsion $\N_K$
can be written as $\N_K = R_K + \overline{R}_K,$
where $R_K, \overline{R}_K \in \Omega^2(M, TM)$ are the \textit{curvature} and \textit{co-curvature} of $K$ given, respectively, by
\begin{align*}
R_K(X,Y)  &= K([(\id-K)(X), (\id-K)(Y)])\\
\overline{R}_K(X,Y) & =(\id-K)([K(X), K(Y)]).
\end{align*}
In particular, $\N_K=0$ if and only if both $\ker(K)$ and $\mathrm{im}(K)$ are involutive distributions.}.
By Proposition \ref{prop:connections}, $\nabla^+$ is a $T^0$-connection, and $\nabla^-$ is a $T^1$-connection.
Using \eqref{D:square}, we see that
$$
D^2_{(X,Y)}(a) =
\left\{
\begin{array}{ll}
\mathrm{Curv}^+_{(Y,X)}(a), & \text{ if } X, Y \in \Gamma(T^0), \, a \in \Gamma(A^1)\\
\mathrm{Curv}^-_{(Y,X)}(a), & \text{ if } X, Y \in \Gamma(T^1), \, a \in \Gamma(A^0)\\
0, & \text{ otherwise, }\\
\end{array}
\right.
$$
where $\mathrm{Curv}^{+}$ (resp. $\mathrm{Curv}^{-}$) is the curvature of $\nabla^{+}$ (resp. $\nabla^-$).
Hence, $D^2 = 0$ if and only if both $\nabla^+, \nabla^-$ are flat. This concludes the proof.
\end{proof}

\begin{remark}
A distribution $\Delta \subset T\G$ is said to be multiplicative if $\Delta$ is a Lie subgroupoid.
As observed in \cite{bd}, a multiplicative projection is equivalent to a pair of multiplicative
distributions $\Delta_1$, $\Delta_2$ such that $\Delta_1\oplus \Delta_2=T\G$.
Also, $\N_K =0$ is equivalent to both distributions being involutive. In this context,
Proposition \ref{prop:Nij_vanish} agrees with the integrability criteria for multiplicative distributions given in
\cite{CSS, JO}.
\end{remark}

\begin{example}\label{ex:product2}
Following Example~\ref{ex:product}, consider a multiplicative $Q: T\G\to T\G$ satisfying $Q^2=\id$,
and let $K=(Q+\id)/2$ be the corresponding multiplicative projection.
Let $(D,l,r)$ and $(D',l',r')$ be the infinitesimal components of $K$ and $Q$, respectively. Then
$$
l= (l'+ \id)/2,\;\;\; r= (r'+\id)/2,\;\;\;\;\; D=D'.
$$
The bundles $T^0$, $A^0$ (resp. $T^1$, $A^1$) are now the $-1$ (resp. $+1$) eigenbundles of $r'$ and $l'$. Also, $D'$ decomposes into tensors $\Lambda^+ \in \Gamma(T^{0 \, *} \otimes A^{0 \,*} \otimes A^1)$, $\Lambda^- \in \Gamma(T^{1 \, *} \otimes A^{1 \, * } \otimes A^0)$ and connections $\nabla^+: \Gamma(A^1) \to \Gamma(T^{0 \, *} \otimes A^1)$, $\nabla^-: \Gamma(A^0) \to \Gamma(T^{ 1\, *} \otimes A^0)$. Noticing that $\N_K = 0 \Leftrightarrow \N_Q = 0$, we see that
Prop.~\ref{prop:Nij_vanish} directly applies to $Q$ instead of $K$.
\end{example}

We now illustrate the infinitesimal components of a multiplicative projection in the classical example of a
projection defined by a connection on a principal bundle.

\begin{example}
Let $P \to M$ be a principal bundle for a Lie group $G$,
and consider its gauge groupoid $\G(P):=(P\times P)/G \toto M$ (see e.g. \cite[Sec,~1.1]{Mac-book}).
In \cite{bd}, it is shown that there is a one-to-one correspondence between principal connections
$\theta \in \Omega^1(P, \frakg)$ and multiplicative projections $K: T\G(P) \to T\G(P)$ such that
$\mathrm{im}(K)=\ker(T\s) \cap \ker(T\t)$. To explicitly describe this projection it is useful to identify the
tangent groupoid $T\G(P)$ with the gauge groupoid $\G(TP)= (TP \times TP)/TG \to TM$ of the principal $TG$-bundle $TP \to TM$.
The quotient map $TP \times TP \to \G(TP)$ is denoted by
$$
(X,Y) \mapsto \overline{(X,Y)}.
$$
The projection $K$ is now defined as
$$
K(\overline{(X, Y)}) = \overline{(\theta(X)_P, \theta(Y)_P)},
$$
where $u_P$ denotes the infinitesimal generator on $P$ corresponding to $u\in \frakg$.
One can check (see \cite{bd}) that the Nijenhuis torsion of $K$ is given by
$$
\N_K = \mathcal{S}(F_{\theta}) - \T(F_{\theta}),
$$
where $F_{\theta} \in \Omega^2(M, P\times_G \frakg)$ is the curvature of $\theta$, and the maps $\mathcal{T}$ and $\mathcal{S}$
are defined in \eqref{T:def};
here $P\times_G \frakg$ is the associated bundle with respect to the adjoint representation, and we
are using the Atiyah sequence
$$
0 \longrightarrow P\times_G \frakg \longrightarrow TP/G \longrightarrow TM \longrightarrow 0,
$$
to view $P\times_G \frakg$ as a subbundle of $A(\G(P))=TP/G$.

The infinitesimal components $(D,l,r)$ of the multiplicative projection $K$ are  given as follows:
$r=0$, while $l: TP/G \to P\times_G \frakg \subset TP/G$ is the map induced by $\theta: TP\to \mathfrak{g}$,
and $D: \Gamma(TP/G) \to \Gamma(T^*M \otimes TP/G)$ is given by
$$
D_X(\overline{Y}) = \theta([X_H, Y]),
$$
where $X_H \in \frakx(P)$ is the horizontal lift of $X$. Note that $T^0=TM$, $T^1=0$, $A^0= H/G \cong TM$, $A^1= P\times_G \frakg$,
where $H=\ker(\theta) \subset TP$ is the horizontal distribution. Under the splitting
$D= D^++D^-$, one may directly  check that $D^-=0$ and
$\nabla^+ : \Gamma(P\times_G \frakg) \to \Gamma(T^*M\otimes P\times_G \frakg)$ is the natural connection on the associated bundle
$P\times_G \frakg$, whereas $\Lambda^+: \Gamma(TM) \to \Gamma(T^*M \otimes P\times_G \frakg)$ is
$$
\Lambda^+_X(Y) = \theta([X_H, Y_H]) = -F_{\theta}(X,Y).
$$
\end{example}

Prop.~\ref{prop:Nij_vanish} admits yet another geometric interpretation, that we discuss next.



\subsubsection*{Characterization via matched pairs}

In the remainder of this section we provide a characterization of multiplicative flat projections using the theory
of matched pairs of Lie algebroids \cite{mokri}.

\begin{definition}
Let $A, B \Arrow M$ be Lie algebroids. We say that $(A, B)$ is a {\em matched pair} if $A$ has a representation on $B$,
and $B$ has a representation on $A$ such that
\begin{align}
\label{match1}[\rho_A(a), \rho_B(b)] & = -\rho_A(\nabla_b \,a) + \rho_B (\nabla_a \,b),\\
\label{match2}\nabla_a \,[b_1,b_2] & = [\nabla_a \,b_1, b_2] + [b_1, \nabla_a \,b_2] + \nabla_{\nabla_{b_2} \,a} \,b_1 -
\nabla_{\nabla_{b_1}\,a} \,b_2,\\
\label{match3}\nabla_b \,[a_1,a_2] & = [\nabla_b \,a_1, a_2] + [a_1, \nabla_b \,a_2] + \nabla_{\nabla_{a_2} \,b} \,a_1 -
\nabla_{\nabla_{a_1}\,b} \,a_2.
\end{align}
Here we denote both representations by $\nabla$.
\end{definition}

\begin{definition}
A {\em morphism of matched pairs} from $(A_1, B_1)$ to $(A_2,B_2)$ is a pair of
Lie algebroid morphisms $F_A: A_1 \Arrow A_2$, $F_B: B_1 \Arrow B_2$ such that
\begin{align}
\label{morf1}\nabla_{F_A(a)} \,F_B(b) &= F_B(\nabla_a \,b),\\
\label{morf2}\nabla_{F_B(b)} \, F_A(a) & = F_A (\nabla_b\, a).
\end{align}
\end{definition}

 A matched pair $(A, B)$ is equivalent to a Lie algebroid structure on the
Whitney sum $A\oplus B$ such that $A$ and $B$ are Lie subalgebroids, see \cite{Mac-doubles, mokri}.
From this viewpoint, the representations are determined by the Lie bracket:
$$
\nabla_a \,b = \pr_B([a,b]) \,\,\, , \,\,\, \nabla_b \,a = \pr_A([b,a]),
$$
where $\pr_A: A\oplus B\Arrow A$ and $\pr_B: A\oplus  B \Arrow B$ are the projections.
In this context, a morphism of matched pairs from $(A_1,A_2)$ to $(B_1,B_2)$ is equivalent to a
Lie-algebroid morphism $F: A_1\oplus B_1 \Arrow A_2 \oplus B_2$ which restricts to Lie-algebroid morphisms
from $A_1$ to $A_2$, and from $B_1$ to $B_2$.

For a flat multiplicative projection $K \in \Omega^1(\G, T\G)$, Proposition~\ref{prop:Nij_vanish} implies that the
decomposition $TM = T^0 \oplus T^1$ defines a matched pair $(T^0, T^1)$. Also, for $a, b \in \Gamma(A^0)$, the condition
$$
l([a,b]) = [a,l(b)] - D_{\rho(b)}(a) = - \Lambda^+_{\rho(b)}(a) = 0
$$
implies that $A^0 \subset A$ is a subalgebroid. Similarly, one can check that $A^1 \subset A$ is a subalgebroid.
Thus, the decomposition $A = A^0 \oplus A^1$ defines a matched pair $(A^0,A^1)$.
For $a \in \Gamma(A^0), b \in \Gamma(A^1)$, the representations are defined by
$$
\nabla_a \,b = l([a,b]),\;\;\;\;\; \nabla_b \,a = [a,b] - l([a,b]).
$$

We obtain the following infinitesimal characterization of flat multiplicative projections:

\begin{theorem}
Let $\G$ be a source 1-connected groupoid.
There is a one-to-one correspondence between flat multiplicative projections on $\G$ and
decompositions $A = A^0 \oplus A^1$ and $TM = T^0 \oplus T^1$, where
\begin{itemize}
\item[(i)] $A^0, A^1 \subset A$ and $T^0, T^1 \subset TM$ are Lie subalgebroids;
\item[(ii)] $(A^0, T^1)$ and $(T^0, A^1)$ are matched pairs;
\item[(iii)] The sides of the commutative square
$$
\begin{CD}
(A^0,A^1) @>(\id_{A^0},\, \rho)>> (A^0, T^1)\\
@V (\rho,\, \id_{A^1}) VV     @VV (\rho,\, \id_{T^1})V \\
(T^0, A^1) @>> (\id_{T^0},\, \rho) > (T^0,T^1)
\end{CD}
$$
are morphisms of matched pairs.
\end{itemize}
\end{theorem}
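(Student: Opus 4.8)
The plan is to reduce the statement to a purely algebraic equivalence on the Lie algebroid $A$, and then to unwind both sides in terms of matched pairs. Since $\G$ is source $1$-connected, Theorem~\ref{thm:main} identifies multiplicative $(1,1)$-tensor fields $K$ on $\G$ with IM $(1,1)$-tensors $(D,l,r)$ on $A$. By Corollary~\ref{cor:proj}(a), $K^2=K$ is equivalent to $l^2=l$, $r^2=r$ and $l\circ D(a)+D(a)\circ r=D(a)$; by Corollary~\ref{cor:nij} the infinitesimal components of $\N_K$ are $(D^2,[D,l],\N_r)$, so, arguing as in the proof of Corollary~\ref{cor:proj} (using source connectedness), $\N_K=0$ is equivalent to $D^2=0$, $[D,l]=0$ and $\N_r=0$; and by (the proof of) Proposition~\ref{prop:Nij_vanish} these last three identities amount, for a projection $(D,l,r)$, to $\Lambda^{\pm}=0$, the involutivity of $T^0,T^1$, and the flatness of $\nabla^{\pm}$. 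Hence it suffices to set up a bijection between such \emph{flat IM projections} $(D,l,r)$ on $A$ and the decompositions of $A$ and $TM$ described in the statement.

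Starting from a flat IM projection, I would put $A^0=\ker l$, $A^1=\operatorname{im}l$, $T^0=\ker r$, $T^1=\operatorname{im}r$, so that $l,r$ become the projections onto $A^1,T^1$. Equation~\eqref{IM6_1} gives $\rho(A^i)\subset T^i$ (cf.~\eqref{proj:anchor}), so $\rho$ restricts to Lie algebroid maps $A^i\to T^i$; equation~\eqref{IM2_1} (with $\Lambda^{\pm}=0$) shows that $A^0,A^1$ are Lie subalgebroids of $A$, while $T^0,T^1$ are Lie subalgebroids of $TM$ by involutivity, which gives (i) and, by the standard dictionary between matched pairs and decomposed Lie algebroids \cite{Mac-doubles,mokri}, also makes $(A^0,A^1)$ and $(T^0,T^1)$ into matched pairs. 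For the mixed pairs I would take $\nabla^{+}$ (from \eqref{def:plus}) as the $T^0$-action on $A^1$ and $\nabla^{-}$ (from \eqref{def:minus}, up to sign) as the $T^1$-action on $A^0$, and declare the remaining two representations by pulling back the $T^0$- and $T^1$-actions of the pair $(T^0,T^1)$ along $\rho$. The content of equations~\eqref{IM1_1}, \eqref{IM2_1}, \eqref{IM3_1} and of $D^2=0$, expanded via \eqref{D:square} and restricted to the various combinations of arguments in $A^0,A^1,T^0,T^1$, is then exactly the assertion that these data satisfy \eqref{match1}--\eqref{match3}, giving (ii), and that the four maps built from $\id$ and $\rho$ satisfy \eqref{morf1}--\eqref{morf2}, giving (iii). (For instance, \eqref{IM6_1} together with the cases of \eqref{IM2_1}, \eqref{IM3_1} having one entry in $A^1$, resp.~$A^0$, produce the morphism conditions; \eqref{IM1_1} with entries in $A^0$, resp.~$A^1$, combined with $D^2=0$, gives the Jacobi-type axioms \eqref{match2}, \eqref{match3}; and \eqref{match1} is the $\rho$-compatibility of the anchors.)

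Conversely, given decompositions satisfying (i)--(iii), I would let $l,r$ be the projections onto $A^1,T^1$ and define $D$ by $D_X(a)=\nabla^{+}_{X^0}(la)-\nabla^{-}_{X^1}\big((\id-l)a\big)$, where $X=X^0+X^1$ is the decomposition in $TM=T^0\oplus T^1$, $\nabla^{+}$ is the $T^0$-representation on $A^1$ of the matched pair $(T^0,A^1)$, and $\nabla^{-}$ is the $T^1$-representation on $A^0$ of $(A^0,T^1)$. The Leibniz rule~\eqref{D:leibniz} for this $D$ follows from the connection identities of $\nabla^{\pm}$ (cf.~Proposition~\ref{prop:connections}); the projection identity $l\circ D(a)+D(a)\circ r=D(a)$ and $[D,l]=0$ are immediate, $\N_r=0$ holds because $T^0,T^1$ are involutive, and $D^2=0$ because $\nabla^{\pm}$ are flat; and the IM-equations~\eqref{IM1_1}--\eqref{IM6_1} translate, by the same dictionary as above, into (i), the matched-pair axioms for $(T^0,A^1)$ and $(A^0,T^1)$, and the morphism conditions (iii), together with the (automatic) fact that $(A^0,A^1)$ and $(T^0,T^1)$ are matched pairs. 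This assignment is inverse to the previous one by construction, so composing with the reduction of the first paragraph yields the stated correspondence.

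The step I expect to be the main obstacle is not conceptual but organizational: there are four matched pairs $(A^0,A^1),(T^0,A^1),(A^0,T^1),(T^0,T^1)$, each carrying two representations, and one must pair each of the equations~\eqref{IM1_1}--\eqref{IM6_1} (and $D^2=0$, $[D,l]=0$, $\N_r=0$) with the correct combination of \eqref{match1}--\eqref{match3} and \eqref{morf1}--\eqref{morf2}, keeping careful track of which piece of $D$ (one of $\Lambda^{\pm}$ or $\nabla^{\pm}$) occupies which slot and of all interactions with the anchor $\rho$. The redundancies of Remark~\ref{rem:redundacy}, applied to the IM-equations in this $(1,1)$ setting (where it suffices to retain \eqref{IM1_1}, \eqref{IM2_1}, \eqref{IM6_1}), should be used to keep this bookkeeping manageable.
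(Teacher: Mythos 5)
Your proposal follows essentially the same route as the paper: reduce to flat IM projections via Theorem~\ref{thm:main}, Corollaries~\ref{cor:proj} and \ref{cor:nij}, and Proposition~\ref{prop:Nij_vanish}, then match the IM-equations against the matched-pair and morphism axioms, reconstructing $D$ by the same formula $D(a)=\nabla^{+}(l(a))-\nabla^{-}(a-l(a))$ as in \eqref{D:nabla}. The only difference is one of completeness rather than method: the paper carries out the case-by-case verification of \eqref{IM1_1}--\eqref{IM6_1} in the direction from matched pairs to IM tensors (leaving the converse to the reader), whereas you sketch the bookkeeping in both directions; your pairing of equations with axioms agrees with the paper's.
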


\begin{proof}
Consider decompositions $A= A^0\oplus A^1$ and $TM = T^0\oplus T^1$ for which (i), (ii) and (iii) hold.
We will prove that they define the infinitesimal components of a flat multiplicative projection.

Define $l:A \Arrow A$ (resp. $r: TM \Arrow TM$) to be the projection on $A^1$ (resp. $T^1$) along $A^0$ (resp. $T^0$).
Using that $\mathrm{Ann}(T^1)\cong T^{0\,*}$ and $\mathrm{Ann}(T^0) \cong T^{1\,*}$, define
$D: \Gamma(A) \Arrow \Gamma(T^*M \otimes A)$ to be the map
\begin{equation}\label{D:nabla}
D(a) = \nabla^+(l(a)) - \nabla^-(a-l(a)),
\end{equation}
where $\nabla^+: \Gamma(A^1) \Arrow \Gamma(T^{0\,*}\otimes A^1)$, $\nabla^-: \Gamma(A^0) \Arrow \Gamma(T^{1\,*}\otimes A^0)$
are the flat connections (i.e., representations) corresponding to the matched pairs $(A^0, T^1)$, $(T^0, A^1)$, respectively.
One may directly verify that $D$ satisfies the Leibniz rule \eqref{D:leibniz}. We now prove that the triple $(D,l,r)$ satisfies
the IM-equations (Section~\ref{subsec:1-1ic}).

\vspace{5pt}
\paragraph{\bf Equation \eqref{IM6_1}:}
From (iii), one has that $\rho(A^0) \subset T^0$, $\rho(A^1) \subset T^1$, which is equivalent to $\rho \circ l = r \circ \rho$.

\paragraph{\bf Equation \eqref{IM2_1}:}
The subbundle $A^0\subset A$ is a Lie subalgebroid if and only if
$
l([a,b]) = 0,
$
for $a, b \in \Gamma(A^0)$. As $l(b)= 0 = D_{\rho(b)}(a)$, this implies that
$$
l([a,b]) = [a,l(b)] - D_{\rho(b)}(a),\;\;\; \forall \, a,\, b \in \Gamma(A^0).
$$
Similarly, one can check that \eqref{IM2_1} holds for $a, b \in \Gamma(A^1)$ using that $A^1 \subset A$ is a
Lie subalgebroid. It remains to verify that \eqref{IM2_1} holds for crossed terms.
Using (ii), let
$\nabla^{+,\,bas}: \Gamma(A^1) \times \Gamma(T^0) \Arrow \Gamma(T^0)$,
$\nabla^{-, \,bas}: \Gamma(A^0)\times \Gamma(T^1) \Arrow \Gamma(T^1)$ be the representations of $A^1$, $A^0$ on
$T^0$, $T^1$, respectively. For equation \eqref{match1} to hold for the matched pairs $(A^0, T^1)$, $(T^0, A^1)$,
one must have that
\begin{align}
\nabla^{+, \,bas}_a \, X & = [\rho(a), X] + \rho(\nabla^+_X \,a),\\
\nabla^{-, \,bas}_b \, Y & = [\rho(b),Y] + \rho(\nabla^-_Y \,b).
\end{align}
Now, $(\id_{A^0}, \,\rho):(A^0,A^1)\Arrow (A^0, T^1)$ and $(\id_{A^1}, \,\rho):(A^0,A^1) \Arrow (T^0, A^1)$ are morphisms of
matched pairs if and only if, for all  $a \in \Gamma(A^0)$, $b \in \Gamma(A^1)$,
\begin{align*}
\l([a,b])& = [a,b] -  D_{\rho(b)}(a) = [a, l(b)] - D_{\rho(b)}(a),\\
l([b,a]) &= -D_{\rho(a)}(b) = [b, l(a)] - D_{\rho(a)}(b),
\end{align*}
respectively. Altogether, this proves that \eqref{IM2_1} holds.
\paragraph{\bf Equation \eqref{IM3_1}:}
Similarly to the previous case, one can check that \eqref{IM3_1} follows from $T^0$, $T^1$
being involutive and $(\rho,\,\id_{T^1}): (A^0,T^1) \Arrow (T^0, T^1)$ and
$(\id_{T^0}, \rho): (T^0,A^1)\Arrow (T^0, T^1)$ being morphisms of matched pairs.
\paragraph{\bf Equation \eqref{IM1_1}:}
We recall \eqref{IM1_1} for convenience:
$$
D_X([a,b])= [a, D_X(b)] - [b, D_X(a)] + D_{[\rho(b), X]}(a) - D_{[\rho(a), X]}(b).
$$
There are 6 cases to check by taking $X$ in $T^0$ or $T^1$, and $a,b$ in $A^0$ or $A^1$.
\begin{enumerate}
\item $X \in \Gamma(T^0)$ and $a, b \in \Gamma(A^0)$,
\item $X \in \Gamma(T^0)$ and $a, b \in \Gamma(A^1)$,
\item $X \in \Gamma(T^0)$ and $a \in \Gamma(A^0), \, b \in \Gamma(A^1)$.
\end{enumerate}
The other 3 cases where $X \in \Gamma(T^1)$ work analogously. For (1), both sides of \eqref{IM1_1} are
trivially zero. For (2), using that $D|_{T^1}(a)=D|_{T^1}(b)=0$, one can show that \eqref{IM1_1} is
equivalent to \eqref{match2} for the matched pair $(T^0,A^1)$. Finally, for (3), one has that
\begin{align*}
D_X([a,b]) = - \nabla^+_{X} \, l([b,a]) & \stackrel{\eqref{IM2_1}}= - \nabla^+_X \, \left([b, \cancelto{0}{l(a)}] -D_{\rho(a)}(b)\right)
 = \nabla^+_X \nabla^+_{\rho(a)} \,b.
\end{align*}
On the other hand, using that $D|_{T^0}(a)=0$, the RHS of \eqref{IM1_1} simplifies to
\begin{align*}
[a, D_X(b)] + D_{r([\rho(b), X])}(a) - D_{[\rho(a), X]}(b) & \\
& \hspace{-100pt} \stackrel{\eqref{IM3_1}} =
 [a, D_X(b)] - D_{\rho(D_X(b))}(a) - \nabla^+_{[\rho(a), X]} \,b\\
& \hspace{-100pt} \stackrel{\eqref{IM2_1}} = l([a, D_X(b)]) - \nabla^+_{[\rho(a), X]} \,b.
\end{align*}
Using \eqref{IM2_1} once again, one can prove that
$$
l([a, D_X(b)]) = -l([D_X(b), a]) = D_{\rho(a)}D_{X}(b) = \nabla^+_{\rho(a)}\nabla^+_X \, b,
$$
so the case (3) follows from the flatness of $\nabla^+$. Putting everything together, we have proved that $(D,l,r)$
defines an IM $(1,1)$-tensor, so it integrates to a multiplicative $(1,1)$
tensor $K \in \Omega^1(\G, T\G)$ by Theorem \ref{thm:main}.
It then follows from Corollary \ref{cor:proj} and Proposition \ref{prop:Nij_vanish} that $K$ is a flat projection.

The converse, i.e., that the infinitesimal components of a multiplicative flat projection give rise to matched pairs as in
the statement of the theorem, is proven by similar arguments and is left to the reader.

%
%
%
%
%
\end{proof}

Following Example~\ref{ex:product2}, an analogous result holds for product structures; see \cite[Thm.~4.8]{LMX2} for a parallel result in the context of holomorphic structures.


\appendix

\section{Lie theory of componentwise linear functions}
In this appendix, we study componentwise linear functions on Whitney sums of VB-groupoids. We start by presenting
a useful characterization of these functions.

Let $\E_i\to M$ be vector bundles, $i=1,\ldots,p$, and let $\E= \E_1 \oplus \dots \oplus \E_p$.
Consider the multiplication by non-negative scalars $h: \R_{\geq 0}\times \E \Arrow \E$, $h_\lambda(e)=\lambda e$.
We shall refer to $h$ as the \textit{homogeneous structure} on $\E$. The next result gives a characterization of componentwise
linear functions on $E$ in terms of its homogeneous structure. This is an extension of the characterization of vector bundle maps in \cite{GR}, for $p=1$.

\begin{proposition}\label{comp:alt}
A smooth function $F: \E \Arrow \R$ is componentwise linear if and only if
\begin{itemize}
\item[(1)]$F\circ h_{\lambda} = \lambda^p F$, for all $\lambda \geq 0$,
\item[(2)] $F\circ 0_{i} = 0$,
\end{itemize}
where $0_{i}: \oplus_{1 \leq j \neq i \leq p }\E_j \to \oplus_{1 \leq j \leq p} E_j$ is the map
\begin{equation}\label{dfn:zero_map}
(v_1, \dots, v_{i-1}, v_{i+1}, \dots, v_p)  \mapsto (v_1, \dots, v_{i-1}, 0, v_{i+1}, \dots, v_p),
\end{equation}
for $i=1, \dots, p$.
\end{proposition}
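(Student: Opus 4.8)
The plan is to prove both directions by induction on $p$, using the scaling and vanishing properties to peel off one linear component at a time. First I would handle the forward direction, which is essentially a direct computation: if $F$ is componentwise linear, then for fixed $e=(e_1,\dots,e_p)$ the function $\lambda\mapsto F(h_\lambda(e))=F(\lambda e_1,\dots,\lambda e_p)$ is a product of $p$ linear expressions in $\lambda$ (one per component), hence equals $\lambda^p F(e)$; and setting any component to zero kills the corresponding linear factor, giving $F\circ 0_i = 0$. This needs no induction.

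For the converse, the key step is a Taylor/Euler-type argument along each fiber direction. Fix $i$ and consider, for $(e_1,\dots,e_p)$ with the $j$-th entries held fixed for $j\neq i$, the smooth map $t\mapsto F(e_1,\dots,te_i,\dots,e_p)$ on $\R$. Properties (1) applied with the partial scaling on the $i$-th factor alone is not directly assumed, so instead I would argue globally: property (1) says $F$ is homogeneous of degree $p$ under the \emph{simultaneous} scaling $h_\lambda$, and by a standard argument (differentiating $F(h_\lambda e)=\lambda^p F(e)$ at $\lambda=1$) $F$ satisfies the Euler relation $\sum_i \mathcal{E}_i F = pF$, where $\mathcal{E}_i$ is the Euler vector field on the $i$-th summand $\E_i$. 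Homogeneity of degree $p$ together with smoothness forces $F$ to be a \emph{polynomial} of degree $\leq p$ in the fiber coordinates, and the multidegree in the fiber variables is at most $1$ in each block $\E_i$ once we use property (2): indeed, if $F$ had a monomial of degree $\geq 2$ in the $\E_i$-coordinates, then after setting all \emph{other} blocks' coordinates to zero we would get a nonzero polynomial of the single-block variable of degree $\geq 2$ but homogeneous of degree $p$ under simultaneous scaling — one then runs the induction on $p$, restricting $F$ to the locus where one block is zero (which by (2) gives the zero function, but more usefully one restricts to a single nonzero block to isolate its degree). A cleaner route: restrict $F$ to $\E_i$ (all other entries zero) — by (2) this is $0$; restrict to $\E_i\oplus\E_j$ and use the $p=2$ case; in general fiber-integrate the Euler relation to see each block contributes exactly one linear factor.

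The main obstacle I expect is bookkeeping the inductive step cleanly: extracting from "globally homogeneous of degree $p$ plus vanishing on each coordinate hyperplane $0_i$" the statement "multilinear in the $p$ blocks". The honest way is: write the Taylor expansion of $F$ in fiber coordinates with smooth coefficients on $M$; homogeneity degree $p$ kills all terms of total fiber-degree $\neq p$; property (2) with $0_i$ kills all terms in which the $\E_i$-block does not appear, for every $i$; hence every surviving monomial has fiber-degree $\geq 1$ in each of the $p$ blocks, and since the total is exactly $p$, it is exactly $1$ in each block. That is precisely componentwise (multi)linearity. So after setting up the Taylor/polynomial structure, the conclusion is immediate; the technical content is justifying that a smooth function homogeneous of degree $p$ under $h_\lambda$ is fiberwise polynomial of degree $p$, which is the standard lemma behind the $p=1$ case in \cite{GR} and extends verbatim.
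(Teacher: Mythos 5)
Your proposal is correct and follows essentially the same route as the paper: fix a fiber, use Taylor's theorem together with the degree-$p$ homogeneity under $h_\lambda$ to show $F$ is a homogeneous polynomial of degree $p$ in the fiber coordinates, and then use $F\circ 0_i=0$ to eliminate every monomial in which some block $\E_i$ is absent, forcing multidegree $(1,\dots,1)$, i.e.\ multilinearity (the paper writes this out for $p=2$ and notes the general case is a direct generalization). The only cosmetic issue is the phrase ``product of $p$ linear expressions'' in the forward direction — a multilinear $F$ need not factor, but the identities $F\circ h_\lambda=\lambda^pF$ and $F\circ 0_i=0$ follow immediately from linearity in each slot anyway.
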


\begin{proof}
We will consider the case $p=2$, the general case being a direct generalization.
By restricting $F$ to the fibers of $\E_1$ and $\E_2$, we can assume that $F: \R^{m}\times \R^n \Arrow \R$
satisfies $F(\lambda x, \lambda y) = \lambda^2 F(x,y)$, for all $\lambda \geq 0$,  and $F(x,0)=F(0,y)=0$.
Note that this implies that $F(0,0)=0$ and $DF(0,0)=0$. Now, one can use Taylor's Theorem and the homogeneity of $F$ to prove that
$F(z)= \sum_{1\leq i, j \leq m+n} \frac{\partial^2 F}{\partial z_i\partial z_j}(0,0) z_iz_j$, for $z=(x,y)$ satisfying $|z|=1$. Using the homogeneity once more, it is possible to extend the equality to arbitrary $z$. The condition that
$F(x,0)=F(0,y)=0$ implies that the terms $x_ix_j$ and $y_iy_j$ do not appear in the sum. Hence $F: \R^m \times \R^n \to \R$ is bilinear. This completes the proof.
\end{proof}

We now consider $\VB$-groupoids (see e.g. \cite[Sec.~11.2]{Mac-book} for details and original references),
following the viewpoint of \cite{BCdH}.

\begin{definition}
A $\VB$-groupoid is a square
\begin{equation*}
\xymatrix{
\V \ar@<-3pt>[d] \ar@<3pt>[d] \ar[r] & \G \ar@<-3pt>[d] \ar@<3pt>[d]\\
E \ar[r] & M,\\
}
\end{equation*}
whose horizontal sides are vector bundles, the vertical sides are Lie groupoids, satisfying the following compatibility condition:
denoting by $h$ and $h^E$ the homogeneous structures on $\V$ and $E$, then,
for each $\lambda\geq 0$, $h_{\lambda}: \V \Arrow \V$ is a groupoid morphism over $h_\lambda^E: E\to E$.
\end{definition}

The Lie algebroid $A\V \Arrow E$ of a $\VB$-groupoid inherits a vector bundle structure over $A$ by differentiation of $h_{\lambda}$.
If $h^A$ is the corresponding homogeneous structure, then $h_{\lambda}^{A}: A\V \Arrow A\V$ is a Lie algebroid morphism over $h_\lambda$ for each
$\lambda$.

In the following, we consider VB-groupoids $\V_1 \toto E_1$, $\dots$,  $\V_p \toto E_p$ over $\G\toto M$
and their Whitney sum $\V= \V_1\oplus \dots \oplus \V_p$ (as vector bundles over $\G$).
This defines a $\VB$-groupoid $\V \toto E$ over $\G \toto M$,
whose Lie algebroid $A\V \to E$ splits naturally as a Whitney sum $A\V_1\oplus \dots \oplus A\V_p$ over $A\G$.


\begin{proposition}\label{prop:comp_group}
For a source 1-connected Lie groupoid $\G \toto M$, a multiplicative function $F: \V \Arrow \R$ is componentwise linear if and only if so is $AF:A\V\to \R$. Moreover, in the case $\V_1=\dots=\V_p$, $F$ is skew-symmetric if and only if so is $AF$.
%
%
\end{proposition}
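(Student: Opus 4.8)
The plan is to reduce the statement to the characterization of componentwise linear functions in Proposition~\ref{comp:alt}, applied on both the groupoid $\V$ and its Lie algebroid $A\V$, and then to translate the two homogeneity conditions back and forth across the Lie functor using the fact that $h_\lambda$ is a groupoid morphism and $h^A_\lambda$ the induced Lie algebroid morphism. First I would recall that, since $\V\toto E$ is a $\VB$-groupoid, for each $\lambda\ge 0$ the scalar multiplication $h_\lambda\colon\V\to\V$ is a groupoid morphism, and likewise each \emph{partial} zero map $0_i\colon \oplus_{j\neq i}\V_j\to\V$ is a groupoid morphism (being a componentwise zero section); hence pullbacks $h_\lambda^*$ and $0_i^*$ of multiplicative functions are multiplicative. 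Applying the Lie functor, $A(h_\lambda)=h^A_\lambda$ and $A(0_i)=0_i^{A}$, the corresponding partial zero map on $A\V = A\V_1\oplus\dots\oplus A\V_p$.

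Next, for a multiplicative $F\colon\V\to\R$, I would use the naturality of the correspondence between multiplicative functions and Lie-algebroid cocycles (equation~\eqref{eq:cocycle} and the discussion after Definition in Section~\ref{subsec:functions}): for a groupoid morphism $\Phi\colon\H_1\to\H_2$ and a multiplicative $f$ on $\H_2$, one has $A(\Phi^*f)=(A\Phi)^*(Af)$. Applying this with $\Phi=h_\lambda$ gives $A(h_\lambda^*F) = (h^A_\lambda)^*(AF)$, i.e.\ $AF\circ h^A_\lambda = A(F\circ h_\lambda)$. So if $F$ satisfies $F\circ h_\lambda=\lambda^p F$, then, since $A$ is $\R$-linear, $AF\circ h^A_\lambda = A(\lambda^p F)=\lambda^p\,AF$; and $F\circ 0_i=0$ gives $AF\circ 0_i^A = A(F\circ 0_i)=0$. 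By Proposition~\ref{comp:alt} applied to the vector bundle $A\V\to E$, this shows $AF$ is componentwise linear. For the converse, I would run the same identities backwards: given that $AF$ is componentwise linear, Proposition~\ref{comp:alt} gives $AF\circ h^A_\lambda=\lambda^p AF$ and $AF\circ 0_i^A=0$; then $A(F\circ h_\lambda)=\lambda^p AF = A(\lambda^p F)$ and $A(F\circ 0_i)=0$. Since $\G$ is source $1$-connected, so is $\V\toto E$ (each source fiber of $\V$ is a vector space bundle over a source fiber of $\G$, cf.\ Remark~\ref{rem:fibers}), and hence the Lie functor on multiplicative functions is injective: a multiplicative function is determined by its cocycle. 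Both $F\circ h_\lambda$ and $\lambda^p F$ are multiplicative (the former because $h_\lambda$ is a groupoid morphism, the latter trivially), so $A(F\circ h_\lambda)=A(\lambda^p F)$ forces $F\circ h_\lambda=\lambda^p F$; similarly $F\circ 0_i=0$. Another application of Proposition~\ref{comp:alt}, now on $\V\to\G$ — or rather on each source/target fiber, or simply fiberwise over $\G$ since componentwise linearity is a fiberwise condition — yields that $F$ is componentwise linear.

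For the skew-symmetry addendum, assume $\V_1=\dots=\V_p=\V_0$. A permutation $\sigma\in S(p)$ acts on $\V=\oplus^p\V_0$ by permuting the components; this action map $\sigma\colon\V\to\V$ is a groupoid morphism over the analogous map on $E$, and its derivative is the corresponding permutation map on $A\V=\oplus^p A\V_0$. Thus $A(F\circ\sigma)=(A\sigma)^*(AF)=AF\circ\sigma$. If $F$ is skew-symmetric, then $AF\circ\sigma = A(F\circ\sigma)=A(\mathrm{sgn}(\sigma)F)=\mathrm{sgn}(\sigma)\,AF$, so $AF$ is skew-symmetric. Conversely, if $AF$ is skew-symmetric, then $A(F\circ\sigma)=\mathrm{sgn}(\sigma)\,AF=A(\mathrm{sgn}(\sigma)F)$; both $F\circ\sigma$ (pullback of a multiplicative function by a groupoid morphism) and $\mathrm{sgn}(\sigma)F$ are multiplicative, so injectivity of the Lie functor under source $1$-connectedness gives $F\circ\sigma=\mathrm{sgn}(\sigma)F$.

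The only genuinely delicate points, which I would state carefully rather than belabor, are: (i) the naturality formula $A(\Phi^*f)=(A\Phi)^*(Af)$ for pullback of multiplicative functions along a groupoid morphism $\Phi$, together with $\R$-linearity $A(\lambda f)=\lambda\,Af$ — these follow directly from~\eqref{eq:cocycle} and the chain rule, since $\Phi$ commutes with structure maps and carries right-invariant vector fields to $\Phi$-related ones; and (ii) the identification $A(h_\lambda)=h^A_\lambda$, $A(0_i)=0^A_i$, $A(\sigma)=\sigma$, which is exactly the statement that differentiating the homogeneous (resp.\ permutation) structure on the $\VB$-groupoid yields the homogeneous (resp.\ permutation) structure on its Lie algebroid — this is part of the standard $\VB$-groupoid/$\VB$-algebroid correspondence recalled before the proposition (and in \cite{BCdH}). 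The source $1$-connectedness hypothesis is used precisely once, and crucially: to invoke the bijectivity in Proposition~\ref{func:prop} (integration of cocycles) so that equality of cocycles implies equality of the multiplicative functions themselves, which is what powers both converse directions. I expect the main obstacle to be purely bookkeeping: making sure that ``componentwise linear'' — a fiberwise condition over $\G$ for $F$, but a genuine vector-bundle condition over $E$ for $AF$ — is handled by the \emph{same} Proposition~\ref{comp:alt} on both sides, which it is, since on the $\V$ side one may simply apply the proposition to $\V\to\G$ regarded as a Whitney sum of vector bundles, the groupoid structure playing no role in that direction of the argument.
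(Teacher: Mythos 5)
Your proposal is correct and follows essentially the same route as the paper's proof: both reduce to the homogeneity/zero-section characterization of Proposition~\ref{comp:alt}, observe that $h_\lambda$, the partial zero maps, and the permutation maps are groupoid morphisms whose Lie functor images are the corresponding maps on $A\V$, and use source $1$-connectedness (uniqueness of integration of cocycles) to pass the resulting identities back from $AF$ to $F$. The naturality formula $A(\Phi^*f)=(A\Phi)^*(Af)$ that you isolate is exactly what the paper uses implicitly when it identifies the cocycles of $F\circ h_\lambda$ and $F\circ 0_{\V_i}$.
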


\begin{proof}
We will treat the case $p=2$, the general case follows similarly. Consider
$$
F\circ h_{\lambda}, \;\lambda^2 F: \V \to \R \text{ and } F\circ 0_{\V_i}: \V_i \to \R, \,\,\, i=1,2.
$$
 These are multiplicative functions and their associated infinitesimal cocycles are: $AF \circ h_\lambda^A, \,\lambda^2 AF : A\V \to \R$ and $AF \circ 0_{A\V_i} : A\V_i \to \R$, where $0_{A\V_i}: A\V_i \Arrow A\V$ is the zero map \eqref{dfn:zero_map} for the $A\V= A\V_1 \oplus A\V_2$, $i=1,2$. Note that $0_{\V_i}$ is a Lie groupoid morphism and $A(0_{\V_i}) = 0_{A\V_i}$. The fact that $\G$ is source 1-connected implies that $\V$ is source 1-connected (see Remark \ref{rem:fibers}) and, therefore, by uniqueness of integration, one has that
\begin{align*}
 F\circ h_\lambda = \lambda^2 F & \Leftrightarrow AF \circ  h_\lambda^A = \lambda^2 AF\\
 F \circ 0_{\V_i} = 0 & \Leftrightarrow AF \circ 0_{A\V_i} = 0.
\end{align*}
So, it follows from Proposition \ref{comp:alt} that $F$ is componentwise linear if and only if so is $AF$.

When $\V_1=\dots =\V_p$, each permutation $\sigma \in S(p)$ acts by groupoid
morphism on $\V$ via $\sigma(v_1, \dots, v_p) = (v_{\sigma(1)}, \dots, v_{\sigma(p)})$. Applying the Lie functor,
$A\sigma$ acts on $A\V= A\V_1\oplus \dots \oplus A\V_1$ permuting the elements of $A\V_1$ by $\sigma$ itself.
Hence, for a function $F \in C^{\infty}(\V)$,
$$
A(\mathfrak{p}_{\V}(F))= \mathfrak{p}_{A\V}(AF)
$$
where $\mathfrak{p}_{\V}$, $\mathfrak{p}_{A\V}$ are the projections \eqref{anti_projection} for $\V$ and $A\V$, respectively.
The result now follows exactly as before using the uniqueness of integration.
\end{proof}

\section{Cotangent Lie algebroid.}\label{cot_appendix}
Let $\G \toto M$ be a Lie groupoid, with Lie algebroid $\pi_A: A \to M$. Consider the cotangent
Lie groupoid $T^*\G \toto A^*$.
It is well-known that the canonical symplectic form $\omega_{can}$ on $T^*\G$ is multiplicative and its infinitesimal
component $l_{can}: A(T^*\G) \Arrow T^*(A^*)$, which satisfies (see \eqref{l:def})
\begin{equation}\label{eq:canonical}
\omega_{can}(\overrightarrow{\chi}, \cdot) = \cott^*l_{can}(\chi), \;\;\; \forall \, \chi \in \Gamma(A(T^*\G)),
\end{equation}
is a Lie algebroid isomorphism, where the Lie algebroid structure on $T^*(A^*)$ comes from the linear Poisson
structure $\Pi_{lin}$ on $A^*$. In fact, $(T^*\G, \omega_{can})$ is the
symplectic groupoid integrating the linear Poisson structure of $A^*$, see \cite{CDW}.

The composition of $l_{can}$ with the reversal isomorphism $\Rev: T^*(A^*) \Arrow T^*A$ (see \eqref{eq:reversal})
defines the isomorphism of Lie algebroids between $A(T^*G)$ and the cotangent Lie algebroid $T^*A$
mentioned in Section~\ref{lie_alg},
$$
\vartheta:= \Rev \circ l_{can}: A(T^*\G) \longrightarrow T^*A
$$
(see \cite[Theorem~5.3]{Mac-Xu}).
So, by $\overrightarrow{\Rev_a}$ and $\overrightarrow{\calb\alpha}$ we mean the right-invariant vector fields
of $T^*\G \toto A^*$ corresponding to $\vartheta^{-1}(\Rev_a)$ and $\vartheta^{-1}(\calb\alpha)$, for
$a \in \Gamma(A)$ and $\alpha \in \Omega^1(M)$, respectively. Note that, from \eqref{Ra_def} and \eqref{R_hat},
one has that
$$
\vartheta^{-1}(\Rev_a)=l_{can}^{-1}(d\ell_a), \;\; \mbox{ and } \;\;
\vartheta^{-1}(\calb\alpha)= l_{can}^{-1}(-\pi_{A^*}^*\alpha),
$$
where $\pi_{A^*}: A^*\to M$ is the projection of the dual bundle.

\begin{lemma}
For $a \in \Gamma(A)$, consider $\ell_{\overrightarrow{a}} \in C^{\infty}(T^*\G)$. One has
\begin{equation}\label{eq:omegacan}
\omega_{can}(\overrightarrow{\Rev_a}, \cdot) = d\ell_{\overrightarrow{a}} = \omega_{can}(\overrightarrow{a}^{T^*}, \cdot),
\end{equation}
where $\overrightarrow{a}^{T^*}$ is the cotangent lift of $\overrightarrow{a}$ \eqref{ham_lift}.
In particular, $\overrightarrow{\Rev_a} = \overrightarrow{a}^{T^*}$.
\end{lemma}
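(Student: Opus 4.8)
The plan is to verify the first equality in \eqref{eq:omegacan}, namely $\omega_{can}(\overrightarrow{\Rev_a}, \cdot) = d\ell_{\overrightarrow{a}}$, since the second equality $d\ell_{\overrightarrow{a}} = \omega_{can}(\overrightarrow{a}^{T^*}, \cdot)$ is the standard characterization of the cotangent lift (the cotangent lift of a vector field $X$ is the Hamiltonian vector field of the fiberwise-linear function $\ell_X$ with respect to $\omega_{can}$). Granting both, nondegeneracy of $\omega_{can}$ immediately forces $\overrightarrow{\Rev_a} = \overrightarrow{a}^{T^*}$, which is the last assertion. So the entire content is the left-hand identity.

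To prove $\omega_{can}(\overrightarrow{\Rev_a}, \cdot) = d\ell_{\overrightarrow{a}}$, I would start from \eqref{eq:canonical}, which says $\omega_{can}(\overrightarrow{\chi},\cdot) = \cott^* l_{can}(\chi)$ for $\chi \in \Gamma(A(T^*\G))$. Taking $\chi = \vartheta^{-1}(\Rev_a) = l_{can}^{-1}(d\ell_a)$, the right-hand side becomes $\cott^*\big(l_{can}(l_{can}^{-1}(d\ell_a))\big) = \cott^*(d\ell_a) = d(\cott^*\ell_a)$. Thus the claim reduces to the pointwise identity $\cott^*\ell_a = \ell_{\overrightarrow{a}}$ as functions on $T^*\G$. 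This is essentially a definition-chase: for $\xi_g \in T^*_g\G$, one has $\ell_{\overrightarrow{a}}(\xi_g) = \langle \xi_g, \overrightarrow{a}(g)\rangle$, while $(\cott^*\ell_a)(\xi_g) = \ell_a(\cott(\xi_g)) = \langle \cott(\xi_g), a(\t(g))\rangle$, and by the defining formula \eqref{eq:cotst} for the target map of the cotangent groupoid, $\langle \cott(\xi_g), a(\t(g))\rangle = \langle \xi_g, \overrightarrow{a}(g)\rangle$. Hence the two functions agree, and the identity follows.

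The main (mild) obstacle is bookkeeping the chain of identifications: the section $\overrightarrow{\Rev_a}$ is, by the convention set just above the lemma, the right-invariant vector field on $T^*\G \toto A^*$ corresponding to $\vartheta^{-1}(\Rev_a) \in \Gamma(A(T^*\G))$, where $\vartheta = \Rev \circ l_{can}$; so one must carefully track that $\vartheta^{-1}(\Rev_a) = l_{can}^{-1}(\Rev^{-1}(\Rev_a)) = l_{can}^{-1}(d\ell_a)$, using $\Rev_a = \Rev \circ d\ell_a$ from \eqref{Ra_def}. Once the identifications are untangled, everything is a one-line application of \eqref{eq:canonical} together with \eqref{eq:cotst}; there is no analytic difficulty. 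I would present the argument in exactly this order: (i) reduce to showing $\cott^*\ell_a = \ell_{\overrightarrow{a}}$ via \eqref{eq:canonical} and the identification of $\vartheta^{-1}(\Rev_a)$; (ii) verify $\cott^*\ell_a = \ell_{\overrightarrow{a}}$ pointwise using \eqref{eq:cotst}; (iii) invoke the standard description of the cotangent lift as a Hamiltonian vector field to get the second equality; (iv) conclude $\overrightarrow{\Rev_a} = \overrightarrow{a}^{T^*}$ by nondegeneracy of $\omega_{can}$.
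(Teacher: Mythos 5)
Your proposal is correct and follows essentially the same route as the paper: reduce via \eqref{eq:canonical} to the identity $d\ell_{\overrightarrow{a}} = \cott^* d\ell_a$, obtain the second equality from the Hamiltonian description of the cotangent lift, and conclude by nondegeneracy of $\omega_{can}$. The only difference is that you spell out the ``simple to check'' step $\cott^*\ell_a = \ell_{\overrightarrow{a}}$ explicitly from \eqref{eq:cotst}, which the paper leaves to the reader.
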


\begin{proof}
It is simple to check that the pull-back of the 1-form $d\ell_a \in \Omega^1(A^*)$
by the target map $\cott: T^*\G \to A^*$ is exactly $d\ell_{\overrightarrow{a}}$, i.e.,
$
d\ell_{\overrightarrow{a}} = \cott^*d\ell_a.
$
Then, by \eqref{eq:canonical},
$$
\omega_{can}(\overrightarrow{\Rev_a}, \cdot) = \cott^*l_{can}(l_{can}^{-1}(d\ell_{a})) =
d\ell_{\overrightarrow{a}}.
$$
The second identity in \eqref{eq:omegacan} follows from the fact that the cotangent lift of a vector field is exactly its Hamiltonian lift with respect to $\omega_{can}$. The last statement follows from the non-degeneracy of $\omega_{can}$
\end{proof}

In the following, we shall need the useful relationship between the sum and multiplication
on the cotangent bundle known as {\em interchange law}\footnote{Interchange laws hold more generally for
VB-groupoids, where they express the
compatibility between the vector bundle and groupoid structures. See \cite{Mac-book}}:
\begin{equation}\label{inter_law}
(\xi_1 + \eta_1)\bullet (\xi_2 + \eta_2) = (\xi_1\bullet \xi_2) +
(\eta_1\bullet \eta_2),
\end{equation}
for $\xi_1, \eta_1 \in T_{g_1}^*\G$, $\xi_2, \eta_2 \in T_{g_2}^*\G$
such that $(\xi_1, \xi_2)$, $(\eta_1, \eta_2)$ are composable
pairs.

\begin{lemma}
Given a 1-form $\alpha \in \Omega^1(M)$,
$$
\overrightarrow{\calb\alpha} = (\t^*\alpha)^{\vl},
$$
the vertical lift of $\t^*\alpha \in \Omega^1(\G)$.
\end{lemma}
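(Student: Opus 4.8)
The plan is to identify the right-invariant vector field $\overrightarrow{\calb\alpha}$ on $T^*\G \toto A^*$ by showing it agrees with $(\t^*\alpha)^{\vl}$, the vertical lift of $\t^*\alpha \in \Omega^1(\G)$ viewed as a fiberwise-linear vector field on the vector bundle $T^*\G \to \G$. First I would recall that right-invariant vector fields are characterized by $\overrightarrow{\chi}(\xi) = \chi(\cott(\xi))\bullet 0_\xi$, where $0_\xi$ denotes the zero vector (at $\xi$) for the vector bundle $T(T^*\G) \to T^*\G$ in the tangent groupoid $T(T^*\G)$; here $\chi = \vartheta^{-1}(\calb\alpha) = l_{can}^{-1}(-\pi_{A^*}^*\alpha) \in \Gamma(A(T^*\G))$. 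The essential ingredient is the interchange law \eqref{inter_law}, which lets us unpack the groupoid multiplication $\bullet$ on $T^*\G$ in the presence of the vector-bundle sum, together with the description \eqref{core_dual} of the translation bisection $\calb\alpha$ and the formula \eqref{m:cotang} for multiplication of covectors.

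The key steps, in order, are: (1) compute the core section $\calb\alpha(\varphi) \in T^*\G|_M$ explicitly using \eqref{core_dual}, and note that as an element of $A(T^*\G) = \ker(T\cots)|_{A^*}$ it corresponds, under $\vartheta$, to $\calb\alpha$ in the cotangent prolongation; (2) using the interchange law, show that $\overrightarrow{\calb\alpha}(\xi_g)$, which lives in $T_{\xi_g}(T^*\G)$, can be written as a ``$\bullet$-product'' $\chi(\cott(\xi_g)) \bullet 0_{\xi_g}$, and that this product is the vertical lift of a covector obtained by evaluating $\t^*\alpha$; (3) verify the identity on fiberwise-linear functions: for $a \in \Gamma(A)$ and $\omega \in \Omega^1(\G)$, one has $\Lie_{\omega^{\vl}}\ell_{\overrightarrow{a}} = \langle \t^*\alpha, \overrightarrow{a}\rangle \circ \pi$ when $\omega = \t^*\alpha$, by \eqref{vert_lift} and the definition of $\cott$ in \eqref{eq:cotst}; and compare this with $\Lie_{\overrightarrow{\calb\alpha}}\ell_{\overrightarrow{a}}$, which by the multiplicativity formula \eqref{func:eq} applied to the function $\ell_{\overrightarrow{a}}$ on $T^*\G$ equals $\cott^*\langle A\ell_{\overrightarrow{a}}, \vartheta^{-1}(\calb\alpha)\rangle$. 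Matching these on the generating set of functions (linear functions $\ell_U$ for $U \in \frakx(\G)$, and pullbacks $\cott^*g$) pins down $\overrightarrow{\calb\alpha} = (\t^*\alpha)^{\vl}$.

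The main obstacle I anticipate is bookkeeping in step (2): carefully distinguishing the two vector-bundle structures on $T^*\G$ (the cotangent bundle $T^*\G \to \G$ and the groupoid source/target $\cots,\cott: T^*\G \to A^*$), and handling the ``double'' structure when one further takes the tangent groupoid $T(T^*\G)$ — one must apply the interchange law in the correct order and keep track of which zero section is meant at each stage. A clean way around this is to avoid explicit manipulation of $T(T^*\G)$ altogether and instead prove the equality purely by testing against the two families of functions above, using uniqueness: a vector field on $T^*\G$ is determined by its action on $\ell_U$ ($U\in \frakx(\G)$) and on $\pi_{T^*\G}^*h$ ($h \in C^\infty(\G)$), since these generate $C^\infty(T^*\G)$ locally. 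On the pullback functions both sides act as the (appropriately lifted) vector field $\rho^*\alpha^{\vl}$-type expression pushed to $\G$, which vanishes since $(\t^*\alpha)^{\vl}$ is vertical over $\G$ and $\overrightarrow{\calb\alpha}$ projects to zero in $\frakx(\G)$ (as $\calb\alpha$ is a core section with trivial anchor in the $\G$-direction); on the linear functions $\ell_{\overrightarrow{a}}$ the computation in step (3) gives agreement, and one extends to all $\ell_U$ by a density/linearity argument analogous to the one used for $\overrightarrow{\Rev_a}$ in the preceding lemma.
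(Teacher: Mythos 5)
There is a genuine gap, and it sits exactly where the real work of this lemma is. Your outline correctly handles the ``easy half'' (and here it overlaps with the paper, which also uses the interchange law \eqref{inter_law} to show that $(\t^*\alpha)^{\vl}$ is a right-invariant vector field tangent to the $\cots$-fibers). The problem is the identification step: you must pin down \emph{which} section of $A(T^*\G)$ the right-invariant field $(\t^*\alpha)^{\vl}$ corresponds to, namely $\vartheta^{-1}(\calb\alpha)=l_{can}^{-1}(-\pi_{A^*}^*\alpha)$, and your test family $\{\ell_{\overrightarrow{a}},\ \pr^*h\}$ cannot do this. Since $\ell_{\overrightarrow{a}}=\cott^*\ell_a$ (by \eqref{eq:cotst}), the derivative $\Lie_{\overrightarrow{\chi}}\ell_{\overrightarrow{a}}=\cott^*\Lie_{\rho_{T^*}(\chi)}\ell_a$ only sees the anchor $\rho_{T^*}(\chi)$, and $\Lie_{\overrightarrow{\chi}}\pr^*h$ only sees $A\pr(\chi)$. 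For $\chi$ in the core direction $T^*M\subset A(T^*\G)$ the anchor is $(\rho^*\alpha)^{\vl}$ and $A\pr(\chi)=0$, so whenever $\rho$ is not surjective there are nonzero sections killed by both tests: the family does not separate sections of $A(T^*\G)$. Nor can you ``extend by density/linearity'' from $\ell_{\overrightarrow{a}}$ to all $\ell_U$, $U\in\frakx(\G)$: right-invariant vector fields only span $\ker(T\s)$, not $T\G$, and for a general $U$ you have no formula for $\Lie_{\overrightarrow{\calb\alpha}}\ell_U$ without already knowing the answer. (A smaller, fixable error: \eqref{func:eq} cannot be applied to $\ell_{\overrightarrow{a}}$, which is not multiplicative --- it is $\cott^*\ell_a$, and only $\cott^*\ell_a-\cots^*\ell_a$ is; the correct substitute is $\Lie_{\overrightarrow{\chi}}\cott^*f=\cott^*\Lie_{\rho(\chi)}f$, which leads to the same, still insufficient, conclusion.)

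The paper closes exactly this gap with the canonical symplectic form: by \eqref{eq:canonical} and \eqref{R_hat} one has $\omega_{can}(\overrightarrow{\calb\alpha},\cdot)=\cott^*l_{can}(\vartheta^{-1}(\calb\alpha))=-\cott^*\pi_{A^*}^*\alpha=-\pr^*\t^*\alpha$, and a direct computation with the tautological one-form gives $\omega_{can}((\t^*\alpha)^{\vl}(\varphi),\Upsilon)=-\langle\t^*\alpha,T\pr(\Upsilon)\rangle$ for $\Upsilon\in T_\varphi A^*$; non-degeneracy of $\omega_{can}$ then forces the two right-invariant fields to coincide. Some ingredient of this strength --- pairing against $\omega_{can}$, or an explicit inversion of $l_{can}$ along the units --- is unavoidable, because the data you propose to test against genuinely underdetermines the section. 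If you want to keep your ``test against functions'' strategy, you would at least have to add the functions $\ell_{\overleftarrow{a}}=\cots^*\ell_a$ and $\ell_X$ for $X$ tangent to $M$, and compute $\Lie_{\overrightarrow{\calb\alpha}}$ on those as well, which again requires knowing $\vartheta^{-1}(\calb\alpha)$ beyond its anchor.
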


\begin{proof}
To simplify notation, denote $\t^*\alpha$ by $\eta$. First note that $\cots(\eta(g))=0$, $\forall \, g \in \G$.
Indeed, for any $a \in A_{\s(g)}$, it follows from the definition of $\cots$ that
$$
\<\cots(\eta(g)), a \>
= \<\alpha(\t(g)), T\t(\overleftarrow{a}(g))\,\>
= 0.
$$
Hence
$$
T\cots(\eta^{\vl}(\xi))
= \left.\frac{d}{d\epsilon}\right|_{\epsilon=0} \cots(\xi + \epsilon \eta(g))
= \left.\frac{d}{d\epsilon}\right|_{\epsilon=0} (\cots(\xi) + \epsilon \,\cots(\eta(g)))=0.
$$
Also, one has that
\begin{equation}\label{pull_back}
\eta(g) = \eta(\t(g)) \bullet 0_g.
\end{equation}
Indeed, for any $U \in T_g \G$, using \eqref{m:cotang}, one obtains
\begin{align*}
\< \eta(t(g)) \bullet 0_g, U\> = \<\eta(t(g)) \bullet 0_g, T\t(U)\bullet U\> =  \<\eta(t(g)), T\t(U)\> = \<\eta(g), U\>.
\end{align*}

Let us now prove that $\eta^{\vl}$ is a right-invariant vector field. For $\xi \in T_g^*\G$,
\begin{equation*}
\begin{split}
\eta^{\vl}(\xi) & = \left.\frac{d}{d\epsilon}\right|_{\epsilon=0} (\xi + \epsilon \,\eta(g))
\stackrel{\eqref{pull_back}} = \left.\frac{d}{d\epsilon}\right|_{\epsilon=0}(\cotu_{\cott(\xi)} \bullet \xi + \epsilon\, \eta(\t(g)) \bullet 0_g)\\
& \stackrel{\eqref{inter_law}} = \left.\frac{d}{d\epsilon}\right|_{\epsilon=0} [(\cotu_{\cott(\xi)} + \epsilon\, \eta(\t(g)))\bullet (\xi + 0_g)]
 = dR_{\xi}[ \eta^{\vl}(\cotu_{\cott(\xi)}) ].
\end{split}
\end{equation*}
This proves that $\eta^{\vl}$ is right-invariant.

To conclude the proof, we just need to prove that $
\omega_{can}(\eta^\vl(\varphi), \Upsilon) = \omega_{can}(\overrightarrow{\calb\alpha}(\varphi), \Upsilon)
$
for any $\Upsilon \in T_{\varphi} A^* \subset T(T^*\G)$, $\varphi \in A_x^*$, $x \in M$. Choose any
projectable vector field $\widetilde{\Upsilon} \in \mathfrak{X}(T^*\G)$ extending $\Upsilon$, with respect to
the cotangent bundle projection $\pr: T^*\G \to \G$. Recall that $\pr$ is a groupoid morphism covering
$\pi_{A^*}: A^* \to M$. As $\omega_{can}= - d\theta_{can}$ for the tautological 1-form $\theta_{can} \in \Omega^1(T^*\G)$,
one has that
\begin{equation*}
\begin{split}
\omega_{can}(\eta^{\vl}(\varphi), \Upsilon) & = -\Lie_{\eta^{\vl}} \theta_{can}(\widetilde{\Upsilon})|_{\varphi}
 = - \left.\frac{d}{d\epsilon}\right|_{\epsilon=0}\<\varphi+ \epsilon \,\eta(x), \,
Tpr(\widetilde{\Upsilon}(\varphi+ \epsilon \,\eta(x)) \>\\
& = - \< \eta(x), T \pr(\Upsilon)\>,
\end{split}
\end{equation*}
where we have used that $\theta_{can}(\eta^{\vl}) = \theta_{can}([\eta^{\vl}, \widetilde{\Upsilon}])=0$ and
$
T\pr(\widetilde{\Upsilon}(\varphi+ \epsilon \eta(x))) = T\pr(\widetilde{\Upsilon}(\varphi)). $
The proof now follows from \eqref{eq:canonical}
$$
\omega_{can}(\overrightarrow{\calb \alpha}, \cdot) = - \cott^*(\pi_{A^*}^*\alpha) = - \pr^*(\t^*\alpha).
$$
\end{proof}



\end{document}